\DeclarePairedDelimiter\floor{\lfloor}{\rfloor}
\newcolumntype{R}[1]{>{\raggedleft\arraybackslash}p{#1}}
\DeclareSymbolFont{rsfs}{U}{rsfs}{m}{n}
\DeclareSymbolFontAlphabet{\mathscrsfs}{rsfs}
\numberwithin{equation}{section}
\newcounter{smallarabics}
\newcounter{smallroman}
\newenvironment{romanenumerate}
{\begin{list}{{\normalfont\textrm{(\roman{smallroman})}}}
  {\usecounter{smallroman}\setlength{\itemindent}{0cm}
   \setlength{\leftmargin}{5ex}\setlength{\labelwidth}{4ex}
   \setlength{\topsep}{0.75\parsep}\setlength{\partopsep}{0ex}
   \setlength{\itemsep}{0ex}}}
{\end{list}}
\newcommand{\ben}{\begin{romanenumerate}}  
\newcommand{\een}{\end{romanenumerate}}  
\newtheorem{theoreme}{theorem }[section]
\newtheorem{theorem}[theoreme]{Theorem}
\newtheorem{proposition}[theoreme]{Proposition}
\newtheorem{Lemma}[theoreme]{Lemma}
\newtheorem{corollary}[theoreme]{Corollary}
\newtheorem{remark}{Remark}[section]
\newcommand{\Pp}{P^{\perp}}
\newcommand{\tH}{\theta(\mathfrak{H})}
\newcolumntype{L}{>{\centering\arraybackslash}m{3cm}}
\newcommand\nn\nonumber
\renewcommand\leq\varleq
\renewcommand\geq\vargeq
\newcommand{\1}{\mathbb 1}
 \newcommand{\R}{\mathbb{R}}
 \newcommand{\N}{\mathbb{N}}
\newcommand{\Z}{\mathbb{Z}} \newcommand{\C}{\mathbb{C}}
\renewcommand{\i}{\mathrm{i}}
\renewcommand{\epsilon}{\varepsilon}
\newcommand\blue[1]{\textcolor{blue}{#1}}
\newcolumntype{A}{D{.}{.}{2.3}}
\pgfplotsset{compat=1.11}
      \def\@setcopyright{}
      \def\serieslogo@{}
\begin{document}

\author{Sylvain Gol\'enia and Marc-Adrien Mandich}
   \address{Univ. Bordeaux, CNRS, Bordeaux INP, IMB, UMR 5251,  F-33400, Talence, France}
   \email{sylvain.golenia@math.u-bordeaux.fr}
      \address{Independent researcher, Jersey City, 07305, NJ, USA}
	\email{marcadrien.mandich@gmail.com}
   

   \title[LAP for discrete Schr\"odinger operator]{Bands of pure a.c.\ spectrum for lattice Schr{\"o}dinger operators with a more general long range condition. Part I}

   \begin{abstract}
   Commutator methods are applied to get limiting absorption principles for the discrete standard and Molchanov-Vainberg Schr\"odinger operators, $\Delta+V$ and $D+V$ on $\ell^2(\Z^d)$, with emphasis on $d=1,2,3$. Considered are electric potentials $V$ satisfying a long range condition of the type: $V-\tau_j ^{\kappa}V$ decays appropriately at infinity for some $\kappa \in \N$ and all $1 \leq j \leq d$, where $\tau_j ^{\kappa} V$ is the potential shifted by $\kappa$ units on the $j^{\text{th}}$ coordinate. More comprehensive results are obtained for small values of $\kappa$, e.g.\ $\kappa =1,2,3,4$. We work in a simplified framework in which the main takeaway appears to be the existence of bands where a limiting absorption principle holds, and hence pure absolutely continuous (a.c.)\ spectrum exists. Other decay conditions at infinity for $V$ arise from an isomorphism between $\Delta$ and $D$ in dimension 2.  Oscillating potentials are examples in application.
   \end{abstract}

%
\subjclass[2010]{39A70, 81Q10, 47B25, 47A10.}

   \keywords{limiting absorption principle, discrete Schr\"{o}dinger operator, long range potential, Wigner-von Neumann potential, Mourre theory, weighted Mourre theory, Chebyshev polynomials}
 

\maketitle
\hypersetup{linkbordercolor=black}
\hypersetup{linkcolor=blue}
\hypersetup{citecolor=blue}
\hypersetup{urlcolor=blue}
\tableofcontents

\section{Introduction}

The spectral and scattering theory of quantum systems on the lattice $\Z^d$ has been done primarily using the standard definition of the Laplacian
\begin{equation}
\label{def:std}
\Delta  \equiv \Delta_{\mathrm{std}}:=  \sum _{j=1}^d \Delta_j, \quad \text{where } \Delta_j:=  \frac{1}{2} (S_j + S^*_j), \quad \text{defined on} \ \mathscr{H}:= \ell^2(\Z^d).
\end{equation}
Here $S_j = S_j^1$ and $S^*_j = S^{-1}_j$ are the shifts to the right and left respectively on the $j^{th}$ coordinate. So $(S_j u)(n) = u(n_1,\ldots,n_j-1,\ldots,n_d)$ for $u \in \mathscr{H}$, $n=(n_1,\ldots,n_d) \in \Z^d$. Set $|n|^2:= n_1^2 + ...+n_d^2$. In \cite{MV}, Molchanov and Vainberg proposed an alternative Laplacian on $\mathscr{H}$ defined by
\begin{equation}
\label{def:MV22}
D \equiv D_{\mathrm{MV}}:= \prod_{j=1}^d \Delta_j = 2^{-d} \sum_{\nu \in \Sigma} S_1^{\nu_1} S_2^{\nu_2} \cdot \cdot \cdot S_d^{\nu_d}, \quad \text{where } \Sigma:=  \{-1,1\}^d, \nu = (\nu_1, \ldots, \nu_d).
\end{equation}
In the literature, this difference operator has been coined the \emph{Molchanov-Vainberg} Laplacian. A Fourier transformation shows that the spectra of $\Delta_j$, $\Delta$ and $D$ are purely absolutely continuous (a.c.) and $\sigma(\Delta_j) \equiv [-1,1]$, $\sigma(\Delta) = [-d,d]$ and $\sigma(D) =[-1,1]$. Moreover $d^{-1}\Delta$ and $D$ are equal when $d=1$ and isomorphic when $d=2$ (see below), but we believe they are fundamentally different for $d\geq 3$. In spite of this difference, a common point is convergence to the continuous Laplacian. Namely, in \cite{NT} it was proved that $\Delta$ converges to the continuous Laplacian on $\R^d$ in the norm resolvent sense, when the mesh size of the lattice goes to zero. It is not very hard to prove that this is also true for $D$, for all dimensions, see Appendix \ref{convergence_appendix}.

An interesting property of $D$ is the decay of its Green's function. It satisfies $G(0,n,E):= \langle \delta_0, (D - E - \i 0) ^{-1} \delta_n \rangle = O ( | n | ^{- \frac{d-1}{2}} )$ as $|n| \to \infty$ for all $E \in (-1,1)\setminus \{0 \}$, whereas such a rate of decay for the Green's function of $\Delta$ does not hold in dimensions $d \geqslant 3$ for energies in the middle part of its spectrum. This statement was conjectured in \cite{MV}, see also \cite{SV}, and proven in \cite{P}. The literature on the Molchanov-Vainberg Laplacian is thin. Here we study its spectral properties by means of commutator methods and obtain limiting absorption principles (LAP) in a spirit similar to that of \cite{BSa}, \cite{GM2} or \cite{IK}, for the discrete Schr\"odinger operators based on the standard definition of the Laplacian. But unlike these articles, here we consider a more general long\blue{-}range condition on the perturbation $V$, which we describe shortly. 

Let $V$ model a discrete electric potential and act pointwise, i.e.\ $(Vu)(n) = V(n) u(n)$, for $u \in \mathscr{H}$. Let $\kappa = (\kappa_j)_{j=1}^d \in (\N^*)^d$ (non-zero positive integers) be given. The potential shifted by $\kappa_j$ units on the $j^{th}$ coordinate is again a multiplication operator defined by
\begin{equation*}
(\tau_j ^{\kappa_j} V) u(n):= V(n_1,\ldots,n_j-\kappa_j,\ldots n_d)u(n), \quad (\tau_j^{-\kappa_j} V) u(n):= V(n_1,\ldots,n_j+\kappa_j,\ldots, n_d) u(n).
\end{equation*} 
We are interested in potentials $V$ satisfying a non-radial condition at infinity of the form 
\begin{equation}
\label{generalLR condition}
n_j (V - \tau_j ^{\kappa_j} V)(n) = O(g(n)), \quad \forall  1 \leq j \leq d,
\end{equation}
where $g(n)$ is a (radial) function which goes to zero at infinity. This type of condition arises rather naturally in a wider framework of applied Mourre theory/commutator methods on a square lattice which we develop here and in \cite{GM3}. Let us also point out that condition \eqref{generalLR condition} is close to a summability condition $\sum_{ n \in \Z^d} |(V - \tau_j ^{\kappa_j} V)(n)| < \infty$, especially if $V$ is radial. If all the $\kappa_j$'s are equal, which will often be our assumption, we write $\kappa = \kappa_j$ in short. So for example the notation $\kappa=2$ means $\kappa_j \equiv 2$. When the underlying Laplacian is $\Delta$, we refer to the case $\kappa = 1$ and $g(n) = |n|^{-\epsilon}$, $\epsilon > 0$, as the \textit{base case condition}. When the underlying Laplacian is $D$, we define the \textit{base case condition} to be $\kappa = 2$ and $g(n) = |n|^{-\epsilon}$, $\epsilon > 0$. For $\Delta$, the base case condition is treated in \cite{Ma1}: the result is that one has a LAP locally for $\Delta + V$ on $[-d,d] \setminus \{-d+2l: l=0,...,d\}$, and thereby absence of singular continuous spectrum on this set. One can then look to improve the base case conditions by either \begin{enumerate}
\item weakening the decay of $g(n)$ at $|n| \to \infty$, or
\item increasing the $\kappa_j$'s. 
\end{enumerate}
Our methods indicate that these 2 points can be analyzed independently. Article \cite{GM2} tackles the former point, by weakening $g(n) = |n|^{-\epsilon}$, $\epsilon >0$, to functions that decay logarithmically, e.g.\ $g(n) = \log ^{-q} ( |n|)$, $q >2$. In this article and \cite{GM3} we turn our attention to point (2). In addition to \eqref{generalLR condition} we always assume $V$ is real-valued and goes to zero at infinity. Thus $\sigma_{\mathrm{ess}}(D+V) = \sigma(D)$ and $\sigma_{\mathrm{ess}}(\Delta+V) = \sigma(\Delta)$, where $\sigma_{\text{ess}}(\cdot)$ is the essential spectrum. The question we attempt to answer, at least to some degree of generality, is: If we fix $V$ satisfying \eqref{generalLR condition} for some $\kappa$ and appropriate $g(n)$ (say $g(n) = |n|^{-\epsilon}$ for simplicity), what is the support of the pure a.c.\ spectra of $\Delta+V$ and $D+V$ ? What  LAPs can be derived and at what energies?

We will formulate LAPs according to 3 distinct commutator theories: 1) Mourre's original papers \cite{Mo1}, \cite{Mo2},  2) the Besov space approach of \cite{ABG}, \cite{BSa}, and 3) G\'erard's energy estimate approach \cite{G}, \cite{GJ2}. Novel commutator ideas have recently been advanced in \cite{AIIS} ; we do not probe them here. Let $\mathfrak{H}$ be a Hamiltonian on $\mathscr{H} = \ell^2(\Z^d)$. For a closed interval $I \subset \R$ let $I_{\pm}:= \{ z \in \C_{\pm}: \mathrm{Re} (z) \in I\}$, $\C_\pm:=\{z\in \C, \pm\mathrm{Im}(z)>0\}$. The LAPs are statements about the extension of the holomorphic maps 
\begin{equation*}
\label{LaP_generic}
I_{\pm} \ni z \mapsto (\mathfrak{H}-z)^{-1} \in \mathscr{B}(\mathcal{K}, \mathcal{K}^*),
\end{equation*}
to $I$ for some appropriate Banach space $\mathcal{K} \subset \mathscr{H}$. Here $\mathcal{K}^*$ is the antidual of $\mathcal{K}$, when $\mathscr{H}=\mathscr{H}^*$ by the Riesz isomorphism ; $\mathscr{B}(\mathcal{K}, \mathcal{K}^*)$ are the bounded operators from $\mathcal{K}$ to $\mathcal{K}^*$.

Let us present the framework and results. We start by fixing $\kappa = (\kappa_j)$ according to \eqref{generalLR condition}. The coordinate-wise position operators are 

\begin{equation*}
\label{notation_position}
(N_j u)(n):= n_j u(n), \quad \mathrm{Dom}[N_j] = \Big \{ u \in \mathscr{H} = \ell^2(\Z^d): \sum_{n \in \Z^d} |n_j u(n)|^2 < \infty \Big \}.
\end{equation*}
Commutator methods rely on a self-adjoint \emph{conjugate operator}. To handle condition \eqref{generalLR condition} it makes sense to consider a linear combination of conjugate operators, namely $\sum_{l \in \N^*} \rho_{l \kappa} A_{l\kappa}$, where $\rho_{l \kappa} \in \R$ and $A_{l \kappa} := \sum_{j=1}^d A_j (l \kappa)$, and where the $A_j (l \kappa)$ are the closure in $\mathscr{H}$ of
\small
\begin{equation}
\label{generatorDilations_0111}
A_{j}(l \kappa):= \frac{1}{2\i} \bigg [  \frac{l \kappa_j}{2}(S_j ^{l \kappa_j} + S_j^{-l \kappa_j}) + (S_j ^{l \kappa_j} - S_j^{-l \kappa_j}) N_j \bigg ]  = \frac{1}{4\i} \bigg[  (S_j ^{l \kappa_j} - S_j^{-l \kappa_j})N_j + N_j(S_j^{l \kappa_j}-S_j^{-l \kappa_j}) \bigg],
\end{equation} 
\normalsize
on the domain of compactly supported sequences. Each $A_{l \kappa}$ is self-adjoint by a straightforward adaptation of the case $\kappa = 1$ and $l=1$, see e.g.\ \cite{GGo}. $A_{l \kappa}$ is more compactly expressed in Fourier space. Let $\mathcal{F}: \mathscr{H} \to L^2([-\pi,\pi]^d,d\xi)$ be the Fourier transform 
\begin{equation}
\normalsize
(\mathcal{F} u)(\xi):=  (2\pi)^{-d/2} \sum \limits_{n \in \Z^d} u(n) e^{\i n \cdot \xi}, \quad  \xi=(\xi_1,\ldots,\xi_d).
\label{FourierTT}
\end{equation}
Then $A_{l \kappa}$ is unitarily equivalent to the self-adjoint realization of the operator
\begin{equation}
\label{A_fourier}
\mathcal{F} A_{l \kappa} \mathcal{F}^{-1}: = \frac{1}{2\i} \sum_{j=1} ^d \sin(l \kappa_j \xi_j) \frac{\partial}{ \partial \xi_j} + \frac{\partial }{\partial \xi_j} \sin(l \kappa_j \xi_j) .
\end{equation}
To our knowledge the commutator methods applied to the discrete Schr\"odinger operators have thus far only considered the above $A_{l\kappa}$ with $\kappa = 1$, and $l=1$, see e.g.\ \cite{BSa} and \cite{GM2}. Here we extend the usual generator of dilations (with some inspiration based on Nakamura's paper \cite{N}). In this article (part I) we work in the simplified framework where the linear combination $\sum_{l \in \N^*} \rho_{l \kappa} A_{l \kappa}$ contains just the first term, i.e. $\rho_{l \kappa} = 1$ if $l =1$ and $\rho_{l \kappa} =0$ otherwise. Thus, henceforth we stick to the simpler notation $A_{\kappa}:= \sum_{j=1}^d A_j$ where the $A_j$'s are 
\begin{equation}
\label{generatorDilations_011}
A_{j}:= \frac{1}{4\i} \bigg[  (S_j ^{\kappa_j} - S_j^{- \kappa_j})N_j + N_j(S_j^{\kappa_j}-S_j^{- \kappa_j}) \bigg].
\end{equation} 
In part II \cite{GM3}, which relies more heavily on numerical calculations with the computer, finite non-trivial combinations $\sum_{l \in \N^*} \rho_{l \kappa} A_{l \kappa}$ are considered. Results in Part II add to those in Part I.

Notation is set in order to treat the 2 Laplacians $\Delta$ and $D$ in parallel. It will be interesting to compare them afterwards. Let $(\mathfrak{D}, \mathfrak{H})$ be a generic pair of (free, perturbed) Hamiltonians. Let $\mathbb{A}$ be a generic conjugate operator. We treat the 2 cases $(\mathfrak{D}, \mathfrak{H}) = (D,D+V)$ or $(\Delta, \Delta+V)$, and initially consider $\mathbb{A} = \pm A_{\kappa}$ but later other options arise. At a basic level, the ability to obtain a LAP on an interval $I \subset \sigma(\mathfrak{H})$ depends directly on the ability to prove a \textit{strict Mourre estimate} for $\mathfrak{D}$ with respect to some conjugate operator $\mathbb{A}$ on this interval, that is to say, $\exists \gamma >0$ such that 
\begin{equation}
\label{mourreEstimate123}
E_I(\mathfrak{D}) [\mathfrak{D}, \i \mathbb{A}] _{\circ} E_{I} (\mathfrak{D}) \geq \gamma E_{I} (\mathfrak{D}),
\end{equation}
where $E_I(\mathfrak{D})$ is the spectral projection of $\mathfrak{D}$ onto $I$, and $[\mathfrak{D}, \i \mathbb{A}] _{\circ}$ means the extension of the commutator between $\mathfrak{D}$ and $\mathbb{A}$ to a bounded operator in $\mathscr{H}$. Consider:
\begin{equation}
\label{mu_set_def}
\boldsymbol{\mu}_{\mathbb{A}} ^{\pm} (\mathfrak{D}):= \{ E \in \sigma(\mathfrak{D}): \exists \ \mathrm{an \ open \ interval \ } I, I \ni E, \gamma >0, \mathrm{ \ s.t. \ } \eqref{mourreEstimate123} \mathrm{ \ holds  \ with \ } \pm \mathbb{A} \}.
 \end{equation} 
From the Mourre theory, $\boldsymbol{\mu}_{\mathbb{A}}^{\pm} (\mathfrak{D})$ are open subsets of $\R$. Now define $\boldsymbol{\mu}_{\mathbb{A}} (\mathfrak{D}):= \boldsymbol{\mu}_{\mathbb{A}} ^{+} (\mathfrak{D}) \cup \boldsymbol{\mu}_{\mathbb{A}} ^{-} (\mathfrak{D})$. 

It will be proved that choosing $\mathbb{A}$ equal to $+A_{\kappa}$ or $-A_{\kappa}$ yields a strict Mourre estimate on some intervals $I \subset \sigma(\mathfrak{D})$, for $\mathfrak{D} = D$ or $\Delta$. The $\pm$ sign depends on a number of things, namely $I$, $\kappa$, the dimension $d$, and whether $\mathfrak{D} = D$ or $\Delta$. A main goal of this article is to identify the sets $\boldsymbol{\mu}_{A_{\kappa}}(D)$ and $\boldsymbol{\mu}_{A_{\kappa}}(\Delta)$. Denote $U_{n}(\cdot)$ the Chebyshev polynomials of the second kind of order $n$. In Proposition \ref{reg_Prop_DD}, it is shown that
\begin{equation}
[\Delta, \i A_{\kappa} ]_{\circ} = \sum_{j=1} ^d (1-\Delta_j^2) U_{\kappa_j - 1} (\Delta_j), \quad \mathrm{and} \quad [D, \i A_{\kappa}]_{\circ} = \sum_{j=1} ^d \frac{D}{\Delta_j} (1-\Delta_j^2) U_{\kappa_j-1} (\Delta_j).
\label{MV:k323533}
\end{equation}
Since $D, \Delta$ and $\Delta_j$ are self-adjoint commuting operators, the problem of determining the sets $\boldsymbol{\mu}_{A_{\kappa}}(D)$ and $\boldsymbol{\mu}_{A_{\kappa}}(\Delta)$ translates into an extremization problem of polynomials in $d$ variables of degrees respectively $\kappa^*+1$ and $\kappa^*+d$ over a $d-1$ dimensional surface on a bounded domain, where $\kappa^*:= \max \kappa_j$. The complexity increases with $\kappa^*$ and $d$. The case of the multi-dimensional Standard Laplacian for $\kappa_j \equiv 1$ is well studied, see e.g.\ \cite{BSa} or \cite{GM2}. One has $\boldsymbol{\mu}_{A_{\kappa=1}}(\Delta) = [-d,d] \setminus \{-d+2l: l=0,...,d\}$. In dimension $1$, our new results are comprehensive: we prove $\boldsymbol{\mu}_{A_{\kappa}}(\Delta) = \boldsymbol{\mu}_{A_{\kappa}}(D) = [-1,1] \setminus \{\pm \cos(\pi j / \kappa), j=0,\ldots,\floor*{\kappa/2}  \}$, $\forall \kappa \in \N^*$. For $d\geq 2$, our rigorous results are mostly partial. Now we assume $\kappa \equiv \kappa_j$. Tables \ref{table:202} and \ref{table:101} collect such results for respectively $\Delta$, $\kappa = 2,3,4,$ and $D$, $\kappa =2,4,6$. For these values of $\kappa$, and so for the results in Tables \ref{table:202} and \ref{table:101}, the sets $\boldsymbol{\mu}_{A_{\kappa}}(\mathfrak{D})$ are symmetric about zero, i.e.\  $\boldsymbol{\mu}_{A_{\kappa}}(\mathfrak{D}) = -  \boldsymbol{\mu}_{A_{\kappa}}(\mathfrak{D})$ (this propriety is due to the bipartite property of $\Z^d$).

\begin{table}[H]
    \begin{tabular}{c|c|c|c} 
      $\kappa$ & $d=1$ & $d = 2$ & $d=3$    \\ [0.4em]
      \hline
      $2$  & $(0,1)$ & $ (1,2)$ & $ (2,3)$   \\ [0.4em]
     $3$  & $[0, 1) \setminus \{ \frac{1}{2} \}$  & $ \left(\frac{1}{2} \sqrt{\frac{1}{2}(5-\sqrt{7})},1 \right) \cup  \left( \frac{3}{2}, 2 \right)$ & $\left(\frac{5}{2},3 \right)$   \\ [0.4em]
      $4$  & $(0, 1) \setminus \{ \frac{1}{\sqrt{2}} \}$  & $\left( \sqrt{\frac{3}{2}-\frac{1}{\sqrt{5}}}, \sqrt{2} \right) \cup \left(1+\frac{\sqrt{2}}{2}, 2 \right)$ &  $\left(2+\frac{1}{\sqrt{2}},3\right)$  \\ [0.4em]
            \hline \hline
      $2$  & $ \{ 0,1 \}$ & $ [0,1] \cup \{2\}$ & $[0,2] \cup \{3\}$  \\ [0.4em]
     $3$  &  $\{ \frac{1}{2}, 1\}$ & $ \left[0, \frac{1}{2} \sqrt{\frac{1}{2}(5-\sqrt{7})} \right]  \cup \left[ 1, \frac{3}{2} \right] \cup \{2\}$ & $\left[ 0, \frac{5}{2} \right] \cup \{3\}$   \\ [0.4em]
      $4$  & $\left\{0, \frac{1}{\sqrt{2}},1 \right\}$  & $\left[0, \sqrt{\frac{3}{2}-\frac{1}{\sqrt{5}}} \right] \cup   \left[\sqrt{2} , 1+\frac{\sqrt{2}}{2} \right] \cup \{2\}$ &  $[0,2] \cup \left[ \frac{3}{\sqrt{2}}, 2+\frac{1}{\sqrt{2}}\right] \cup \{3\}$
    \end{tabular}
  \caption{Top: open sets for which we prove they are included in $\boldsymbol{\mu}_{A_{\kappa}}(\Delta) \cap [0,d]$. Bottom: closed sets for which we prove they are included in $[0,d] \setminus \boldsymbol{\mu}_{A_{\kappa}}(\Delta)$.}
      \label{table:202}
\end{table}
\begin{table}[H]
    \begin{tabular}{c|c|c|c} 
      $\kappa$ & $d=1$ & $d = 2$ & $d=3$  \\ [0.4em]
      \hline
      $2$  & $ (0,1)$ & $ (0,1)$ & $ (0,1)$  \\ [0.4em]
      $4$  & $\left(0,1\right) \setminus \{ \frac{1}{\sqrt{2}}\} $ & $\left(0, \frac{1}{2}\right)  \cup \left(\frac{1}{\sqrt{2}},1\right) $ & $\left(0, \frac{\sqrt{3-\sqrt{3}}}{4}\right)  \cup \left(\frac{1}{\sqrt{2}}, 1\right) $    \\[0.4em]
      $6$ & $\left(0,1\right) \setminus \{\frac{1}{2}, \frac{\sqrt{3}}{2}\}$ & $(0,0.21) \cup (0.53,0.71) \cup   \left(\frac{\sqrt{3}}{2},1 \right) $ & $(0,0.08) \cup (0.57,0.58) \cup \left(\frac{\sqrt{3}}{2},1 \right)$ \\ [0.4em]
          \hline \hline
      $2$  & $\{0,1\}$ & $\{0,1\}$ & $\{0,1\}$ \\ [0.4em]
      $4$  & $\{0,\frac{1}{\sqrt{2}}, 1\}$ & $\left[\frac{1}{2}, \frac{1}{\sqrt{2}} \right] \cup \{0,1\}$ & $\left[\frac{1}{2\sqrt{2}}, \frac{1}{\sqrt{2}} \right] \cup \{0,1\}$  \\[0.4em]
      $6$ & $\{0, \frac{1}{2}, \frac{\sqrt{3}}{2}, 1 \}$ &  $\left[\frac{1}{4}, \frac{1}{2} \right] \cup \left[\frac{3}{4}, \frac{\sqrt{3}}{2} \right] \cup \{0,1\}$ & $\left[\frac{1}{8}, \frac{1}{2} \right] \cup \left[\frac{3\sqrt{3}}{8}, \frac{\sqrt{3}}{2} \right] \cup \{0,1\}$  
    \end{tabular}
  \caption{Top: open sets for which we prove they are included in $\boldsymbol{\mu}_{A_{\kappa}}(D) \cap [0,1]$. Bottom: closed sets for which we prove they are included in $[0,1] \setminus \boldsymbol{\mu}_{A_{\kappa}}(D)$.  }
     \label{table:101}
\end{table}

Denote the point and singular continuous spectra of $\mathfrak{H}$ by $\sigma_\mathrm{p}(\mathfrak{H})$ and $\sigma_\mathrm{sc}(\mathfrak{H})$ respectively. The following new result holds for $(\mathfrak{D}, \mathfrak{H}) = (\Delta, \Delta+V)$ or $(D,D+V)$, and $\mathbb{A} = A_{\kappa}$, $\kappa \in (\N^*)^d$:

\begin{theorem}
\label{lapy305} 
Suppose there is $\epsilon >0$ such that $V(n) = O \left( |n|^{-\epsilon} \right)$ as $|n| \to \infty$ and 
\begin{equation}
n_j (V - \tau_j^{\kappa_j} V)(n)  = O \left( |n|^{-\epsilon} \right ),\quad as \ |n| \to \infty, \quad j=1,\ldots,d.
\label{A20}
\end{equation}
Let $E \in \boldsymbol{\mu}_{\mathbb{A}}(\mathfrak{D}) \setminus \sigma_\mathrm{p}(\mathfrak{H})$, where $\boldsymbol{\mu}_{\mathbb{A}}(\mathfrak{D})$ is as in Tables \ref{table:202} and \ref{table:101} for example. Then there is a closed interval $I$, such that $E$ belongs to the interior of $I$ and
\begin{enumerate}
\item $\sigma_\mathrm{p}(\mathfrak{H}) \cap I$ is finite and consists of eigenvalues of finite multiplicity, 
\item $\forall s>1/2$ the map $I_{\pm} \ni z \mapsto (\mathfrak{H}-z)^{-1} \in \mathscr{B}\left( \mathcal{K}, \mathcal{K}^* \right)$ extends to a uniformly bounded map on each open set of $I\setminus \sigma_{p}(\mathfrak{H})$, with $\mathcal{K} = L^2_{s}(\mathbf{N}) = L^2_{s,0,0,0}(\mathbf{N})$, defined by \eqref{L2sppM}, and $\mathbf{N} := (1 + N_1 ^2 + ... +N_d^2)^{1/2}$. In particular $\sigma_{\mathrm{sc}} (\mathfrak{H}) \cap I = \emptyset$. 
\end{enumerate}
\end{theorem}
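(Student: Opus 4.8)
\emph{Overall approach.} The plan is to run weighted Mourre theory, taking the strict Mourre estimate built into the definition of $\boldsymbol{\mu}_{\mathbb A}(\mathfrak D)$ together with the explicit commutators \eqref{MV:k323533} as the free-operator input, and treating $V$ as a perturbation controlled by \eqref{A20}. Fix $E \in \boldsymbol{\mu}_{\mathbb A}(\mathfrak D) \setminus \sigma_\mathrm{p}(\mathfrak H)$; by definition there are an open interval $I_0 \ni E$ and $\gamma > 0$ for which \eqref{mourreEstimate123} holds for $\mathfrak D$ with $\mathbb A = A_\kappa$ (up to the sign). The free operator is regular: because $\mathfrak D$ and $A_\kappa$ are built out of the commuting shifts $S_j^{\pm 1}$ and the position operators $N_j$, and $[\Delta_j, N_j]$ is bounded, all iterated commutators stay bounded, so $\mathfrak D \in C^\infty(\mathbb A)$; the first one is the bounded operator in \eqref{MV:k323533}.

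\emph{Regularity and compactness of $V$.} This is the heart of the argument. I would first record the identity $[V, S_j^{\pm \kappa_j}] = (V - \tau_j^{\pm \kappa_j} V)\, S_j^{\pm \kappa_j}$, valid because $V$ acts by multiplication and $S_j^{\kappa_j} V = (\tau_j^{\kappa_j} V) S_j^{\kappa_j}$. Inserting this into \eqref{generatorDilations_011} and using $[V, N_j]=0$ expresses $[V, \i A_\kappa]$ as a finite sum of terms of the form $n_j(V - \tau_j^{\kappa_j} V)$ times bounded shift operators. The long-range hypothesis \eqref{A20} says exactly that each factor $n_j(V - \tau_j^{\kappa_j} V)$ is $O(|n|^{-\epsilon})$, so $[V, \i A_\kappa]$ extends to a bounded operator vanishing at infinity, hence compact on $\ell^2(\Z^d)$; $V$ itself is compact since $V(n) \to 0$. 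The polynomial rate $\epsilon > 0$ is then used to promote mere boundedness of the commutator to $V \in C^{1,1}(\mathbb A)$, the optimal class for the LAP.

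\emph{Mourre estimate for $\mathfrak H$ and spectral conclusions.} Since $V$ is $\mathfrak D$-compact and $[V, \i A_\kappa]$ is compact, standard perturbation arguments (localization $\varphi(\mathfrak H) - \varphi(\mathfrak D)$ compact for $\varphi \in C_c^\infty$) transfer the estimate to $\mathfrak H = \mathfrak D + V$: on a closed interval $I \ni E$ contained in $I_0$ one gets $E_I(\mathfrak H)[\mathfrak H, \i \mathbb A]_\circ E_I(\mathfrak H) \geq \gamma' E_I(\mathfrak H) + K$ with $K$ compact. The Virial theorem in the $C^1(\mathbb A)$ setting, together with this estimate, yields conclusion (1), that $\sigma_\mathrm{p}(\mathfrak H) \cap I$ is finite and of finite multiplicity. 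Feeding the $C^{1,1}(\mathbb A)$ regularity and this Mourre estimate into the abstract LAP for $C^{1,1}$ operators (as in \cite{ABG}, or the equivalent Besov/energy-estimate formulations of \cite{BSa}, \cite{G}) gives, on each compact subinterval of $I \setminus \sigma_\mathrm{p}(\mathfrak H)$, uniformly bounded boundary values of the resolvent in the $A_\kappa$-Sobolev spaces, and hence $\sigma_\mathrm{sc}(\mathfrak H) \cap I = \emptyset$; note that $E \in I \setminus \sigma_\mathrm{p}(\mathfrak H)$ by the hypothesis $E \notin \sigma_\mathrm{p}(\mathfrak H)$.

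\emph{Passage to position weights and the main difficulty.} It remains to replace the $A_\kappa$-weights by the $\mathbf N$-weights of the statement. From \eqref{generatorDilations_011} and $N_j S_j^{\kappa_j} = S_j^{\kappa_j}(N_j + \kappa_j)$ one gets $\|A_\kappa u\| \leq C\,\|\mathbf N u\|$, so $L^2_s(\mathbf N)$ embeds continuously in the $A_\kappa$-Sobolev space; dualizing and composing, the $A_\kappa$-weighted LAP implies the $L^2_s(\mathbf N)$-weighted LAP for every $s > 1/2$, giving conclusion (2). I expect the regularity step to be the main obstacle: verifying full $C^{1,1}(\mathbb A)$ membership of $V$ from \eqref{A20} with the non-standard generator $A_\kappa$ ($\kappa \neq 1$), rather than merely $C^1$, because the double-commutator bookkeeping is considerably heavier than for the classical dilation generator.
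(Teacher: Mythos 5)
Your overall architecture (free strict Mourre estimate on $\boldsymbol{\mu}_{\mathbb A}(\mathfrak D)$, compact perturbation, virial theorem for conclusion (1), abstract LAP plus the domination of $\langle \mathbb A\rangle$ by $\mathbf N$ for conclusion (2)) is sound, and the $C^1$/compactness part of your regularity discussion matches Lemma \ref{LEM0}. The gap is exactly where you yourself suspect it: the assertion that \eqref{A20} ``promotes'' $V$ to $C^{1,1}(\mathbb A)$ is not proved, and for $d\geq 2$ it is not something you should expect to extract from \eqref{A20}. The only $C^{1,1}$ criterion available (Lemma \ref{criteriaLS}, imported from the appendix of \cite{BSa}) is \emph{radial}: it requires $\int_1^\infty \sup_{r<|n|<2r}\big|(V-\tau_j^{\kappa_j}V)(n)\big|\,dr<\infty$, i.e.\ essentially $(V-\tau_j^{\kappa_j}V)(n)=O(|n|^{-1-\epsilon})$. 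Hypothesis \eqref{A20} only gives $|(V-\tau_j^{\kappa_j}V)(n)|\leq C\,|n|^{-\epsilon}/|n_j|$, which degenerates near the hyperplane $n_j=0$ when $d\geq 2$ and does not yield the dyadic integrability; nor does \eqref{A20} imply the second-order difference condition of Lemma \ref{LemC2A} (already for $d=1$, $\kappa=1$ it gives only $n^2|V(n)-2V(n-1)+V(n-2)|=O(n^{1-\epsilon})$, not $O(1)$). So the route through the $C^{1,1}$/Besov LAP of Theorem \ref{BesovTHM} does not close as written.

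The paper sidesteps this entirely: Theorem \ref{lapy305} is obtained as a special case of Theorem \ref{lapy3055}, the weighted-Mourre/energy-estimate LAP, whose hypotheses are only $V\in C^1(\mathbb A)$ (assumption \eqref{A1}, supplied by Lemma \ref{LEM0}), the logarithmic decay \eqref{A2} (trivially implied by $V(n)=O(|n|^{-\epsilon})$), and the sandwiched-commutator condition \eqref{A3}, namely $\mathscr W_m^{p}(\mathbb A/t)\,\eta(\mathfrak D)\,[V,\varphi_{p,m}(\mathbb A/t)]_{\circ}\,\eta(\mathfrak D)\,\mathscr W_m^{p}(\mathbb A/t)=t^{-2}B+t^{-1}K$. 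The remark following Theorem \ref{lapy3055} verifies that \eqref{A20} implies \eqref{A3} by expanding $[V,\varphi_{p,m}(\mathbb A/t)]_{\circ}$ with the Helffer-Sj\"ostrand formula and estimating the resulting integral; conclusion (2) then follows since $L^2_{s}(\mathbf N)\subset L^2_{\frac12,p,\frac12,M}(\mathbf N)$ for $s,p>1/2$, and conclusion (1) from the abstract theory of Section \ref{full_Mourre}. To repair your argument you would either have to carry out that Helffer-Sj\"ostrand verification of \eqref{A3}, or genuinely establish $C^{1,1}(A_\kappa)$ from \eqref{A20} -- which would actually strengthen the paper's conclusion (Besov weights instead of $L^2_s$) and should not be expected to be mere double-commutator bookkeeping.
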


A sharper version of Theorem \ref{lapy305} is stated in Theorem \ref{lapy3055} -- and the latter formulation also applies to classes of oscillating potentials, such as Wigner von-Neumann potentials, which decay only like $O(|n|^{-1})$, see examples \ref{exampleOSC} and \ref{exWvN}. The formulation of the LAP according to Mourre's original papers is Theorem \ref{THM_MOURRE}; the formulation in the Besov spaces is Theorem \ref{BesovTHM}.

The purpose of the second half of this article is to strengthen the connection between the two Laplacians. In Section \ref{Iso2D} an isomorphism $\pi$ is established between $\Delta$ and $D$ in dimension $2$. In our view the isomorphism is not surprising afterall as it is due to the trigonometric identity $2\cos(\xi_1) \cos(\xi_2) = \cos(\xi_1+\xi_2) + \cos(\xi_1-\xi_2)$. The isomorphism however enriches the spectral theory for the two Laplacians. For $\kappa = (\kappa_1, \kappa_2) \in (\N^*)^2$, $\alpha \in \R$, let $\alpha \kappa = (\alpha \kappa_1, \alpha \kappa_2)$. One finds conjugate operators $\pi A_{2\kappa} \pi^{-1} =$ \eqref{PIinversePI} for $\Delta$ and $\pi^{-1} A_{\kappa} \pi=$ \eqref{A_2d} for $D$. The new result is:

\begin{theorem}
\label{iso2Dthm} Fix $d=2$, $\kappa = (\kappa_1,\kappa_2) \in (\N^*)^2$. Then 
$$\boldsymbol{\mu}_{\pi A_{2\kappa} \pi^{-1}}(\Delta) = \boldsymbol{\mu}_{A_{2\kappa}}(2D) = 2 \times \boldsymbol{\mu}_{A_{2\kappa}}(D), \quad \text{and} \quad \boldsymbol{\mu}_{\pi^{-1} A_{\kappa} \pi}(D) = \boldsymbol{\mu}_{A_{\kappa}}(\frac{1}{2} \Delta) = \frac{1}{2} \times \boldsymbol{\mu}_{A_{\kappa}}(\Delta).$$ In addition to $V$ satisfying $V(n) = O ( |n| ^{-\epsilon} )$ as $|n| \to \infty$ for some $\epsilon >0$, consider the assumptions
\begin{align}
\label{CriterionA444}  
& (n_1-n_2) (V - \tau_1^{\kappa_1} \tau_2^{-\kappa_1} V)(n) \quad and \quad (n_1+n_2) (V - \tau_1^{\kappa_2} \tau_2^{\kappa_2} V)(n) = O \left( |n| ^{-\epsilon} \right), \\
\label{CriterionA555}  
& (n_1+n_2) (V - \tau_1^{\kappa_1} \tau_2^{\kappa_1} V)(n) \quad and \quad (n_2-n_1) (V - \tau_1^{\kappa_2} \tau_2^{-\kappa_2} V)(n) = O \left( |n| ^{-\epsilon} \right),
\end{align} 
for some $\epsilon >0$. Then Theorem \ref{lapy305} holds for $(\mathfrak{D}, \mathfrak{H}) = (\Delta, \Delta + V)$, $\mathbb{A} = \pi A_{2\kappa} \pi^{-1}$ and assuming \eqref{CriterionA444} instead of \eqref{A20} on the one hand, and $(\mathfrak{D}, \mathfrak{H}) = (D, D+V)$, $\mathbb{A} = \pi^{-1} A_{\kappa} \pi$ and assuming \eqref{CriterionA555} instead of \eqref{A20} on the other hand.
\end{theorem}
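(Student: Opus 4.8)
The plan is to treat the two halves of the statement separately: first the identities relating the various Mourre sets, which are of an abstract nature, and then the transport of the limiting absorption principle of Theorem~\ref{lapy305}, whose only genuinely new input is the regularity of $V$ with respect to the rotated conjugate operators. Throughout I assume, as established in Section~\ref{Iso2D}, that the isomorphism $\pi$ satisfies $\pi^{-1}\Delta\pi = 2D$, equivalently $\pi D\pi^{-1} = \tfrac12\Delta$, and that the explicit forms \eqref{PIinversePI} and \eqref{A_2d} of $\pi A_{2\kappa}\pi^{-1}$ and $\pi^{-1}A_\kappa\pi$ are available.

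For the set identities I would isolate two elementary facts. The first is that a strict Mourre estimate \eqref{mourreEstimate123} is preserved under conjugation by $\pi$: since $\Delta=\pi(2D)\pi^{-1}$ gives $E_I(\Delta)=\pi E_I(2D)\pi^{-1}$ and $[\Delta,\i\pi A_{2\kappa}\pi^{-1}]_\circ=\pi[2D,\i A_{2\kappa}]_\circ\pi^{-1}$, one has
$$E_I(\Delta)\,[\Delta, \i \pi A_{2\kappa}\pi^{-1}]_\circ\, E_I(\Delta) = \pi\, E_I(2D)\,[2D, \i A_{2\kappa}]_\circ\, E_I(2D)\,\pi^{-1},$$
so one estimate holds with constant $\gamma$ iff the other does, and by the definition \eqref{mu_set_def} this yields $\boldsymbol{\mu}_{\pi A_{2\kappa}\pi^{-1}}(\Delta) = \boldsymbol{\mu}_{A_{2\kappa}}(2D)$; the companion identity $\boldsymbol{\mu}_{\pi^{-1}A_\kappa\pi}(D) = \boldsymbol{\mu}_{A_\kappa}(\tfrac12\Delta)$ follows identically with $\pi^{-1}$ in place of $\pi$. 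The second fact is a scaling property: for $c>0$ one has $E_I(c\mathfrak{D}) = E_{I/c}(\mathfrak{D})$ and $[c\mathfrak{D}, \i\mathbb{A}]_\circ = c[\mathfrak{D}, \i\mathbb{A}]_\circ$, so the estimate for $c\mathfrak{D}$ on $I$ with constant $\gamma$ is equivalent to the estimate for $\mathfrak{D}$ on $I/c$ with constant $\gamma/c$. This gives $\boldsymbol{\mu}_{A_{2\kappa}}(2D) = 2\,\boldsymbol{\mu}_{A_{2\kappa}}(D)$ and $\boldsymbol{\mu}_{A_\kappa}(\tfrac12\Delta) = \tfrac12\,\boldsymbol{\mu}_{A_\kappa}(\Delta)$, completing both chains.

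For the limiting absorption principle I would verify that the hypotheses of the abstract scheme behind Theorem~\ref{lapy305} are met for the new pairs. The strict Mourre estimate on the relevant intervals is exactly the content of the set identities just established, so $\pi A_{2\kappa}\pi^{-1}$ (resp.\ $\pi^{-1}A_\kappa\pi$) is a valid conjugate operator for $\Delta$ (resp.\ $D$) on the interiors of the listed sets. Self-adjointness of the rotated conjugate operators is automatic from self-adjointness of $A_{2\kappa},A_\kappa$ and unitarity of $\pi$, and the regularity of the free operator $\Delta$ with respect to $\pi A_{2\kappa}\pi^{-1}$ is equivalent, again by conjugation, to the already-known regularity of $2D$ with respect to $A_{2\kappa}$ (from Proposition~\ref{reg_Prop_DD}). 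The only piece that must be redone is the regularity of the perturbation $V$, in the class underlying Theorem~\ref{lapy305} (e.g.\ $C^{1,1}(\mathbb{A})$) and in the same weighted spaces $L^2_s(\mathbf{N})$.

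The main obstacle, and the computation I would carry out most carefully, is precisely this commutator of $V$ with the rotated conjugate operator. Using \eqref{PIinversePI}, the operator $\pi A_{2\kappa}\pi^{-1}$ is a generator of dilations adapted to the diagonal directions: its shift part involves $S_1^{\kappa_1}S_2^{-\kappa_1}$ and $S_1^{\kappa_2}S_2^{\kappa_2}$ and its position part the rotated coordinates $N_1-N_2$ and $N_1+N_2$. Consequently $[V, \i\pi A_{2\kappa}\pi^{-1}]$ decomposes --- exactly as $[V,\i A_\kappa]$ decomposes into the terms $N_j(V-\tau_j^{\kappa_j}V)$ controlled by \eqref{A20} --- into the quantities $(N_1-N_2)(V-\tau_1^{\kappa_1}\tau_2^{-\kappa_1}V)$ and $(N_1+N_2)(V-\tau_1^{\kappa_2}\tau_2^{\kappa_2}V)$, which are precisely what \eqref{CriterionA444} requires to be $O(|n|^{-\epsilon})$; the case of $D$ with \eqref{CriterionA555} is identical, with the roles of the two diagonals and of $\kappa_1,\kappa_2$ interchanged as dictated by the form \eqref{A_2d} of $\pi^{-1}A_\kappa\pi$. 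Once this identification is made, the regularity bound for $V$ follows verbatim from the argument of Theorem~\ref{lapy305}, the decay $V(n)=O(|n|^{-\epsilon})$ absorbing the lower-order remainders. A secondary point is that $\pi$, being essentially a lattice rotation, maps $L^2_s(\mathbf{N})$ boundedly onto an equivalent space, since $\mathbf{N}$ and its $\pi$-conjugate are comparable; this lets the conclusion be stated in the same spaces. With the Mourre estimate, the two regularity statements, and this space identification in hand, the abstract machinery delivers conclusions (1) and (2) of Theorem~\ref{lapy305} for the new pairs.
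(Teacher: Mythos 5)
Your proposal is correct and follows essentially the same route as the paper: conjugation invariance of the strict Mourre estimate plus the trivial scaling identity for the set equalities, and then a verification that the commutator of $V$ with the rotated conjugate operator decomposes into exactly the quantities $(N_1-N_2)(V-\tau_1^{\kappa_1}\tau_2^{-\kappa_1}V)$ and $(N_1+N_2)(V-\tau_1^{\kappa_2}\tau_2^{\kappa_2}V)$ controlled by \eqref{CriterionA444} (resp.\ the diagonal-interchanged quantities for \eqref{CriterionA555}). The paper packages the last step as the implication that \eqref{CriterionA444} and \eqref{CriterionA555} yield the hypotheses \eqref{A1}--\eqref{A3} of Theorem \ref{lapy3055}, of which Theorem \ref{lapy305} is a special case, and handles the change of weighted spaces by Lemma \ref{A<N}; your version of these steps is the same in substance.

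The one point you gloss over is your starting identity ``$\pi^{-1}\Delta\pi=2D$''. As constructed in Section \ref{Iso2D}, $\pi$ is an isometry from $\ell^2(\mathcal{V}_B)$ onto $\ell^2(\Z^2)$, not a unitary of $\ell^2(\Z^2)$, and the correct identity is $\pi (2D_B)\pi^{-1}=\Delta$ with $D_B=\left.D\right|_{\ell^2(\mathcal{V}_B)}$. Your conjugation argument therefore literally produces $\boldsymbol{\mu}_{\pi A_{2\kappa}\pi^{-1}}(\Delta)=\boldsymbol{\mu}_{A_{2\kappa}}(2D_B)$, and to reach $\boldsymbol{\mu}_{A_{2\kappa}}(2D)$ you still need the decomposition $\ell^2(\Z^2)=\ell^2(\mathcal{V}_B)\oplus\ell^2(\mathcal{V}_R)$, which $A_{2\kappa}$ respects precisely because the exponents $2\kappa_j$ are even, together with the translation unitary identifying $D_R$ with $D_B$ (harmless for the Mourre estimate, since $[D,\i A_{2\kappa}]_{\circ}$ is a polynomial in the $\Delta_j$). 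This parity issue is also the reason the theorem pairs $\Delta$ with $A_{2\kappa}$ but $D$ with $A_{\kappa}$: for odd $\kappa_j$ the operator $A_{\kappa}$ does not leave $\ell_0(\mathcal{V}_B)$ invariant, so $\pi A_{\kappa}\pi^{-1}$ would not be well defined on $\ell_0(\Z^2)$. With that repair your argument coincides with the paper's proof.
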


As seen in Table \ref{table:456798999}, the set $\boldsymbol{\mu}_{\pi A_{2\kappa} \pi^{-1}}(\Delta)$ (corresponding to $\mathbb{A} = \pi A_{2\kappa} \pi^{-1}$) covers parts of the spectrum of the 2-dimensional $\Delta$ that are not covered by neither $\boldsymbol{\mu}_{A_{\kappa}}(\Delta)$ nor $\boldsymbol{\mu}_{A_{2\kappa}}(\Delta)$. 
The spectral contributions appear to be somewhat complementary. Thus the isomorphism allows to augment the results of Theorem \ref{lapy305}, but different conditions on the potential are required. We find it interesting and convenient that $\boldsymbol{\mu}_{A_{\kappa}}(\Delta)$ appears to cover energies close to $0$ only if $\kappa = (\kappa_1,\kappa_2) = (1,1)$, see Tables \ref{tab:table1011} and \ref{tab:table10112162} ; on the other hand $\boldsymbol{\mu}_{\pi A_{2\kappa} \pi^{-1}}(\Delta)$ appears to cover energies close to $0$ for various combinations of $(\kappa_1,\kappa_2)$, see Tables \ref{tab:table1012} and \ref{tab:table101244}.

\begin{table}[H]
  \begin{tabular}{c|c|c|c|c}
    \multirow{2}{*}{} &       
      \multicolumn{2}{c|}{$d=2$} &
      \multicolumn{2}{c}{$d=3$} \\
      \hline
  $\kappa$  & $ A_{\kappa}$ & $ \pi A_{\kappa} \pi^{-1}$ & $ A_{\kappa}$ & $ \pi A_{\kappa, \rho} \pi^{-1}$   \\
    \hline
    $2$  & $(1,2)$ & $(0,2)$  & $(2,3)$  &  $(1,2)$ \\ [0.4em]
      $4$  & $(1.026,1.414) \cup (1.707,2)$ & $(0,1) \cup (1.414,2)$ & $(2.072,2.121) \cup (2.707,3)$ & $(2.414,2.707)$   \\ [0.4em]
  \end{tabular}
    \caption{$\boldsymbol{\mu}_{\mathbb{A}}(\Delta) \cap [0,d]$ for $\Delta$ in dimensions $d=2,3$. $\mathbb{A}$ as in the table. $\kappa_1 = \kappa_2$ for $d=2$ and $\kappa_1=\kappa_2=\kappa_3$ for $d=3$. $\rho = -0.5$. Numbers are rounded.}
     \label{table:456798999}
\end{table}

In dimension $3$, $\Delta$ and $D$ cannot be isomorphic due to a different number of neighbors ($6$ for $\Delta$ and $8$ for $D$). Nevertheless we specify an isomorphism between $\Delta$ and a hybrid of $D$ and $\Delta$ in $3d$. This isomorphism is basically the aforementioned $2$-dimensional isomorphism for the $x-y$ coordinates and the identity on the $z$ coordinate. It is analogous to a transformation from Cartesian to cylindrical coordinates. It gives rise to another conjugate operator for $\Delta$, which we denote $\pi A_{\kappa, \rho} \pi^{-1}$. This produces results that augment those obtained with just $A_{\kappa}$ (see Tables \ref{table:456798999}, \ref{table:4567} and \ref{table:4567new}). Another curious phenomenon observed is that for this conjugate operator, numerical results suggest that varying the parameter $\rho \in \R$ against the vertical coordinate produces different bands of a.c.\ spectrum. This is a phenomenon we do not grasp.

\begin{table}[H]
    \begin{tabular}{c|c|c|c|c} 
      \footnotesize $(\mathfrak{D}, \mathfrak{H})$ &  \footnotesize  $A_{\kappa}$ = \eqref{generatorDilations_011} & \footnotesize  $\pi A_{2\kappa} \pi^{-1}$ = \eqref{PIinversePI} & \footnotesize  $\pi^{-1} A_{\kappa} \pi$ = \eqref{A_2d}  &  \footnotesize  $\pi A_{\tilde{\kappa},\rho} \pi^{-1}$ = \eqref{CONJ_3d}  \\ [0.4em]
      \hline
      \small $(\Delta, H_{\mathrm{std}})$ & $d \in \N^*$ & $d=2$ & - & $d=3$   \\[0.4em]
      \small $(D, H_{\mathrm{MV}})$ & $d \in \N^*$ & - & $d=2$ & - 
    \end{tabular}
  \caption{List of conjugate operators in this article. $d=$ dimension. $\kappa = (\kappa_j) \in (\N^*)^d$.}
      \label{table:2020987}
\end{table}
We conclude with a series of remarks. Table \ref{table:2020987} summarizes all the conjugate operators $\mathbb{A}$ considered in this article and the Laplacians and dimensions to which they apply. 
Sections \ref{stdLaplacianMourre} and \ref{SME_MV} further elaborate on the sets $\boldsymbol{\mu}_{A_{\kappa}}(\Delta)$ and $\boldsymbol{\mu}_{A_{\kappa}}(D)$. Because of the complexity of the polynomial extremization problem alluded to before, numerical results with the help of the computer have been produced, see Tables \ref{tab:table1011}, \ref{tab:table10112162}, \ref{tab:table1013127conj}, \ref{tab:table1012}, \ref{tab:table101244}, \ref{tab:table10144}, along with conjectures on exact values for the band endpoints. Those results are denoted $\boldsymbol{\mu} ^{\mathrm{NUM}}_{A_{\kappa}}(\mathfrak{D})$ and $\boldsymbol{\mu} ^{\mathrm{CONJ}}_{A_{\kappa}}(\mathfrak{D})$ respectively. In the light of the numerical results we highlight three conjectures:
\begin{enumerate}
\item $\boldsymbol{\mu}_{A_{\kappa}}(\mathfrak{D})$ is a finite union of open intervals (bands) whose size go to zero as $\kappa^*$ goes to $\infty$.
\item $\boldsymbol{\mu}_{A_{\kappa}}(D)$ contains a band adjacent to energy $0$ in dimensions 2 and 3 for all even $\kappa$ ; whereas $\boldsymbol{\mu}_{A_{\kappa}}(\Delta)$ contains a band adjacent to energy $0$ in dimensions 2 and 3 only if $\kappa = 1$.
\item For $d=2,3$, $\kappa \equiv \kappa_j$, $\boldsymbol{\mu}_{A_{\kappa}}(\mathfrak{D}) \subset \boldsymbol{\mu}_{A_{\kappa / \delta}}(\mathfrak{D})$ whenever $\delta \in \N^*$ is a divisor of $\kappa$. 
\end{enumerate}
The second conjecture potentially illustrates a significant contrast between the two Laplacians, and also gives further importance to the isomorphism in dimension $2$. The third conjecture is quite natural from the viewpoint of the potential, in the sense that if $V$ satisfies \eqref{generalLR condition} for $\kappa$, then it satisfies \eqref{generalLR condition} for $\kappa/ \delta$, assuming $\delta | \kappa$ and $V$ goes to zero at infinity.

Let $V_{p}$ be a periodic potential of period $p = (p_1,\ldots,p_d)$. In spite of having $[V_{p}, A_{p}]_{\circ} = 0$ we cannot include periodic potentials as such because compactness remains a crucial assumption in our methods. For Mourre theory adapted for periodic potentials we refer to \cite{GN} and \cite{Sa2}.

There is the broad question of whether there are other conjugate operators consistent with our working assumption \eqref{generalLR condition} which contribute additional bands of a.c.\ spectrum. In part II of our investigations \cite{GM3} finite linear combinations $\sum_{l \in \N^*} \rho_{l \kappa} A_{l \kappa}$ are constructed and the evidence suggests that more bands of a.c.\ spectrum may be uncovered in this way. This partly addresses the question.

There is the intriguing question about what is the nature of the spectrum of $\mathfrak{H}$ between the bands of a.c.\ spectrum, i.e.\ for energies in $\sigma(\mathfrak{H}) \setminus \boldsymbol{\mu}_{\mathbb{A}}(\mathfrak{D})$. For $d=1$, Liu \cite{Li1}, based on Kiselev's work \cite{Ki}, proved that $\sigma_{\mathrm{sc}}(\Delta+V) = \emptyset$ whenever $V(n)= O(|n|^{-1})$. This topic is an open problem for $d \geq 2$. Perhaps the results of this article and those in part II \cite{GM3} can serve as an indication for this research. We also refer to articles by Stolz \cite{St1} and \cite{St2} where a.c.\ spectrum in dimension 1 is proved under different but akin conditions on $V$. 

We also wonder if the band endpoints could be potential locations of resonances or embedded eigenvalues. To illustrate what we mean, consider the oscillating $1$-dimensional perturbation created by Remling \cite{R}, see example \ref{exampleOSC}. It is such that $(V-\tau^2 V)(n) = O(|n|^{-2})$, but $(V-\tau V)(n) = O(|n|^{-1})$. It is relevant to pick $\mathbb{A} = A_{\kappa=2}$ to obtain a LAP. The embedded eigenvalue of this Hamiltonian, $E=0$, corresponds to the threshold between the two spectral bands $(-1,0) \cup (0,1) = \boldsymbol{\mu}_{A_{\kappa=2}}(\Delta)$. There are many open questions on the topic of embedded threshold eigenvalues. We refer to \cite{NoTa}, \cite{Li2}, \cite{IJ} and \cite{IJ2} for recent research in this area. 

Finally, we point out that the commutators $[\Delta, A_{\kappa}]_{\circ}$ and $[D, A_{\kappa}]_{\circ}$ are expressed in terms of Chebyshev polynomials and that these also enter directly in the Green's function for the Laplacian, at least in dimension $1$, see e.g.\ \cite{Y}, but also \cite{IJ} and \cite{IJ2}. It would be interesting to understand if there is a deeper connection. 

The plan of the article is as follows. Section \ref{full_Regularity} reviews operator regularity and practical criteria for the potential. In Sections \ref{stdLaplacianMourre} and \ref{SME_MV} the sets $\boldsymbol{\mu}_{A_{\kappa}}(\Delta)$ and $\boldsymbol{\mu}_{A_{\kappa}}(D)$ are determined to the best of our capability, with numerical additions in Appendices \ref{appendix_std} and \ref{appendix_MV} respectively. In Sections \ref{Iso2D} and \ref{Iso3D} we discuss isomorphisms in dimensions $2$ and $3$ respectively. In Sections \ref{proofMourre}, \ref{proofBesov} and \ref{proofEnergy} the main Theorems (LAPs) are stated with respect to the three commutator theories. Section \ref{Sec_example} has examples. Appendix \ref{convergence_appendix} details the convergence between the 2 different discrete Laplacians. Appendix \ref{appendix_algo} details the procedure used numerically estimate the sets $\boldsymbol{\mu}_{A_{\kappa}}(\Delta)$ and $\boldsymbol{\mu}_{A_{\kappa}}(D)$.

\noindent \textbf{Acknowledgements:} It is a pleasure to thank Vojkan Jak\u si\'c for drawing our attention to the Molchanov-Vainberg Laplacian, and Shu Nakamura for very helpful explanations.
\\
\noindent \textbf{Notation:} $\N$ denotes the non-negative integers, it contains $0$. Given a countable set $X$, $\ell_0(X):=\{f: X\to \C, \textrm{ with finite support}\}$. $\mathscr{B}(\mathcal{H})$ are the bounded operators on a Hilbert space $\mathcal{H}$. 

\section{Regularity and Mourre estimate for the full Hamiltonians}
\label{full_Regularity}

\subsection{Regularity: Abstract definitions.} Consider three self-adjoint operators $T,S$ and $\mathbb{A}$ acting in some complex Hilbert space $\mathcal{H}$. Suppose also $T,S \in \mathscr{B}(\mathcal{H})$. $T$ is of class $C^k(\mathbb{A})$, $k \in \N$, in notation $T \in C^k(\mathbb{A})$, if the map  
\begin{equation}
\label{defCk}
\R \ni t \mapsto e^{\i t\mathbb{A}}Te^{-\i t\mathbb{A}} \in \mathscr{B}(\mathcal{H})
\end{equation} 
has the usual $C^k(\R)$ regularity with $\mathscr{B}(\mathcal{H})$ endowed with the strong operator topology. Write $T \in C^{\infty}(\mathbb{A})$, if $T \in C^k(\mathbb{A})$ for all $k \in \N$. The form $[T,\mathbb{A}]$ is defined on $\text{Dom}[\mathbb{A}] \times\text{Dom}[\mathbb{A}]$ by $\langle \psi , [T,\mathbb{A}] \phi \rangle:= \langle T\psi,\mathbb{A} \phi \rangle - \langle \mathbb{A}\psi , T\phi \rangle$. By \cite[Lemma 6.2.9]{ABG} $T \in C^1(\mathbb{A})$ if and only if the form $[T,\mathbb{A}]$ extends to a bounded form on $\mathcal{H}\times \mathcal{H}$, in which case we denote the extended bounded form by $[T,\mathbb{A}]_{\circ}$. $T$ is of class $C^{k}_u(\mathbb{A})$, $k \in \N$, in notation $T \in C^k_u(\mathbb{A})$, if the map \eqref{defCk} has the usual $C^k(\R)$ regularity with $\mathscr{B}(\mathcal{H})$ endowed with the norm operator topology. $T \in C^{1,1}(\mathbb{A})$ if 
\begin{equation*}
\int_0 ^1 \| [T, e^{\i t\mathbb{A}} ]_{\circ} , e^{\i t\mathbb{A}}]_{\circ} \| t^{-2} dt < \infty.
\end{equation*}
One has $C^2(\mathbb{A}) \subset C^{1,1}(\mathbb{A}) \subset C^1_u(\mathbb{A}) \subset C^1(\mathbb{A})$, see \cite{ABG}. For each of these classes, the subclass comprising of the bounded operators on $\mathcal{H}$ constitutes a $\C-$vector space.

\subsection{Mourre estimate: Abstract definitions and properties.}
\label{full_Mourre}
 Suppose $T \in C^1(\mathbb{A})$, with $T, \mathbb{A}$ self-adjoint. The \textit{Mourre estimate} for $T$ holds w.r.t.\ $\mathbb{A}$ on an interval $I$, if $\exists \gamma >0$ such that 
\begin{equation}
\label{mourreEstimate12345}
E_I(T) [T, \i \mathbb{A}] _{\circ} E_{I} (T) \geq \gamma E_{I} (T) +K,
\end{equation}
where $E_I(T)$ is the spectral projection of $T$ onto $I$ and $K$ is some compact operator. Consider:
\begin{equation}
\label{mu_set_def_tilde}
\tilde{\boldsymbol{\mu}}_{\mathbb{A}} ^{\pm} (T):= \{ E \in \sigma(T): \exists \ \mathrm{an \ open \ interval \ } I, I \ni E, \gamma >0, \mathrm{ \ s.t. \ } \eqref{mourreEstimate12345} \mathrm{ \ holds  \ with \ }\pm \mathbb{A} \}.
 \end{equation}
Let $\boldsymbol{\mu}_{\mathbb{A}} ^{\pm} (T)$ be the subset of $\tilde{\boldsymbol{\mu}}_{\mathbb{A}} ^{\pm} (T)$ consisting of those energies for which \eqref{mourreEstimate12345} holds with $K=0$. The general theory provides useful results. First \cite[Proposition 7.2.10]{ABG} ensures $\boldsymbol{\mu}_{\mathbb{A}} ^{\pm} (T) \cap \sigma_{\mathrm{p}} (T) = \emptyset$. Second \cite[Theorem 7.2.13]{ABG} $\tilde{\boldsymbol{\mu}}_{\mathbb{A}} ^{\pm} (T) \setminus \boldsymbol{\mu}_{\mathbb{A}} ^{\pm} (T)$ consists of eigenvalues of $T$ of finite multiplicity. Third \cite[Theorem 7.2.9]{ABG} (see \cite[Lemma 3.3]{GM} for a proof) one has that $\tilde{\boldsymbol{\mu}}_{\mathbb{A}} ^{\pm} (T) = \tilde{\boldsymbol{\mu}}_{\mathbb{A}} ^{\pm} (S)$ whenever $T-S$ is compact. Now let $\boldsymbol{\mu}_{\mathbb{A}} (T):= \boldsymbol{\mu}_{\mathbb{A}} ^{+} (T)  \cup \boldsymbol{\mu}_{\mathbb{A}} ^{-} (T)$ and $\tilde{\boldsymbol{\mu}}_{\mathbb{A}}  (T):= \tilde{\boldsymbol{\mu}}_{\mathbb{A}} ^{+} (T)  \cup \tilde{\boldsymbol{\mu}}_{\mathbb{A}} ^{-} (T) $. In the context of our Schr\"odinger operators we have
$$\boldsymbol{\mu}_{\mathbb{A}} (\mathfrak{D}) = \tilde{\boldsymbol{\mu}}_{\mathbb{A}} (\mathfrak{D}) = \tilde{\boldsymbol{\mu}}_{\mathbb{A}} (\mathfrak{H}) = \boldsymbol{\mu}_{\mathbb{A}} (\mathfrak{H}) \cup \{ \mathrm{eigenvalues \ of } \ \mathfrak{H} \mathrm{ \ of \ finite \ multiplicity} \},$$
for $(\mathfrak{D}, \mathfrak{H}) = (\Delta, \Delta+V)$ or $(D,D+V)$, and any $\mathbb{A}$ as in Table \ref{table:2020987}, $\kappa = (\kappa_j)$ (we always assume $V(n) = o(1)$ as $|n|\to \infty$, hence compact). Alternatively, we note that, since we assume $V(n) = o(1)$ and $V \in C^1(\mathbb{A})$ in Theorems \ref{THM_MOURRE}, \ref{BesovTHM} and \ref{lapy3055}, then we have $V \in C^1_u(\mathbb{A})$ if and only if $[V, \mathbb{A}]_{\circ}$ is compact, see \cite[Proposition 2.1]{GM}.

\subsection{Regularity for the free operators $D$ and $\Delta$: Application.} 
Notation given in the Introduction is assumed, notably \eqref{def:std}, \eqref{def:MV22}, \eqref{FourierTT}, and \eqref{generatorDilations_011}. A simple computation on compactly supported sequences leads to the conclusion that
$$[N_j, S_j ^{\kappa_j} ]_{\circ} = \kappa_j S_j^{\kappa_j}, \quad [N_j, S_j^{-\kappa_j} ]_{\circ} = -\kappa_j S_j^{-\kappa_j}, \quad \forall \kappa = (\kappa_i)_{i=1} ^{d} \in (\N^*)^d, \quad \forall 1 \leq j \leq d.$$
Let $U_{n}$ be the \emph{Chebyshev polynomials of the second kind} of order $n$. They are defined by
\begin{align}\label{e:deftcheby}
 U_0:=1, \quad U_1:= 2X, \quad \text{ and } \quad U_{n+1}:= 2X U_n- U_{n-1}, \quad \forall n\geq 1.
 \end{align}
Note that $U_n$ is a polynomial of degree $n$ and that $U_n(1)=n+1$ for all $n\geq 0$. One also has
\begin{equation}
\label{cheby}
\sin(n \xi) = 2^{n-1} \prod_{j=0} ^{n-1} \sin \left( \frac{\pi j}{n} + \xi \right) = \sin(\xi) U_{n-1}(\cos(\xi)), \quad  \forall n\geq 1.
\end{equation}
It implies that 
\begin{align}\label{e:zerotcheby}
U_n\left(\cos\left(k\pi / (n+1 )\right)\right)=0, \quad \forall k\in\{1, \ldots, n\}.
\end{align}
Finally, if $n$ is even, note that
\begin{align}\label{e:polyeven}
\frac{U_{n-1}(x)}{x} = 2^{n-1} \prod_{k=1} ^{n/2-1} (x^2-\cos^2(k\pi /n)).
\end{align}
Given $j\in \{1, \ldots, d\}$ and $\kappa_j\geq 1$, thanks to \eqref{cheby}, one computes on $\ell_0(\Z^d)$ :
$$[\Delta_j , \i A_{\kappa}] = \mathcal{F}^{-1} \left[  \sin(\xi_j) \sin(\kappa_j \xi_j) \right] \mathcal{F}  =  \mathcal{F}^{-1} \left [  \sin ^2 (\xi_j) U_{\kappa_j-1} (\cos(\xi_j))  \right] \mathcal{F} = (1-\Delta_j^2) U_{\kappa_j-1} (\Delta_j).$$
Extending by density to all sequences in $\ell^2(\Z^d)$ allows to conclude that $\Delta_j \in C^1(A_{\kappa})$. Using linearity and obvious induction, this leads to the result:

\begin{proposition}
\label{reg_Prop_DD}
Let $\kappa = (\kappa_j)_{j=1} ^d$ be given. Then $\forall 1 \leq j \leq d$, $\Delta_j, \Delta, D \in C^{\infty}(A_{\kappa})$. Moreover:

\begin{align}
\label{MV:k333Ultra}
[\Delta, \i A_{\kappa}]_{\circ} &= \sum_{j=1} ^d [\Delta_j , \i A_j ]_{\circ}  = \sum_{j=1} ^d (1-\Delta_j^2) U_{\kappa_j-1} (\Delta_j), \\
\label{COMMUTATOR_D}
[D, \i A_{\kappa}]_{\circ} &= \sum_{j=1} ^d \frac{D}{\Delta_j} [\Delta_j , \i A_j ]_{\circ} = \sum_{j=1} ^d \frac{D}{\Delta_j} (1-\Delta_j^2) U_{\kappa_j-1} (\Delta_j). 
\end{align}

\end{proposition}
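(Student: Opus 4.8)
The plan is to bootstrap from the single-coordinate commutator computation already carried out immediately above the statement. That computation shows, on $\ell_0(\Z^d)$, that $[\Delta_j, \i A_\kappa] = (1-\Delta_j^2)U_{\kappa_j-1}(\Delta_j)$; since the right-hand side is a bounded operator, density of $\ell_0(\Z^d)$ together with \cite[Lemma 6.2.9]{ABG} already yields $\Delta_j \in C^1(A_\kappa)$ with $[\Delta_j, \i A_\kappa]_\circ$ equal to this polynomial in $\Delta_j$. First I would record that the cross terms vanish: because $A_i$ (defined by \eqref{generatorDilations_011}) acts only through the shifts and the position operator in the $i$-th coordinate, while $\Delta_j$ acts only in the $j$-th coordinate, one has $[\Delta_j, \i A_i]_\circ = 0$ for $i \neq j$, so that $[\Delta_j, \i A_\kappa]_\circ = [\Delta_j, \i A_j]_\circ$. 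This is exactly what is needed for the first equality in \eqref{MV:k333Ultra} and \eqref{COMMUTATOR_D}.

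Second, I would upgrade $C^1$ to $C^\infty$ for each $\Delta_j$ by an iteration argument. The key observation is that the first commutator $[\Delta_j, \i A_\kappa]_\circ = Q(\Delta_j)$ is a polynomial in $\Delta_j$; consequently, for any polynomial $P$, the Leibniz rule together with the commutativity of all functions of $\Delta_j$ gives $[P(\Delta_j), \i A_\kappa]_\circ = P'(\Delta_j)Q(\Delta_j)$, again a bounded polynomial in $\Delta_j$. Hence every iterated commutator of $\Delta_j$ against $A_\kappa$ exists and is bounded, so $\Delta_j \in C^\infty(A_\kappa)$. Since the bounded operators of class $C^\infty(A_\kappa)$ form an algebra (closed under sums by the vector-space property recalled above, and under products by the Leibniz rule $[TS,\mathbb{A}]_\circ = T[S,\mathbb{A}]_\circ + [T,\mathbb{A}]_\circ S$), the finite sum $\Delta = \sum_j \Delta_j$ of \eqref{def:std} and the finite product $D = \prod_j \Delta_j$ of \eqref{def:MV22} are likewise in $C^\infty(A_\kappa)$.

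Third, the commutator formula for $\Delta$ follows immediately by linearity, $[\Delta, \i A_\kappa]_\circ = \sum_j [\Delta_j, \i A_\kappa]_\circ = \sum_j (1-\Delta_j^2)U_{\kappa_j-1}(\Delta_j)$. For the Molchanov--Vainberg formula \eqref{COMMUTATOR_D} I would apply the Leibniz rule to the product $D = \Delta_1 \cdots \Delta_d$, writing $[D, \i A_\kappa]_\circ = \sum_j \Delta_1 \cdots \Delta_{j-1}\,[\Delta_j, \i A_\kappa]_\circ\,\Delta_{j+1}\cdots \Delta_d$. Since $[\Delta_j, \i A_\kappa]_\circ$ is a function of $\Delta_j$ and the $\Delta_i$ mutually commute, I can freely collect the remaining factors into $\prod_{i\neq j}\Delta_i$, which is precisely the bounded operator denoted $D/\Delta_j$; this symbol is merely shorthand for $\prod_{i\neq j}\Delta_i$ and involves no genuine inversion, so $0 \in \sigma(\Delta_j)$ causes no difficulty. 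This gives $[D, \i A_\kappa]_\circ = \sum_j (D/\Delta_j)(1-\Delta_j^2)U_{\kappa_j-1}(\Delta_j)$, as claimed.

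I expect no serious analytic obstacle: the substantive content is the single-coordinate Fourier computation, already in hand via the Chebyshev identity \eqref{cheby}. The only points demanding care are bookkeeping ones — verifying that iterated commutators remain polynomials in $\Delta_j$ so that $C^\infty$ regularity genuinely holds, and checking that the reordering in the Leibniz expansion for $D$ is legitimate, which it is precisely because each $[\Delta_j, \i A_\kappa]_\circ$ is a function of $\Delta_j$ alone and the $\Delta_i$ commute among themselves.
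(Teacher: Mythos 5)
Your proposal is correct and follows essentially the same route as the paper: the single-coordinate Fourier computation via the Chebyshev identity \eqref{cheby} on $\ell_0(\Z^d)$, extension by density to get $C^1(A_\kappa)$, and then what the paper dispatches with ``linearity and obvious induction.'' Your write-up merely makes explicit the steps the paper leaves implicit (vanishing of the cross commutators $[\Delta_j,\i A_i]_\circ$ for $i\neq j$, the fact that iterated commutators stay polynomial in $\Delta_j$, and the Leibniz expansion of $D=\prod_j\Delta_j$ with $D/\Delta_j$ read as $\prod_{i\neq j}\Delta_i$), all of which are sound.
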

\begin{remark}
Note that $\displaystyle \frac{D}{\Delta_j} = \prod_{i=1, \ldots, d ; i\neq j} \Delta_i$ is a bounded operator.
\end{remark}

\subsection{Regularity criteria for the potential: Application.}
Let $\kappa = (\kappa_j) \in (\N^*)^d$ be given. The basic criterion for the $C^1(A_{\kappa})$ class is:
\begin{Lemma}
\label{LEM0}
Suppose $n_j (V-\tau_j ^{\kappa_j}V)(n) = O(1)$, as $|n|\to\infty$, $j=1,...,d$. Then $V \in C^1(A_{\kappa})$. 
\end{Lemma}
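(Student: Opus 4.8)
The plan is to invoke the abstract criterion recalled in the excerpt: by \cite[Lemma 6.2.9]{ABG} it suffices to show that the commutator form $[V,A_\kappa]$, defined a priori on $\ell_0(\Z^d)\times\ell_0(\Z^d)$, extends to a bounded form on $\mathscr{H}\times\mathscr{H}$. Since $A_\kappa=\sum_{j=1}^d A_j$ and boundedness is additive in $j$, I would fix $j$ and treat $[V,A_j]$ separately. Writing $T_j:=S_j^{\kappa_j}-S_j^{-\kappa_j}$, so that $A_j=(4\i)^{-1}(T_jN_j+N_jT_j)$, and using that the diagonal operators $V$ and $N_j$ commute, the Leibniz rule gives on $\ell_0(\Z^d)$
\[
[V,A_j]=\frac{1}{4\i}\big([V,T_j]\,N_j+N_j\,[V,T_j]\big).
\]

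The next step is to compute the inner commutator explicitly. A direct calculation on $\ell_0(\Z^d)$ gives $[V,S_j^{\pm\kappa_j}]=(V-\tau_j^{\pm\kappa_j}V)S_j^{\pm\kappa_j}$, where $V-\tau_j^{\pm\kappa_j}V$ is again a (diagonal) multiplication operator; hence $[V,T_j]=W_j^+S_j^{\kappa_j}-W_j^-S_j^{-\kappa_j}$ with $W_j^\pm:=V-\tau_j^{\pm\kappa_j}V$. The crucial point is that the hypothesis is precisely the statement that $N_jW_j^+=N_j(V-\tau_j^{\kappa_j}V)$ is a bounded multiplication operator. To obtain the same for the negative shift I would use the identity $W_j^-=-\tau_j^{-\kappa_j}W_j^+$, which yields $(N_jW_j^-)(n)=-n_jW_j^+(n+\kappa_je_j)=-\big((N_jW_j^+)(n+\kappa_je_j)-\kappa_jW_j^+(n+\kappa_je_j)\big)$; the first term is bounded by hypothesis (evaluated at the shifted point) and the second since $W_j^+$ is bounded ($V=o(1)$), so $N_jW_j^-$ is bounded as well.

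The last step is bookkeeping: I move every $N_j$ next to a factor $W_j^\pm$ using $[N_j,S_j^{\pm\kappa_j}]_\circ=\pm\kappa_jS_j^{\pm\kappa_j}$, i.e.\ $S_j^{\pm\kappa_j}N_j=(N_j\mp\kappa_j)S_j^{\pm\kappa_j}$. For example $W_j^+S_j^{\kappa_j}N_j=(N_jW_j^+)S_j^{\kappa_j}-\kappa_jW_j^+S_j^{\kappa_j}$ and $N_jW_j^+S_j^{\kappa_j}=(N_jW_j^+)S_j^{\kappa_j}$; in each summand $N_jW_j^+$ is a bounded multiplication operator (previous step), $W_j^+$ is bounded, and the shifts are unitary. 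The $W_j^-$ terms are handled identically. Thus every summand of $[V,A_j]$ is a product of bounded multiplication operators with unitary shifts, hence bounded on $\mathscr{H}$; summing over $j$ shows $[V,A_\kappa]$ extends to a bounded operator, and \cite[Lemma 6.2.9]{ABG} yields $V\in C^1(A_\kappa)$.

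I expect the main obstacle to be the careful handling of the unbounded $N_j$: one must check that the formal manipulations are legitimate on the core $\ell_0(\Z^d)$ and, above all, recognize that boundedness comes from pairing $N_j$ with the \emph{decaying difference} $W_j^\pm$ rather than with $V$ itself. The only genuinely non-obvious ingredient is deducing the $O(1)$ bound for $n_jW_j^-$ from the stated hypothesis on $n_jW_j^+$, which the identity $W_j^-=-\tau_j^{-\kappa_j}W_j^+$ resolves.
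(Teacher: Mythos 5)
Your proposal is correct and follows essentially the same route as the paper: compute $[V,\i A_{\kappa}]$ on $\ell_0(\Z^d)$, obtain $\frac{1}{4}\sum_j (2N_j-\kappa_j)(V-\tau_j^{\kappa_j}V)S_j^{\kappa_j}-(2N_j+\kappa_j)(V-\tau_j^{-\kappa_j}V)S_j^{-\kappa_j}$, observe each factor $N_j(V-\tau_j^{\pm\kappa_j}V)$ is a bounded multiplication operator, and extend by density. Your explicit reduction of the negative-shift term to the hypothesis via $W_j^-=-\tau_j^{-\kappa_j}W_j^+$ is a detail the paper leaves implicit, but it is the same argument.
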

\begin{proof} 
On compactly supported sequences one computes
$$[V,\i A_{\kappa}] = \frac{1}{4} \sum_{j=1}^d (2N_j-\kappa_j)(V-\tau_j^{\kappa_j}V) S_j^{\kappa_j} - (2N_j+\kappa_j) (V-\tau_j^{-\kappa_j}V)S_j^{-\kappa_j}.$$
Then one extends the validity of the commutator to all sequences in $\ell^2(\Z^d)$ by density.
\qed
\end{proof}

In Lemma \ref{LEM0} if the assumption $O(1)$ is replaced with $o(1)$, and if also $V$ is compact, then $V \in C^1_u(A_{\kappa})$, see \cite[Proposition 2.1]{GM}. According to \cite{ABG}, \cite{BSa}, the main criteria for the $C^{1,1}(A_{\kappa})$ class is:
\begin{Lemma} 
\label{criteriaLS}
Short and long range components satisfy $V_s, V_l \in C^{1,1}(A_{\kappa})$ whenever respectively
\begin{equation}
\label{Firstcriterion}
\int_{1} ^{\infty} \sup _{r < | n| < 2r} |V_{\text{s}}(n)| dr < \infty, \ and 
\end{equation}
\begin{equation} 
\label{Secondcriterion}
V_{\text{l}}(n) = o(1) \textrm{ as } |n|\to\infty, \quad \text{and} \quad \int_{1} ^{\infty} \sup _{r < | n| < 2r} \big | (V_{\text{l}} - \tau_j^{\kappa_j} V_{\text{l}})(n) \big | dr < \infty, \quad j=1,...,d.
\end{equation}
\end{Lemma}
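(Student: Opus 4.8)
The plan is to verify the defining integral of $C^{1,1}(A_\kappa)$, namely $\int_0^1\|[[V,e^{\i t A_\kappa}]_\circ,e^{\i t A_\kappa}]_\circ\|\,t^{-2}\,dt<\infty$, separately for $V=V_s$ and $V=V_l$. Multiplying the iterated commutator on the right by the unitary $e^{-2\i t A_\kappa}$ shows $\|[[V,e^{\i t A_\kappa}]_\circ,e^{\i t A_\kappa}]_\circ\|=\|W_t(V)\|$, where
\[
W_t(V):=e^{2\i t A_\kappa}Ve^{-2\i t A_\kappa}-2e^{\i t A_\kappa}Ve^{-\i t A_\kappa}+V ,
\]
so the task is an integrability statement for the second difference of $t\mapsto e^{\i t A_\kappa}Ve^{-\i t A_\kappa}$. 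This follows the scheme of \cite[Thm.~7.5.8]{ABG} and its discrete adaptation in \cite{BSa}, the only new point being that the generator of dilations is replaced by $A_\kappa$. The common preliminary is flow control: from $[N_j,S_j^{\pm\kappa_j}]_\circ=\pm\kappa_j S_j^{\pm\kappa_j}$ one finds that $[\i A_\kappa,\mathbf N^2]$ is a first-order expression dominated by $\mathbf N^2$, so a Gronwall argument gives $\|\mathbf N^s e^{\i t A_\kappa}\mathbf N^{-s}\|\leq C_s$ for $|t|\leq1$. Hence conjugation by $e^{\i t A_\kappa}$ distorts position cut-offs to $\{\mathbf N\leq R\}$ and $\{\mathbf N>R\}$ by only a bounded factor over $t\in[0,1]$, which is what legitimizes using the resonant scale $R\sim1/t$ below.

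For $V_s$ the argument is elementary. Decompose $V_s=\sum_{k\geq0}V_s^{(k)}$ with $V_s^{(k)}$ supported in $\{2^k\leq\mathbf N<2^{k+1}\}$, so $\|V_s^{(k)}\|\lesssim\omega_s(2^k)$ where $\omega_s(r):=\sup_{r<|n|<2r}|V_s(n)|$. I bound $\|W_t(V_s^{(k)})\|$ two ways: the trivial $\|W_t(V_s^{(k)})\|\leq4\|V_s^{(k)}\|\lesssim\omega_s(2^k)$, and the smooth $\|W_t(V_s^{(k)})\|\leq t^2\|[[V_s^{(k)},\i A_\kappa]_\circ,\i A_\kappa]_\circ\|\lesssim t^2\,2^{2k}\omega_s(2^k)$, the latter because by Lemma \ref{LEM0} the double commutator has leading term $N_j^2$ times a second difference of $V_s^{(k)}$, both factors controlled on the shell. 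Using the smooth bound on $t\leq2^{-k}$ and the trivial bound on $t\geq2^{-k}$ gives $\int_0^1\|W_t(V_s^{(k)})\|\,t^{-2}\,dt\lesssim2^k\omega_s(2^k)$, and summing, $\sum_k2^k\omega_s(2^k)\asymp\int_1^\infty\omega_s(r)\,dr<\infty$ by \eqref{Firstcriterion}.

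For $V_l$ the far region breaks this scheme, since $V_l$ need not decay, so one works through the commutator. A short computation gives
\[
W_t(V_l)=-\int_0^t e^{\i s A_\kappa}\big(e^{\i t A_\kappa}Be^{-\i t A_\kappa}-B\big)e^{-\i s A_\kappa}\,ds ,\qquad B:=[V_l,\i A_\kappa]_\circ ,
\]
whence $\|W_t(V_l)\|\leq t\,\|e^{\i t A_\kappa}Be^{-\i t A_\kappa}-B\|$ and the problem reduces to $B\in C^{0,1}(A_\kappa)$, i.e.\ $\int_0^1\|e^{\i t A_\kappa}Be^{-\i t A_\kappa}-B\|\,t^{-1}\,dt<\infty$. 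One first checks, using that each dyadic shell has width $\sim r$, that \eqref{Secondcriterion} forces $r\,\omega_l(r)\to0$ with $\omega_l(r):=\sup_{r<|n|<2r}|(V_l-\tau_j^{\kappa_j}V_l)(n)|$; hence $\sup_n|n_j(V_l-\tau_j^{\kappa_j}V_l)(n)|<\infty$, so $B$ is bounded and $V_l\in C^1(A_\kappa)$. Since by Lemma \ref{LEM0} $B$ carries a factor $N_j\sim r$ multiplying $(V_l-\tau_j^{\kappa_j}V_l)$, its localization integral obeys $\int_1^\infty\|B\,\chi(\mathbf N\sim r)\|\,r^{-1}\,dr\asymp\int_1^\infty r\,\omega_l(r)\,r^{-1}\,dr=\int_1^\infty\omega_l(r)\,dr<\infty$, and the implication ``this localization condition $\Rightarrow B\in C^{0,1}(A_\kappa)$'' is the sharp Besov-type embedding of \cite[Ch.~7.5]{ABG}.

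I expect this long-range step to be the main obstacle. A naive shell-by-shell estimate of $\|e^{\i t A_\kappa}Be^{-\i t A_\kappa}-B\|$ via the triangle inequality loses a logarithmic factor in the resummation over scales $2^k\gtrsim1/t$, so one cannot avoid invoking the real-interpolation characterization $C^{1,1}(A_\kappa)=\big(C^2(A_\kappa),\mathscr B(\mathscr H)\big)_{1/2,1}$ (equivalently a $K$-functional estimate), which performs the resummation sharply. The genuinely new verifications, beyond quoting \cite{ABG} and \cite{BSa}, are therefore the flow estimates of the first paragraph for the non-standard generator $A_\kappa$, together with the bookkeeping that the shifts $S_j^{\pm\kappa_j}$ inside $A_\kappa$ leave intact the telescoping of the differences $V_l-\tau_j^{\pm\kappa_j}V_l$ on which the entire scheme rests.
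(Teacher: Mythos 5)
Your proposal is correct in outline and follows the same route as the paper, which gives no written proof of Lemma \ref{criteriaLS}: it cites the appendix of \cite{BSa} (itself an adaptation of \cite[Theorem 7.5.8]{ABG}) and observes that the only genuinely new ingredient for general $A_{\kappa}$ is its domination by $\mathbf{N}$ (Lemma \ref{A<N}) --- which is exactly the ``flow control'' you isolate in your first paragraph and again in your closing remark. Your dyadic interpolation for $V_s$, the reduction of the $V_l$ case to $[V_l,\i A_{\kappa}]_{\circ}\in C^{0,1}(A_{\kappa})$ via the integral identity for the second difference, and the appeal to the Besov-type embedding of \cite{ABG} for the final step reproduce the scheme of that cited argument, so there is nothing to object to beyond the fact that, like the paper, you ultimately defer the sharp resummation to \cite{ABG}.
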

\begin{remark}
The criteria in Lemma \ref{criteriaLS} are not thorough to check the $C^{1,1}(A_{\kappa})$ condition. For example, let $V(n) = \sum_{i=1}^d \ln (2 + |n_i|) \cdot \ln ^{-3} (1 + \langle n \rangle)$. Then for $d \geq 2$, $V$ satisfies neither \eqref{Firstcriterion} nor \eqref{Secondcriterion}, regardless of the choice of $\kappa$, but $V$ clearly satisfies the criterion for the $C^2(A_{\kappa=1})$ class (see Lemma \ref{LemC2A} below), and so $V \in C^{1,1}(A_{\kappa=1})$.
\end{remark}

\begin{remark}
Recall the notation \eqref{weightsW205}. If there are $m \in \N$ and $1 = r < q$ such that $V_s(n)$ and $(V_l - \tau_j ^{\kappa_j} V_l)(n) = O(|n|^{-1} \cdot (w_m^{q,r}(n))^{-1})$ as $|n|\to\infty$, $1 \leq j \leq d$, then \eqref{Firstcriterion} and \eqref{Secondcriterion} are true respectively.
\end{remark}

The appendix of \cite{BSa} has a detailed proof of Lemma \ref{criteriaLS} for the case $A_{\kappa=1}$. There is no point in reproducing the argument here as it readily applies to the more general $A_{\kappa}$, thanks to Lemma \ref{A<N} (in \cite[Theorem 6.1]{BSa} one uses $\Lambda = \langle N \rangle = (1+N_1^2+...+N_d^2)^{1/2}$ and $A = A_{\kappa}$). Finally the basic criterion for the $C^2(A_{\kappa})$ class is:
\begin{Lemma}
\label{LemC2A} If $n_j (V-\tau_j ^{\kappa_j}V)(n) = O(1)$ and $n_i n_j \big[ (V-\tau_i ^{\kappa_i} V) - \tau_j ^{\kappa_j}(V-\tau_i ^{\kappa_i} V) \big](n) = O(1)$, as $|n|\to\infty$, $\forall 1 \leq i,j \leq d$, then $V \in C^2(A_{\kappa})$. 
\end{Lemma}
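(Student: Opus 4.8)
The plan is to reduce the $C^2(A_\kappa)$ membership to showing that the iterated commutator $[[V,\i A_\kappa],\i A_\kappa]$ extends to a bounded operator on $\ell^2(\Z^d)$. Recall from the standard abstract theory (as reviewed in the excerpt, with $C^2(\mathbb{A}) \subset C^{1,1}(\mathbb{A}) \subset C^1(\mathbb{A})$) that for a bounded self-adjoint $V$, membership in $C^2(A_\kappa)$ is equivalent to $V \in C^1(A_\kappa)$ together with $[V,\i A_\kappa]_\circ \in C^1(A_\kappa)$; concretely one must verify that the first commutator form is bounded and that the second commutator form is again bounded. The first assumption $n_j(V-\tau_j^{\kappa_j}V)(n) = O(1)$ is exactly the hypothesis of Lemma \ref{LEM0}, so it immediately delivers $V \in C^1(A_\kappa)$ and the explicit formula

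\begin{equation*}
[V,\i A_{\kappa}]_{\circ} = \frac{1}{4} \sum_{j=1}^d (2N_j-\kappa_j)(V-\tau_j^{\kappa_j}V) S_j^{\kappa_j} - (2N_j+\kappa_j) (V-\tau_j^{-\kappa_j}V)S_j^{-\kappa_j}.
\end{equation*}

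The second step is to compute, on compactly supported sequences $\ell_0(\Z^d)$, the commutator of this bounded operator with $\i A_\kappa$. I would expand $A_\kappa = \sum_{i=1}^d A_i$ and use bilinearity, so that the double commutator becomes a sum over pairs $(i,j)$ of terms $[\,(2N_j\mp\kappa_j)(V-\tau_j^{\mp\kappa_j}V)S_j^{\mp\kappa_j}\,,\,\i A_i\,]$. The key algebraic inputs are the elementary relations already recorded in the excerpt, namely $[N_j,S_j^{\pm\kappa_j}]_\circ = \pm\kappa_j S_j^{\pm\kappa_j}$, together with the fact that shifts and position operators on distinct coordinates commute, and that $A_i$ contains the factor $S_i^{\pm\kappa_i}N_i$. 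When one pushes $\i A_i$ through the multiplication operator $(V-\tau_j^{\mp\kappa_j}V)$ one generates a new finite-difference factor of the form $(W - \tau_i^{\pm\kappa_i}W)$ applied to $W := (V-\tau_j^{\mp\kappa_j}V)$, accompanied by one extra power of a position operator $N_i$ coming from the $N_i$ in $A_i$, and possibly a lower-order term with only the constant $\kappa_i$ factor from the $S_i^{\kappa_i}+S_i^{-\kappa_i}$ piece of $A_i$.

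The third step is bookkeeping: collect the leading terms and match them against the hypotheses. The dangerous terms are those carrying two factors of position operators, of the form $N_i N_j \cdot \big[(V-\tau_j^{\kappa_j}V) - \tau_i^{\kappa_i}(V-\tau_j^{\kappa_j}V)\big](n)$ (and the analogous combinations with various sign choices on $\kappa_i,\kappa_j$), acting against a composition of shifts $S_i^{\pm\kappa_i}S_j^{\pm\kappa_j}$. These are precisely controlled by the second hypothesis $n_in_j\big[(V-\tau_i^{\kappa_i}V)-\tau_j^{\kappa_j}(V-\tau_i^{\kappa_i}V)\big](n) = O(1)$ — note the symmetry of the mixed second difference in $i$ and $j$, which lets one rewrite whichever ordering arises. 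The remaining terms carry at most one position operator times a single first difference $N_k(V-\tau_k^{\pm\kappa_k}V)$, which is $O(1)$ by the first hypothesis, or are products of bounded multiplication operators with shifts. Since shifts are unitary and each surviving coefficient sequence is bounded, every term extends to a bounded operator on $\ell^2(\Z^d)$; extending by density then shows $[V,\i A_\kappa]_\circ \in C^1(A_\kappa)$, hence $V \in C^2(A_\kappa)$.

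The main obstacle is the combinatorial explosion in the third step: there are many sign and index combinations, and one must be careful that the position operators $N_i$ are always paired with the correctly shifted difference of $V$ so that the $O(1)$ hypotheses genuinely apply (an off-by-a-shift error would leave an unbounded $N_i$ factor). The cleanest way to manage this is to organize the computation by first commuting $A_i$ past the shift $S_j^{\pm\kappa_j}$ (using coordinate-disjointness when $i\neq j$ and the Chebyshev/shift identities when $i=j$) and only afterward commuting past the multiplication operators, so that the finite-difference structure is produced in a uniform way and the two hypotheses can be invoked term by term.
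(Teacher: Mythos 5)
Your proposal is correct and follows essentially the same route as the paper: one takes the explicit formula for $[V,\i A_{\kappa}]_{\circ}$ from Lemma \ref{LEM0}, computes its commutator with the unbounded part of $A_{\kappa}$ on $\ell_0(\Z^d)$, and observes that the terms carrying $N_iN_j$ are paired with mixed second differences of $V$ controlled by the second hypothesis while the remaining terms carry at most one $N$ against a first difference, then extends by density. The only cosmetic difference is that the paper commutes against $\sum_i N_i S_i^{\kappa_i}$ rather than the full $A_{\kappa}$, which trims the sign bookkeeping but is the same argument.
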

\begin{proof}
It is enough to compute $\left[ [V, \i A_{\kappa} ]_{\circ}, \sum_i N_i S_i ^{\kappa_i} \right]$ on the compactly supported sequences.
\begin{align*}
\displaystyle \left[ [V, \i A_{\kappa} ], \sum_i N_i S_i ^{\kappa_i} \right] &= -\frac{1}{4} \sum_i \kappa_i (2N_i - \kappa_i) (V- \tau_i ^{\kappa_i} V) S_i ^{2\kappa_i} + \kappa_i (2N_i + \kappa_i) (V- \tau_i ^{-\kappa_i} V) \\
& \quad  +\frac{2}{4} \sum_i \kappa_i N_i S_i ^{\kappa_i} (V- \tau_i ^{\kappa_i} V) S_i ^{\kappa_i}  -\kappa_i  N_i S_i ^{\kappa_i} (V- \tau_i ^{-\kappa_i} V) S_i ^{-\kappa_i} \\
& \quad + \frac{1}{4} \sum_{i,j} (2N_i - \kappa_i) N_j  \left( (V- \tau_i ^{\kappa_i} V) - \tau_j ^{\kappa_j}(V- \tau_i ^{\kappa_i} V) \right) S_j ^{\kappa_j} S_i ^{\kappa_i} \\
& \quad - \frac{1}{4} \sum_{i,j}  (2N_i + \kappa_i) N_j \left( (V- \tau_i ^{-\kappa_i} V) - \tau_j ^{\kappa_j}(V- \tau_i ^{-\kappa_i} V) \right) S_j ^{\kappa_j} S_i ^{-\kappa_i}.
\end{align*}
The validity of the commutator extends to all sequences in $\ell^2(\Z^d)$ by density.
\qed
\end{proof}

\section{Strict Mourre estimate for the Standard Laplacian $\Delta$, $d \geq 1$}
\label{stdLaplacianMourre}

In the previous Section the commutator $[\Delta, \i A_{\kappa}]_{\circ}$ was computed to justify operator regularity. Now it is used to determine the sets $\boldsymbol{\mu}_{A_{\kappa}} (\Delta)$ to the best of our capability. In a nutshell we prove

\begin{theorem}
The results mentioned in Table \ref{table:202} are true.
\end{theorem}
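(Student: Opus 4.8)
The plan is to reduce the statement to a finite-dimensional, \emph{separable} extremization problem via the Fourier transform, and then to analyze it case by case in $d$ and $\kappa$.

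First I would pass to Fourier space. By Proposition \ref{reg_Prop_DD} both $\Delta$ and $[\Delta,\i A_\kappa]_\circ$ are functions of the commuting self-adjoint operators $\Delta_1,\ldots,\Delta_d$, which under $\mathcal F$ become multiplication by $x_j:=\cos(\xi_j)\in[-1,1]$. Writing $h_\kappa(t):=(1-t^2)U_{\kappa-1}(t)$ and $g(x):=\sum_{j=1}^d h_{\kappa_j}(x_j)$, the spectral projector $E_I(\Delta)$ is multiplication by the indicator of $\{x:\sum_j x_j\in I\}$, so the strict Mourre estimate \eqref{mourreEstimate123} with $\mathbb A=+A_\kappa$ holds on $I$ iff $g(x)\geq\gamma$ for a.e.\ $x\in[-1,1]^d$ with $\sum_j x_j\in I$. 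Setting
\[
m(E):=\min\Big\{g(x):x\in[-1,1]^d,\ \textstyle\sum_j x_j=E\Big\},\qquad M(E):=\max\Big\{g(x):x\in[-1,1]^d,\ \textstyle\sum_j x_j=E\Big\},
\]
both continuous in $E$, the open-interval requirement in \eqref{mu_set_def} together with the continuity of $m,M$ yields the dictionary
\[
E\in\boldsymbol{\mu}_{A_\kappa}^{+}(\Delta)\iff m(E)>0,\qquad E\in\boldsymbol{\mu}_{A_\kappa}^{-}(\Delta)\iff M(E)<0 .
\]
Hence $E\in\boldsymbol{\mu}_{A_\kappa}(\Delta)$ iff $g$ is strictly of one sign on the slice $\{\sum_j x_j=E\}$, and $E\notin\boldsymbol{\mu}_{A_\kappa}(\Delta)$ iff $g$ vanishes or changes sign there (the slice being convex, hence connected, $g$ attains every intermediate value). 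This is exactly the polynomial extremization alluded to after \eqref{MV:k323533}.

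Next I would record the one-variable data. Since $U_{\kappa-1}$ is even or odd according to the parity of $\kappa$, $h_\kappa$ has a definite parity; this gives $\boldsymbol{\mu}_{A_\kappa}(\Delta)=-\boldsymbol{\mu}_{A_\kappa}(\Delta)$ and lets me restrict to $E\in[0,d]$. By \eqref{e:zerotcheby} the zeros of $h_\kappa$ in $[-1,1]$ are $\pm1$ and the Chebyshev nodes $\cos(k\pi/\kappa)$, and I would tabulate $h_2(t)=2t(1-t^2)$, $h_3(t)=(1-t^2)(4t^2-1)$, $h_4(t)=4t(1-t^2)(2t^2-1)$ together with their critical points from $h_\kappa'$. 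For $d=1$ the slice is a single point, so $m(E)=M(E)=h_\kappa(E)$ and the first column of Table \ref{table:202} is immediate: $\boldsymbol{\mu}_{A_\kappa}(\Delta)\cap(-1,1)$ is $(-1,1)$ with the Chebyshev nodes removed.

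For $d=2,3$ I would exploit the separable form $g(x)=\sum_j h_\kappa(x_j)$. To certify membership (the top rows) I would use the Jensen/convex-envelope bound $m(E)\geq d\,\bar h_\kappa(E/d)$, where $\bar h_\kappa$ is the lower convex envelope of $h_\kappa$ on $[-1,1]$, and dually $M(E)\leq d\,\hat h_\kappa(E/d)$ with $\hat h_\kappa$ the concave envelope: wherever the envelope has a strict sign the Mourre estimate holds, which produces the stated open bands. To certify failure (the bottom rows) the envelope bound is only one-sided, so I would instead exhibit explicit admissible configurations --- e.g.\ pinning some coordinates at $\pm1$ or at a Chebyshev node and solving for the rest --- for which $g$ takes both signs, or vanishes, on the slice. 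The band endpoints are the energies at which $m$ or $M$ changes sign; these come from the interior Lagrange system $h_\kappa'(x_i)=h_\kappa'(x_j)$ subject to $\sum_j x_j=E$ and $g=0$, which for $\kappa=3,4$ reduces to algebraic equations whose solutions are precisely the radicals in the table (e.g.\ $\tfrac12\sqrt{\tfrac12(5-\sqrt7)}$ for $\kappa=3,d=2$). Matching each solved endpoint to the corresponding entry completes the verification.

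The main obstacle is the genuinely multivariable extremization for $d=2,3$ with $\kappa=3,4$. Two points concentrate the work: first, because $d$ is finite one cannot split the mass fractionally, so the convex-envelope value is only a lower bound for $m(E)$ and the failure intervals really require hand-built sign-changing configurations rather than a soft argument; second, pinning down the exact endpoints means solving the coupled stationarity equations $h_\kappa'(a)=h_\kappa'(b)$ together with the constraint and $g=0$, which is where the algebraic numbers in Table \ref{table:202} are produced and where the two-band structure (for $d=2$) emerges from the competition between distinct critical configurations as $E$ sweeps across $[0,d]$.
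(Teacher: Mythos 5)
Your overall framework is the paper's: reduce to the sign of $g_E=\sum_j h_{\kappa}(x_j)$ on the slice $\sum_j x_j=E$, use parity of $U_{\kappa-1}$ to restrict to $E\in[0,d]$, read off $d=1$ directly from the zeros of $h_\kappa$, and certify the \emph{exclusions} (bottom rows) by exhibiting explicit admissible configurations where $g_E$ vanishes or changes sign --- this is precisely the paper's Lemma \ref{lemSUMcos} together with its ``ratio test'' (Lemmas \ref{Lemma48}, \ref{Lemma411}, \ref{Lemma412}). That part of your plan is sound.

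The genuine gap is your tool for certifying \emph{membership} (the top rows). The Jensen/envelope bounds $m(E)\geq d\,\bar h_\kappa(E/d)$ and $M(E)\leq d\,\hat h_\kappa(E/d)$ are correct inequalities but are vacuous here: since $h_\kappa(\pm1)=0$ and $h_\kappa$ takes negative values inside $(-1,1)$ (e.g.\ $h_3(0)=-1$), the lower convex envelope $\bar h_\kappa$ is $\leq 0$ on all of $[-1,1]$, so $d\,\bar h_\kappa(E/d)$ can never certify $m(E)>0$; dually $\hat h_\kappa\geq 0$, so $d\,\hat h_\kappa(E/d)$ can never certify $M(E)<0$. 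Concretely, for $d=2$, $\kappa=3$, on the band $\left(\tfrac12\sqrt{\tfrac12(5-\sqrt7)},1\right)$ the estimate holds with $-A_\kappa$ (e.g.\ at $E=0.9$ the maximum of $g_E$ over the slice is $\approx-0.30$), yet $\hat h_3(0.45)=9/16>0$, so your bound only gives $M(E)\leq 9/8$. Not a single band in Table \ref{table:202} is reachable this way, including the top band $(d-1+\cos(\pi/\kappa),d)$: what makes that one work is not an envelope but the fact that the constraint $\sum_j x_j>d-1+\cos(\pi/\kappa)$ forces \emph{every} coordinate into $(\cos(\pi/\kappa),1)$, where $h_\kappa>0$ (the paper's Lemma \ref{edgySPdd}). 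For the interior bands in $d=2$ the paper does the honest one-variable analysis of $x\mapsto g_E(E-x,x)$: locate its critical points, evaluate $g_E$ there and at the endpoints, and track the sign (Lemmas \ref{Lemma49}, \ref{Lemma410}); this is where the radicals $\tfrac12\sqrt{\tfrac12(5-\sqrt7)}$ and $\sqrt{\tfrac32-\tfrac1{\sqrt5}}$ come from. Your closing paragraph about the Lagrange system $h_\kappa'(x_i)=h_\kappa'(x_j)$ is in effect this computation, so the fix is to promote it from an endpoint-finding device to the actual certification of strict sign on each band, and to drop the envelope step entirely.
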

\begin{proof}
See Lemmas \ref{Lemma48}, \ref{Lemma49}, \ref{Lemma410}, \ref{Lemma411}, \ref{Lemma412} of this Section.
\qed
\end{proof}

Again, we emphasize that the sets $\boldsymbol{\mu}_{A_{\kappa}}(\Delta)$ that appear in Table \ref{table:202} are symmetric about zero, i.e.\  $\boldsymbol{\mu}_{A_{\kappa}}(\Delta) = -  \boldsymbol{\mu}_{A_{\kappa}}(\Delta)$, see Lemma \ref{Lemma_symmetryDelta}. To paint a more complete picture, we also derive in this Section other properties about the sets $\boldsymbol{\mu}_{A_{\kappa}} (\Delta)$. Numerical evidence is given in Tables \ref{tab:table1011} and \ref{tab:table1013127conj}. To $[\Delta, \i A_{\kappa}]_{\circ}$ given by \eqref{MV:k333Ultra} one associates the polynomial $g_E: [-1,1]^d \mapsto \R$,
\begin{equation}
\label{def:gE}
g_E(E_1,...,E_d):=  \sum_{j=1} ^d (1-E_j^2) U_{\kappa - 1} (E_j).
\end{equation}
Consider the constant energy surface for the Standard Laplacian:
\begin{equation}
\label{constE_MV}
S_E:= \left \{ (E_1, ..., E_d) \in [-1,1]^d: E = \sum_{j=1}^d E_j \right \}.
\end{equation} 
By functional calculus and continuity of the function $g_E$, we have that $E \in \boldsymbol{\mu}^{\pm}_{A_{\kappa}} (\Delta)$ if and only if $\pm \left.g_E\right|_{S_E}$ is strictly positive. We cover the $1-$dimensional case first. 

\begin{Lemma}
Let $d=1$. Then $\forall \kappa \in \N^*$, $\boldsymbol{\mu}_{A_{\kappa}}(\Delta) = [-1,1] \setminus \{\pm \cos(\pi j / \kappa), j=0,...,\floor*{\kappa/2}  \}$. In particular, $\Delta$ has $\kappa+1$ thresholds with respect to $A_{\kappa}$, where the thresholds are defined as 
$\sigma_{\rm ess}(\Delta) \setminus \boldsymbol{\mu}_{A_{\kappa}}(\Delta)$. 
\end{Lemma}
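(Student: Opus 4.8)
The plan is to specialize the general sign criterion to $d=1$, where the extremization problem collapses to a single point. With $d=1$ the constant energy surface $S_E$ of \eqref{constE_MV} is the singleton $\{E\}$, so by the equivalence recorded just before the statement, $E \in \boldsymbol{\mu}^{\pm}_{A_\kappa}(\Delta)$ if and only if $\pm g_E(E) > 0$, where $g_E$ is the polynomial \eqref{def:gE}. Taking the union over the two signs yields the clean criterion
\[
E \in \boldsymbol{\mu}_{A_\kappa}(\Delta) \iff g_E(E) = (1-E^2)\, U_{\kappa-1}(E) \neq 0,
\]
since $g_E(E) \neq 0$ forces either $g_E(E) > 0$ (membership in $\boldsymbol{\mu}^+$) or $g_E(E) < 0$ (membership in $\boldsymbol{\mu}^-$). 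Consequently the set of thresholds $\sigma_{\rm ess}(\Delta) \setminus \boldsymbol{\mu}_{A_\kappa}(\Delta)$ is exactly the zero set of $(1-E^2)U_{\kappa-1}(E)$ inside $[-1,1]$.

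First I would locate these zeros explicitly. The factor $1-E^2$ vanishes at $E = \pm 1$, while \eqref{e:zerotcheby} gives that $U_{\kappa-1}$ vanishes precisely at $E = \cos(k\pi/\kappa)$ for $k = 1, \ldots, \kappa-1$. Since $\cos(0)=1$ and $\cos(\pi)=-1$, the combined zero set is $\{\cos(k\pi/\kappa) : k = 0, 1, \ldots, \kappa\}$. To recast this in the symmetric form claimed, I would invoke the reflection identity $\cos((\kappa-k)\pi/\kappa) = -\cos(k\pi/\kappa)$, which shows the zero set is invariant under $E \mapsto -E$ and that its non-negative elements are exactly $\cos(j\pi/\kappa)$ for $j = 0, \ldots, \floor*{\kappa/2}$. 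Hence the zero set equals $\{\pm \cos(\pi j/\kappa): j = 0, \ldots, \floor*{\kappa/2}\}$, matching the asserted description of the complement of $\boldsymbol{\mu}_{A_\kappa}(\Delta)$.

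Finally, to count the thresholds I would note that $k \mapsto k\pi/\kappa$ maps $\{0,1,\ldots,\kappa\}$ into $[0,\pi]$, where cosine is strictly decreasing; thus the $\kappa+1$ values $\cos(k\pi/\kappa)$ are pairwise distinct, giving exactly $\kappa+1$ thresholds. The only point requiring care---the main, and fairly modest, obstacle---is the bookkeeping between the two indexings: one must verify that passing from the range $k = 0,\ldots,\kappa$ to $j=0,\ldots,\floor*{\kappa/2}$ neither loses nor double-counts values, paying attention to the parity of $\kappa$ (for even $\kappa$ the central value $\cos(\pi/2)=0$ is its own reflection and is counted once). Everything else follows directly from the sign criterion together with the known zeros of $U_{\kappa-1}$.
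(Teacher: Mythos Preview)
Your proof is correct and essentially the same as the paper's: both identify the thresholds as the zeros of $(1-E^2)U_{\kappa-1}(E)$ and rewrite the resulting set $\{\cos(j\pi/\kappa):j=0,\ldots,\kappa\}$ in the symmetric $\pm$ form. The only cosmetic difference is that the paper phrases the vanishing condition in Fourier space as $\sin(\xi)\sin(\kappa\xi)=0$ (equivalent via \eqref{cheby}) and goes on to record explicitly which sign $\pm A_\kappa$ is needed on each band, a detail your argument leaves implicit but which is not required for the statement as given.
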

\begin{remark}
When $d=1$, $0 \in \boldsymbol{\mu}_{A_{\kappa}}(\Delta)$ if and only if $\kappa$ is even.
\end{remark}
\begin{proof}
In Fourier space we have $\sin(\xi) \sin(\kappa \xi) = 0 \Leftrightarrow \xi \in \big \{ \pm \pi j / \kappa: j = 0,...,\kappa \big \}$.  One takes the image of this set by the cosine function to get the corresponding threshold energies. Note that $\{\cos(\pi j / \kappa), j=0,...,\kappa  \} = \{\pm \cos(\pi j / \kappa), j=0,...,\floor*{\kappa/2}  \}$. Let us be specific about the choice of conjugate operator in the Mourre estimate \eqref{mourreEstimate123}. Since $d=1$, it is straightforward to decide this, especially if one has a graph of the function $\xi \mapsto \sin(\xi) \sin(\kappa \xi)$ at hand, see Figure \ref{fig2}. For $\kappa=1$, one takes $\mathbb{A} = A_{\kappa}$ for any $E \in (-1,1)$. For $\kappa=2$, one takes $\mathbb{A} = A_{\kappa}$ if $E \in (0,1)$ and $\mathbb{A} = -A_{\kappa}$ if $E \in (-1,0)$. For general $\kappa \geqslant 1$, one takes 
\begin{equation}
\label{formula_A}
\mathbb{A}:=
\begin{cases}
A_{\kappa}, & \text{if \ } E \in \bigcup _{j=0} ^{\floor*{\frac{\kappa-1}{2}}} \left( \cos(\frac{(2j+1)\pi}{\kappa}), \cos(\frac{2j\pi}{\kappa}) \right)\\[1em]
-A_{\kappa}, & \text{if \ } E \in \bigcup _{j=1} ^{\floor*{\frac{\kappa}{2}}} \left( \cos(\frac{2j\pi}{\kappa}), \cos(\frac{(2j-1)\pi}{\kappa}) \right).
\end{cases}
\end{equation}
\qed
\end{proof}
A convenience of the $1-$dimensionnal case is that the best constant in the strict Mourre estimate at energy $E$ is simply equal to $|g_E(E)| = (1-E^2) | U_{\kappa-1}(E) |$.

\begin{figure}[h]
\begin{tikzpicture}[domain=0:10]
\begin{axis}
[
clip = true, 
clip mode=individual, 
axis x line = middle, 
axis y line = middle, 
xlabel={$\xi$}, 
xticklabels={-$\pi$,-$\frac{\pi}{2}$,0, $\frac{\pi}{2}$, $\pi$},
xtick={-3.1415,-1.5707,0,1.5707,3.1415},
yticklabels={-1,0,1},
ytick={-1,0,1},
ylabel={$E$}, 
ylabel style={at=(current axis.above origin), anchor=west}, 
y=1cm,
x=1cm,
enlarge y limits={rel=0.03}, 
enlarge x limits={rel=0.03}, 
ymin = -1.2, 
ymax = 1.2, 
after end axis/.code={\path (axis cs:0,0) ;}]

\addplot[color=gray,line width = 1.0 pt, samples=1000, domain=-3.15:3.15] ({x},{sin(deg(x))*sin(2*deg(x))});
\addplot[color=black,line width = 1.0 pt, samples=1000, domain=-3.15:3.15,dotted] ({x},{cos(deg(x))});
\draw [color=blue, very thick] (0,0) -- (0,1);
\draw [color=yellow, very thick] (0,-1)-- (0,0);
\node[label={},circle,fill,inner sep=0.75pt, red] at (axis cs:0,1) {};
\node[label={},circle,fill,inner sep=0.75pt, red] at (axis cs:3.1415,-1) {};
\node[label={},circle,fill,inner sep=0.75pt, red] at (axis cs:1.5707,0) {};
\end{axis}
\end{tikzpicture}
\quad 
\begin{tikzpicture}[domain=0:10]
\begin{axis}
[
clip = true, 
clip mode=individual, 
axis x line = middle, 
axis y line = middle, 
xlabel={$\xi$}, 
xticklabels={-$\pi$,$-\frac{2\pi}{3}$, $-\frac{\pi}{3}$,0, $\frac{\pi}{3}$, $\frac{2\pi}{3}$, $\pi$},
xtick={-3.1415,-2.0943,-1.0471,0,1.0471,2.0943,3.1415},
yticklabels={$-1$,$\frac{1}{2}$, 0,$\frac{1}{2}$, $1$},
ytick={-1,-0.5,0,0.5,1},
ylabel={$E$}, 
ylabel style={at=(current axis.above origin), anchor=west}, 
y=1cm,
x=1cm,
enlarge y limits={rel=0.03}, 
enlarge x limits={rel=0.03}, 
ymin = -1.2, 
ymax = 1.2, 
after end axis/.code={\path (axis cs:0,0) ;}]

\addplot[color=gray,line width = 1.0 pt, samples=1000, domain=-3.15:3.15] ({x},{sin(deg(x))*sin(3*deg(x))});
\addplot[color=black,line width = 1.0 pt, samples=1000, domain=-3.15:3.15,dotted] ({x},{cos(deg(x))});
\node[label={},circle,fill,inner sep=0.75pt, red] at (axis cs:0,1) {};
\node[label={},circle,fill,inner sep=0.75pt, red] at (axis cs:3.1415,-1) {};
\node[label={},circle,fill,inner sep=0.75pt, red] at (axis cs:1.0471,0.5) {};
\node[label={},circle,fill,inner sep=0.75pt, red] at (axis cs:2.0943,-0.5) {};
\draw [color=blue, very thick] (0,0.5) -- (0,1);
\draw [color=blue, very thick] (0,-1)-- (0,-0.5);
\draw [color=yellow, very thick] (0,-0.5)-- (0,0.5);
\end{axis}
\end{tikzpicture}
\caption{Plot of $\xi \mapsto \sin(\xi) \sin(\kappa \xi)$, $\xi \mapsto \cos(\xi) =$ dotted. $\kappa=2$ (left), $\kappa=3$ (right).}
\label{fig2}
\end{figure}
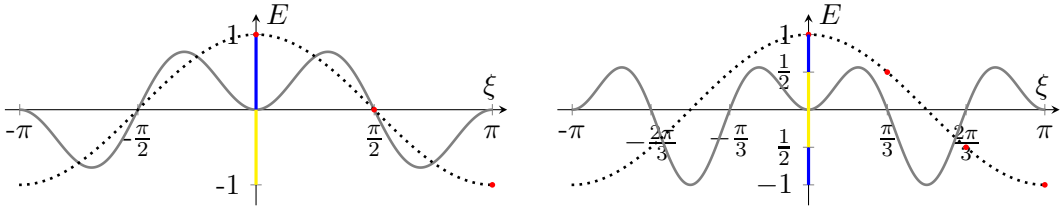

We now proceed with some results in higher dimensions. To stress the dependence on the dimension, write $\Delta_d$ to mean $\Delta$ on $\ell^2(\Z^d)$.

\begin{Lemma} For any $d \geq 1$, any $\kappa = (\kappa_j)$, $\pm d \not \in \boldsymbol{\mu}_{A_{\kappa}} (\Delta_d)$. 
\end{Lemma}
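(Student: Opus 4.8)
The plan is to use the characterization already recorded in this section: by functional calculus, $E \in \boldsymbol{\mu}^{\pm}_{A_\kappa}(\Delta_d)$ if and only if $\pm g_E$ is strictly positive on the constant energy surface $S_E$ from \eqref{constE_MV}, where $g_E$ is the polynomial \eqref{def:gE} attached to the commutator \eqref{MV:k333Ultra}. It therefore suffices to show that $g_{\pm d}$ is neither strictly positive nor strictly negative on $S_{\pm d}$.

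First I would observe that the constant energy surfaces degenerate to single points at the spectral endpoints. Since each coordinate obeys $E_j \in [-1,1]$, the constraint $\sum_{j=1}^d E_j = d$ forces $E_j = 1$ for every $j$, so $S_d = \{(1,\ldots,1)\}$; symmetrically $S_{-d} = \{(-1,\ldots,-1)\}$. Next I would evaluate $g_E$ at these corner points. Every term of $g_E$ carries the common factor $(1-E_j^2)$, which vanishes at $E_j = \pm 1$. Hence $g_d(1,\ldots,1) = \sum_{j=1}^d (1-1^2)\,U_{\kappa_j-1}(1) = 0$ and likewise $g_{-d}(-1,\ldots,-1) = 0$. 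Since $g_{\pm d}$ is identically zero on the one-point set $S_{\pm d}$, neither $+g_{\pm d}|_{S_{\pm d}}$ nor $-g_{\pm d}|_{S_{\pm d}}$ is strictly positive, so $\pm d \notin \boldsymbol{\mu}^{+}_{A_\kappa}(\Delta_d)$ and $\pm d \notin \boldsymbol{\mu}^{-}_{A_\kappa}(\Delta_d)$; therefore $\pm d \notin \boldsymbol{\mu}_{A_\kappa}(\Delta_d)$.

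There is essentially no technical obstacle here; the only point worth flagging is conceptual. The statement records that the band edges $E = \pm d$ are always thresholds of $\Delta_d$ with respect to $A_\kappa$, reflecting that $[\Delta,\i A_\kappa]_\circ$ is built entirely from the factors $(1-\Delta_j^2)$, each of which annihilates the top and bottom of the one-dimensional spectrum $\sigma(\Delta_j) = [-1,1]$.

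If one preferred to argue directly from the Mourre estimate \eqref{mourreEstimate123} rather than from the surface criterion, one would instead note that for any open $I \ni \pm d$ the spectral projection $E_I(\Delta_d)$ is nonzero while the joint spectral support of $(\Delta_1,\ldots,\Delta_d)$ meets every neighborhood of the corner $(\pm 1,\ldots,\pm 1)$, where $g_E \to 0$ by continuity; hence no strict lower bound $\gamma > 0$ can hold for either choice of sign of $\mathbb{A}$, which is the same conclusion.
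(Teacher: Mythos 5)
Your proof is correct and follows essentially the same route as the paper: the constraint $\sum_j E_j = \pm d$ with $E_j \in [-1,1]$ forces $E_j \equiv \pm 1$, so $S_{\pm d}$ is a single corner point where every factor $(1-E_j^2)$ vanishes, hence $g_{\pm d}|_{S_{\pm d}} = 0$ and no strict sign is possible. The paper's proof is a one-line version of exactly this observation.
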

\begin{proof}
$E=d$ iff $E_j \equiv 1 \Rightarrow \left.g_E\right|_{S_E} = 0$. This implies the statement. Similarly for $E=-d$. 
\qed 
\end{proof}

\begin{Lemma} For any $d$ even, for any $\kappa = (\kappa_j)$, $0 \not \in \boldsymbol{\mu}_{A_{\kappa}}(\Delta)$. For any $d$ odd, any $\kappa = (\kappa_j)$ with all $\kappa_j$ even, $0 \not \in \boldsymbol{\mu}_{A_{\kappa}}(\Delta)$.
\end{Lemma}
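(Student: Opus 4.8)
The plan is to reduce both assertions to a single elementary observation: if the polynomial attached to $[\Delta,\i A_\kappa]_\circ$ vanishes at even one point of the constant-energy surface $S_0$, then $0\notin\boldsymbol{\mu}_{A_\kappa}(\Delta)$. By the criterion recorded just after \eqref{constE_MV}, one has $0\in\boldsymbol{\mu}^{+}_{A_\kappa}(\Delta)$ precisely when $g_0|_{S_0}$ is strictly positive on the compact set $S_0$, and $0\in\boldsymbol{\mu}^{-}_{A_\kappa}(\Delta)$ precisely when $-g_0|_{S_0}$ is strictly positive. A zero of $g_0$ on $S_0$ destroys strict positivity of both $g_0$ and $-g_0$ simultaneously, so such a zero places $0$ in neither $\boldsymbol{\mu}^{+}$ nor $\boldsymbol{\mu}^{-}$, hence outside $\boldsymbol{\mu}_{A_\kappa}(\Delta)$. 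Here, in accordance with the commutator \eqref{MV:k333Ultra}, $g_0(E_1,\ldots,E_d)=\sum_{j=1}^d (1-E_j^2)\,U_{\kappa_j-1}(E_j)$ and $S_0=\{(E_1,\ldots,E_d)\in[-1,1]^d:\sum_j E_j=0\}$.

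For $d$ even I would exhibit such a zero at a corner of the cube. Since $d$ is even, the point $p=(1,\ldots,1,-1,\ldots,-1)$ with $d/2$ entries equal to $+1$ and $d/2$ equal to $-1$ satisfies $\sum_j p_j=0$, hence $p\in S_0$. Every summand of $g_0$ carries the factor $1-E_j^2$, which vanishes at $E_j=\pm1$; therefore $g_0(p)=0$, and the reduction above yields $0\notin\boldsymbol{\mu}_{A_\kappa}(\Delta)$ for \emph{arbitrary} $\kappa=(\kappa_j)$.

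For $d$ odd with all $\kappa_j$ even the corner trick is unavailable, since no point of $\{-1,1\}^d$ can have coordinate sum zero when $d$ is odd, so instead I would use the center of the surface. The origin $(0,\ldots,0)$ lies in $S_0$, and $g_0(0,\ldots,0)=\sum_j U_{\kappa_j-1}(0)$. Because each $\kappa_j$ is even, $\kappa_j-1$ is odd, so $U_{\kappa_j-1}$ is an odd polynomial (from the parity relation $U_n(-x)=(-1)^n U_n(x)$), whence $U_{\kappa_j-1}(0)=0$ for every $j$ and $g_0$ vanishes at the origin. Again $0\notin\boldsymbol{\mu}_{A_\kappa}(\Delta)$.

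There is essentially no serious obstacle here. The one point requiring care is the logical step that a single zero on $S_0$ rules out membership in both $\boldsymbol{\mu}^{+}$ and $\boldsymbol{\mu}^{-}$ at once; this uses that $S_0$ is a nonempty compact subset of $[-1,1]^d$, so that ``strictly positive'' genuinely means bounded below by a positive constant, which is what the Mourre constant $\gamma$ in \eqref{mourreEstimate123} demands. The only structural inputs are the omnipresent factor $1-E_j^2$ and the parity identity for the Chebyshev polynomials $U_n$; matching the parity of $d$ to the availability of a balanced $\pm1$ configuration (for $d$ even) versus the vanishing of odd-degree $U_{\kappa_j-1}$ at the origin (for $d$ odd, all $\kappa_j$ even) is exactly what separates the two cases.
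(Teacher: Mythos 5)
Your proposal is correct and follows exactly the paper's argument: for $d$ even the balanced $\pm1$ corner point kills every summand via the factor $1-E_j^2$, and for $d$ odd with all $\kappa_j$ even the origin kills every summand via $U_{\kappa_j-1}(0)=0$. The extra justification you give for why a single zero of $g_0$ on $S_0$ excludes $0$ from both $\boldsymbol{\mu}^{+}_{A_\kappa}(\Delta)$ and $\boldsymbol{\mu}^{-}_{A_\kappa}(\Delta)$ is just an unpacking of the criterion the paper records immediately after \eqref{constE_MV}, so nothing essentially new is added or missing.
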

\begin{proof}
For the case where $d$ is even, take half of the $E_j$'s equal to $1$, the other half equal to $-1$. Then $E= \sum E_j = 0$ and $g_E(E_1,...,E_d)=0$. For the case where we assume $d$ odd and all the $\kappa_j$ even, let $E_j \equiv 0$. Then $E = \sum E_j = 0$ and $g_E(E_1,...,E_d) = 0$ since $U_{\kappa_j-1}(0)=0$. \qed
\end{proof}

\begin{Lemma}
\label{lemSUMcos} 
Let $d \geq 2$. For all $\kappa = (\kappa_j)$,
\[\left\{\sum_{q=1} ^d \cos( j_q \pi / \kappa_q): (j_1,...,j_d) \in \prod_{q=1} ^d \{0,...,\kappa_q\} \right\} \subset [-d,d] \setminus \boldsymbol{\mu}_{A_{\kappa}} (\Delta).\] 
\end{Lemma}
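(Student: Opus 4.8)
The plan is to exploit the characterization established immediately before the statement: $E \in \boldsymbol{\mu}^{\pm}_{A_{\kappa}}(\Delta)$ if and only if $\pm\, \left.g_E\right|_{S_E}$ is strictly positive on the constant energy surface $S_E$. Consequently $E \notin \boldsymbol{\mu}_{A_{\kappa}}(\Delta) = \boldsymbol{\mu}^{+}_{A_{\kappa}}(\Delta) \cup \boldsymbol{\mu}^{-}_{A_{\kappa}}(\Delta)$ as soon as $\left.g_E\right|_{S_E}$ fails to be strictly of one sign, and in particular as soon as it \emph{vanishes at a single point} of $S_E$. So the whole argument reduces to exhibiting, for each prescribed energy in the stated set, an explicit zero of $g_E$ lying on $S_E$.

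The key step is the construction of that zero. Fix a multi-index $(j_1,\ldots,j_d) \in \prod_{q=1}^d \{0,\ldots,\kappa_q\}$ and set $E_q := \cos(j_q \pi / \kappa_q)$ and $E := \sum_{q=1}^d E_q$. Since each $E_q \in [-1,1]$, the point $(E_1,\ldots,E_d)$ lies in $[-1,1]^d$ with coordinate sum $E$, so it belongs to $S_E$; moreover $E \in [-d,d]$, placing it in $\sigma(\Delta)$ as needed. I would then evaluate $g_E$ term by term. For each $q$ there are exactly two mechanisms forcing the $q$-th summand $(1-E_q^2)U_{\kappa_q-1}(E_q)$ to vanish: if $j_q \in \{0,\kappa_q\}$ then $E_q = \pm 1$ and the prefactor $1-E_q^2$ is zero; if instead $1 \leq j_q \leq \kappa_q - 1$, then $E_q = \cos(j_q \pi / \kappa_q)$ is a root of $U_{\kappa_q - 1}$ by \eqref{e:zerotcheby} (applied with $n = \kappa_q - 1$, so that $n+1 = \kappa_q$). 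These two cases exhaust all admissible values of $j_q$, hence every summand vanishes and $g_E(E_1,\ldots,E_d) = 0$.

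To finish, I would invoke the characterization: having located a zero of $g_E$ on $S_E$, the restriction $\left.g_E\right|_{S_E}$ is neither strictly positive nor strictly negative, so $E \notin \boldsymbol{\mu}^{+}_{A_{\kappa}}(\Delta)$ and $E \notin \boldsymbol{\mu}^{-}_{A_{\kappa}}(\Delta)$; therefore $E \in [-d,d] \setminus \boldsymbol{\mu}_{A_{\kappa}}(\Delta)$, which is exactly the asserted inclusion.

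I do not anticipate a genuine obstacle: the substance is the simultaneous vanishing of all $d$ terms, and this is guaranteed term-by-term by the elementary dichotomy above. The only point deserving care is the bookkeeping against \eqref{e:zerotcheby} — namely noting that the endpoints $j_q = 0$ and $j_q = \kappa_q$, which fall \emph{outside} the index range $\{1,\ldots,\kappa_q-1\}$ of that zero formula, are precisely the cases handled instead by the vanishing prefactor $1-E_q^2$ — together with the remark that a single zero on $S_E$ suffices because the sign criterion demands strict positivity (or negativity) \emph{throughout} $S_E$.
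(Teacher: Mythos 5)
Your proof is correct and is essentially the paper's own argument: both exhibit the point $E_q=\cos(j_q\pi/\kappa_q)$ on $S_E$ and observe that every summand $(1-E_q^2)U_{\kappa_q-1}(E_q)$ vanishes, since the roots of $x\mapsto(1-x^2)U_{\kappa_q-1}(x)$ are exactly $\{\cos(j\pi/\kappa_q)\}_{j=0}^{\kappa_q}$. Your explicit split between the endpoint cases (killed by the prefactor $1-E_q^2$) and the interior cases (killed by \eqref{e:zerotcheby}) is just a slightly more detailed rendering of the same step.
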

\begin{remark} This lemma supports the conjectures in Tables \ref{tab:table1011} and \ref{tab:table1013127conj}.
\end{remark}
\begin{proof}
Recalling \eqref{e:zerotcheby}, the roots of $x \mapsto (1-x^2)U_{n-1}(x)$ are $\{ \cos(\pi j / n)\}_{j=0}^{n}$. Let $E_q = \cos( j_q \pi / \kappa_q)$, $q=1,...,d$. Then $E= \sum_{q=1}^d E_q = \sum_{q=1} ^d \cos( j_q \pi / \kappa_q)$ and $g_E(E_1, ..., E_d) = 0$.
\qed
\end{proof}
For the next Lemma we require more notation to avoid confusion. Let $g_E ^{[d]}$ denote the function $g_E$ from \eqref{def:gE} to specify it is a function of the $d$ variables $(E_1,...,E_d)$. For a multi-index $\kappa$, denote $\kappa [d]$ the restriction of $\kappa$ to its first $d$ components. 
\begin{Lemma} For any $d \geq 2$, for any $\kappa = (\kappa_j)_{j=1}^{d}$, one has
\[\boldsymbol{\mu}_{A_{\kappa [d]}}(\Delta_{d})\pm 1\subset\boldsymbol{\mu}_{A_{\kappa [d-1]}}(\Delta_{d}).\]
\end{Lemma}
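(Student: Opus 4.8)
The plan is to decide membership in both sides purely from the sign of the associated polynomials on their constant-energy surfaces, using the characterization recorded just before the lemma: for a conjugate operator of the form $A_{\kappa[m]}$ one has $E\in\boldsymbol{\mu}^{\pm}_{A_{\kappa[m]}}$ if and only if the governing symbol is strictly of sign $\pm$ on the relevant constant-energy surface. The point is that $A_{\kappa[d-1]}=\sum_{j=1}^{d-1}A_j$ omits the $d$-th generator, so by Proposition \ref{reg_Prop_DD} its commutator with $\Delta_d$ is $\sum_{j=1}^{d-1}(1-\Delta_j^2)U_{\kappa_j-1}(\Delta_j)$; the polynomial governing the right-hand side $\boldsymbol{\mu}_{A_{\kappa[d-1]}}(\Delta_d)$ is therefore the reduced symbol $g_E^{[d-1]}(E_1,\ldots,E_{d-1})=\sum_{j=1}^{d-1}(1-E_j^2)U_{\kappa_j-1}(E_j)$, which does not involve the variable $E_d$.

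The key step is a face identity for the cube $[-1,1]^d$. The $d$-th summand of $g_E^{[d]}$, namely $(1-E_d^2)U_{\kappa_d-1}(E_d)$, vanishes identically on the two faces $\{E_d=\pm1\}$ because of the factor $1-E_d^2$. Consequently $g_E^{[d]}=g_E^{[d-1]}$ on each face, and the intersection of $S_E^{[d]}:=\{(E_1,\ldots,E_d)\in[-1,1]^d:\sum_j E_j=E\}$ with $\{E_d=1\}$ projects, under deletion of the last coordinate, onto the entire $(d-1)$-dimensional surface $S_{E-1}^{[d-1]}:=\{(E_1,\ldots,E_{d-1})\in[-1,1]^{d-1}:\sum_{j<d}E_j=E-1\}$; likewise the face $\{E_d=-1\}$ yields $S_{E+1}^{[d-1]}$.

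With this in hand I would fix $E\in\boldsymbol{\mu}^{+}_{A_{\kappa[d]}}(\Delta_d)$, so that $g_E^{[d]}>0$ strictly on all of $S_E^{[d]}$, hence in particular on both faces. The face identity upgrades this to $g^{[d-1]}>0$ strictly on the \emph{whole} of $S_{E-1}^{[d-1]}$ and of $S_{E+1}^{[d-1]}$, which is exactly the assertion that $E-1$ and $E+1$ belong to the set cut out by the reduced symbol, i.e.\ to $\boldsymbol{\mu}^{+}_{A_{\kappa[d-1]}}(\Delta_d)$. Running the same argument with $-g$ disposes of $\boldsymbol{\mu}^{-}$, and taking unions gives the claimed inclusion $\boldsymbol{\mu}_{A_{\kappa[d]}}(\Delta_d)\pm1\subset\boldsymbol{\mu}_{A_{\kappa[d-1]}}(\Delta_d)$; restriction to a face transports the sign of the Mourre constant unchanged, so the $\pm$ label is preserved in each branch.

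The step I expect to be the main obstacle is the boundary bookkeeping. The face $\{E_d=+1\}$ (resp.\ $\{E_d=-1\}$) is non-empty precisely when $E-1$ (resp.\ $E+1$) lies in $[-(d-1),d-1]$, the range of $\sum_{j<d}E_j$; when it is empty the shifted energy falls outside the spectrum of the reduced problem and there is nothing to prove there, so the inclusion must be read modulo these degenerate cases. The one genuinely substantive verification is that the trace of $S_E^{[d]}$ on a face recovers the \emph{entire} reduced surface $S_{E\mp1}^{[d-1]}$ and not merely a proper subset of it — this is what guarantees strict positivity everywhere on $S_{E\mp1}^{[d-1]}$, hence genuine membership in $\boldsymbol{\mu}^{\pm}_{A_{\kappa[d-1]}}$; everything else reduces to the functional-calculus characterization already in hand together with the vanishing of the factor $1-E_d^2$ at $E_d=\pm1$.
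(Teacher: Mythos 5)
Your proposal is correct and follows essentially the same route as the paper's own proof: both hinge on the face identity $g_E^{[d-1]}(E_1,\ldots,E_{d-1})=g_E^{[d]}(E_1,\ldots,E_{d-1},\pm 1)$ together with the fact that the trace of $S_{E\pm 1}^{[d]}$ on the face $\{E_d=\pm 1\}$ is the \emph{entire} reduced surface $S_{E}^{[d-1]}$, so strict positivity (resp.\ negativity) of $g^{[d]}$ on $S_{E\pm 1}^{[d]}$ passes to strict positivity (resp.\ negativity) of $g^{[d-1]}$ on all of $S_{E}^{[d-1]}$. Your one imprecision --- identifying the target set as $\boldsymbol{\mu}_{A_{\kappa [d-1]}}(\Delta_{d})$ --- is inherited verbatim from the lemma's own wording (an index slip; compare the preceding lemma for $D$, whose right-hand side is $D_{d-1}$): what your argument, exactly like the paper's, actually establishes is positivity on the $(d-1)$-variable surfaces, i.e.\ the inclusion into $\boldsymbol{\mu}_{A_{\kappa [d-1]}}(\Delta_{d-1})$, read modulo shifted energies falling outside $\sigma(\Delta_{d-1})=[-(d-1),d-1]$, a degenerate case you correctly flag.
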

\begin{proof}
From \eqref{def:gE}, $g_E ^{[d-1]}(E_1,...,E_{d-1}) = g_E^{[d]}(E_1,...,E_{d-1},\pm1)$. Therefore, 
\[\inf_{(E_1, \ldots, E_{d-1})\in S_E^{[d-1]}}g_E ^{[d-1]}(E_1,...,E_{d-1}) \geq \inf_{(E_1, \ldots, E_{d})\in S_{E \pm 1}^{[d]}} g_{E}^{[d]}(E_1,...,E_{d})\]
gives $\boldsymbol{\mu}^+_{A_{\kappa [d]}}(\Delta_{d})\pm 1\subset\boldsymbol{\mu}^+_{A_{\kappa [d-1]}}(\Delta_{d})$ and 
\[\sup_{(E_1, \ldots, E_{d-1})\in S_E^{[d-1]}}g_E ^{[d-1]}(E_1,...,E_{d-1}) \leq \sup_{(E_1, \ldots, E_{d})\in S_{E \pm 1}^{[d]}} g_{E}^{[d]}(E_1,...,E_{d})\]
ensures $\boldsymbol{\mu}^-_{A_{\kappa [d]}}(\Delta_{d})\pm 1\subset\boldsymbol{\mu}^-_{A_{\kappa [d-1]}}(\Delta_{d})$. This implies the statement. 
\qed
\end{proof}

\begin{Lemma}
\label{Lemma_symmetryDelta}
For any $d\geq1$, $\forall \kappa = (\kappa_j)$ with all $\kappa_j$ even or all $\kappa_j$ odd, $\boldsymbol{\mu}_{A_{\kappa}} (\Delta) = - \boldsymbol{\mu}_{A_{\kappa}} (\Delta)$. 
\end{Lemma}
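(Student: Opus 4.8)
The plan is to exploit the characterization recorded just before the Lemma: for $E \in \sigma(\Delta) = [-d,d]$ one has $E \in \boldsymbol{\mu}^{\pm}_{A_{\kappa}}(\Delta)$ if and only if $\pm g_E|_{S_E}$ is strictly positive, where $g_E(E_1,\ldots,E_d) = \sum_{j=1}^d (1-E_j^2) U_{\kappa_j-1}(E_j)$ and $S_E = \{(E_1,\ldots,E_d) \in [-1,1]^d : \sum_j E_j = E\}$. Since the polynomial $g_E$ in fact does not depend on the label $E$, I will write $g := g_E$ and reserve $E$ only for selecting the surface. The whole argument reduces to tracking $g$ and $S_E$ under the global sign flip $\phi: (E_1,\ldots,E_d) \mapsto (-E_1,\ldots,-E_d)$, which restricts to a bijection $\phi: S_E \to S_{-E}$ because $\sum_j E_j = E$ forces $\sum_j(-E_j) = -E$.

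First I would record the parity $U_n(-x) = (-1)^n U_n(x)$ of the Chebyshev polynomials of the second kind, immediate from $U_n(\cos\theta) = \sin((n+1)\theta)/\sin\theta$ together with $\cos(\pi-\theta) = -\cos\theta$. As $(1-E_j^2)$ is even in $E_j$, the $j$-th summand of $g$ obeys $(1-E_j^2)U_{\kappa_j-1}(-E_j) = (-1)^{\kappa_j-1}(1-E_j^2)U_{\kappa_j-1}(E_j)$. This is where the hypothesis is used: if all $\kappa_j$ are even then every $\kappa_j-1$ is odd, and if all $\kappa_j$ are odd then every $\kappa_j-1$ is even, so the sign $(-1)^{\kappa_j-1}$ is the same for all $j$ and factors out of the sum. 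Hence $g \circ \phi = \varepsilon\, g$ on $[-1,1]^d$, with $\varepsilon = -1$ in the all-even case and $\varepsilon = +1$ in the all-odd case.

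I would then transport strict positivity across $\phi$. Using the bijection $\phi: S_E \to S_{-E}$ and $g\circ\phi = \varepsilon g$, the function $g$ is strictly positive on $S_{-E}$ if and only if $\varepsilon g$ is strictly positive on $S_E$. In the all-odd case ($\varepsilon = +1$) this gives $-E \in \boldsymbol{\mu}^+_{A_{\kappa}}(\Delta) \iff E \in \boldsymbol{\mu}^+_{A_{\kappa}}(\Delta)$, and similarly for $\boldsymbol{\mu}^-_{A_{\kappa}}(\Delta)$, so each set is already symmetric. In the all-even case ($\varepsilon = -1$) it gives $-E \in \boldsymbol{\mu}^+_{A_{\kappa}}(\Delta) \iff E \in \boldsymbol{\mu}^-_{A_{\kappa}}(\Delta)$ and conversely, so $\phi$ interchanges $\boldsymbol{\mu}^+_{A_{\kappa}}(\Delta)$ and $\boldsymbol{\mu}^-_{A_{\kappa}}(\Delta)$. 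In both cases, passing to the union $\boldsymbol{\mu}_{A_{\kappa}}(\Delta) = \boldsymbol{\mu}^+_{A_{\kappa}}(\Delta) \cup \boldsymbol{\mu}^-_{A_{\kappa}}(\Delta)$ yields $\boldsymbol{\mu}_{A_{\kappa}}(\Delta) = -\boldsymbol{\mu}_{A_{\kappa}}(\Delta)$, as claimed.

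There is no analytic subtlety here; the only thing to get right is the bookkeeping of the sign $\varepsilon$, and the key conceptual point is that the assumption "all even or all odd" is exactly what makes $(-1)^{\kappa_j-1}$ uniform in $j$, so that $g$ acquires a definite parity under $\phi$. For mixed parities different summands would flip with opposite signs, $g$ would have no definite symmetry, and the conclusion would genuinely fail — consistent with the tables, where symmetry about zero is asserted only for uniform parity of the $\kappa_j$.
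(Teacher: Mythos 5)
Your proof is correct and follows essentially the same route as the paper, which simply invokes the parity of the Chebyshev polynomials $U_n$ together with the identity $S_{-E}=-S_E$; you have merely carried out the sign bookkeeping (the factor $(-1)^{\kappa_j-1}$ being uniform in $j$ under the hypothesis, hence $g\circ\phi=\varepsilon g$) explicitly. Nothing is missing.
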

\begin{proof}
This follows from \eqref{def:gE} and the fact that the $U_n(\cdot)$ are even when $n$ is even, odd when $n$ is odd. Then use the fact that $S_{-E} = -S_E$.
\qed
\end{proof}
For this reason, we focus only on positive energies whenever all the $\kappa_j$'s have the same parity.

\begin{Lemma}
\label{edgySPdd}
For any $d \geq 1$, for any $\kappa = (\kappa_j)$, set  $\kappa^*:= \max_{1 \leq j \leq d} \kappa_j$. We have:
\[ (d-1+\cos \left(\pi / \kappa^* \right) ,d)\subset \boldsymbol{\mu}_{A_{\kappa}}(\Delta). \]
Moreover if all $\kappa_j$ have the same parity, we also get
\[ -(d-1+\cos \left(\pi / \kappa^* \right) ,d)\subset \boldsymbol{\mu}_{A_{\kappa}}(\Delta). \]
\end{Lemma}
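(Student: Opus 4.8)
The plan is to exploit the criterion recorded just after \eqref{constE_MV}: an energy $E$ lies in $\boldsymbol{\mu}^+_{A_{\kappa}}(\Delta)$ exactly when the restriction $\left. g_E \right|_{S_E}$ of the polynomial \eqref{def:gE} is strictly positive. So I would fix $E \in (d-1+\cos(\pi/\kappa^*),d)$ and prove that $g_E$ is strictly positive at every point of the constant energy surface $S_E$; by continuity of $g_E$ and compactness of $S_E$ this upgrades to $\min_{S_E} g_E > 0$ and hence gives $E \in \boldsymbol{\mu}^+_{A_{\kappa}}(\Delta) \subset \boldsymbol{\mu}_{A_{\kappa}}(\Delta)$.

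First I would analyze the sign of each summand $f_j(x) := (1-x^2) U_{\kappa_j-1}(x)$ for $x$ near $1$. Writing $x = \cos(\xi)$ and using \eqref{cheby}, one gets $f_j(\cos\xi) = \sin(\xi)\sin(\kappa_j \xi)$, which is strictly positive for $\xi \in (0, \pi/\kappa_j)$ and vanishes at $\xi = 0$. Translating back to the variable $x$, this says $f_j(x) > 0$ for $x \in (\cos(\pi/\kappa_j), 1)$, while $f_j(1) = 0$. Since $\kappa_j \leq \kappa^*$ implies $\cos(\pi/\kappa_j) \leq \cos(\pi/\kappa^*)$, each term $f_j$ is in particular strictly positive on the smaller interval $(\cos(\pi/\kappa^*), 1)$.

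Next I would use the geometry of $S_E$ to confine the coordinates. On $S_E$ one has $E_i = E - \sum_{j \neq i} E_j \geq E - (d-1)$, because every $E_j \leq 1$. For $E > d-1+\cos(\pi/\kappa^*)$ this forces $E_i > \cos(\pi/\kappa^*) \geq \cos(\pi/\kappa_i)$ for each $i$, so every coordinate lies in $(\cos(\pi/\kappa_i), 1]$, whence $f_i(E_i) \geq 0$ and $g_E = \sum_i f_i(E_i) \geq 0$ on $S_E$. To get strict positivity, observe that $g_E = 0$ would force $f_i(E_i) = 0$ for every $i$; by the sign analysis above, on $(\cos(\pi/\kappa_i), 1]$ this is only possible when $E_i = 1$, whence $\sum_i E_i = d$, contradicting $E < d$. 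Thus $g_E > 0$ pointwise on $S_E$, which completes the first inclusion.

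For the symmetric statement I would simply invoke Lemma \ref{Lemma_symmetryDelta}: when all $\kappa_j$ have the same parity, $\boldsymbol{\mu}_{A_{\kappa}}(\Delta) = -\boldsymbol{\mu}_{A_{\kappa}}(\Delta)$, so negating the interval just established yields $-(d-1+\cos(\pi/\kappa^*),d) \subset \boldsymbol{\mu}_{A_{\kappa}}(\Delta)$. The only delicate point I anticipate is the exclusion of the degenerate configuration with some $E_i = 1$ in the strict-positivity step; this is exactly where the strict bound $E < d$ (the open right endpoint of the interval) is essential, and it is what prevents the top threshold $E = d$ from entering $\boldsymbol{\mu}_{A_{\kappa}}(\Delta)$.
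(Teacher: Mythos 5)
Your proof is correct and follows essentially the same route as the paper's: decompose $g_E$ into the one-variable summands $(1-E_j^2)U_{\kappa_j-1}(E_j)$, show each is strictly positive on $(\cos(\pi/\kappa_j),1)$, use $E_i \geq E-(d-1)$ on $S_E$ to confine the coordinates, and invoke Lemma \ref{Lemma_symmetryDelta} for the negative interval. In fact you are slightly more careful than the paper, which asserts $S_E \subset (\cos(\pi/\kappa^*),1)^d$ even though coordinates equal to $1$ can occur; your explicit treatment of that degenerate configuration (where the strict bound $E<d$ saves positivity) closes a small gap the paper's one-line argument skips over.
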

\begin{proof}
Consider the $d$ one variable polynomials, $h_j(x):= (1-x^2)U_{\kappa_j-1}(x)$, $j=1,...,d$. Clearly $g_E(E_1,...,E_d) = \sum_{j=1}^d h_j(E_j)$. Recalling \eqref{e:zerotcheby} and that $U_n(1)=n+1$ for all $n\geq 1$, we infer that
\[h_j(x)>0, \quad \forall x\in (\cos(\pi/\kappa_j), 1).\]
Furthermore, $E \in (d-1+\cos(\pi / \kappa^*),d)$ implies $S_E \subset (\cos(\pi / \kappa^*), 1)^d\subset \boldsymbol{\mu}^+_{A_{\kappa}}(\Delta)$. For the negative part, use Lemma \ref{Lemma_symmetryDelta}. \qed
\end{proof}
\begin{remark} The interval $(d-1+\cos \left(\pi / \kappa^* \right) ,d)$ is maximal in ${\mu}_{A_{\kappa}}(\Delta$) due to Lemma \ref{lemSUMcos} by taking $j_{\kappa^*}=1$ and the other $j_i=0$. 
\end{remark}

\noindent \underline{\textbf{Assumption:}} For the rest of this Section we suppose all $\kappa_j$'s are equal. Thus we always assume without loss of generality that $E \in (0,d)$. 

For some of the following results we use a \textit{ratio test}, to rule out some energies where a strict Mourre estimate can hold. It goes as follows: one considers two points belonging to $S_E$. For example one may choose (1) $(E_1,E_2,...,E_d) = (E-d+1,1,...,1)$, and (2) $(E_1,E_2,...,E_d) = (E/d,E/d,...,E/d)$. Then one looks at the sign of the ratio 
\begin{align*}
r(E):= d \frac{ g_E(E-d+1,1,...,1)}{g_E(E/d,E/d,...,E/d)} &= \frac{(1-(E-d+1)^2) U_{\kappa-1}(E-d+1)}{(1-(E/d)^2) U_{\kappa-1}(E/d)}.
\end{align*}
Assuming $-1 \leq E-d+1 \leq 1$ and $-1 \leq E/d \leq 1$, the sign of $r(E)$ is the same as that of $R(E) = U_{\kappa-1}(E-d+1) [U_{\kappa-1}(E/d)]^{-1}$.  If $R(E) <0$, a Mourre estimate cannot hold at $E$, i.e.\ $E \not \in \boldsymbol{\mu}_{A_{\kappa}}(\Delta)$. If $R(E) >0$, the test is inconclusive, i.e.\ a Mourre estimate may or may not hold at $E$. For warm up Table \ref{table:1019090u} applies the ratio test for some values of $\kappa$ in $2d$.

\begin{table}[H]
    \begin{tabular}{c|c|c} 
      $\kappa$ & $R(E)$ & $\{ E \in (0,2): R(E) <0 \}$     \\ [0.4em]
      \hline
      $1$  & $1$ &  (test inconclusive) \\ [0.4em]
      $2$  & $2(E-1)E^{-1}$ & $E \in (0,1)$ \\ [0.4em]
      $3$  & $[4(E-1)^2-1][E^2-1]^{-1}$ & $E \in (0,1/2) \cup (1,3/2)$ \\ [0.4em]
      $4$  &$[8(E-1)^3-4(E-1)][8(E/2)^3-4(E/2)]^{-1}$ & $E \in (1/2,1) \cup (\sqrt{2}, 3/2)$ 
     \end{tabular}
  \caption{Ratio test. $d=2$, $\kappa =1,2,3,4$. Applied to points $(E-1,1)$ and $(E/2,E/2)$. }
     \label{table:1019090u}
\end{table}
\begin{Lemma} 
\label{Lemma48}
Let $d\geq 1$. Suppose $\kappa=2$. Then $\boldsymbol{\mu}_{A_{\kappa}}(\Delta) = \pm (d-1,d)$. 
\end{Lemma}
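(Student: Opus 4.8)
The plan is to prove the two inclusions separately, reducing everything to positive energies. Since $\kappa=2$ is even, Lemma \ref{Lemma_symmetryDelta} gives $\boldsymbol{\mu}_{A_{\kappa}}(\Delta)=-\boldsymbol{\mu}_{A_{\kappa}}(\Delta)$, so it suffices to determine $\boldsymbol{\mu}_{A_{\kappa}}(\Delta)\cap[0,d]$. The inclusion $(d-1,d)\subset\boldsymbol{\mu}_{A_{\kappa}}(\Delta)$ is already furnished by Lemma \ref{edgySPdd} (with $\kappa^*=2$, $\cos(\pi/\kappa^*)=0$), while $d\notin\boldsymbol{\mu}_{A_{\kappa}}(\Delta)$ is immediate, since $E=d$ forces $E_j\equiv1$ and hence $g_d\equiv0$ on $S_d$. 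Thus the whole content of the lemma is the reverse inclusion: every $E\in[0,d-1]$ must be excluded from $\boldsymbol{\mu}_{A_{\kappa}}(\Delta)$.

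The key observation is that it is enough to produce, for each such $E$, a \emph{single} zero of $g_E$ on the surface $S_E$. Indeed, by the characterization recalled above, $E\in\boldsymbol{\mu}^{\pm}_{A_{\kappa}}(\Delta)$ iff $\pm\,g_E|_{S_E}$ is strictly positive; if $g_E$ vanishes somewhere on $S_E$, then neither $g_E|_{S_E}$ nor $-g_E|_{S_E}$ can be strictly positive, so $E\notin\boldsymbol{\mu}^{+}_{A_{\kappa}}(\Delta)\cup\boldsymbol{\mu}^{-}_{A_{\kappa}}(\Delta)$. For $\kappa=2$ one has $U_1(X)=2X$, so that from \eqref{def:gE}
\[
g_E(E_1,\ldots,E_d)=2\sum_{j=1}^d E_j(1-E_j^2)=2\Big(E-\sum_{j=1}^d E_j^3\Big)\qquad\text{on }S_E.
\]
Hence $g_E$ has a zero on $S_E$ exactly when $E$ lies in the range of $(E_1,\ldots,E_d)\mapsto\sum_j E_j^3$ over $S_E$. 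Since $S_E$ is convex, hence connected, and $g_E$ is continuous, it suffices by the intermediate value theorem to exhibit one point of $S_E$ where $g_E\geq0$ and one where $g_E\leq0$.

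For the first, the balanced point $E_j\equiv E/d$ gives $g_E=2E\bigl(1-(E/d)^2\bigr)\geq0$ for $E\in[0,d]$. For the second, set $k:=\lceil E\rceil$ of the coordinates equal to $1$ and the remaining $d-k$ coordinates equal to $v:=(E-k)/(d-k)$. Because $E\leq d-1$ we have $k\leq d-1$, so at least one coordinate remains; moreover $k-1<E\leq k$ gives $E-k\in(-1,0]$, whence $v\in(-1,0]$, a legitimate point of $S_E$, and there $g_E=2(d-k)\,v(1-v^2)\leq0$ (the $k$ unit coordinates contribute nothing). The intermediate value theorem along the segment joining the two test points then yields a zero of $g_E$ on $S_E$, so $E\notin\boldsymbol{\mu}_{A_{\kappa}}(\Delta)$. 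Combining the two inclusions with the symmetry gives $\boldsymbol{\mu}_{A_{\kappa}}(\Delta)\cap[0,d]=(d-1,d)$, and therefore $\boldsymbol{\mu}_{A_{\kappa}}(\Delta)=\pm(d-1,d)$. The only delicate point—the ``main obstacle''—is the construction of the non-positive test point uniformly in $E$: one must keep $E_j\in[-1,1]$ for every $E\in[0,d-1]$, which is precisely what the feasibility checks $k=\lceil E\rceil\leq d-1$ and $v\in(-1,0]$ guarantee (integer values of $E$ being the boundary case $v=0$, where $g_E=0$ outright).
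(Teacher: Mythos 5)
Your proof is correct and follows essentially the same route as the paper: both exclude every $E\in[0,d-1]$ by comparing the sign of $g_E$ at the balanced point $(E/d,\ldots,E/d)$ with its sign at a point of $S_E$ having some coordinates pinned to $0$ or $1$ (this is exactly the paper's ``ratio test''). Your single uniform test point with $k=\lceil E\rceil$ unit coordinates and the remainder spread evenly is a minor tidying that treats integer energies directly (where $v=0$ gives $g_E=0$ outright), whereas the paper uses one test point per subinterval $(j-1,j)$ and then invokes openness of $\boldsymbol{\mu}_{A_{\kappa}}(\Delta)$ to take closures.
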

\begin{proof}
The inclusion $\supset$ follows from Lemma \ref{edgySPdd}. For the reverse inclusion, we assume $E>0$ and $d\geq2$ (the statement is true for $d=1$). We apply the ratio test to the points $(E_1,x_2,x_3,...,x_d)$ and $(E/d,E/d,...,E/d)$ where the $x_i$ are equal to $0$ or $1$. Say that there are $j$ $x_i$'s that are equal to $1$, then $E_1 = E-j$. $j$ takes any of the values from $1$ to $d-1$. We must assume $-1 \leq E-j \leq 1$ and $-1 \leq E/d \leq 1$. Then $R(E) = d(E-j)/E$. This is negative for $j-1 < E < j$. The result now follows from the fact that the set of points where a Mourre estimate holds is an open set.
\qed
\end{proof}

\begin{Lemma} 
\label{Lemma49}
Let $d=2$, $\kappa = 3$. Then $\boldsymbol{\mu}_{A_{\kappa}}(\Delta) = \pm \left(\frac{1}{2}\sqrt{\frac{1}{2}(5-\sqrt{7})}, 1\right) \cup \pm \left(\frac{3}{2},2 \right)$.
\end{Lemma}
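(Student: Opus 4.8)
For $d=2$, $\kappa=3$, the constant energy surface $S_E$ is the line segment $\{(E_1, E-E_1) : E_1, E-E_1 \in [-1,1]\}$, and by the criterion established in the text we must determine for which $E \in (0,2)$ the restriction $\pm g_E|_{S_E}$ is strictly positive, where $g_E(E_1,E_2) = h(E_1) + h(E_2)$ with $h(x) := (1-x^2)U_2(x) = (1-x^2)(4x^2-1)$. The plan is to reduce the two-variable problem to a one-variable extremization by parametrizing $S_E$ via $E_1 = E/2 + t$, $E_2 = E/2 - t$, and studying $\phi_E(t) := h(E/2+t) + h(E/2-t)$ on the allowed $t$-interval. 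Because $h$ is even (consistent with Lemma \ref{Lemma_symmetryDelta}), $\phi_E$ is an even function of $t$, so its extrema occur at $t=0$ (the diagonal point) and at the endpoints of $S_E$ and at interior critical points solving $h'(E/2+t) = h'(E/2-t)$.

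\textbf{Key steps.} First I would use Lemma \ref{edgySPdd} to get the inclusion $\left(\tfrac{3}{2}, 2\right) \subset \boldsymbol{\mu}_{A_\kappa}(\Delta)$ for free, since $d-1+\cos(\pi/3) = 1 + \tfrac12 = \tfrac32$; the remark after that lemma, together with Lemma \ref{lemSUMcos} (whose threshold set here includes $\cos(\pi/3)+1 = \tfrac32$ and $2\cos(\pi/3)=1$), confirms $\tfrac32$ and $1$ are genuine endpoints. Second, for the lower band I would apply the ratio test of Table \ref{table:1019090u}: for $\kappa=3$ one has $R(E) < 0$ precisely on $(0,\tfrac12) \cup (1,\tfrac32)$, which immediately excludes those energies from $\boldsymbol{\mu}_{A_\kappa}(\Delta)$. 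This already shows the gaps must contain $[0,\tfrac12]$ and the region near $(1,\tfrac32)$ is excluded, matching the claimed lower endpoint $\tfrac12\sqrt{\tfrac12(5-\sqrt7)}$ being strictly above $\tfrac12$. Third, the delicate part is to locate exactly the left endpoint of the lower band on $\left(\tfrac12, 1\right)$. Here the ratio test is inconclusive (it only uses two sample points), so I would compute the global extrema of $\phi_E(t)$ honestly: evaluate $\phi_E'(t)=0$, which after using the explicit cubic $h'(x) = -8x^3 + 10x \cdot(\text{lower order})$ reduces to $t=0$ or an equation quadratic in $t^2$ and in $E^2$. Setting the relevant critical value of $\phi_E$ equal to zero yields an algebraic equation in $E$; solving it produces $E^2 = \tfrac14 \cdot \tfrac12(5-\sqrt7)$, i.e.\ the stated endpoint $\tfrac12\sqrt{\tfrac12(5-\sqrt7)}$.

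\textbf{Main obstacle.} The genuine difficulty is the third step: pinning down the exact left endpoint of the band $\left(\tfrac12\sqrt{\tfrac12(5-\sqrt7)},1\right)$. This requires determining where the minimum of $g_E|_{S_E}$ transitions through zero, which is not decided by the crude two-point ratio test and demands a full analysis of the interior critical points of the quartic-in-each-variable polynomial $\phi_E$ along the segment $S_E$, including careful bookkeeping of which critical point realizes the global minimum as $E$ varies and verification that the endpoints of $S_E$ (where some $E_j = \pm 1$, forcing $h = 0$) do not produce a lower value. I expect the algebra to funnel into a quadratic in $E^2$ whose root gives $\tfrac12(5-\sqrt7)$; the $\sqrt7$ is the tell-tale discriminant of that quadratic. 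The sign analysis (determining that $+A_\kappa$ is the correct choice on the upper band and the appropriate sign on the lower band) and the verification that strict positivity genuinely holds throughout the open intervals — not merely nonvanishing — will also need attention, and the symmetric negative bands then follow immediately from Lemma \ref{Lemma_symmetryDelta}.
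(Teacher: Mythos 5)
Your proposal is correct and follows essentially the same route as the paper: the paper likewise reduces to the one-variable function $g_E(E-x,x)$ on the segment $S_E$, finds the critical points $x=E/2$ and $x=E/2\pm\sqrt{5/2-3E^2}/2$ (your $t=0$ and the quadratic-in-$t^2$ roots), and solves for the energies where the relevant extremum crosses zero, yielding $\frac{1}{2}\sqrt{\frac{1}{2}(5-\sqrt{7})}$ and $1$. Your additional appeals to Lemma \ref{edgySPdd} and the ratio test are consistent shortcuts for the upper band and the crude exclusions, but the decisive step is the same critical-point analysis.
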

\begin{proof}
Fix $0 < E < 2$. For $\kappa = 3$, it is still human to do an analysis of the function 
\[h(x):= g_E(E-x, x) = (1-x^2)(4x^2-1) + (1-(E-x)^2)(4(E-x)^2-1),\]
 defined for $x \in [E-1,1]$. We give a brief sketch. The roots of $h'$ are $x=E/2$ and also $x = E/2 \pm \sqrt{5/2-3E^2}/2$ provided $E \in (0,\sqrt{5/6})$. If $E \in (0,\sqrt{5/6})$, $h$ reaches its maximum at $x = E/2 \pm \sqrt{5/2-3E^2}/2$ ; if $E \in (\sqrt{5/6},2)$ $h$ reaches its maximum at $x=E/2$. Plug these values into $h$ and find the positive roots of $h$. They are $\frac{1}{2}\sqrt{\frac{1}{2}(5-\sqrt{7})}$ and $1$ respectively. \qed 
\end{proof}

\begin{Lemma} 
\label{Lemma410}
Let $d=2$, $\kappa = 4$.  Then $\boldsymbol{\mu}_{A_{\kappa}}(\Delta) = \pm \left(\sqrt{\frac{3}{2} - \frac{1}{\sqrt{5}}}, \sqrt{2} \right) \cup \pm \left(1+\frac{\sqrt{2}}{2},2 \right)$.
\end{Lemma}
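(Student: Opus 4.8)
The plan is to mimic the one-variable reduction of Lemma \ref{Lemma49}, while exploiting the extra symmetry available for $\kappa=4$. By Lemma \ref{Lemma_symmetryDelta} (all $\kappa_j$ even) it suffices to treat $E\in(0,2)$ and then reflect. Parametrizing the surface $S_E$ by $x=E_1\in[E-1,1]$, I set $H_E(x):=g_E(x,E-x)=h(x)+h(E-x)$, where $h(x):=(1-x^2)U_3(x)=4x(2x^2-1)(1-x^2)=-8x^5+12x^3-4x$. By functional calculus and continuity, $E\in\boldsymbol{\mu}^+_{A_\kappa}(\Delta)$ iff $H_E>0$ on all of $[E-1,1]$, and $E\in\boldsymbol{\mu}^-_{A_\kappa}(\Delta)$ iff $H_E<0$ there. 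The roots of $h$ on $[-1,1]$ are $\cos(\pi j/4)$, and $h(s)>0$ exactly on $(-\tfrac{1}{\sqrt2},0)\cup(\tfrac1{\sqrt2},1)$.

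The key simplification is that $h$ is odd, so the degree-$5$ parts of $h(x)$ and $h(E-x)$ cancel and $H_E$ is an even quartic about $x=E/2$. Writing $a=E/2$ and $y=x-E/2$, I would expand to obtain $H_E=a\,Q(y^2)$ with
\[
Q(t)=-80\,t^2+(72-160a^2)\,t-8(2a^4-3a^2+1),
\]
a downward parabola in $t=y^2$, where $y$ ranges over $|y|\le 1-a$, i.e.\ $t\in[0,T]$ with $T=(1-a)^2$. Since $a>0$, the whole problem reduces to the sign of $Q$ on $[0,T]$, and the geometry is carried by the two endpoint values $a\,Q(0)=2h(E/2)$ and $a\,Q(T)=H_E(1)=h(E-1)$ (using $h(1)=0$).

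For $\boldsymbol{\mu}^+$ I use concavity: a downward parabola lies above the chord through its endpoint values, so $Q>0$ on $[0,T]$ iff $Q(0)>0$ and $Q(T)>0$. Here $Q(0)>0\iff h(E/2)>0\iff E>\sqrt2$, while $Q(T)>0\iff h(E-1)>0\iff E\in(1-\tfrac1{\sqrt2},1)\cup(1+\tfrac1{\sqrt2},2)$; intersecting gives $\boldsymbol{\mu}^+\cap(0,2)=(1+\tfrac{\sqrt2}{2},2)$. For $\boldsymbol{\mu}^-$ I instead control the maximum of $Q$, which sits either at the vertex $t^*=\tfrac{9}{20}-a^2$ (when $t^*>0$, i.e.\ $E<3/\sqrt5$) or at $t=0$ (when $t^*\le0$). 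A short computation shows $T-t^*=2a^2-2a+\tfrac{11}{20}$ has negative discriminant, hence $t^*<T$ always: the vertex never escapes the interval to the right. When $t^*\le0$ the maximum is $Q(0)$, negative exactly for $E<\sqrt2$. When $t^*>0$ the maximum is the vertex value, and setting $Q(t^*)=0$ with $u=a^2$ reduces to $320u^2-240u+41=0$, i.e.\ $u=\tfrac38\pm\tfrac{\sqrt5}{20}$, i.e.\ $E^2=\tfrac32\pm\tfrac1{\sqrt5}$. Tracking signs shows $Q(t^*)<0$ throughout the relevant range down to $E=\sqrt{\tfrac32-\tfrac1{\sqrt5}}$, so $\boldsymbol{\mu}^-\cap(0,2)=(\sqrt{\tfrac32-\tfrac1{\sqrt5}},\sqrt2)$. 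The claimed set then follows by symmetry.

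The main obstacle is the bookkeeping in the $\boldsymbol{\mu}^-$ regime: one must confirm that $t^*$ lies inside $[0,T]$ whenever it is positive, and single out which of $E=\sqrt{\tfrac32\pm\tfrac1{\sqrt5}}$ is the true band edge — the $+$ root falls strictly inside the band and is spurious. Finally I would check that the gaps are genuinely excluded: on $(0,\sqrt{\tfrac32-\tfrac1{\sqrt5}})$ one has $Q(0)<0$ but $Q(t^*)>0$, and on $(\sqrt2,1+\tfrac{\sqrt2}{2})$ one has $H_E(E/2)>0$ but $H_E(1)=h(E-1)<0$, so in both cases $H_E$ changes sign and no strict Mourre estimate can hold.
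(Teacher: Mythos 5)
Your proposal is correct and follows essentially the same route as the paper: restrict $g_E$ to $S_E$ as a one-variable function of $x=E_1$, locate the interior critical points at $x=E/2$ and $x=E/2\pm\tfrac12\sqrt{9/5-E^2}$ (your vertex $t^*=\tfrac{9}{20}-a^2$ is exactly these), and solve for the energies where the extremal values vanish, recovering $\sqrt{3/2-1/\sqrt5}$ and $\sqrt{2}$. Your repackaging as a concave quadratic $Q(t)$ in $t=(x-E/2)^2$, together with the endpoint/vertex case analysis and the verification that the gaps are genuinely excluded, supplies the details that the paper's proof only sketches, and all your computations check out.
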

\begin{proof}
Fix $0 < E < 2$. Again we merely sketch a proof. We propose to analyze the function $h(x) := g_E(E-x, x)=(1-x^2)(8x^3-4x) + (1-(E-x)^2)(8(E-x)^3-4(E-x))$ defined for $x \in [E-1,1]$. It reaches its maximum at $x = E/2 \pm \sqrt{9/5-E^2}/2$ or $x=E/2$ depending on $E$. Plug these values into $h$ and find the positive root of $h$. They are $\sqrt{\frac{3}{2} - \frac{1}{\sqrt{5}}}$ and $\sqrt{2}$ respectively.  \qed 
\end{proof}
In the two previous Lemmata, note that the three numbers that are on the right, appear also in Lemma \ref{lemSUMcos}.
\begin{Lemma} 
\label{Lemma411}
Let $d=3$, $\kappa = 3$.  Then $\boldsymbol{\mu}_{A_{\kappa}}(\Delta) = \pm \left(\frac{5}{2}, 3\right)$.
\end{Lemma}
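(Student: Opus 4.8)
The plan is to read the commutator symbol off \eqref{MV:k333Ultra} for $\kappa\equiv 3$. Since $U_2(X)=4X^2-1$, the relevant one–variable block is $h(x):=(1-x^2)(4x^2-1)$, and on the surface $S_E$ from \eqref{constE_MV} (with $d=3$) the symbol associated to $g_E$ in \eqref{def:gE} is $g_E(E_1,E_2,E_3)=h(E_1)+h(E_2)+h(E_3)$. By the functional–calculus criterion recorded just after \eqref{constE_MV}, $E\in\boldsymbol{\mu}^{\pm}_{A_\kappa}(\Delta)$ if and only if $\pm g_E|_{S_E}$ is strictly positive, so everything reduces to locating, for each $E$, points of $S_E$ where $g_E\leq 0$ and where $g_E\geq 0$. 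The first step is thus a sign chart of $h$ on $[-1,1]$: writing $u=x^2$ one has $h=-(4u-1)(u-1)$, whence $h\leq 0$ for $|x|\leq\tfrac12$ (vanishing at $|x|=\tfrac12$), $h>0$ for $\tfrac12<|x|<1$, $h(\pm1)=0$, together with $\min_{[-1,1]}h=h(0)=-1$ and $\max_{[-1,1]}h=h(\pm\sqrt{5/8})=\tfrac{9}{16}$.

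The inclusion $\supset$ is immediate from Lemma~\ref{edgySPdd} with $d=3$, $\kappa^*=3$, $\cos(\pi/3)=\tfrac12$, which gives $(\tfrac52,3)\subset\boldsymbol{\mu}^+_{A_\kappa}(\Delta)$, and Lemma~\ref{Lemma_symmetryDelta} (all $\kappa_j$ odd) supplies the symmetric copy $-(\tfrac52,3)$. Thanks to this symmetry it suffices, for the reverse inclusion, to prove $[0,\tfrac52]\cap\boldsymbol{\mu}_{A_\kappa}(\Delta)=\emptyset$.

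To discard $\boldsymbol{\mu}^+$ on $[0,\tfrac52]$ I would exhibit a point of $S_E$ with $g_E\leq 0$: for $E\in[0,\tfrac32]$ the diagonal point $(E/3,E/3,E/3)$ has coordinates in $[0,\tfrac12]$, so $g_E=3h(E/3)\leq 0$; for $E\in(\tfrac32,\tfrac52]$ the point $(1,1,E-2)$ has $E-2\in(-\tfrac12,\tfrac12]$, so $g_E=h(E-2)\leq 0$ (with equality exactly at $E=\tfrac52$, consistent with Lemma~\ref{lemSUMcos}, as $\tfrac52=1+1+\cos(\pi/3)$). To discard $\boldsymbol{\mu}^-$ I would produce a point with $g_E\geq 0$: for $E\in[0,1]$ take $(\sqrt{5/8},-\sqrt{5/8},E)$, which gives $g_E=\tfrac{9}{8}+h(E)\geq\tfrac{9}{8}-1=\tfrac18>0$; for $E\in(1,\tfrac52]$ take $(\sqrt{5/8},\sqrt{5/8},E-\sqrt{5/2})$, whose third coordinate lies in $[-1,1]$ since $\sqrt{5/2}=2\sqrt{5/8}\approx1.58$, again yielding $g_E=\tfrac{9}{8}+h(E-\sqrt{5/2})\geq\tfrac18>0$. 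Because on all of $[0,\tfrac52]$ the restriction $g_E|_{S_E}$ takes both a nonpositive and a nonnegative value, neither $+g_E|_{S_E}$ nor $-g_E|_{S_E}$ is strictly positive, so $E\notin\boldsymbol{\mu}^{+}_{A_\kappa}(\Delta)\cup\boldsymbol{\mu}^{-}_{A_\kappa}(\Delta)$; with the $\supset$ part and symmetry this gives $\boldsymbol{\mu}_{A_\kappa}(\Delta)=\pm(\tfrac52,3)$.

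The step I expect to be delicate is the $\boldsymbol{\mu}^-$ direction at low energies, where the naive diagonal point gives $g_E<0$ and would misleadingly suggest a negative estimate; the fix is the two-sided point $(\pm\sqrt{5/8},\cdot)$, which exploits the large positive lobe $h(\pm\sqrt{5/8})=\tfrac9{16}$ so that two coordinates already outweigh the worst third contribution $h\geq-1$. In fact these points show $g_E|_{S_E}$ attains a positive value for every $E\in[0,3)$, so $\boldsymbol{\mu}^{-}_{A_\kappa}(\Delta)=\emptyset$ for this $(d,\kappa)$, the extra coordinate relative to the $d=2$ situation of Lemma~\ref{Lemma49} furnishing exactly the missing freedom. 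An alternative route feeds Lemma~\ref{Lemma49} into the dimensional-reduction lemma $\boldsymbol{\mu}_{A_{\kappa[d]}}(\Delta_d)\pm1\subset\boldsymbol{\mu}_{A_{\kappa[d-1]}}(\Delta_d)$ by appending a coordinate equal to $\pm1$ (where $h$ vanishes); one must then keep track of whether the shifted two-dimensional energy stays inside $[-2,2]$, which is why I would prefer the explicit points above.
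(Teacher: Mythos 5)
Your proof is correct and follows essentially the same route as the paper: both reduce the claim to the sign of $g_E(E_1,E_2,E_3)=\sum_j(1-E_j^2)U_2(E_j)$ at explicit points of $S_E$, the paper packaging the two-sign comparison as a ratio test against the diagonal point plus a closure argument, while you exhibit a nonpositive and a nonnegative witness separately (which has the minor advantage of covering the threshold energies $\tfrac12,1,\tfrac32,\tfrac52$ directly, without taking closures). The only slip is in your closing aside: the witness $(\sqrt{5/8},\sqrt{5/8},E-\sqrt{5/2})$ leaves $[-1,1]^3$ once $E>1+\sqrt{5/2}\approx 2.58$, so it does not by itself show that $g_E|_{S_E}$ takes a positive value for every $E\in[0,3)$ --- though $\boldsymbol{\mu}^-_{A_\kappa}(\Delta)=\emptyset$ still follows from Lemma~\ref{edgySPdd} on $(\tfrac52,3)$ --- and this does not affect the proof of the stated lemma, where you only invoke that point for $E\leq\tfrac52$.
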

\begin{proof}
Inclusion $\supset$ follows from Lemma \ref{edgySPdd}. Now let $q(E,x):= [4(E-x)^2-1][4(E/3)^2-1]^{-1}$. For the reverse inclusion we use the ratio test. First we apply it to the points $(E-2,1,1)$ and $(E/3,E/3,E/3)$. We have $R(E) = q(E,2)$. This ratio is valid for $-1 \leq E-2 \leq 1$, and negative for $E \in (1,5/2)$. Then we apply it to the points $(E-3/2,1/2,1)$ and $(E/3,E/3,E/3)$. Use $U_2(1/2)=0$. We have $R(E) = q(E,3/2)$. This ratio is valid for $-1 \leq E-3/2 \leq 1$, and negative for $E \in (1/2,1) \cup (3/2, 2)$. Finally we apply it to the points $(E-1,1/2,1/2)$ and $(E/3,E/3,E/3)$. We have $R(E) = q(E,1)$. This ratio is valid for $-1 \leq E-1 \leq 1$, and negative for $E \in (0,1/2)$. Taking closures rules out a Mourre estimate at all energies between $0$ and $5/2$. 
\qed
\end{proof}

\begin{Lemma} 
\label{Lemma412}
Let $d=3$, $\kappa = 4$. Then $[0,2] \cup [3/ \sqrt{2}, 2+1/ \sqrt{2}]\subset [0,3] \setminus \boldsymbol{\mu}_{A_{\kappa}}(\Delta)$.
\end{Lemma}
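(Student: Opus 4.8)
The plan is to run the \emph{ratio test} of this Section against the characterization that, by functional calculus, $E \notin \boldsymbol{\mu}_{A_{\kappa}}(\Delta)$ as soon as the restriction $g_E|_{S_E}$ of the polynomial \eqref{def:gE} takes both a strictly positive and a strictly negative value on $S_E$. Since $\boldsymbol{\mu}_{A_{\kappa}}(\Delta)$ is open its complement in $[0,3]$ is closed, so it suffices to rule out a strict Mourre estimate on a family of open subintervals whose union is dense in $[0,2]\cup[3/\sqrt{2},2+1/\sqrt{2}]$ and then pass to closures. For $\kappa=4$ the relevant one-variable polynomial is $h(x)=(1-x^2)U_3(x)$ with $U_3(x)=8x^3-4x=4x(2x^2-1)$; hence $h$ vanishes exactly at $x\in\{0,\pm 1/\sqrt{2},\pm 1\}$, while $U_3(x)>0$ on $(-1/\sqrt{2},0)\cup(1/\sqrt{2},1)$ and $U_3(x)<0$ on $(-1,-1/\sqrt{2})\cup(0,1/\sqrt{2})$. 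As comparison points in $S_E$ I will always use the uniform point $(E/3,E/3,E/3)$, of value $3h(E/3)$, together with an \emph{anchored} point $(E-a,b,c)$ where $b,c\in\{0,1/\sqrt{2},1\}$ are roots of $h$ with $b+c=a$; the latter has value $h(E-a)$ because the $b$- and $c$-terms drop out, so (as $1-(E-a)^2>0$) its sign is that of $U_3(E-a)$.

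First I treat $[0,2]$. For $E\in(0,2)$ one has $E/3<2/3<1/\sqrt{2}$, so $U_3(E/3)<0$ and the uniform point is strictly negative. It then remains to exhibit, on each piece of $(0,2)$, an anchored point with $U_3(E-a)>0$, i.e.\ with $E-a\in(-1/\sqrt{2},0)\cup(1/\sqrt{2},1)$. Taking $a=1/\sqrt{2}$ (anchor $(0,1/\sqrt{2})$) handles $E\in(0,1/\sqrt{2})$, where $E-a\in(-1/\sqrt{2},0)$, and also $E\in(\sqrt{2},1+1/\sqrt{2})$, where $E-a\in(1/\sqrt{2},1)$; taking $a=\sqrt{2}$ (anchor $(1/\sqrt{2},1/\sqrt{2})$) handles $E\in(1/\sqrt{2},\sqrt{2})$, where $E-a\in(-1/\sqrt{2},0)$; and taking $a=1$ (anchor $(0,1)$) handles $E\in(1+1/\sqrt{2},2)$, where $E-a\in(1/\sqrt{2},1)$. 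In each case the two comparison points carry opposite signs, the ratio is negative, and $E\notin\boldsymbol{\mu}_{A_{\kappa}}(\Delta)$. These four open intervals cover $(0,2)$ save the points $1/\sqrt{2},\sqrt{2},1+1/\sqrt{2}$, so taking closures yields $[0,2]$.

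Next I treat $[3/\sqrt{2},2+1/\sqrt{2}]$. Here the sign of the uniform point flips: for $E\in(3/\sqrt{2},2+1/\sqrt{2})$ one has $E/3\in(1/\sqrt{2},1)$, so $U_3(E/3)>0$ and $(E/3,E/3,E/3)$ is strictly positive. A single anchored point $(E-2,1,1)$ (anchor $a=2$) now suffices: since $E-2\in(0,1/\sqrt{2})$ for such $E$, one has $U_3(E-2)<0$ and this point is strictly negative. Opposite signs rule out a strict Mourre estimate on the whole open interval $(3/\sqrt{2},2+1/\sqrt{2})$, and passing to closures absorbs the left endpoint $3/\sqrt{2}$, where the uniform point degenerates into a root of $h$.

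The argument is entirely elementary, so the only delicate part — the main obstacle — is the \emph{bookkeeping}: matching the correct anchor $a$ to each subinterval and verifying that every comparison point $(E-a,b,c)$ really lies in $[-1,1]^3$, so that the $(1-x^2)$ prefactors are positive and the ratio test is legitimate. It is also worth noting \emph{why} the gap $(2,3/\sqrt{2})$ is left untreated: no sum $a=b+c$ of two roots of $h$ exceeds $2$, so for $E\in(2,3/\sqrt{2})$ no anchored point can place $E-a$ in $(-1/\sqrt{2},0)\cup(1/\sqrt{2},1)$, which is precisely why the statement stops at $2$ and only resumes at $3/\sqrt{2}$.
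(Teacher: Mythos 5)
Your proof is correct and follows the same overall strategy as the paper's: the ratio test comparing the uniform point $(E/3,E/3,E/3)$ against anchored points $(E-a,b,c)$ whose last two coordinates are zeros of $h(x)=(1-x^2)U_3(x)$, so that $g_E$ there collapses to the single term $h(E-a)$. Your bookkeeping checks out: on each of the four subintervals of $(0,2)$ the anchored value is strictly positive while the uniform value is strictly negative, on $(3/\sqrt{2},2+1/\sqrt{2})$ the signs are reversed, each comparison point lies in $[-1,1]^3$, and the closure argument legitimately absorbs the excluded endpoints because $\boldsymbol{\mu}_{A_{\kappa}}(\Delta)$ is open.

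Where you genuinely differ from the paper is in the choice of anchors, and your choice is the better one. The paper tests against $(E-3/2,1/2,1)$, $(E-1,1/2,1/2)$ and $(E-1/2,1/2,0)$ and asserts that the resulting ratio is $q(E,x)=U_3(E-x)/U_3(E/3)$ up to positive factors --- the same anchors as in the $\kappa=3$ case (Lemma \ref{Lemma411}). But $1/2$ is a zero of $U_2$, not of $U_3$: for $\kappa=4$ one has $h(1/2)=-3/4\neq 0$, so for instance $g_E(E-1,1/2,1/2)=h(E-1)-3/2$, and since $\max_{[-1,1]}h<3/2$ this is negative on all of $(1+1/\sqrt{2},2)$, where the uniform value is also negative; the test as written is inconclusive there, so the stated reduction does not hold. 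By anchoring at the actual roots $0$, $1/\sqrt{2}$, $1$ of $h$, you make every reduction exact, so your argument repairs a gap in the paper's own proof while reaching the same conclusion (which is consistent with the numerics of Table \ref{tab:table1013127conj}). Your closing remark on why no anchored point of this type can reach the gap $(2,3/\sqrt{2})$ is accurate and a useful sanity check, though not needed for the statement.
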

\begin{proof}
Let $q(E,x):= [8(E-x)^3-4(E-x)][8(E/3)^3-4(E/3)]^{-1}$. First we apply the ratio test to the points $(E-2,1,1)$ and $(E/3,E/3,E/3)$. We have $R(E) = q(E,2)$. This ratio is valid for $-1 \leq E-2 \leq 1$, and negative for $E \in (2-1/ \sqrt{2},2) \cup (3/\sqrt{2}, 2+1/ \sqrt{2})$. Next we apply it to the points $(E-3/2,1/2,1)$ and $(E/3,E/3,E/3)$. We have $R(E) =q(E,3/2)$. This ratio is valid for $-1 \leq E-3/2 \leq 1$, and negative for $E \in (3/2-1/ \sqrt{2},3/2) \cup (3/ \sqrt{2}, 3/2+1/ \sqrt{2})$. Next we apply it to the points $(E-1,1/2,1/2)$ and $(E/3,E/3,E/3)$. We have $R(E) = q(E,1)$. This ratio is valid for $-1 \leq E-1 \leq 1$, and negative for $E \in (1-1/ \sqrt{2},1) \cup (1+ 1/ \sqrt{2}, 2)$. Finally we apply it to the points $(E-1/2,1/2,0)$ and $(E/3,E/3,E/3)$. We have $R(E) = q(E,1/2)$. This ratio is valid for $-1 \leq E-1/2 \leq 1$, and negative for $E \in (0,1/2)$. This implies the statement.
\qed
\end{proof}

\section{Strict Mourre estimate for the Molchanov-Vainberg Laplacian $D$, $d \geq 2$}
\label{SME_MV}

In Section \ref{full_Regularity} the commutator $[D, \i A_{\kappa}]_{\circ}$ was computed to justify operator regularity. Now it is used to determine the sets $\boldsymbol{\mu}_{A_{\kappa}} (D)$ to the best of our capability. In a nutshell we prove

\begin{theorem}
The results mentioned in Table \ref{table:101} are true.
\end{theorem}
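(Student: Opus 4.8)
The plan is to mirror the analysis of $\Delta$ from the previous section, adapted to the multiplicative structure of $D$. By \eqref{COMMUTATOR_D} and functional calculus I associate to $[D,\i A_\kappa]_\circ$ the polynomial
\[\tilde g_E(E_1,\dots,E_d) := \sum_{j=1}^d \Big(\prod_{i\neq j} E_i\Big)(1-E_j^2)U_{\kappa_j-1}(E_j),\]
together with the \emph{product} constant-energy surface $S_E^D := \{(E_1,\dots,E_d)\in[-1,1]^d : \prod_{j} E_j = E\}$, which is the correct surface since $D=\prod_j\Delta_j$ has spectrum $[-1,1]$. As in the $\Delta$ case, $E\in\boldsymbol{\mu}^{\pm}_{A_\kappa}(D)$ iff $\pm\,\tilde g_E|_{S_E^D}$ is strictly positive. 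The decisive simplification, available because Table \ref{table:101} involves only \emph{even} $\kappa$, comes from \eqref{e:polyeven}: writing $U_{\kappa-1}(x)=x\,2^{\kappa-1}\prod_{k=1}^{\kappa/2-1}(x^2-\cos^2(k\pi/\kappa))$, every summand of $\tilde g_E$ acquires the common factor $\prod_i E_i=E$, so that on $S_E^D$
\[\tilde g_E = E\cdot 2^{\kappa-1}\sum_{j=1}^d (1-E_j^2)\prod_{k=1}^{\kappa/2-1}\big(E_j^2-\cos^2(k\pi/\kappa)\big) =: E\cdot 2^{\kappa-1}\sum_{j=1}^d p(E_j^2).\]
Hence $\sign(\tilde g_E)=\sign(E)\cdot\sign\!\big(\sum_j p(E_j^2)\big)$. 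Setting $t_j:=E_j^2\in[0,1]$ (any sign pattern of the $E_j$ compatible with $\prod E_j=E$ is realizable and irrelevant to the even function $p$), for $E>0$ this reduces everything to: $E\in\boldsymbol{\mu}^+_{A_\kappa}(D)$ iff $\min_{\prod t_j=E^2}\sum_j p(t_j)>0$, and $E\in\boldsymbol{\mu}^-_{A_\kappa}(D)$ iff $\max_{\prod t_j=E^2}\sum_j p(t_j)<0$, over $t_j\in[0,1]$.

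Next I would dispose of the structural and boundary cases. The symmetry $\boldsymbol{\mu}_{A_\kappa}(D)=-\boldsymbol{\mu}_{A_\kappa}(D)$ follows from the evenness of $p$ and $S_{-E}^D=-S_E^D$, so I restrict to $E\in[0,1]$. The endpoints are immediate: at $E=0$ the surface meets $\{E_j=0\}$ and the factored form gives $\tilde g_0\equiv0$, while at $E=1$ the point $(1,\dots,1)\in S_1^D$ makes every $1-E_j^2=0$; thus $0,1\notin\boldsymbol{\mu}_{A_\kappa}(D)$ for all $\kappa,d$, as the tables record. The $d=1$ column needs no new work, since $D=\Delta_1=\Delta$ in one dimension, so these entries are read off from the one-dimensional $\Delta$ lemma. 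Finally the $\kappa=2$ row is clean in all dimensions: here $p(t)=1-t$, so $\sum_j p(t_j)=d-\sum_j t_j>0$ unless all $t_j=1$; on $S_E^D$ (which for $0<E<1$ is compact and bounded away from $\{t_j=0\}$ since $\prod t_j=E^2>0$) this forces a strictly positive minimum, giving $(0,1)\subset\boldsymbol{\mu}^+_{A_2}(D)$ for $d=1,2,3$.

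For the remaining entries ($\kappa=4,6$ in $d=2,3$) the work splits in two. To show a band lies in $\boldsymbol{\mu}_{A_\kappa}(D)$, I must check that the constrained extremum of $F(t):=\sum_j p(t_j)$ over $\{\prod_j t_j=E^2,\ t_j\in[0,1]\}$ keeps one strict sign throughout the band. As this surface (for $E>0$) is compact and avoids $\{t_j=0\}$, the extrema sit either at interior critical points---located by Lagrange multipliers on $\log\prod t_j=\log E^2$, giving $t_j\,p'(t_j)=\lambda$ for all $j$, of which the symmetric candidate is $t_1=\dots=t_d=E^{2/d}$---or on the faces $\{t_j=1\}$, where one recurses in lower dimension. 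The band endpoints are the energies at which the optimal configuration makes $F=0$; for $\kappa=4$, with $p(t)=(1-t)(t-\tfrac12)$, these are explicit (e.g.\ the $d=2$ endpoint $E=1/\sqrt2$ comes from the face point $(t_1,t_2)=(1,\tfrac12)$, and $E=1/2$ from the symmetric point $t_1=t_2=\tfrac12$). To show a closed set lies in the complement $[0,1]\setminus\boldsymbol{\mu}_{A_\kappa}(D)$, I would run the \emph{ratio test} adapted to $D$: exhibit two points of $S_E^D$---typically the symmetric one $E_j\equiv E^{1/d}$ and an edge point with some $|E_j|=1$---at which $F$ takes opposite signs, so that $\tilde g_E$ changes sign and no Mourre estimate can hold at $E$.

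The main obstacle is exactly this constrained extremization. Unlike the affine surface $\sum_j E_j=E$ of the $\Delta$ problem, the multiplicative surface $\prod_j t_j=E^2$ is curved, so the stationarity conditions $t_j\,p'(t_j)=\lambda$ do not confine critical points to a single simple locus, and the number of competing boundary configurations (coordinates pinned at $1$, the rest forced small to preserve the product) grows with $d$ and with $\deg p=\kappa/2$. For $\kappa=4$ the extrema remain closed-form, but already for $\kappa=6$ the endpoints are only pinned down numerically---which is precisely why the $\kappa=6$ rows of Table \ref{table:101} carry decimals rather than exact values---and the supporting lemmas then verify the sign of $F$ at the claimed endpoints and rule out the gaps via the ratio test. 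Assembling these per-case lemmas, the $D$-analogues of Lemmas \ref{Lemma48}--\ref{Lemma412}, yields every inclusion recorded in Table \ref{table:101}.
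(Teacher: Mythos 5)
Your setup coincides with the paper's: the same polynomial (your $\tilde g_E$ is exactly the paper's $g_E$ from \eqref{defGE} since $D/\Delta_j=\prod_{i\neq j}\Delta_i$), the same product surface \eqref{constE_MV22}, the same factorization of $U_{\kappa-1}(x)/x$ via \eqref{e:polyeven} leading to \eqref{MV:k general222}, the same disposal of $E=0,\pm1$, of the $d=1$ column, of the $\kappa=2$ row, and the same symmetry reduction. Your treatment of the complement (bottom half of Table \ref{table:101}) via a ratio test comparing the symmetric point $E_j\equiv E^{1/d}$ with an edge point is precisely what the paper does in subsections \ref{subk4} and \ref{subk6}, and your explicit analysis for $d=2$, $\kappa=4$ matches Lemma \ref{lemK=4}. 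The reduction to $t_j=E_j^2$ is a mild repackaging and is sound.

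The genuine gap is in how you certify the open bands for $\kappa=6$ (both dimensions) and for $d=3$, $\kappa=4$ near $E=0$. You propose a full constrained extremization of $\sum_j p(t_j)$ over $\{\prod t_j=E^2\}$ by Lagrange multipliers plus face recursion, concede that for $\kappa=6$ the extrema are "only pinned down numerically," and then claim the supporting lemmas "verify the sign of $F$ at the claimed endpoints." Checking a sign at finitely many points does not establish strict positivity of $g_E$ on all of $S_E$ for every $E$ in a band, so as written this does not prove the entries $(0,0.21)\cup(0.53,0.71)$, $(0,0.08)\cup(0.57,0.58)$, or $\left(0,\tfrac{\sqrt{3-\sqrt3}}{4}\right)$. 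The paper's device, which you are missing, is the \emph{central band test} and \emph{second band test} of subsections \ref{sub_central} and \ref{sub_second_band}: pin $E_2=\dots=E_d$ at $\sin(\pi/\kappa)$ (resp.\ $\sin(2\pi/\kappa)$), bound the contribution of those coordinates by the single extremal value $m=\min$ or $\max$ of the one-variable function $h$ on $[\sin(\pi/\kappa),1]$ (resp.\ $[\sin(2\pi/\kappa),1]$), and reduce the whole problem to one explicit polynomial inequality $h(E/\sin^{d-1}(\cdot))\gtrless (d-1)|m|$ in $E$ alone. This deliberately sacrifices sharpness (the proven band $(0.53,0.71)$ sits strictly inside the numerically observed one $(0.506\ldots,0.75)$ of Table \ref{tab:table1012}) in exchange for rigour: the decimal endpoints in Table \ref{table:101} are roots of these explicit inequalities, not numerical estimates of the true extremizers. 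Without this (or an equivalent rigorous bound replacing your Lagrange-multiplier step), your argument establishes the bottom half of the table and the $\kappa=2$, $d=1$, and $(d,\kappa)=(2,4)$ entries, but not the remaining positive inclusions.
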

\begin{proof}
See Corollary \ref{CorollaryK=2}, Lemma \ref{lemK=4}, and subsections \ref{subk4}, \ref{subk6}, \ref{sub_central}, \ref{sub_second_band} of this Section.
\qed
\end{proof}

Again, we emphasize that the sets $\boldsymbol{\mu}_{A_{\kappa}}(D)$ that appear in Table \ref{table:101} are symmetric about zero, i.e.\  $\boldsymbol{\mu}_{A_{\kappa}}(D) = -  \boldsymbol{\mu}_{A_{\kappa}}(D)$, see Lemma \ref{symmetry_propertyD}. To paint a more complete picture, we also derive in this section other properties about the sets $\boldsymbol{\mu}_{A_{\kappa}} (D)$. Numerical evidence is given in Tables \ref{tab:table1012} and \ref{tab:table10144}. To $[D, \i A_{\kappa}]_{\circ}$ given by \eqref{COMMUTATOR_D} one associates the function $g_E: [-1,1]^d \mapsto \R$,
\begin{equation}
\label{defGE}
g_E(E_1,...,E_d) := \sum_{j=1}^d \frac{E}{E_j} (1-E_j^2) U_{\kappa_j-1} (E_j).
\end{equation}
Consider the constant energy surface for the Molchanov-Vainberg Laplacian:
\begin{equation}
\label{constE_MV22}
S_E:= \left \{ (E_1, ..., E_d) \in [-1,1]^d: E = \prod_{j=1}^d E_j \right \}.
\end{equation} 
Note that on $g_E$ restricted to $S_E$ becomes a polynomial. By functional calculus and continuity of the function $g_E$, we have that $E \in \boldsymbol{\mu}^{\pm}_{A_{\kappa}} (D)$ if and only if $\pm \left.g_E\right|_{S_E}$ is strictly positive. 

\begin{Lemma}\label{l:D0} For any $d \geq 1$, any $\kappa = (\kappa_j)$, $0, \pm 1 \not \in \boldsymbol{\mu}_{A_{\kappa}} (D)$. 
\end{Lemma}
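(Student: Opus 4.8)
The plan is to exploit the criterion stated just before the lemma: $E\in\boldsymbol{\mu}^{\pm}_{A_{\kappa}}(D)$ if and only if $\pm\left.g_E\right|_{S_E}$ is strictly positive, where $g_E$ is given by \eqref{defGE} and $S_E$ by \eqref{constE_MV22}. Using the Remark that $D/\Delta_j=\prod_{i\neq j}\Delta_i$, the (a priori singular) expression $g_E$ becomes, on the surface $S_E$, the genuine polynomial
\[
\left.g_E\right|_{S_E}(E_1,\ldots,E_d)=\sum_{j=1}^d\Big(\prod_{i\neq j}E_i\Big)(1-E_j^2)\,U_{\kappa_j-1}(E_j).
\]
For each of the three energies $E\in\{1,-1,0\}$ I would exhibit a point of $S_E$ at which this polynomial vanishes (or at which it takes both signs), so that neither $\left.g_E\right|_{S_E}$ nor $-\left.g_E\right|_{S_E}$ can be strictly positive; this rules out membership in $\boldsymbol{\mu}^{+}_{A_{\kappa}}(D)$ and $\boldsymbol{\mu}^{-}_{A_{\kappa}}(D)$ simultaneously, hence in $\boldsymbol{\mu}_{A_{\kappa}}(D)$.

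For $E=\pm1$ the argument is immediate: since every coordinate satisfies $|E_j|\leq 1$ and $\prod_{j}E_j=\pm1$, one is forced to have $|E_j|=1$ for all $j$. Consequently each factor $1-E_j^2$ vanishes, so every summand is zero and $\left.g_{\pm1}\right|_{S_{\pm1}}\equiv 0$ on the (finite) surface $S_{\pm1}$. A function that is identically zero is not strictly positive, and neither is its negative, so $\pm1\notin\boldsymbol{\mu}_{A_{\kappa}}(D)$.

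For $E=0$ the surface $S_0$ consists of the tuples with at least one vanishing coordinate, and the main point is to produce an honest zero of the polynomial rather than merely an indeterminacy of the formal ratio $E/E_j$. For $d\geq 2$ I would evaluate at a point with (at least) two vanishing coordinates, say $(E_1,E_2,E_3,\ldots,E_d)=(0,0,c_3,\ldots,c_d)$ with arbitrary $c_i\in[-1,1]$, which indeed lies in $S_0$. Then each summand contains the factor $\prod_{i\neq j}E_i$, which includes $E_1=0$ or $E_2=0$ for every index $j$, so $\left.g_0\right|_{S_0}=0$ there and $0\notin\boldsymbol{\mu}_{A_{\kappa}}(D)$. (Equivalently, one may use a single vanishing coordinate together with the bipartite sign-flip symmetry: flipping the sign of one nonzero coordinate reverses the sign of the surviving term, forcing $\left.g_0\right|_{S_0}$ to take both signs unless it is already zero.) The degenerate case $d=1$ is special, since there $D=\Delta_1$ and $S_0=\{0\}$ reduces to the single value $U_{\kappa-1}(0)$, so it should be treated alongside the one-dimensional Standard Laplacian analysis; the genuinely $D$-specific content, consistent with the section's standing hypothesis $d\geq 2$, is exactly the multi-coordinate vanishing just described. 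The only real obstacle is this $E=0$ case: one must argue on the polynomial (product) form supplied by the Remark, not on the raw formula \eqref{defGE}, and the cleanest route is the double-zero evaluation, which makes the vanishing transparent for every choice of the multi-index $\kappa$.
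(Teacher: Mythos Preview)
Your proof is correct and follows the same underlying idea as the paper---exhibit a point of $S_E$ at which the polynomial $g_E|_{S_E}$ vanishes---but you are considerably more careful at $E=0$. The paper's one-line proof simply asserts $g_E|_{S_E}=0$ for $E\in\{0,\pm1\}$, relying for $E=0$ on the formal prefactor $E$ in \eqref{defGE}; your observation that on $S_0$ some $E_j$ vanishes, so one must pass to the polynomial form $\sum_j\big(\prod_{i\neq j}E_i\big)(1-E_j^2)U_{\kappa_j-1}(E_j)$ supplied by the Remark, is the rigorous reading of that step, and the double-zero evaluation $(0,0,c_3,\ldots,c_d)$ is the clean way to produce the required vanishing.

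Your caveat about $d=1$ is also well taken: for $d=1$ one has $D=\Delta_1$, and by the one-dimensional analysis $0\in\boldsymbol{\mu}_{A_\kappa}(\Delta)$ precisely when $\kappa$ is odd, so the claim $0\notin\boldsymbol{\mu}_{A_\kappa}(D)$ genuinely fails there. The section is titled ``$d\geq 2$'' and the subsequent lemmas all assume $d\geq 2$, so the ``$d\geq 1$'' in the statement should be read as $d\geq 2$; your argument handles exactly that range.
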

\begin{proof}
If $E \in \{ 0, \pm 1\}$ then $\left.g_E\right|_{S_E} = 0$. This implies the statement.
\qed 
\end{proof}

For the next Lemma we require more notation to avoid confusion. Write $D_d$ to mean $D$ on $\ell^2(\Z^d)$. Let $g_E ^{[d]}$ denote the function $g_E$ from \eqref{defGE} to specify it is a function of the $d$ variables $(E_1,...,E_d)$. For a multi-index $\kappa$, denote $\kappa [d]$ the restriction of $\kappa$ to its first $d$ components. 

\begin{Lemma} For any $d\geq 2$, any $\kappa = (\kappa_j)_{j=1}^{d}$, $\boldsymbol{\mu}_{A_{\kappa [d]}} (D_d) \subset \boldsymbol{\mu}_{A_{\kappa [d-1]}} (D_{d-1})$.
\end{Lemma}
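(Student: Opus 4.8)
The plan is to mimic the proof of the analogous statement for the Standard Laplacian, exploiting the characterization established just above: for the MV Laplacian one has $E \in \boldsymbol{\mu}^{\pm}_{A_{\kappa [d]}}(D_d)$ if and only if $\pm g_E^{[d]}$ is strictly positive on the constant-energy surface $S_E^{[d]}$ (and likewise in dimension $d-1$). The key observation, which replaces the additive shift used for $\Delta$, is that the energy surface for $D$ is \emph{multiplicative}: appending a coordinate equal to $1$ preserves the product, so there is a natural inclusion of surfaces $\iota : S_E^{[d-1]} \hookrightarrow S_E^{[d]}$, $(E_1,\ldots,E_{d-1}) \mapsto (E_1,\ldots,E_{d-1},1)$, well-defined because $\prod_{j=1}^{d-1} E_j = E$ forces $\prod_{j=1}^{d} E_j = E$ with $E_d = 1$.

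First I would verify that $g_E^{[d]}$ restricted to the image of $\iota$ coincides with $g_E^{[d-1]}$. Indeed, the $j=d$ summand of $g_E^{[d]}$ is $\frac{E}{E_d}(1-E_d^2)U_{\kappa_d-1}(E_d)$, which vanishes at $E_d = 1$ thanks to the factor $1-E_d^2$ (and with no division issue, since $E_d = 1 \neq 0$); hence $g_E^{[d]}(E_1,\ldots,E_{d-1},1) = \sum_{j=1}^{d-1} \frac{E}{E_j}(1-E_j^2)U_{\kappa_j-1}(E_j) = g_E^{[d-1]}(E_1,\ldots,E_{d-1})$ on $S_E^{[d-1]}$.

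Then the inclusion follows at once. I would fix $E \in \boldsymbol{\mu}_{A_{\kappa [d]}}(D_d)$, say $E \in \boldsymbol{\mu}^{+}_{A_{\kappa [d]}}(D_d)$ (the $\boldsymbol{\mu}^{-}$ case being identical up to a global sign). Then $g_E^{[d]} > 0$ on all of $S_E^{[d]}$, in particular on $\iota(S_E^{[d-1]})$, so by the previous step $g_E^{[d-1]} > 0$ on $S_E^{[d-1]}$, i.e.\ $E \in \boldsymbol{\mu}^{+}_{A_{\kappa [d-1]}}(D_{d-1})$. Taking the union over the two signs yields $\boldsymbol{\mu}_{A_{\kappa [d]}}(D_d) \subset \boldsymbol{\mu}_{A_{\kappa [d-1]}}(D_{d-1})$. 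I would also remark that $S_E^{[d-1]}$ is nonempty for every $E \in [-1,1]$ (take $E_1 = E$ and the remaining coordinates equal to $1$, using $d-1 \geq 1$), so no degenerate surface arises, and the problematic values $E = 0, \pm 1$ are already excluded from $\boldsymbol{\mu}_{A_{\kappa [d]}}(D_d)$ by Lemma \ref{l:D0}.

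There is essentially no obstacle here: the whole content is the multiplicative compatibility of the energy surfaces together with the vanishing of the last summand at $E_d = 1$. The only point worth flagging is the contrast with the Standard-Laplacian lemma, where the additive energy constraint produced an inclusion shifted by $\pm 1$; here the product structure yields an inclusion at the \emph{same} energy $E$, which is what makes the present statement cleaner.
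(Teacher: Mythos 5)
Your proof is correct and follows essentially the same route as the paper's: the paper's entire argument is the one-line observation that $g_E^{[d-1]}(E_1,\ldots,E_{d-1}) = g_E^{[d]}(E_1,\ldots,E_{d-1},1)$, which is exactly your embedding $\iota$ together with the vanishing of the last summand at $E_d=1$. You have merely spelled out the details (well-definedness of the surface inclusion, non-emptiness, the sign cases) more explicitly.
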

\begin{proof}
From \eqref{defGE}, $g_E ^{[d-1]}(E_1,...,E_{d-1}) = g_E ^{[d]}(E_1,...,E_{d-1},1)$. This implies the statement.
\qed
\end{proof}

\begin{Lemma}
Let $d \geq 2$. Let $\kappa = (\kappa_j)$ and suppose at least two parameters $\kappa_{j_1}$ and $\kappa_{j_2}$ are odd. Then $\boldsymbol{\mu}_{A_{\kappa}} (D) = \emptyset$.
\end{Lemma}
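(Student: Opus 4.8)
The goal is to show that when at least two of the parameters, say $\kappa_{j_1}$ and $\kappa_{j_2}$, are odd, the set $\boldsymbol{\mu}_{A_{\kappa}}(D)$ is empty. By the functional-calculus characterization stated just above Lemma \ref{l:D0}, this amounts to showing that for every $E \in \sigma(D) = [-1,1]$, the restricted polynomial $\left. g_E \right|_{S_E}$ is neither strictly positive nor strictly negative on the constant-energy surface $S_E$; equivalently, $\left. g_E \right|_{S_E}$ must vanish somewhere on $S_E$ (or change sign), for every $E$. So the plan is to exhibit, for each $E \in [-1,1]$, a point $(E_1,\ldots,E_d) \in S_E$ at which $g_E$ vanishes.

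The key structural observation is the presence of the factor $U_{\kappa_j - 1}(E_j)$ in each summand of \eqref{defGE}, together with the fact that $U_{m}$ for $m = \kappa_j - 1$ even (i.e.\ $\kappa_j$ odd) has $0$ as a root: indeed, by \eqref{e:zerotcheby} applied with $k = (m+1)/2$ and $n = m$ when $m$ is even, or more directly since $U_m(0) = 0$ for odd index behavior of Chebyshev polynomials — concretely $U_{\kappa_j-1}(0) = 0$ precisely when $\kappa_j - 1$ is even, i.e.\ when $\kappa_j$ is odd. This is exactly the mechanism already used in the proof of the earlier Lemma handling $0 \notin \boldsymbol{\mu}_{A_{\kappa}}(\Delta)$ for odd $d$ and all $\kappa_j$ even. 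First I would record this vanishing fact cleanly.

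The main step is then a parametrization argument. Fix $E \in [-1,1]$. I want to find a point on $S_E$ (so with $\prod_j E_j = E$) at which every term of the sum \eqref{defGE} vanishes, or at least the whole sum vanishes. The cleanest route: set $E_{j_1} = 0$. This forces the $j_1$-th summand to vanish (the factor $U_{\kappa_{j_1}-1}(0)=0$, and one should note the apparent singularity $E/E_{j_1}$ is removable since $g_E$ restricted to $S_E$ is a genuine polynomial, as remarked after \eqref{constE_MV22}). But setting $E_{j_1}=0$ forces $\prod_j E_j = 0$, so this only directly handles $E=0$. To cover general $E \neq 0$, I would instead keep all coordinates nonzero and use the second odd index: the trick is to pick the remaining free coordinates so that the two odd-indexed terms cancel each other while the constraint $\prod E_j = E$ is met. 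Concretely, one can try to choose coordinates symmetrically among $j_1,j_2$ so that $E_{j_1} = -E_{j_2}$ (or a sign-flip that, by oddness of $U_{\kappa-1}$ when $\kappa$ is odd, makes the corresponding two summands negatives of one another) while the product is held at $E$ by adjusting the other coordinates. The cleanest version is to realize that with two odd parameters the function $g_E|_{S_E}$ is odd under the coordinate reflection $(E_{j_1},E_{j_2}) \mapsto (-E_{j_1},-E_{j_2})$ composed with a relabeling, and this antisymmetry forces a zero by the intermediate value theorem on the connected surface $S_E$.

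The main obstacle I anticipate is handling the removable singularities $E/E_j$ and making the sign-change/vanishing argument rigorous and uniform across all $E \in [-1,1]$, including the endpoints $E = \pm 1$ (already covered by Lemma \ref{l:D0}) and the degenerate boundary strata of $S_E$ where some $E_j = \pm 1$. The cleanest and safest execution is therefore: (i) note $g_E|_{S_E}$ is a polynomial so no genuine singularity occurs; (ii) exhibit explicitly two points $p_+, p_- \in S_E$ related by the sign-flip $(E_{j_1},E_{j_2}) \to (-E_{j_1},-E_{j_2})$ with all other coordinates fixed (this preserves $\prod E_j = E$ since two signs cancel), and observe $g_E(p_-) = -g_E(p_+)$ because each of the two flipped terms changes sign (oddness of $x \mapsto (1-x^2)U_{\kappa-1}(x)/x$ for odd $\kappa$) while the unflipped terms are unchanged; (iii) conclude by connectedness of $S_E$ and continuity that $g_E|_{S_E}$ takes both signs (or is identically zero), hence is not strictly signed, so $E \notin \boldsymbol{\mu}_{A_{\kappa}}^{\pm}(D)$. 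Since $E$ was arbitrary, $\boldsymbol{\mu}_{A_{\kappa}}(D) = \emptyset$.
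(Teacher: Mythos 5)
Your overall strategy is the same as the paper's: exploit the antisymmetry of $g_E$ under the sign flip $(E_{j_1},E_{j_2})\mapsto(-E_{j_1},-E_{j_2})$, which preserves the constraint $\prod_j E_j=E$ and, because $U_{\kappa_j-1}$ is \emph{even} when $\kappa_j$ is odd, negates the two corresponding summands of \eqref{defGE}. For $d=2$ your execution is complete and matches the paper.

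For $d\geq 3$, however, step (ii) as written contains a genuine gap. Writing $g_E(p_+)=a+b$ with $a$ the sum of the two flipped terms and $b$ the sum of the remaining terms, your own observation (flipped terms negated, unflipped terms unchanged) gives $g_E(p_-)=-a+b$, which equals $-g_E(p_+)=-a-b$ only if $b=0$. With the other coordinates ``fixed'' at arbitrary values, $b$ need not vanish, and then $g_E(p_\pm)$ can both be positive (take $b$ large), so no sign change or zero is forced; the appeal to connectedness and the intermediate value theorem cannot rescue this, since the two computed values do not bracket $0$. The missing idea -- and what the paper does -- is to choose the spectator coordinates on the boundary, $E_j=+1$ for $j\neq j_1,j_2$, so that the factor $(1-E_j^2)$ kills every unflipped summand and $b=0$; then $g_E(E_{j_1},E_{j_2},1,\ldots,1)=-g_E(-E_{j_1},-E_{j_2},1,\ldots,1)$ with both points on $S_E$, and the conclusion follows (connectedness is not even needed: a function taking a value and its negative cannot be strictly signed). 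The exceptional energies $E\in\{0,\pm1\}$ are, as you note, already excluded by Lemma \ref{l:D0}.
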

\begin{proof}
We start with the two-dimensional case. In this case we have 
\begin{equation*}
\label{MV:k3}
g_E(E_1,E_2) = E_2 (1-E_1^2)U_{\kappa_1-1}(E_1) + E_1 (1-E_2^2)U_{\kappa_2-1}(E_2).
\end{equation*}
Fix $E = E_1 E_2 \in \pm (0,1)$. $\kappa_1$ and $\kappa_2$ odd implies $U_{\kappa_1-1}(\cdot)$ and $U_{\kappa_2-1}(\cdot)$ are even functions. Note that if $E>0$, then $(-E_1)\cdot(-E_2)= E_1E_2$ and $g_E(E_1,E_2) = -g_E(-E_1,-E_2)$, whereras if $E<0$, $(-E_1)\cdot E_2 = E_1\cdot(-E_2)$ and  $g_E(-E_1,E_2)=-g_E(E_1,-E_2)$. So $g_E$ cannot be strictly positive or strictly negative.

We now extend this observation to the case $ d \geq 3$. Assume, without lost of generality, that $\kappa_1$ and $\kappa_2$ are odd. Fix $E \in \pm (0,1)$. If $E>0$,  $g_E(E_1,E_2,+1,...,+1) = - g_E(-E_1,-E_2,+1,...,+1)$, whereas if $E<0$ we have $g_E(-E_1,E_2,+1,...,+1) = - g_E(E_1,-E_2,+1,...,+1)$. 
\qed
\end{proof}
\noindent \underline{\textbf{Assumption:}} For the rest of this Section we suppose all $\kappa_j$'s are even. Applying \eqref{cheby} to $\xi=\pi/2-\xi_j$, we infer 
\[\sin(\kappa_j \xi_j) = (-1)^{\kappa_j /2-1} \cos(\xi_j) U_{\kappa_j-1}(\sin(\xi_j)).\] 
Thus, $[\Delta_j , \i A_{\kappa}] = \mathcal{F}^{-1} \left[  \sin(\xi_j) \sin(\kappa_j \xi_j) \right] \mathcal{F} $ and 
\begin{equation}
\begin{aligned}
\label{dAk}
[D, \i A_{\kappa}]_{\circ} &=  D \mathcal{F} ^{-1} \sum_{j=1} ^d \frac{\sin(\xi_j) \sin(\kappa_j \xi_j)}{\cos(\xi_j)} \mathcal{F} = D \mathcal{F} ^{-1} \sum_{j=1} ^d \frac{(1- \cos^2(\xi_j)) \sin(\kappa_j \xi_j)}{\cos(\xi_j)\sin(\xi_j)} \mathcal{F} \\
&=D \sum_{j=1} ^d (-1)^{\kappa_j /2-1} (1-\Delta_j^2) \mathcal{F} ^{-1} \left[ \frac{U_{\kappa_j-1} (\sin(\xi_j))}{\sin(\xi_j)} \right] \mathcal{F} \\
&= D \sum_{j=1} ^d (-1)^{\kappa_j /2-1} 2^{\kappa_j-1}  \prod_{l=1} ^{\kappa_j /2} (\sin^2(l \pi / \kappa_j)-\Delta_j^2),
\end{aligned}
\end{equation}
where we used \eqref{e:polyeven} for the last line. Thus we also have 
\begin{equation}
\label{MV:k general222}
g_E(E_1,...,E_d) = E \sum_{j=1} ^d (-1)^{\kappa_j /2-1} 2^{\kappa_j-1}  \prod_{l=1} ^{\kappa_j /2} (\sin^2(l \pi / \kappa_j)-E_j^2).
\end{equation}

\begin{Lemma}
\label{symmetry_propertyD}
For any $d\geq1$, for any $\kappa = (\kappa_j)$ with all $\kappa_j$ even, $\boldsymbol{\mu}_{A_{\kappa}} (D) = - \boldsymbol{\mu}_{A_{\kappa}} (D)$.
\end{Lemma}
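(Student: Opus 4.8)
The plan is to mimic the proof of Lemma~\ref{Lemma_symmetryDelta} for the Standard Laplacian, but the \emph{product} structure of the energy surface \eqref{constE_MV22} forces a different substitution than the global reflection $S_{-E}=-S_E$ used there. The starting point is the factorization $g_E = E\cdot P_\kappa$ made manifest in \eqref{MV:k general222}, where
\[
P_\kappa(E_1,\ldots,E_d) := \sum_{j=1}^d (-1)^{\kappa_j/2-1}\,2^{\kappa_j-1}\prod_{l=1}^{\kappa_j/2}\bigl(\sin^2(l\pi/\kappa_j)-E_j^2\bigr)
\]
depends on each coordinate $E_j$ only through $E_j^2$, and is therefore even in every variable separately. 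This evenness is precisely where the standing assumption that all $\kappa_j$ are even is consumed: it is what renders each $U_{\kappa_j-1}$ odd in \eqref{defGE}, hence $P_\kappa$ even in each $E_j$.

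Next I would introduce the single-coordinate reflection $\phi(E_1,E_2,\ldots,E_d):=(-E_1,E_2,\ldots,E_d)$. Since $\phi$ is an involution of $[-1,1]^d$ and reverses the sign of the product $\prod_{j} E_j$, it restricts to a bijection $\phi:S_E\to S_{-E}$ for every $E$ (the involution property makes bijectivity automatic, so no nondegeneracy hypothesis on the coordinates is needed). Because $P_\kappa$ is even in the first variable we have $P_\kappa\circ\phi=P_\kappa$, and consequently, for all $x\in S_E$,
\[
g_{-E}(\phi(x)) = (-E)\,P_\kappa(\phi(x)) = -E\,P_\kappa(x) = -g_E(x).
\]

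Finally I would translate this into a statement about the sets. By the functional-calculus criterion recalled just before the lemma, $E\in\boldsymbol{\mu}^{+}_{A_\kappa}(D)$ means $g_E|_{S_E}>0$ pointwise; letting $x$ range over $S_E$ and $y=\phi(x)$ range over $S_{-E}$, the identity $g_{-E}(y)=-g_E(x)$ shows this is equivalent to $g_{-E}|_{S_{-E}}<0$ pointwise, i.e.\ to $-E\in\boldsymbol{\mu}^{-}_{A_\kappa}(D)$. Hence $\boldsymbol{\mu}^{-}_{A_\kappa}(D)=-\boldsymbol{\mu}^{+}_{A_\kappa}(D)$, and taking the union gives $\boldsymbol{\mu}_{A_\kappa}(D)=\boldsymbol{\mu}^{+}_{A_\kappa}(D)\cup\bigl(-\boldsymbol{\mu}^{+}_{A_\kappa}(D)\bigr)$, which is symmetric about $0$. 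The value $E=0$ needs no special treatment, as $0\notin\boldsymbol{\mu}_{A_\kappa}(D)$ by Lemma~\ref{l:D0}.

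I do not expect a genuine analytic obstacle here; the whole content is the parity observation together with choosing the correct involution adapted to the multiplicative surface. The one point that requires care is the $+/-$ bookkeeping: reflecting a single coordinate flips the sign of the scalar prefactor $E$ while leaving $P_\kappa$ intact, so a strict \emph{positivity} statement at $E$ becomes a strict \emph{negativity} statement at $-E$. This is exactly why $\boldsymbol{\mu}^{+}$ and $\boldsymbol{\mu}^{-}$ get interchanged under $E\mapsto -E$ rather than each being individually symmetric, and it is the only place the argument could be misstated.
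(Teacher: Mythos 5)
Your proof is correct and follows essentially the same route as the paper's: the paper likewise invokes \eqref{MV:k general222} together with the single-coordinate reflections $\lambda_j$ (which send $S_E$ onto $S_{-E}$ and leave the even-in-each-variable factor invariant while flipping the sign of the prefactor $E$). Your version merely fixes $j=1$ and spells out the $\boldsymbol{\mu}^{+}/\boldsymbol{\mu}^{-}$ bookkeeping that the paper leaves implicit.
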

\begin{proof}
This follows from \eqref{MV:k general222} and the fact that 
$$S_{-E} = \bigcup_{j=1}^d \{ \lambda_j(E_1,...,E_d): (E_1,...,E_d) \in S_E\},$$ 
where $\lambda_j (E_1,...,E_d) := (F_1,...,F_d)$, with $F_i = E_i$ if $i \neq j$ and $F_j = -E_j$. 
\qed
\end{proof}
For this reason, we focus only on positive energies for the rest of this section.
\begin{Lemma} 
\label{lemPRODsin}
Let $d \geq 2$. For any $\kappa = (\kappa_j)$ with all $\kappa_j$ even, 
\[\left\{\prod_{q=1} ^d \sin( j_q \pi / \kappa_q): (j_1,...,j_d) \in \prod _{q=1} ^d \{0,1,...,\kappa_q/2\} \right\} \subset [0,1] \setminus \boldsymbol{\mu}_{A_{\kappa}} (D).\] 
This supports the conjectures in Tables \ref{tab:table1012} and \ref{tab:table10144}.
\end{Lemma}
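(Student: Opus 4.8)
The plan is to mimic the proof of Lemma~\ref{lemSUMcos} for the Standard Laplacian, replacing sums of cosines by products of sines. Recall that by functional calculus $E \in \boldsymbol{\mu}_{A_{\kappa}}(D)$ holds precisely when $\left. g_E\right|_{S_E}$ is either strictly positive or strictly negative on the energy surface $S_E$ of \eqref{constE_MV22}. Hence, to show that a given energy $E$ does \emph{not} belong to $\boldsymbol{\mu}_{A_{\kappa}}(D)$, it suffices to exhibit a single point of $S_E$ at which $g_E$ vanishes: the existence of such a zero forbids a strict sign, and hence rules out a strict Mourre estimate with either choice $\pm \mathbb{A}$.

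First I would fix a multi-index $(m_1,\ldots,m_d) \in \prod_{q=1}^d \{0,1,\ldots,\kappa_q/2\}$ and set $E_q := \sin(m_q \pi / \kappa_q)$ for $q = 1,\ldots,d$. Since $0 \le m_q \pi/\kappa_q \le \pi/2$, each $E_q \in [0,1]$, so the candidate energy $E := \prod_{q=1}^d E_q = \prod_{q=1}^d \sin(m_q \pi/\kappa_q)$ lies in $[0,1]$ and the point $(E_1,\ldots,E_d)$ lies on $S_E$ by construction. This is exactly the product analogue of the choice $E_q = \cos(j_q\pi/\kappa_q)$ made in Lemma~\ref{lemSUMcos}.

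The heart of the argument is then to evaluate $g_E$ at this point via the factored formula \eqref{MV:k general222}, which is available because all $\kappa_j$ are even. For the $j$-th summand the relevant product is $\prod_{l=1}^{\kappa_j/2} (\sin^2(l\pi/\kappa_j) - E_j^2)$ with $E_j^2 = \sin^2(m_j\pi/\kappa_j)$. Provided $m_j \ge 1$, the factor indexed by $l = m_j$ equals $\sin^2(m_j\pi/\kappa_j) - \sin^2(m_j\pi/\kappa_j) = 0$, so that entire summand vanishes; if every $m_q \ge 1$ this annihilates all $d$ summands simultaneously and yields $g_E = 0$, as desired.

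The only case needing a separate word is when some $m_{q_0} = 0$, for then $E_{q_0} = 0$ forces $E = \prod_q E_q = 0$; but the overall prefactor $E$ in \eqref{MV:k general222} makes $g_E \equiv 0$ on $S_E$ regardless of the summands, and in any case $0 \notin \boldsymbol{\mu}_{A_{\kappa}}(D)$ is already known from Lemma~\ref{l:D0}. I do not anticipate a genuine obstacle here: the entire difficulty has been front-loaded into deriving the clean product representation \eqref{MV:k general222} (through \eqref{e:polyeven}), after which the vanishing is immediate. The one point to state with care is that a zero of $\left. g_E\right|_{S_E}$ indeed precludes strict positivity or strict negativity, which is exactly the functional-calculus criterion recalled above.
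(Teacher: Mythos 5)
Your proposal is correct and follows essentially the same route as the paper: pick $E_q=\sin(j_q\pi/\kappa_q)$, observe that the point lies on $S_E$, and use the factored formula \eqref{MV:k general222} to see that each summand vanishes (with the case $j_q=0$ dispatched by Lemma \ref{l:D0}). The paper's proof is just a more compressed version of the same argument.
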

\begin{proof} 
If some $j_q=0$, then $\prod_{q=1} ^d \sin( j_q \pi / \kappa_q)=0$ and the result is given by Lemma \ref{l:D0}. Let $(j_1,...,j_d) \in \prod _{q=1} ^d \{1,...,\kappa_q/2\}$. Set
$E_q := \sin(j_q \pi / \kappa_q)$, $q=1,...,d$. Then, we get $E= \prod_{q=1} ^d \sin( j_q \pi / \kappa_q)$ and $g_E(E_1, ..., E_d) = 0$ by \eqref{MV:k general222}.
\qed
\end{proof}

\begin{Lemma}
\label{edgySP}
For any $d \geq 1$, for any $\kappa = (\kappa_j)$ with all $\kappa_j$ even and  $\kappa_j/2$ with the same parity, 
\[\boldsymbol{\mu}_{A_{\kappa}}(D) \supset  \pm (\cos \left(\pi / \kappa^* \right) ,1).\]
\end{Lemma}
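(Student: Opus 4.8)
The plan is to use the functional-calculus criterion stated just before the lemma: $E\in\boldsymbol{\mu}^+_{A_{\kappa}}(D)$ as soon as $\left.g_E\right|_{S_E}$ is strictly positive, where $g_E$ on $S_E$ is given by \eqref{MV:k general222} and $S_E=\{(E_1,\dots,E_d)\in[-1,1]^d:E=\prod_j E_j\}$. So I would fix $E\in(\cos(\pi/\kappa^*),1)$ and prove $\left.g_E\right|_{S_E}>0$; the symmetric interval $-(\cos(\pi/\kappa^*),1)$ then comes for free from Lemma \ref{symmetry_propertyD}, which gives $\boldsymbol{\mu}_{A_{\kappa}}(D)=-\boldsymbol{\mu}_{A_{\kappa}}(D)$ under the standing assumption that all $\kappa_j$ are even.

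The key geometric observation is the multiplicative analogue of the additive bound used in Lemma \ref{edgySPdd}. On $S_E$ one has $E=\prod_k E_k$ with each $|E_k|\leq 1$, so for every fixed $j$, $|E|=|E_j|\prod_{k\neq j}|E_k|\leq|E_j|$. Hence every coordinate satisfies $|E_j|\geq|E|>\cos(\pi/\kappa^*)\geq\cos(\pi/\kappa_j)$, the last step because $\kappa_j\leq\kappa^*$. This replaces ``$E$ near $d$ forces all $E_j$ near $1$'' (for $\Delta$) by ``$E$ near $1$ forces all $|E_j|$ near $1$'' (for $D$), and is exactly the input that confines each coordinate to the favorable interval $(\cos(\pi/\kappa_j),1)$ in absolute value.

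With that confinement in hand I would analyze each summand of \eqref{MV:k general222}. In the $j$-th term the product $\prod_{l=1}^{\kappa_j/2}(\sin^2(l\pi/\kappa_j)-E_j^2)$ has its two largest roots (in $|E_j|$) at $\sin(\pi/2)=1$ (the factor $l=\kappa_j/2$) and $\sin((\kappa_j/2-1)\pi/\kappa_j)=\cos(\pi/\kappa_j)$ (the factor $l=\kappa_j/2-1$); for $|E_j|\in(\cos(\pi/\kappa_j),1)$ the factor $l=\kappa_j/2$ is positive while the remaining $\kappa_j/2-1$ factors are negative, so the product carries the sign $(-1)^{\kappa_j/2-1}$, which cancels against the explicit prefactor $(-1)^{\kappa_j/2-1}$ and makes each such summand strictly positive. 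Summands with $|E_j|=1$ vanish through the factor $1-E_j^2$, and since $E<1$ forces $|E_k|<1$ for at least one $k$, at least one strictly positive summand always survives; thus $\sum_j(\cdots)>0$ and $\left.g_E\right|_{S_E}=E\sum_j(\cdots)>0$ because $E>0$. Equivalently, working from \eqref{defGE}, on $S_E$ one has $E/E_j=\prod_{k\neq j}E_k$ with the same sign as $E_j$, while $U_{\kappa_j-1}(E_j)$ also has the sign of $E_j$ on $|E_j|\in(\cos(\pi/\kappa_j),1)$ (its largest root is $\cos(\pi/\kappa_j)$ by \eqref{e:zerotcheby} and $\kappa_j-1$ is odd), so each term is a positive multiple of $1-E_j^2$.

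I expect the only delicate point to be the boundary bookkeeping on $S_E$, namely configurations where one or several $E_j=\pm1$: there the corresponding summands drop out and one must ensure a strictly positive summand remains, which is precisely what $E<1$ guarantees. The same-parity hypothesis on the $\kappa_j/2$ is convenient rather than essential: it makes all the prefactor signs $(-1)^{\kappa_j/2-1}$ coincide, so one can factor out a common global sign and read off $\boldsymbol{\mu}^+$ versus $\boldsymbol{\mu}^-$ uniformly. Finally, the left endpoint $\cos(\pi/\kappa^*)$ is sharp by Lemma \ref{lemPRODsin}: taking $j_m=\kappa_m/2-1$ for an index $m$ with $\kappa_m=\kappa^*$ and $j_q=\kappa_q/2$ otherwise yields the product $\cos(\pi/\kappa^*)\cdot 1\cdots 1=\cos(\pi/\kappa^*)\in[0,1]\setminus\boldsymbol{\mu}_{A_{\kappa}}(D)$, mirroring the remark following Lemma \ref{edgySPdd}.
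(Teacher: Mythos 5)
Your proof is correct and takes essentially the same route as the paper's: on $S_E$ each coordinate satisfies $|E_j|\ge|E|>\cos(\pi/\kappa^*)\ge\cos(\pi/\kappa_j)$, which places $|E_j|$ beyond the second-largest root of the $j$-th summand of \eqref{MV:k general222}, and the negative interval follows from Lemma \ref{symmetry_propertyD}. Your sign bookkeeping is in fact more careful than the paper's, which contains a small slip (it asserts $(-1)^{\kappa_j/2-1}h_j(x)>0$ on $(\cos(\pi/\kappa_j),1)$ where $h_j(x)>0$ is what actually holds, the product of the $\kappa_j/2-1$ negative factors and one positive factor cancelling the prefactor) and leans on the same-parity hypothesis only to make the summand signs agree; your observations that each summand is individually strictly positive for $|E_j|\in(\cos(\pi/\kappa_j),1)$, that the configurations with some $|E_j|=1$ are handled because $E<1$ leaves at least one strictly positive summand, and that the parity hypothesis is therefore dispensable for this inclusion are all correct, the last point being corroborated by the entry for $\kappa=(4,6)$ in Table \ref{tab:table101244}, which contains $(\cos(\pi/6),1)$.
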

\begin{proof}
Fix $E \in (\cos(\pi/\kappa ^*),1)= (\sin((\kappa^*/2-1)\pi/\kappa^*), \sin((\kappa^*/2)\pi/\kappa^*))$, $\kappa^*:= \max \kappa_j$. Note that we have $E_j \in (\cos(\pi/\kappa_j), 1)$ whenever $E = \prod E_j$. Moreover, given $j=1, \ldots, d$, we set $h_j(x) := (-1)^{\kappa_j /2-1} 2^{\kappa_j-1}  \prod_{l=1} ^{\kappa_j /2} (\sin^2(l \pi / \kappa_j)-x^2)$. Note that $(-1)^{\kappa_j/2 -1}h_j(x)>0$, for $x\in (\cos(\pi/\kappa_j), 1)$.  Because all the $\kappa_j/2$'s have the same parity and since $g_E(E_1,...,E_d) = E \sum_{j=1}^d h_j(E_j)$, we derive the statement.
\qed
\end{proof}

\begin{remark}
Note that the interval given in Lemma \ref{edgySP} are maximal. Let $q$  be such that $\kappa_q=\kappa^*$. Taking $j_q=\kappa^*/2 -1$ and  $j_i= \kappa_i/2$, for $i\neq q$, in Lemma \ref{lemPRODsin} we see that $\sin(\pi/\kappa^*)\notin \boldsymbol{\mu}_{A_{\kappa}}(D)$. 
\end{remark}
\begin{corollary}
\label{CorollaryK=2}
Let $d\geq 1$. Suppose $\kappa = 2$. Then $\boldsymbol{\mu}_{A_{\kappa}} (D) = \pm (0,1)$. 
\end{corollary}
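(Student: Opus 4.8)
The plan is to establish the two inclusions $\pm(0,1) \subset \boldsymbol{\mu}_{A_{\kappa}}(D)$ and $\boldsymbol{\mu}_{A_{\kappa}}(D) \subset \pm(0,1)$ separately, using that the choice $\kappa = 2$ collapses the associated polynomial $g_E$ to a single factor per coordinate.

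For the inclusion $\supset$, I would first specialize \eqref{MV:k general222} to $\kappa_j \equiv 2$. Since then $\kappa_j/2 = 1$, each product in \eqref{MV:k general222} reduces to the single factor $\sin^2(\pi/2) - E_j^2 = 1 - E_j^2$, while the prefactors are $(-1)^{0} \, 2^{1} = 2$, so that
\[
g_E(E_1,\ldots,E_d) = 2E \sum_{j=1}^d (1 - E_j^2).
\]
Fix $E \in (0,1)$. On the surface $S_E$ one has $E = \prod_j E_j \in (0,1)$, which forces every $E_j \neq 0$ and at least one $|E_j| < 1$; hence $\sum_{j=1}^d (1-E_j^2) > 0$ at each point of $S_E$. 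As $S_E$ is compact and $g_E$ continuous, the infimum of $g_E$ over $S_E$ is attained and strictly positive, so $\left.g_E\right|_{S_E} > 0$ and therefore $E \in \boldsymbol{\mu}^{+}_{A_{\kappa}}(D)$. Equivalently, this is just Lemma \ref{edgySP} applied with $\kappa^* = 2$: the parity hypothesis holds trivially since all $\kappa_j/2 = 1$, and $\cos(\pi/\kappa^*) = \cos(\pi/2) = 0$, which already yields $\pm(0,1) \subset \boldsymbol{\mu}_{A_{\kappa}}(D)$. The negative interval $(-1,0)$ follows either directly — for $E \in (-1,0)$ the same formula gives $\left.g_E\right|_{S_E} < 0$, whence $E \in \boldsymbol{\mu}^{-}_{A_{\kappa}}(D)$ — or from the symmetry $\boldsymbol{\mu}_{A_{\kappa}}(D) = -\boldsymbol{\mu}_{A_{\kappa}}(D)$ of Lemma \ref{symmetry_propertyD}.

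For the reverse inclusion, recall that by definition $\boldsymbol{\mu}_{A_{\kappa}}(D) \subset \sigma(D) = [-1,1]$, while Lemma \ref{l:D0} guarantees $0, \pm 1 \notin \boldsymbol{\mu}_{A_{\kappa}}(D)$. Hence
\[
\boldsymbol{\mu}_{A_{\kappa}}(D) \subset [-1,1] \setminus \{0,\pm 1\} = (-1,0) \cup (0,1) = \pm(0,1),
\]
which closes the argument.

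There is essentially no serious obstacle here: the only points requiring a little attention are upgrading the pointwise strict positivity of $g_E$ on $S_E$ to a uniform one, which is immediate by compactness, and checking that the endpoints excluded by Lemma \ref{l:D0} coincide exactly with the boundary of the open interval produced by Lemma \ref{edgySP}, so that the two inclusions meet cleanly.
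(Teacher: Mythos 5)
Your proof is correct and follows exactly the route the paper intends: the paper states this as an immediate corollary of Lemma \ref{edgySP} (which with $\kappa^*=2$ gives $\pm(\cos(\pi/2),1)=\pm(0,1)\subset\boldsymbol{\mu}_{A_{\kappa}}(D)$) together with Lemma \ref{l:D0} and $\boldsymbol{\mu}_{A_{\kappa}}(D)\subset\sigma(D)=[-1,1]$ for the reverse inclusion. Your direct verification that $g_E=2E\sum_j(1-E_j^2)$ is strictly signed on $S_E$ is just an unwinding of Lemma \ref{edgySP} in this special case, so there is nothing genuinely different here.
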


\begin{Lemma} 
\label{lemK=4}
Let $d=2$. Suppose $\kappa = 4$. Then $\boldsymbol{\mu}_{A_{\kappa}} (D) = \pm (0,1/2) \cup \pm (1/\sqrt{2}, 1)$. 
\end{Lemma}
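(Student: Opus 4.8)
The plan is to turn the two–variable sign problem on $S_E$ into a one–variable analysis. Specializing \eqref{MV:k general222} to $d=2$, $\kappa_1=\kappa_2=4$ (so $\kappa_j/2=2$, $(-1)^{\kappa_j/2-1}=-1$, $2^{\kappa_j-1}=8$ and $\prod_{l=1}^{2}(\sin^2(l\pi/4)-x^2)=(\tfrac12-x^2)(1-x^2)$), I would record, on $S_E$,
$$g_E(E_1,E_2)=E\big[h(E_1)+h(E_2)\big],\qquad h(x):=-8\big(\tfrac12-x^2\big)(1-x^2)=-8x^4+12x^2-4,$$
where $h$ is even, negative on $(0,1/\sqrt2)$, zero at $1/\sqrt2$ and $1$, and positive on $(1/\sqrt2,1)$. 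By Lemma \ref{symmetry_propertyD} it suffices to treat $E\in(0,1)$, and Lemma \ref{l:D0} removes $E=0,1$; the values $E=\tfrac12=\sin(\pi/4)\sin(\pi/4)$ and $E=1/\sqrt2=\sin(\pi/4)\sin(\pi/2)$ are excluded by Lemma \ref{lemPRODsin}. Since $E_1E_2=E>0$ and $h$ is even, I may parametrize $S_E$ by $E_1,E_2\in(0,1]$, and as $E>0$ the sign of $g_E$ agrees with that of $\phi:=h(E_1)+h(E_2)$; thus $E\in\boldsymbol{\mu}^{\pm}_{A_\kappa}(D)$ iff $\pm\phi$ is strictly positive on $S_E$.

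The key step is to notice that $\phi$ is a symmetric function of $(E_1^2,E_2^2)$, whose product $E_1^2E_2^2=E^2$ is fixed. Setting $p:=E_1^2+E_2^2$ one has $E_1^4+E_2^4=p^2-2E^2$, hence
$$\phi=-8\big(p^2-2E^2\big)+12p-8=-8p^2+12p+16E^2-8,$$
a concave parabola in $p$ with vertex at $p=\tfrac34$. On $S_E$ the constraints $E_1E_2=E$, $E_1,E_2\in(0,1]$ force $p\in[\,2E,\,1+E^2\,]$, the minimum $2E$ being the diagonal $(\sqrt E,\sqrt E)$ and the maximum $1+E^2$ the corner $(1,E)$. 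Correspondingly $\phi(2E)=-8(2E-1)(E-1)=2h(\sqrt E)$ and $\phi(1+E^2)=h(E)$.

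It then remains to read off the sign of $\phi$ on $[\,2E,\,1+E^2\,]$ in three regimes. For $E\in(1/\sqrt2,1)$ the vertex $\tfrac34<\sqrt2<2E$ lies left of the interval, so $\phi$ is decreasing and its minimum is $\phi(1+E^2)=h(E)>0$; hence $\phi>0$ and $E\in\boldsymbol{\mu}^+_{A_\kappa}(D)$ (this also follows from Lemma \ref{edgySP}). For $E\in(\tfrac12,1/\sqrt2)$ one finds $\phi(2E)=2h(\sqrt E)>0$ while $\phi(1+E^2)=h(E)<0$, so $\phi$ changes sign on $S_E$ and $E\notin\boldsymbol{\mu}_{A_\kappa}(D)$. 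The main work — and the main obstacle — is the band $E\in(0,\tfrac12)$, where a positive $h(E_j)$ can arise (if one coordinate exceeds $1/\sqrt2$) and one must rule out that it ever makes $\phi\ge 0$; here concavity pays off. If $E\le\tfrac38$ the vertex $\tfrac34$ lies in $[\,2E,\,1+E^2\,]$ and $\max_p\phi=\phi(\tfrac34)=16E^2-\tfrac72<0$ since $E\le\tfrac38<\sqrt{7/32}$; if $\tfrac38<E<\tfrac12$ the parabola is decreasing on the interval and $\max_p\phi=\phi(2E)=-8(2E-1)(E-1)<0$ as $E<\tfrac12$. In both cases $\phi<0$ throughout $S_E$, so $g_E=E\phi<0$ and $E\in\boldsymbol{\mu}^-_{A_\kappa}(D)$. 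Collecting the three regimes gives $\boldsymbol{\mu}_{A_\kappa}(D)\cap(0,1)=(0,\tfrac12)\cup(1/\sqrt2,1)$, and Lemma \ref{symmetry_propertyD} yields the full statement.
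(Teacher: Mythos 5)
Your proof is correct, and it takes a cleaner route than the paper's. The paper parametrizes $S_E$ by $E_1$ and studies $h(E_1):=g_E(E_1,E/E_1)/(4E)$ by brute-force calculus: it computes $h'$, factors the resulting degree-eight polynomial as $-(E_1^2-E)(E_1^2+E)(4E_1^4-3E_1^2+4E^2)$, and then splits into the cases $E>3/8$, $E=3/8$, $0<E<3/8$ according to whether the last factor has real roots, evaluating $g$ at each critical point. You instead observe that $h(E_1)+h(E_2)$ is a symmetric polynomial in $E_1^2,E_2^2$ with fixed product $E^2$, so in the elementary symmetric variable $p=E_1^2+E_2^2$ it collapses to the concave quadratic $-8p^2+12p+16E^2-8$ on the interval $[2E,1+E^2]$; the whole sign analysis then reduces to comparing the vertex $p=3/4$ with the interval and evaluating at the two endpoints, whose values $2h(\sqrt E)$ and $h(E)$ you identify correctly. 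The threshold $E=3/8$ reappears in your argument (as the condition $2E\le 3/4$), which explains where the paper's case split comes from, but you avoid the eighth-degree factorization and the separate treatment of the interior critical points entirely. All the boundary points ($0$, $1/2$, $1/\sqrt2$, $1$) are correctly disposed of via Lemmas \ref{l:D0} and \ref{lemPRODsin}, and the reduction to the positive branch of $S_E$ via evenness of $h$ is sound. The one thing the paper's approach "buys" is that it generalizes in principle to situations where the commutator polynomial is not symmetric under $E_j\mapsto -E_j$ or not expressible through power sums of $E_j^2$; for this particular lemma your argument is strictly simpler.
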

\begin{proof}
Let $E = E_1 E_2 >0$. Then from \eqref{MV:k general222} we have
\begin{equation}
\left.g_E\right|_{S_E}  = g_E(E_1,E/E_1) = 4E \left[ (1-E_1^2)(2E_1^2-1) + (1-(E/E_1)^2)(2(E/E_1)^2-1) \right].
\end{equation}
defined for $E \in (0,1)$ and $E_1 \in \pm [E,1]$. We seek the values of $E$ such that $\left.g_E\right|_{S_E}$ is a strictly positive/negative function of $E_1$. We focus on $E_1 \in [E,1]$ only as $g_E$ is an even function of $E_1$.

Write $g_E(E_1, E/E_1) = 4E h(E_1)$. Solving $g'_E(E_1,E/E_1)=0$ is equivalent to solving $h' (E_1) = 0$. One computes $$h'(E_1) = -2E_1(2E_1^2-1) + 4E_1 (1-E_1^2) + 2\frac{E^2}{E_1^3} \left(2 \frac{E^2}{E_1^2} -1\right) - 4\frac{E^2}{E_1^3} \left(1-\frac{E^2}{E_1^2}\right).$$
Simplifying and factorizing leads to solving 
$$0 = -4E_1^8 + 3E_1^6 -3 E^2 E_1^2 + 4E^2 = -(E_1^2-E)(E_1^2+E)(4E_1^4 - 3E_1^2 +4E^2).$$
Now the last term in brackets is
$$4E_1^4 - 3E_1^2 +4E^2 = 
\begin{cases}
>0, & \text{ if } E^2 > 9/64 \\
(3-8E_1^2)^2/16, & \text{ if } E^2 = 9/64 \\
4(E_1^2-3/8)^2 + 4E^2 - 4(3/8)^2 , & \text{ if } E^2 < 9/64.
\end{cases}
$$
For the last case of the 3 cases, 
$$4(E_1^2-3/8)^2 + 4E^2 - 4(3/8)^2 = 4\left[ E_1^2-3/8-\sqrt{(3/8)^2-E^2} \right]\left[E_1^2-3/8+\sqrt{(3/8)^2-E^2} \right].$$
In what follows we write simply $g(E_1)$ instead of $g_E(E_1,E/E_1)$.

\noindent $\bullet$ If $E>3/8$, the roots of $h'$ are $E_1 = \pm \sqrt{E}$. 
One computes that $h''(\pm \sqrt{E}) = 24-64E$ which is strictly negative if and only if $E > 3/8$. Thus $g$ achieves global maxima at $E_1 = \pm \sqrt{E}$, and $g(\pm \sqrt{E}) = 8E (1-E) (2E-1)$. $g$ is increasing on $[E,\sqrt{E}]$ and decreasing on $[\sqrt{E},1]$. Also $g(E) = g(1) = 4E(1-E^2)(2E^2-1)$. We want $g$ to be either strictly positive or strictly negative on $[E,1]$. $g$ is strictly negative if and only if $E<1/2$ ; $g$ is strictly positive if and only if $E>1/\sqrt{2}$. This gives $(3/8,1/2) \cup (1/\sqrt{2},1) = \boldsymbol{\mu}_{A_{\kappa}} (D) \cap (3/8, 1)$. 

\noindent $\bullet$ If $E = 3/8$, the roots of $h'$ are $E_1 = \{ \pm \sqrt{E} \}$. 
 $h''(\pm \sqrt{E}) = 24-64E = 0$. Since $g(3/8) = g(1) < 0$, it must be that $g$ achieves global maxima at $E_1 = \pm \sqrt{E}$. One computes $g(\pm \sqrt{3/8}) = -15/32 <0$. Thus $g$ is strictly negative on the interval $E_1 \in [E,1]$. This gives  $3/8 \in \boldsymbol{\mu}_{A_{\kappa}} (D)$. 
 
\noindent $\bullet$ Finally if $0 < E < 3/8$, the roots of $h'$ are 
$$E_1 \in  \left \{ \pm \sqrt{E},\pm \sqrt{3/8+\sqrt{(3/8)^2-E^2}},  \pm \sqrt{3/8-\sqrt{(3/8)^2-E^2}} \right \}.$$
The objective is to show that $g$ is strictly negative on the interval $E_1 \in [E,1]$. $g(E) = g(1) = 4E(1-E^2)(2E^2-1) < 0$. Also, $g(\sqrt{E}) = 8E(1-E)(2E-1) <0$. One may check that  
\begin{align*}
g \left(\sqrt{3/8+\sqrt{(3/8)^2-E^2}} \right) = g \left(\sqrt{3/8-\sqrt{(3/8)^2-E^2}} \right) < 0
\end{align*}
whenever $E \in (0,3/8)$. Thus $(0,3/8) = \boldsymbol{\mu}_{A_{\kappa}} (D) \cap (0,3/8)$.
\qed
\end{proof}

\noindent \underline{\textbf{Assumption:}} For the rest of this Section we suppose all the $\kappa_j$'s are equal. Thus we always assume without loss of generality that $E \in (0,1)$.

We now devise elementary tests that allow to glean further information about the sets $\boldsymbol{\mu}_{A_{\kappa}} (D)$. The tests are crude but still yield partial results. We develop:
\begin{itemize}
\item A \textit{ratio test}: it allows to determine energies that belong to $[0,1] \setminus \boldsymbol{\mu}_{A_{\kappa}} (D)$. 
\item \textit{Central \& second band tests}: they allow to determine energies that belong to $\boldsymbol{\mu}_{A_{\kappa}} (D)$. The band tests exploit oscillations of apparent decreasing intensity of the function \eqref{functionH}, see Figure \ref{fig102} for an illustration. The central band test probes for energies within $(0, \sin(\pi /\kappa))$ ; the second band test examines energies within $(\sin(\pi /\kappa), \sin(2\pi /\kappa))$. Naturally we could continue with a third band test to examine energies within $(\sin(2\pi /\kappa), \sin(3\pi /\kappa))$, etc.. The central band test is valid for $\kappa \geq 4$ ; the second band test is valid for $\kappa \geq 6$.
\end{itemize}

\subsection{The ratio test}
As usual, we consider only $E \in (0,1)$. One considers 2 points belonging to $S_E$. Unless otherwise specified we will choose (1) $(E_1,E_2,...,E_d) = (E,1,...,1)$, and (2) $(E_1,E_2,...,E_d) = (E^{1/d},E^{1/d},...,E^{1/d})$. Then one looks at the sign of the ratio
\begin{align*}
R(E):= \frac{ d g_E(E,1,...,1)}{g_E(E^{1/d},E^{1/d},...,E^{1/d})} &= \frac{\prod_{l=1} ^{\kappa /2} (\sin^2(l \pi / \kappa)-E^2)}{\prod_{l=1} ^{\kappa /2} (\sin^2(l \pi / \kappa)-E^{2/d})}
\end{align*}

One does a sign chart of the $R(E)$ function. If the overall sign is negative, a Mourre estimate cannot hold at energy $E$, i.e.\ $E \not \in \boldsymbol{\mu}_{A_{\kappa}}(D)$. In other notation, $\{ E \in (0,1): R(E) < 0 \} \subset [0,1] \setminus \boldsymbol{\mu}_{A_{\kappa}}(D)$. If the sign is overall positive, the test is inconclusive, meaning that a Mourre estimate may or may not hold at energy $E$. 

Let us apply this test for different values of $\kappa$. For $d=1$, any $\kappa$ even, the test is trivial, with $R_{d=1}(E) = 1$, which is inconclusive for all energies $\not\in \pm \{\sin(l \pi / \kappa): l=1,...,\kappa/2\}$. This is consistent with the one-dimensional analysis. For any $d \geq 2$, $\kappa = 2$,  $R(E) = (1-E^2)/(1-E^{2/d}) >0$, which is inconclusive for all energies. This is consistent with Corollary \ref{CorollaryK=2}.

\subsection{Ratio test for $\kappa =4$}
\label{subk4}
We focus on dimensions $2$ and $3$. We have
\begin{align*}
R_{d=2}(E) &= \frac{((1/\sqrt{2})^2 -E^2)(1-E^2)}{((1/\sqrt{2})^2 -E)(1-E)}, \quad R_{d=3}(E) = \frac{((1/\sqrt{2})^2 -E^2)(1-E^2)}{((1/\sqrt{2})^2 -E^{2/3})(1-E^{2/3})}.
\end{align*}
\begin{figure}[H]
\begin{tikzpicture}
   \tkzTabInit[lgt = 3.7, espcl = 2, deltacl = 0.9]{$E$ / 1.1 , $R_{d=2}(E)$ / 0.7, $R_{d=3}(E)$ / 0.7}{$0$, $\dfrac{1}{2\sqrt{2}}$, $\dfrac{1}{2}$, $\dfrac{1}{\sqrt{2}}$, $1$}
   \tkzTabLine{d, +, +, +, d, -, d, +,d}
   \tkzTabLine{d, +, d, -, d, -, d, +,d}
\end{tikzpicture}
\label{signChart423}
\caption{Ratio test for $\kappa =4$, $d=2,3$. }
\end{figure}
The results for the case $\kappa=4$, $d=2$ is consistent with Lemma \ref{lemK=4}.

\subsection{Ratio test for $\kappa =6$}
\label{subk6}
We focus on dimensions $2$ and $3$. We have
\begin{align*}
R_{d=2}(E) &= \frac{ ((1/2)^2 -E^2)((\sqrt{3}/2)^2 -E^2)(1-E^2)}{ ((1/2)^2 -E)((\sqrt{3}/2)^2 -E)(1-E)}. 
\end{align*}
\begin{align*}
R_{d=3}(E) &= \frac{ ((1/2)^2 -E^2)((\sqrt{3}/2)^2 -E^2)(1-E^2)}{ ((1/2)^2 -E^{2/3})((\sqrt{3}/2)^2 -E^{2/3})(1-E^{2/3})}.
\end{align*}
\begin{figure}[H]
\begin{tikzpicture}
   \tkzTabInit[lgt = 3.3, espcl = 1.8, deltacl = 0.7]{$E$ / 1.1, $R_{d=2}(E)$ / 0.7}{$0$, $\dfrac{1}{4}$, $\dfrac{1}{2}$, $\dfrac{3}{4}$, $\dfrac{\sqrt{3}}{2}$, $1$}
   \tkzTabLine{d, +, d, -, d, +, d, -, d, +, d}
\end{tikzpicture}
\label{signChart62}
\caption{Ratio test for $\kappa =6$, $d=2$.}
\end{figure}
\begin{figure}[H]
\begin{tikzpicture}
   \tkzTabInit[lgt = 3.3, espcl = 1.8, deltacl = 0.7]{$E$ / 1.1 , $R_{d=3}(E)$ / 0.7}{$0$, $\dfrac{1}{8}$, $\dfrac{1}{2}$, $\dfrac{3\sqrt{3}}{8}$, $\dfrac{\sqrt{3}}{2}$, $1$}
   \tkzTabLine{d, +, d, -, d, +, d, -,d , +, d}
\end{tikzpicture}
\label{signChart63}
\caption{Ratio test for $\kappa =6$, $d=3$.}
\end{figure}

\subsection{Ratio test for $\kappa =8$}
We focus on dimensions $2$ and $3$. We have
\begin{align*}
R_{d=2}(E) &= \frac{ (\frac{2-\sqrt{2}}{4}-E^2)(\frac{1}{2} - E^2) (\frac{2+\sqrt{2}}{4} - E^2) (1-E^2) }{ (\frac{2-\sqrt{2}}{4}-E)(\frac{1}{2} - E) (\frac{2+\sqrt{2}}{4} - E) (1-E) }.
\end{align*}
\begin{align*}
R_{d=3}(E) &= \frac{ (\frac{2-\sqrt{2}}{4}-E^2)(\frac{1}{2} - E^2) (\frac{2+\sqrt{2}}{4} - E^2) (1-E^2) }{ (\frac{2-\sqrt{2}}{4}-E^{2/3})(\frac{1}{2} - E^{2/3}) (\frac{2+\sqrt{2}}{4} - E^{2/3}) (1-E^{2/3}) }.
\end{align*}
\begin{figure}[H]
\begin{tikzpicture}
   \tkzTabInit[lgt = 3.3, espcl = 1.8, deltacl = 0.7]{$E$ / 1.1, $R_{d=2}(E)$ / 0.7}{$0$, $\frac{2-\sqrt{2}}{4}$, $\frac{\sqrt{2-\sqrt{2}}}{2}$, $\frac{1}{2}$, $\frac{1}{\sqrt{2}}$, $\frac{2+\sqrt{2}}{4}$, $\frac{\sqrt{2+\sqrt{2}}}{2}$, $1$}
   \tkzTabLine{ d, +, d, -,d , +, d, -, d, +, d,  -, d, +, d}
\end{tikzpicture}
\label{signChart82}
\caption{Ratio test for $\kappa =8$, $d=2$.}
\end{figure}
\begin{figure}[H]
\begin{tikzpicture}
   \tkzTabInit[lgt = 3.3, espcl = 1.8, deltacl = 0.7]{$E$ / 1.1,  $R_{d=3}(E)$ / 0.7}{$0$, $\left(\frac{\sqrt{2-\sqrt{2}}}{2}\right)^3$,  $\frac{1}{2\sqrt{2}}$, $\frac{\sqrt{2-\sqrt{2}}}{2}$, $\frac{1}{\sqrt{2}}$, $\left(\frac{\sqrt{2+\sqrt{2}}}{2}\right)^3$, $\frac{\sqrt{2+\sqrt{2}}}{2}$, $1$}
   \tkzTabLine{d, +, d, -, d, +, d, -, d, +, d,  -, d, +, d}
\end{tikzpicture}
\label{signChart83}
\caption{Ratio test for $\kappa =8$, $d=3$.}
\end{figure}

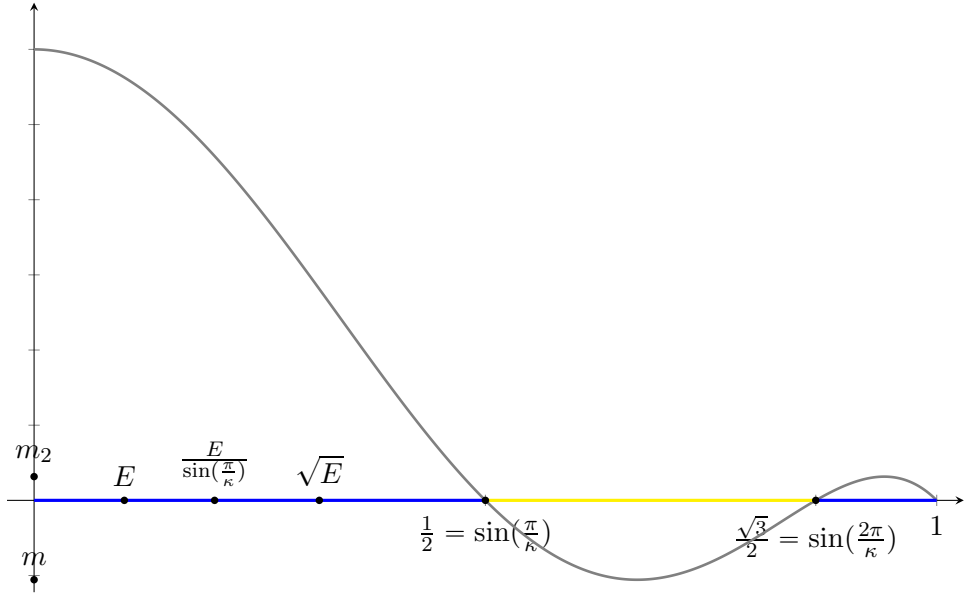
\begin{figure}[H]
\begin{tikzpicture}[domain=0:10]
\begin{axis}
[
clip = true, 
clip mode=individual, 
axis x line = middle, 
axis y line = middle, 
xlabel={}, 
xticklabels={0, $\frac{1}{2} = \sin(\frac{\pi}{\kappa})$, $\frac{\sqrt{3}}{2} = \sin(\frac{2\pi}{\kappa})$, $1$},
xtick={0, 0.5, 0.866, 1},
yticklabels={},
ytick={},
ylabel={}, 
ylabel style={at=(current axis.above origin), anchor=west}, 
y=2cm,
x=12cm,
enlarge y limits={rel=0.03}, 
enlarge x limits={rel=0.03}, 
ymin = -0.5, 
ymax = 3.2, 
after end axis/.code={\path (axis cs:0,0) ;}]

\addplot[color=gray,line width = 1.0 pt, samples=1000, domain=0:1] ({x},{(1-x^2)*(1-4*x^2)*(3-4*x^2)});
\draw [color=blue, very thick] (0,0) -- (0.5,0);
\draw [color=yellow, very thick] (0.5,0) -- (0.866,0);
\draw [color=blue, very thick] (0.866,0)--(1,0);
\node[label={$E$},circle,fill,inner sep=1pt, black] at (axis cs:0.1,0) {};
\node[label={$\frac{E}{\sin(\frac{\pi}{\kappa})}$},circle,fill,inner sep=1pt, black] at (axis cs:0.2,0) {};
\node[label={$\sqrt{E}$},circle,fill,inner sep=1pt, black] at (axis cs:0.316,0) {};
\node[label={},circle,fill,inner sep=1pt, black] at (axis cs:0.866,0) {};
\node[label={},circle,fill,inner sep=1pt, black] at (axis cs:0.5,0) {};
\node[label={$m$},circle,fill,inner sep=1pt, black] at (axis cs:0,-0.528) {};\node[label={$m_2$},circle,fill,inner sep=1pt, black] at (axis cs:0,0.158) {};
\end{axis}
\end{tikzpicture}
\caption{Plot of $h(x) = 2(1-x^2)(1-4x^2)(3-4x^2)$ corresponding to $\kappa=6$.}
\label{fig102}
\end{figure}

\subsection{Central band test}
\label{sub_central}

The test implies a strict Mourre estimate on a subset of $(0, \sin(\pi/ \kappa))$, at least for some values of $\kappa$ and $d$. We consider only $E >0$. Consider the function of $1$ variable
\begin{equation}
\label{functionH}
h: [0,1] \mapsto \R, \quad h:x \mapsto (-1)^{\kappa/2-1} 2^{\kappa-1} \prod_{l=1} ^{\kappa /2} (\sin^2(l \pi / \kappa)-x^2).
\end{equation}
$h$ is graphed in Figure \ref{fig102} for $\kappa=6$. The positive roots of $h$ are $\{ \sin(j\pi/ \kappa): j = 1, ..., \kappa/2 \}$. $h(0) = (-1)^{\kappa/2-1} \kappa$ which is positive if $\kappa =4n+2$ and negative if $\kappa =4n+4$, $n\in \N$. $h'$ is strictly negative (resp.\ positive) on $(0, \sin(\pi/ \kappa))$, depending on whether $\kappa = 4n+2$ (resp. $4n+4$). Thus, if $\kappa = 4n+2$, $h$ is strictly decreasing from $\kappa$ to $0$ on $(0, \sin(\pi/ \kappa)$, whereas if $\kappa = 4n+4$, $h$ is strictly increasing from $-\kappa$ to $0$ on $(0, \sin(\pi/ \kappa))$.

\noindent{\textbf{Case $\kappa = 4n+2$:} } Denote $m = \min_{x \in [\sin(\pi / \kappa), 1]} h(x)$. By counting the number of roots of $h$, $\kappa \geq 4$ implies $m < 0$. When $E_2 = ... = E_d = \sin(\pi/ \kappa)$, $E_1 = E/ \sin^{(d-1)}(\pi / \kappa)$. One solves the inequality $h(E/ \sin^{(d-1)}(\pi / \kappa)) > (d-1)|m|$. Say $[0,E_m)$ solves this inequality then $(0,E_m) \subset \boldsymbol{\mu}_{A_{\kappa}} ^{+} (D)$.

\noindent{\textbf{Case $\kappa = 4n+4$:} } Denote $m = \max_{x \in [\sin(\pi / \kappa), 1]} h(x)$. By counting the number of roots of $h$, $\kappa \geq 4$ implies $m > 0$. When $E_2 = ... = E_d = \sin (\pi/ \kappa)$, $E_1 = E/ \sin^{(d-1)}(\pi / \kappa)$. One solves the inequality $h(E/ \sin^{(d-1)}(\pi / \kappa)) < -(d-1)m$. Say $[0,E_m)$ solves this inequality then $(0,E_m) \subset \boldsymbol{\mu}_{A_{\kappa}} ^{-} (D)$.

Results of the central band test are summarized in Table \ref{table:101909} for $\kappa = 4,6,8$, $d=2,3$. Results for the case $\kappa=4$, $d=2$ are consistent with Lemma \ref{lemK=4}. Figure \ref{fig102} may be of use in understanding the underlying idea behind this test. By way of example, fix $\kappa = 6$, $d=2$. Fix an energy $E = E_1 E_2 \in (0, \sin(\pi / \kappa))$. We start with the point $(E_1,E_2) = (E,1) \in S_E$. As $E_2$ decreases below $1$, $E_1$ increases above $E$. Eventually we get to the point $(E_1,E_2) = (E/ \sin(\pi / \kappa),\sin(\pi / \kappa) )$. Further decreasing $E_2$ and increasing $E_1$, the 2 coordinates cross at the point $(E_1,E_2) = (\sqrt{E},\sqrt{E})$. After crossing $E_1$ and $E_2$ interchange roles.

\begin{table}[H]
    \begin{tabular}{c|c|c|c} 
      $\kappa$ & $m$ & $d=2, E_m$ & $d=3, E_m$    \\ [0.4em]
      \hline
      $4$  & $ 0.5$ & $ \frac{1}{2} \sqrt{\frac{3}{2} - \frac{1}{\sqrt{2}}} \simeq 0.44 $ & $ \frac{\sqrt{3-\sqrt{3}}}{4} \simeq 0.28$   \\ [0.4em]
      $6$  & $ -\frac{10+7\sqrt{7}}{27} \simeq -1.056$ & $ \simeq 0.21 $ & $ \simeq 0.089$   \\ [0.4em]
      $8$  & $\simeq 1.56$ & $ \simeq 0.12 $ & $ \simeq 0.038$   \\ [0.4em]
     \end{tabular}
  \caption{Central band test for $\kappa =4,6,8$, $d=2,3$. $(0, E_m) \subset \boldsymbol{\mu}_{A_{\kappa}}(D)$. }
     \label{table:101909}
\end{table}

\subsection{Second band test} 
\label{sub_second_band}

The procedure of the previous test is adjusted to probe for energies $E \in (\sin(\pi/ \kappa), \sin(2\pi / \kappa))$.

\noindent{\textbf{Case $\kappa = 4n+2$:} } Let $m_2 = \max_{x \in [\sin(2\pi / \kappa), 1]} h(x)$. $\kappa \geq 6 \Rightarrow m_2 > 0$. Solve $h(x) < -(d-1)m_2$. Say $(A_2,B_2)$ is the solution to this inequality that belongs to $[\sin(\pi/ \kappa), \sin(2\pi/ \kappa)]$. 
Then solve $h(E/ \sin^{(d-1)}(2\pi / \kappa)) < -(d-1)m_2$. Say $(D_{m_2},E_{m_2})$ solves this inequality then $(A_2,B_2) \cap (D_{m_2},E_{m_2}) \subset \boldsymbol{\mu}_{A_{\kappa}} ^{-} (D)$.

\noindent{\textbf{Case $\kappa = 4n+4$:} } Let $m_2 = \min_{x \in [\sin(2\pi / \kappa), 1]} h(x)$. $\kappa \geq 6 \Rightarrow m_2 < 0$. Solve $h(x) > (d-1)|m_2|$. Say $(A_2,B_2)$ is the solution to this inequality that belongs to $[\sin(\pi/ \kappa), \sin(2\pi/ \kappa)]$. 
Then solve $h(E/ \sin^{(d-1)}(2\pi / \kappa)) > (d-1)|m_2|$. Say $(D_{m_2},E_{m_2})$ solves this inequality then $(A_2,B_2) \cap (D_{m_2},E_{m_2}) \subset \boldsymbol{\mu}_{A_{\kappa}} ^{+} (D)$.

Results of the second band test are summarized in Table \ref{table:101909098} for $\kappa = 6,8$, $d=2,3$. For $\kappa = 8$, $d=3$, the test does not yield anything, but this is actually in line with the numerical results of Table \ref{tab:table10144}, in the sense that there is no band within $(\sin(\pi/ 8), \sin(2\pi / 8)) \simeq (0.383, 0.707)$. 
    
\begin{table}[H]
    \begin{tabular}{c|c|c|c} 
      $\kappa$ & $m_2$ & $d=2, (A_2,B_2) \cap (D_{m_2},E_{m_2})$ & $d=3, (A_2,B_2) \cap (D_{m_2},E_{m_2})$    \\ [0.4em]
      \hline
      $6$  & $ \frac{7\sqrt{7}-10}{27} \simeq 0.316$ & $ \simeq (0.5283, 0.8236)\cap (0.4575, 0.7133)$ & $ \simeq (0.563,0.780) \cap (0.422,0.585)$ \\ [0.4em]
      $8$  & $\simeq -0.692$ &  $\simeq (0.421,0.649) \cap (0.297,0.458)$ & $ \simeq (0.480, 0.577) \cap (0.240, 0.288) = \emptyset$   \\ [0.4em]
     \end{tabular}
  \caption{Second band test. $\kappa =6,8$, $d=2,3$. $(A_2,B_2) \cap (D_{m_2},E_{m_2}) \subset \boldsymbol{\mu}_{A_{\kappa}}(D)$. }
     \label{table:101909098}
\end{table}
Of course one may perform a third band test for $\kappa = 8$. Without going into all the details, we would find that in $2d$, $(A_3,B_3) \cap (D_{m_3} , E_{m_3}) \simeq (0.729, 0.898) \cap (0.674,0.830) \subset \boldsymbol{\mu}_{A_{\kappa}} (D_{d=2})$, which is consistent with the numerical evidence of Table \ref{tab:table1012} ; whereas in $3d$ we would find that  $(A_3,B_3) \cap (D_{m_3} , E_{m_3}) \simeq (0.756, 0.870) \cap (0.645,0.743) = \emptyset$. This time however the numerical evidence of Table \ref{tab:table10144} suggests that there is a band. This just means that the test is too coarse. 

\section{Isomorphism between Molchanov-Vainberg and Standard Laplacians in dimension $2$.}
\label{Iso2D}

\subsection{Definition of the isomorphism between $\Delta$ and $D$ in dimension $2$.}
Let $\mathcal{G}_B:=(\mathcal{E}_B, \mathcal{V}_B)$ and $\mathcal{G}_R:=(\mathcal{E}_R, \mathcal{V}_R)$ ($B$ for blue, $R$ for red) denote the graphs whose vertices are 
\[\mathcal{V}_B:=\{ (n_1,n_2) \in \Z^2: n_1+n_2 \ \mathrm{ is \ even}\} \quad \mathrm{ \, and \, } \quad \mathcal{V}_R:=\{ (n_1,n_2) \in \Z^2: n_1+n_2 \ \mathrm{ is \ odd}\}\]
We connect the vertices of $\mathcal{G}_B$ and $\mathcal{G}_R$ by the diagonals. Namely, for $X\in \{B,G\}$, we set
 \[\mathcal{E}_X((n_1,n_2), (n_1',n_2')) =1, \mathrm{\, if \,} |n_1-n_1'| = |n_2-n_2'|=1\]
 and $0$ otherwise. The two graphs are illustrated in Figure \ref{figure:lattice}.  As $\Z^2$ is the disjoint union of $\mathcal{V}_B$ and $\mathcal{V}_R$, we have $\ell^2(\Z^2) = \ell^2(\mathcal{V}_B) \oplus \ell^2(\mathcal{V}_R)$. We have that $D$ leaves invariant $\ell^2(\mathcal{V}_B)$ and also $\ell^2(\mathcal{V}_R)$.
Denote $D_B:= \left.D \right|_{\ell^2(\mathcal{V}_B)}$ and $D_R:= \left.D \right|_{\ell^2(\mathcal{V}_R)}$. On the other hand, note that 
\[ \ell^2(\mathcal{V}_R)\simeq \ell^2(\mathcal{V}_B)\simeq \ell^2(\Z^2).\]
For the first one, a simple translation sends $\ell^2(\mathcal{V}_R)$ unitarily onto $\ell^2(\mathcal{V}_B)$. Let us clarify the second isomorphism. Let $\pi_0: \mathcal{V}_B \mapsto \Z^2$, $\pi_0: (n_1,n_2) \mapsto \frac{1}{2}(n_1+n_2,n_2-n_1)$. $\pi_0$ is simply a clockwise rotation of $45^{\circ}$ followed by a $1/ \sqrt{2}$ scaling, i.e.\ $\pi_0 = e^{-\i \pi/4} / \sqrt{2}$. The inverse is $\pi_0^{-1}: \Z^2 \mapsto \mathcal{V}_B$, $\pi_0 ^{-1}: (n_1,n_2) \mapsto (n_1-n_2,n_1+n_2)$. $\pi_0$ induces an isometry $\pi: \ell^2(\mathcal{V}_B)\mapsto \ell^2(\Z^2)$, $\pi: \psi \mapsto \pi(\psi)$, whereby $\pi (\psi) (n) = \psi(\pi_0^{-1}n)$, $n=(n_1,n_2)$. The induced inverse is $\pi^{-1}: \ell^2(\Z^2) \mapsto \ell^2(\mathcal{V}_B)$, whereby $\pi^{-1} (\psi) (n) = \psi(\pi_0 n)$.
With these definitions one secures the identity
\begin{equation}
\label{key_iso_D}
2 D_B  = \pi^{-1} \Delta  \pi \quad \Leftrightarrow   \quad \pi  D_B \pi^{-1}   = \Delta /2 .
\end{equation}
Let $n=(n_1,n_2)$. If $V_B(\cdot)$ is the operator of multiplication by $V_B:\mathcal{V}_B \to \C$ on $\ell^2(\mathcal{V}_B)$ then $\pi V_B (\cdot) \pi^{-1}$ is the operator of multiplication  $V_B(\pi_0^{-1}\cdot)$ on  $\ell^2(\Z^2)$. If $V_B$ is \emph{radial}, i.e.\ $V_B(n_1,n_2) = V_B(\langle n \rangle)$, then $\pi V_B \pi^{-1}$ is radial and $\pi V_B \pi^{-1}(n_1,n_2) = V_B  (\sqrt{2}\langle n \rangle)$. Conversely if $V(\cdot)$   is the operator of multiplication by $V: \Z^2 \to \C$ on $\ell^2(\Z^2)$, then $\pi ^{-1} V(\cdot) \pi$ is the operator of multiplication $V(\pi_0 \cdot)$ on $\ell^2(\mathcal{V}_B)$. If $V(n_1,n_2) = V(\langle n \rangle)$ then $\pi ^{-1} V \pi (n_1, n_2) = V (\langle n \rangle / \sqrt{2})$.

To sum up, for a potential $V$ defined on $\Z^2$, set $V_B:= \left.V \right|_{\mathcal{V}_B}$ and $V_R:= \left.V \right|_{\mathcal{V}_R}$. One has 
\[2D + V = (2D_B + V_B(\cdot)) \oplus (2D_R + V_R(\cdot)) \cong (\Delta + V_B(\cdot)) \oplus (\Delta + V_R(\cdot)),\]
as operators on $\ell^2(\Z^2) = \ell^2(\mathcal{V}_B) \oplus \ell^2(\mathcal{V}_R) \cong \ell^2(\Z^2) \oplus \ell^2(\Z^2)$.

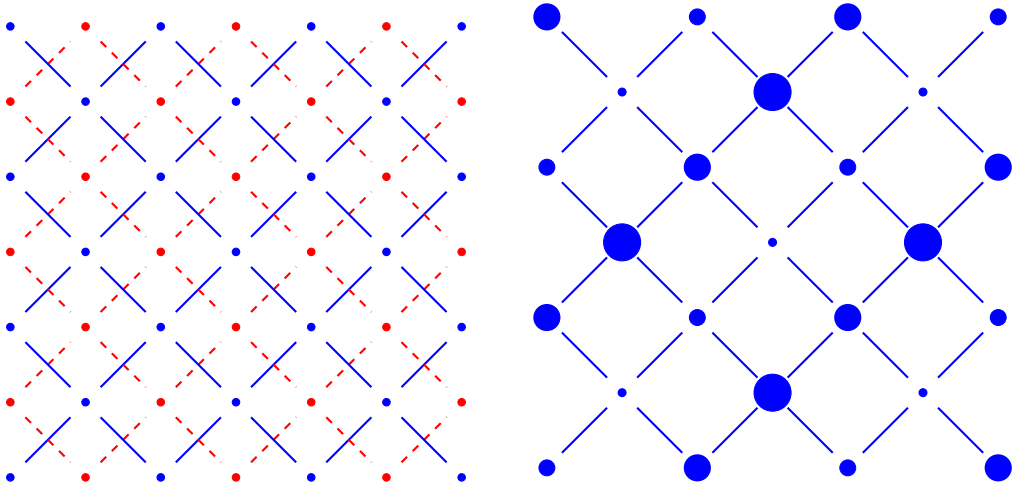
\begin{figure}[ht]

\begin{tikzpicture}
    [
        dot/.style={circle,draw=black, fill,inner sep=1pt},
    ]

\foreach \x in {-3,-1,1,3}{
 \node[dot,blue] at (\x,3){ };
    \node[dot,red] at (\x,2){ };
    \node[dot,blue] at (\x,1){ };
    \node[dot,red] at (\x,0){ };
    \node[dot,blue] at (\x,-1){ };
    \node[dot,red] at (\x,-2){ };
    \node[dot,blue] at (\x,-3){ };
}

\foreach \x in {-2,0,2}{
   \node[dot,red] at (\x,-3){ };
    \node[dot,blue] at (\x,-2){ };
    \node[dot,red] at (\x,1){ };
    \node[dot,blue] at (\x,0){ };
    \node[dot,red] at (\x,-1){ };
    \node[dot,blue] at (\x,2){ };
     \node[dot,red] at (\x,3){ };
}

\foreach \x in {-2.8,...,1.2}
    \draw[thick,red,dashed] (\x,\x+1) -- (\x+.6,\x+.6+1);
    \foreach \x in {-2.8,...,2.2}
    \draw[thick,blue] (\x,\x) -- (\x+.6,\x+.6);    
        \foreach \x in {-1.8,...,2.2}
    \draw[thick,red,dashed] (\x,\x-1) -- (\x+.6,\x+.6-1);    
            \foreach \x in {-2.8,...,0.2}
    \draw[thick,blue] (\x,\x+2) -- (\x+.6,\x+.6+2);    
                \foreach \x in {-0.8,...,2.2}
    \draw[thick,blue] (\x,\x-2) -- (\x+.6,\x+.6-2);    
                    \foreach \x in {1.2,...,2.2}
    \draw[thick,blue] (\x,\x-4) -- (\x+.6,\x+.6-4);    
                        \foreach \x in {-2.8,...,-1.8}
    \draw[thick,blue] (\x,\x+4) -- (\x+.6,\x+.6+4);    
\foreach \x in {-2.8,...,-0.8}
    \draw[thick,red,dashed] (\x,\x+3) -- (\x+.6,\x+.6+3);   
                \foreach \x in {0.2,...,2.2}
    \draw[thick,red,dashed] (\x,\x-3) -- (\x+.6,\x+.6-3);    
            \foreach \x in {-2.8}
    \draw[thick,red,dashed] (\x,\x+5) -- (\x+.6,\x+.6+5);    
                \foreach \x in {2.2}
    \draw[thick,red,dashed] (\x,\x-5) -- (\x+.6,\x+.6-5);
        \foreach \x in {-2.8,...,2.2}
    \draw[thick,blue] (\x,-\x) -- (\x+.6,-\x-.6);    
            \foreach \x in {-2.8,...,1.2}
    \draw[thick,red,dashed] (\x,-\x-1) -- (\x+.6,-\x-.6-1);    
                \foreach \x in {-2.8,...,0.2}
    \draw[thick,blue] (\x,-\x-2) -- (\x+.6,-\x-.6-2);    
                    \foreach \x in {-2.8,...,-0.8}
    \draw[thick,red,dashed] (\x,-\x-3) -- (\x+.6,-\x-.6-3);    
                    \foreach \x in {-2.8,...,-1.8}
    \draw[thick,blue] (\x,-\x-4) -- (\x+.6,-\x-.6-4);    
                    \foreach \x in {-2.8}
    \draw[thick,red,dashed] (\x,-\x-5) -- (\x+.6,-\x-.6-5);    
            \foreach \x in {-1.8,...,2.2}
    \draw[thick,red,dashed] (\x,-\x+1) -- (\x+.6,-\x-.6+1);    
                        \foreach \x in {-0.8,...,2.2}
    \draw[thick,blue] (\x,-\x+2) -- (\x+.6,-\x-.6+2);    
                \foreach \x in {0.2,...,2.2}
    \draw[thick,red,dashed] (\x,-\x+3) -- (\x+.6,-\x-.6+3);    
                            \foreach \x in {1.2,...,2.2}
    \draw[thick,blue] (\x,-\x+4) -- (\x+.6,-\x-.6+4);
                    \foreach \x in {2.2}
    \draw[thick,red,dashed] (\x,-\x+5) -- (\x+.6,-\x-.6+5);
    
\end{tikzpicture}
\quad \quad 
\begin{tikzpicture}
    [
        dot/.style={circle,draw=black, fill,inner sep=0pt},
    ]

\foreach \x in {-3,-1,1,3}{
     \node[dot,blue] at (\x,3){ };
    \node[dot,blue] at (\x,1){ };
    \node[dot,blue] at (\x,-1){ };
    \node[dot,blue] at (\x,-3){ };
}

\foreach \x in {-2,0,2}{
    \node[dot,blue] at (\x,-2){ };
    \node[dot,blue] at (\x,0){ };
    \node[dot,blue] at (\x,2){ };
}

\draw node[fill,circle,minimum size=1pt, blue] at (-3,-1) {};
\draw node[fill,circle,minimum size=1pt, blue] at (-1,1) {};
\draw node[fill,circle,minimum size=1.2pt, blue] at (1,-1) {};
\draw node[fill,circle,minimum size=1.2pt, blue] at (3,1) {};
\draw node[fill,circle,minimum size=1.2pt, blue] at (1,3) {};
\draw node[fill,circle,minimum size=1.2pt, blue] at (-1,-3) {};
\draw node[fill,circle,minimum size=1.2pt, blue] at (3,-3) {};
\draw node[fill,circle,minimum size=1.2pt, blue] at (-3,3) {};

\draw[fill, blue] (0,0) circle[radius=1.5pt];%
\draw[fill, blue] (-2,2) circle[radius=1.5pt];%
\draw[fill, blue] (2,2) circle[radius=1.5pt];%
\draw[fill, blue] (2,-2) circle[radius=1.5pt];%
\draw[fill, blue] (-2,-2) circle[radius=1.5pt];%

\draw[fill, blue] (0,2) circle[radius=7pt];%
\draw[fill, blue] (2,0) circle[radius=7pt];%
\draw[fill, blue] (0,-2) circle[radius=7pt];%
\draw[fill, blue] (-2,0) circle[radius=7pt];%
 
\draw[fill, blue] (-1,-1) circle[radius=3pt];%
\draw[fill, blue] (-3,1) circle[radius=3pt];%
\draw[fill, blue]  (-1,3) circle[radius=3pt];%
\draw[fill, blue]  (1,1) circle[radius=3pt];%
\draw[fill, blue] (3,-1) circle[radius=3pt];%
\draw[fill, blue] (1,-3) circle[radius=3pt];%
\draw[fill, blue] (-3,-3) circle[radius=3pt];%
\draw[fill, blue] (3,3) circle[radius=3pt];%

    \foreach \x in {-2.8,...,2.2}
    \draw[thick, blue] (\x,\x) -- (\x+.6,\x+.6);    
            \foreach \x in {-2.8,...,0.2}
    \draw[thick,blue] (\x,\x+2) -- (\x+.6,\x+.6+2);    
                \foreach \x in {-0.8,...,2.2}
    \draw[thick,blue] (\x,\x-2) -- (\x+.6,\x+.6-2);    
                    \foreach \x in {1.2,...,2.2}
    \draw[thick,blue] (\x,\x-4) -- (\x+.6,\x+.6-4);    
                        \foreach \x in {-2.8,...,-1.8}
    \draw[thick,blue] (\x,\x+4) -- (\x+.6,\x+.6+4);    
        \foreach \x in {-2.8,...,2.2}
    \draw[thick,blue] (\x,-\x) -- (\x+.6,-\x-.6);    
                \foreach \x in {-2.8,...,0.2}
    \draw[thick,blue] (\x,-\x-2) -- (\x+.6,-\x-.6-2);    
                    \foreach \x in {-2.8,...,-1.8}
    \draw[thick,blue] (\x,-\x-4) -- (\x+.6,-\x-.6-4);    
                        \foreach \x in {-0.8,...,2.2}
    \draw[thick,blue] (\x,-\x+2) -- (\x+.6,-\x-.6+2);    
                            \foreach \x in {1.2,...,2.2}
    \draw[thick,blue] (\x,-\x+4) -- (\x+.6,-\x-.6+4);
    
\end{tikzpicture}
  \caption{Left: $\Z^2= \mathcal{G}_B \oplus \mathcal{G}_R$. Right: rotation by $45^{\circ}$ of a ``periodic'' pattern of $2\times2$ squares containing vertices (S, M, L, XL), see example \ref{example11}}
\label{figure:lattice}
\end{figure}

\begin{remark} It is also interesting to depict the isomorphism in Fourier space. Let 
\[\sigma: L^2([-\pi,\pi]^2,d\xi) \mapsto L^2([-\pi,\pi]^2,d\xi), \quad (\sigma f)(\xi_1,\xi_2) = f(\xi_1+\xi_2, \xi_1-\xi_2)\] and $(\sigma^{-1} f)(\xi_1,\xi_2) = f\left((\xi_1+\xi_2)/2, (\xi_1-\xi_2)/2 \right)$. Then using the relationship $2\cos(\xi_1) \cos(\xi_2) = \cos(\xi_1 + \xi_2) + \cos(\xi_1 -\xi_2)$ one gets
$$\mathcal{F}  (2D) \mathcal{F} ^{-1} = \sigma \mathcal{F} \Delta \mathcal{F}^{-1} \sigma^{-1} .$$
\end{remark}

\subsection{Definition of the conjugate operator $\pi A_{2\kappa}\pi^{-1}$ and regularity. }
\label{sub_def_a232} First we must signal a small issue of well-posedness. Let $\kappa = (\kappa_1,\kappa_2) \in (\N^*)^2$. The shifts $\{S_1^{\kappa_1}, S_2^{\kappa_2 }\}$ are invariant on $\ell^2(\mathcal{V}_B)$ and $\ell^2(\mathcal{V}_R)$ if and only if $\kappa_1,\kappa_2$ are even. The conjugate operator $A_{\kappa}$ is invariant on $\ell_0(\mathcal{V}_B)$ and $\ell_0(\mathcal{V}_R)$ if and only if $\kappa_1, \kappa_2$ are both even. Let $\alpha \kappa = (\alpha \kappa_1, \alpha \kappa_2)$, $\alpha \in \R$.

To determine the expression of $\pi A_{2\kappa} \pi^{-1}$, one first computes, for $n \in \N^*$: 
$$\pi  S_1 ^{2n} \pi^{-1} = S_1^{n} S_2 ^{-n}, \quad \pi  S_1^{-2n} \pi^{-1} = S_1^{-n} S_2 ^{n}, \quad \pi S_2 ^{2n} \pi^{-1} = S_1^{n} S_2 ^{n}, \quad \pi S_2^{-2n} \pi^{-1} = S_1^{-n} S_2 ^{-n},$$
$$\pi N_1 \pi^{-1} = N_1 - N_2, \quad \pi N_2 \pi^{-1} = N_1 + N_2.$$
One computes $\pi A_{2\kappa} \pi^{-1}$ on  $\ell_0(\Z^d)$: 
\begin{equation}
\begin{aligned}
\label{PIinversePI}
\pi A_{2\kappa} \pi^{-1} &= \frac{1}{4\i} (N_1 - N_2) \left( S_1^{\kappa_1} S_2 ^{-\kappa_1} - S_1^{-\kappa_1} S_2 ^{\kappa_1} \right) + \frac{1}{4\i}\left( S_1^{\kappa_1} S_2 ^{-\kappa_1} - S_1^{-\kappa_1} S_2 ^{\kappa_1} \right) (N_1 - N_2) \\
& \quad + \frac{1}{4\i} (N_1+N_2) \left( S_1^{\kappa_2} S_2 ^{\kappa_2} - S_1^{-\kappa_2} S_2 ^{-\kappa_2} \right) +\frac{1}{4\i}  \left( S_1^{\kappa_2} S_2 ^{\kappa_2} - S_1^{-\kappa_2} S_2 ^{-\kappa_2} \right) (N_1+N_2). 
\end{aligned}
\end{equation}
Of course \eqref{PIinversePI} may be expressed in Fourier space. By computing on smooth functions that are $2\pi-$periodic, one has $\mathcal{F} S_j ^{\kappa_j} \mathcal{F}^{-1} = e^{\i \kappa_j \xi_j}$ and $\mathcal{F} \i N_j \mathcal{F}^{-1} = \partial / \partial \xi_j$. When $\kappa_1 = \kappa_2$, 
\begin{align}
\nonumber 
& \hspace*{-1cm}\mathcal{F} \pi A_{2\kappa} \pi^{-1} \mathcal{F}^{-1}
\\
\label{ConjFourier}
&=-\i \cos(\kappa \xi_2) \left [ \sin(\kappa \xi_1) \frac{\partial}{\partial \xi_1}+\frac{\partial}{\partial \xi_1} \sin( \kappa \xi_1)  \right] - \i \cos( \kappa \xi_1) \left [ \sin( \kappa \xi_2) \frac{\partial}{\partial \xi_2}+\frac{\partial}{\partial \xi_2} \sin(\kappa \xi_2)  \right].
\end{align}
\eqref{PIinversePI} and \eqref{ConjFourier} are to be compared with \eqref{generatorDilations_011} and \eqref{A_fourier}. Regularity with respect to $\Delta$ is excellent:

\begin{proposition}  $\Delta \in C^{\infty}(\pi A_{2\kappa}\pi^{-1})$ for all $\kappa \in (\N^*)^2$.
\end{proposition}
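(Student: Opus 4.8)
The plan is to deduce the regularity from the already-established fact that $D\in C^{\infty}(A_{2\kappa})$ (Proposition \ref{reg_Prop_DD}), by exploiting that $C^{\infty}$ regularity is preserved both under restriction to a reducing subspace and under conjugation by a unitary. First I would note that because every component of $2\kappa$ is even, the shifts $S_j^{2\kappa_j}$ and the position operators $N_j$ all preserve the parity of $n_1+n_2$; hence $A_{2\kappa}$ leaves $\ell^2(\mathcal{V}_B)$ and $\ell^2(\mathcal{V}_R)$ invariant and reduces as $A_{2\kappa}=A_{2\kappa}^{B}\oplus A_{2\kappa}^{R}$, exactly as $D=D_B\oplus D_R$ does along the same orthogonal decomposition of $\ell^2(\Z^2)$.

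Since $e^{\i tA_{2\kappa}}=e^{\i tA_{2\kappa}^{B}}\oplus e^{\i tA_{2\kappa}^{R}}$ and $D$ is block-diagonal along this decomposition, the orbit map $t\mapsto e^{\i tA_{2\kappa}}De^{-\i tA_{2\kappa}}$ splits as the direct sum of its two blocks; a direct sum of operators has $C^{k}(\R)$ strong regularity iff each summand does, so $D\in C^{\infty}(A_{2\kappa})$ forces $D_B\in C^{\infty}(A_{2\kappa}^{B})$. Next I would conjugate by the unitary $\pi\colon\ell^2(\mathcal{V}_B)\to\ell^2(\Z^2)$. As $\pi$ is unitary, $e^{\i t\,\pi A_{2\kappa}^{B}\pi^{-1}}=\pi\,e^{\i tA_{2\kappa}^{B}}\pi^{-1}$, whence
$$e^{\i t\,\pi A_{2\kappa}^{B}\pi^{-1}}\big(\pi D_B\pi^{-1}\big)e^{-\i t\,\pi A_{2\kappa}^{B}\pi^{-1}}=\pi\big(e^{\i tA_{2\kappa}^{B}}D_Be^{-\i tA_{2\kappa}^{B}}\big)\pi^{-1}.$$
Conjugation by a fixed unitary is a homeomorphism of $\mathscr{B}(\mathcal{H})$ for the strong operator topology, so it preserves the $C^{k}(\R)$ regularity of the orbit map; thus $\pi D_B\pi^{-1}\in C^{\infty}(\pi A_{2\kappa}^{B}\pi^{-1})$. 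Finally, by \eqref{key_iso_D} one has $\pi D_B\pi^{-1}=\Delta/2$, and the conjugated generator computed on $\ell_0(\Z^2)$ is precisely \eqref{PIinversePI}, i.e.\ $\pi A_{2\kappa}\pi^{-1}$. Since the bounded operators of class $C^{\infty}$ form a vector space, $\Delta/2\in C^{\infty}(\pi A_{2\kappa}\pi^{-1})$ yields $\Delta\in C^{\infty}(\pi A_{2\kappa}\pi^{-1})$.

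The only delicate point, and hence the main obstacle, is matching the formal expression \eqref{PIinversePI}, a priori valid only on $\ell_0(\Z^2)$, with the self-adjoint operator obtained by conjugating the self-adjoint realization of $A_{2\kappa}^{B}$: one must verify that these coincide as self-adjoint operators, e.g.\ that $\pi$ carries a core of $A_{2\kappa}^{B}$ onto $\ell_0(\Z^2)$ and that $\ell_0(\Z^2)$ is a core for \eqref{PIinversePI}. Once this identification is secured, everything above is soft. Alternatively, a fully self-contained verification is available in Fourier space: by \eqref{ConjFourier} the operator $\mathcal{F}\,\pi A_{2\kappa}\pi^{-1}\,\mathcal{F}^{-1}$ is a first-order differential operator with smooth $2\pi$-periodic coefficients, while $\mathcal{F}\Delta\mathcal{F}^{-1}$ is multiplication by $\cos(\xi_1)+\cos(\xi_2)$; each iterated commutator of $\Delta$ with $\i\,\pi A_{2\kappa}\pi^{-1}$ is therefore again multiplication by a smooth bounded function, which extends to a bounded operator on $\ell^2(\Z^2)$ and certifies $\Delta\in C^{\infty}(\pi A_{2\kappa}\pi^{-1})$ directly.
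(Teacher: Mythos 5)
Your proof is correct, but it takes a genuinely different, softer route than the paper's. The paper argues by direct computation: using \eqref{dAk} it writes out $[\Delta,\i\,\pi A_{2\kappa}\pi^{-1}]$ on $\ell_0(\Z^2)$ as an explicit polynomial in the shifts (via $\pi\Delta_1^2\pi^{-1}=1/2+(S_1S_2^{-1}+S_1^{-1}S_2)/4$ and its analogue for $\Delta_2$), observes that this is a bounded operator, extends by density to get $C^1$, and then invokes an induction for $C^{\infty}$. You instead transfer the already-established regularity $D\in C^{\infty}(A_{2\kappa})$ of Proposition \ref{reg_Prop_DD} through the reducing decomposition $\ell^2(\Z^2)=\ell^2(\mathcal{V}_B)\oplus\ell^2(\mathcal{V}_R)$ (legitimate, since all components of $2\kappa$ are even) and then through the unitary $\pi$, using that direct sums and unitary conjugation preserve the $C^k$ regularity of the orbit map. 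Both arguments ultimately rest on \eqref{key_iso_D}; yours avoids all computation but produces no formula, whereas the paper's computation yields the explicit commutator (essentially $\pi[2D_B,\i A_{2\kappa}]_{\circ}\pi^{-1}$, cf.\ \eqref{implication1}) that is needed anyway for the subsequent Mourre analysis. The delicate point you flag --- identifying the conjugate of the self-adjoint realization of $A_{2\kappa}^{B}$ with the closure of the formal expression \eqref{PIinversePI} --- is harmless here: $\pi$ maps $\ell_0(\mathcal{V}_B)$, a core for $A_{2\kappa}^{B}$, bijectively onto $\ell_0(\Z^2)$, and the paper in effect \emph{defines} $\pi A_{2\kappa}\pi^{-1}$ as this conjugate. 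Your Fourier-space alternative (iterated commutators of the multiplication operator $\cos\xi_1+\cos\xi_2$ with a first-order differential operator with smooth periodic coefficients remain bounded multiplication operators) is also sound and is closest in spirit to what the paper actually does.
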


\begin{proof}
Thanks to \eqref{dAk}, one computes on $\ell_0(\Z^d)$ that $\left [  \Delta , \i \pi A_{2\kappa} \pi^{-1} \right ]$ equals
\begin{equation*}
\frac{\Delta}{2} \left [ (-1)^{\kappa_1-1} 2^{2\kappa_1}  \prod_{l = 1} ^{\kappa_1} \left( \sin^2 \left(\frac{l \pi}{2\kappa_1} \right) - \pi \Delta_1^2 \pi^{-1} \right) + (-1)^{\kappa_2-1} 2^{2\kappa_2} \prod_{l = 1} ^{\kappa_2} \left( \sin^2 \left(\frac{l \pi}{2\kappa_2} \right) - \pi \Delta_2^2 \pi^{-1} \right) \right],
\end{equation*}
where 
$$ \pi \Delta_1^2 \pi^{-1} = 1/2 + (S_1 S_2^{-1} + S_1 ^{-1} S_2) / 4, \quad \pi \Delta_2^2 \pi^{-1} = 1/2 + (S_1 S_2 + S_1 ^{-1} S_2 ^{-1})/4. $$ 
Extending by density to all vectors in $\ell^2(\Z^d)$ gives $\Delta \in C^1(\pi A_{2\kappa} \pi^{-1})$. An obvious induction argument gives the $C^{\infty} (\pi A_{2\kappa} \pi^{-1})$ regularity. 
\qed
\end{proof}

The next Lemma states the $C^1(\pi A_{2\kappa} \pi^{-1})$ and $C^{1,1}(\pi A_{2\kappa} \pi^{-1})$ regularity for the potential.
\begin{Lemma}
Let $\kappa \in (\N^*)^2$ be given. Suppose
\begin{equation}
\label{CriterionA4441}  
(n_1-n_2) (V - \tau_1^{\kappa_1} \tau_2^{-\kappa_1} V)(n) \quad and \quad (n_1+n_2) (V - \tau_1^{\kappa_2} \tau_2^{\kappa_2} V)(n) = O (1), \textrm{as } |n|\to\infty.
\end{equation} 
Then $V(\cdot) \in C^1(\pi A_{2\kappa} \pi^{-1})$. Now let $V_s$ satisfy \eqref{Firstcriterion}. Then $V_s(\cdot) \in C^{1,1} (\pi A_{2\kappa} \pi^{-1})$. Let $V_l (n) = o(1)$ as $|n|\to\infty$. Then $V_l(\cdot) \in C^{1,1} (\pi A_{2\kappa} \pi^{-1})$ whenever
\begin{equation} 
\label{Secondcriterion22}
\int_{1} ^{\infty} \sup _{r < | n| < 2r} \big | (V_{\text{l}} - \tau_1^{\kappa_1} \tau_2 ^{-\kappa_1} V_{\text{l}})(n) \big | dr < \infty \quad \text{and} \quad \int_{1} ^{\infty} \sup _{r < | n| < 2r} \big | (V_{\text{l}} - \tau_1^{\kappa_2} \tau_2 ^{\kappa_2} V_{\text{l}})(n) \big | dr < \infty.
\end{equation}
\end{Lemma}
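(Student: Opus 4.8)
The plan is to exploit the fact that $\pi A_{2\kappa}\pi^{-1}$, given by \eqref{PIinversePI}, is algebraically a sum of two generators of dilation of exactly the same shape as the $A_j$ in \eqref{generatorDilations_011}, only attached to the two diagonal lattice directions rather than to the coordinate axes. Writing $\pi A_{2\kappa}\pi^{-1} = B_1 + B_2$ with
\begin{equation*}
B_i = \frac{1}{4\i}\big[\, M_i(T_i - T_i^*) + (T_i - T_i^*)M_i \,\big], \quad i=1,2,
\end{equation*}
where $M_1 = N_1 - N_2$, $T_1 = S_1^{\kappa_1}S_2^{-\kappa_1}$ and $M_2 = N_1 + N_2$, $T_2 = S_1^{\kappa_2}S_2^{\kappa_2}$, one checks from $[N_j, S_j^{\kappa_j}]_{\circ} = \kappa_j S_j^{\kappa_j}$ and $[N_j, S_j^{-\kappa_j}]_{\circ} = -\kappa_j S_j^{-\kappa_j}$ that $[M_i, T_i] = 2\kappa_i T_i$ and $[M_i, T_i^*] = -2\kappa_i T_i^*$. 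Thus each $B_i$ carries the same commutation algebra as $A_j$, with $N_j$ playing the role of $M_i$, $S_j^{\kappa_j}$ the role of $T_i$, and the scalar $\kappa_j$ the role of $2\kappa_i$; consequently every computation made for $A_{\kappa}$ in Lemmas \ref{LEM0} and \ref{criteriaLS} transcribes once the coordinate shift $\tau_j^{\kappa_j}$ is replaced by the diagonal shift $\tau_1^{\kappa_1}\tau_2^{-\kappa_1}$ (for $B_1$) or $\tau_1^{\kappa_2}\tau_2^{\kappa_2}$ (for $B_2$).

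For the $C^1$ statement I would redo the computation of Lemma \ref{LEM0} on $\ell_0(\Z^2)$. Since $V$ commutes with $M_i$ and $[V, T_1] = (V - \tau_1^{\kappa_1}\tau_2^{-\kappa_1}V)T_1$, the same manipulation yields
\begin{equation*}
[V, \i B_1] = \tfrac14\big[\,(2M_1 - 2\kappa_1)(V - \tau_1^{\kappa_1}\tau_2^{-\kappa_1}V)T_1 - (2M_1 + 2\kappa_1)(V - \tau_1^{-\kappa_1}\tau_2^{\kappa_1}V)T_1^*\,\big],
\end{equation*}
and likewise for $B_2$ with $M_2$, $T_2$ and $\tau_1^{\kappa_2}\tau_2^{\kappa_2}$. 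The $T_i, T_i^*$ are unitary, and the coefficient $(2M_1 - 2\kappa_1)(V - \tau_1^{\kappa_1}\tau_2^{-\kappa_1}V)$ is the multiplication operator $2(n_1 - n_2 - \kappa_1)(V - \tau_1^{\kappa_1}\tau_2^{-\kappa_1}V)(n)$, which is $O(1)$ precisely by the first half of \eqref{CriterionA4441} (the term $\kappa_1(V - \tau_1^{\kappa_1}\tau_2^{-\kappa_1}V)$ being bounded because $V$ is). The adjoint terms need not be estimated separately: $[V, \i\pi A_{2\kappa}\pi^{-1}]$ is symmetric and its $T_i^*$-term is exactly the adjoint of its $T_i$-term, so boundedness of one gives boundedness of the other. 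Extending by density then gives $V \in C^1(\pi A_{2\kappa}\pi^{-1})$ by \cite[Lemma 6.2.9]{ABG}.

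For the two $C^{1,1}$ statements I would invoke the proof of Lemma \ref{criteriaLS} from the appendix of \cite{BSa}. As emphasized in the text after that lemma, the argument depends on the conjugate operator only through its structure as a generator of dilation and through the relative bound of Lemma \ref{A<N}. Here the analogous bound $|\pi A_{2\kappa}\pi^{-1}| \le C\langle N\rangle$ holds, since $\|M_i u\| = \|(N_1 \mp N_2)u\| \le \sqrt2\,\|\langle N\rangle u\|$ while the $T_i$ are unitary and $[M_i, T_i - T_i^*]$ is bounded; hence the \cite{BSa} estimates apply with the coordinate differences replaced by the diagonal ones. The short range bound is direction-insensitive, so \eqref{Firstcriterion} still gives $V_s \in C^{1,1}(\pi A_{2\kappa}\pi^{-1})$, and the long range condition becomes exactly the pair of integral bounds \eqref{Secondcriterion22}, one governing $B_1$ and one governing $B_2$.

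The main obstacle is the $C^{1,1}$ part: one must be sure that replacing the axis shifts $S_j^{\kappa_j}$ by the diagonal shifts $T_i$ spoils none of the estimates in the \cite{BSa} appendix. This reduces to two checks, namely that the relative bound $|\pi A_{2\kappa}\pi^{-1}| \lesssim \langle N\rangle$ survives (it does, by the computation just sketched) and that the finite-difference directions entering the double commutator are precisely $\tau_1^{\kappa_1}\tau_2^{-\kappa_1}$ and $\tau_1^{\kappa_2}\tau_2^{\kappa_2}$, which is exactly what makes \eqref{Secondcriterion22} the correct hypothesis. Everything else is a direct transcription of the $A_{\kappa}$ computations, so I expect no further difficulty.
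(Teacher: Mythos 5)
Your proposal is correct and is exactly the argument the paper intends: the paper gives no explicit proof of this lemma (the analogous statement for $\pi^{-1}A_{\kappa}\pi$ is explicitly ``left to the reader''), and the expected route is precisely your transcription of Lemma \ref{LEM0} and of the \cite{BSa} appendix argument with the coordinate shifts replaced by the diagonal shifts $\tau_1^{\kappa_1}\tau_2^{-\kappa_1}$, $\tau_1^{\kappa_2}\tau_2^{\kappa_2}$ and the weights $N_j$ replaced by $N_1\mp N_2$. Your commutator formula for $[V,\i B_1]$ and the relative bound against $\langle \mathbf{N}\rangle$ (which is Lemma \ref{A<N}, already stated in the paper for all conjugate operators of Table \ref{table:2020987}) are both right, so nothing is missing.
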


\subsection{Transposing the results for $D_B$ to $\Delta$.}
\eqref{key_iso_D} implies 
\begin{equation}
\label{implication1}
\left [  \Delta , \i \pi A_{2\kappa} \pi^{-1} \right ]_{\circ} = \pi \left [ 2 D_B , \i  A_{2\kappa} \right ]_{\circ} \pi^{-1}.
\end{equation}
Let $\theta \in C_c ^{\infty}(\R)$ be compactly supported. By the Helffer-Sj\"ostrand formula, $\theta \left( 2 D_B \right) = \pi^{-1} \theta \left( \Delta \right) \pi$ and $\theta \left( 2D \right)  = \theta \left( 2D_B \right)  \oplus \theta \left( 2D_R \right).$ In terms of the Mourre estimate this means that 
$$\theta \left( 2 D_B \right) [ 2 D_B,  \pm \i  A_{2\kappa} ]_{\circ} \theta \left( 2 D_B \right) \geq \gamma \theta \left( 2 D_B \right) \Longleftrightarrow \theta \left( \Delta \right) \left[  \Delta, \pm \i \pi A_{2\kappa} \pi^{-1} \right]_{\circ} \theta \left( \Delta \right) \geq \gamma \theta \left( \Delta \right).$$
In other words, $\boldsymbol{\mu}_{\pi A_{2\kappa} \pi^{-1}} ^{\pm}(\Delta) = \boldsymbol{\mu}_{A_{2\kappa}} ^{\pm} (2D_B)$. This formula says that whenever a spectral interval exhibits operator positivity for $2D_B$ with respect to $A_{2\kappa}$ it can be transferred into operator positivity for $\Delta$ with respect to $\pi A_{2\kappa} \pi^{-1}$ and vice versa.

\noindent \underline{\textit{Example:}}  We treat the case $\kappa = (\kappa_1,\kappa_2)=(1,1)$.  A direct calculation using \eqref{ConjFourier} gives:
\begin{equation}
\label{calcul_independent}
\mathcal{F} \left [  \Delta , \i \pi A_{2\kappa} \pi^{-1} \right ]_{\circ} \mathcal{F}^{-1} = [\cos(\xi_1) + \cos(\xi_2), \i \eqref{ConjFourier} ]_{\circ} = 2\sin^2(\xi_1) \cos(\xi_2) + 2\sin^2(\xi_2) \cos(\xi_1). 
\end{equation}
On the other hand, using \eqref{implication1} and \eqref{COMMUTATOR_D} gives
\begin{equation}
\label{commutateurDelta2}
\begin{aligned}
[\Delta, \i \pi A_{2\kappa} \pi^{-1} ]_{\circ} &= 
4 \pi D_B \left( (1-\Delta_1^2) +(1-\Delta_2^2) \right) \pi^{-1} = 2 \Delta \left( 1 - \Delta_1 \Delta_2 \right) \\
&=  \mathcal{F}^{-1} 2\left(\cos(\xi_1) + \cos(\xi_2) \right) \left( 1- \cos(\xi_1) \cos(\xi_2) \right) \mathcal{F}.
\end{aligned}
\end{equation} 
We see that \eqref{commutateurDelta2} and \eqref{calcul_independent} agree. This confirms \eqref{implication1}.

\subsection{Definition of the conjugate operator $\pi^{-1} A_{\kappa}\pi$ and regularity. } This time around $\pi^{-1} A_{\kappa}\pi$ is well defined on $\ell_0( \mathcal{V}_B)$, for all $\kappa \in (\N^*)^2$. One has for $n \in \N^*$,
$$\pi^{-1} S_1 ^{n} \pi = S_1^{n} S_2 ^{n}, \quad \pi^{-1} S_1^{-n}\pi = S_1^{-n} S_2 ^{-n}, \quad \pi^{-1} S_2 ^{n} \pi = S_1^{-n} S_2 ^{n}, \quad \pi^{-1} S_2^{-n} \pi = S_1^{n} S_2 ^{-n},$$
$$\pi^{-1} N_1 \pi = (N_1 + N_2)/2, \quad \pi^{-1} N_2 \pi = (N_2 - N_1)/2.$$
One computes $\pi^{-1} A_{\kappa}\pi$ on $\ell_0(\mathcal{V}_B)$: it is equal to 
\begin{equation}
\begin{aligned}
\label{A_2d}
 &(8\i)^{-1} (N_1 + N_2) \left( S_1^{\kappa_1} S_2 ^{\kappa_1} - S_1^{-\kappa_1} S_2 ^{-\kappa_1} \right) +(8\i)^{-1}  \left( S_1^{\kappa_1} S_2 ^{\kappa_1} - S_1^{-\kappa_1} S_2 ^{-\kappa_1} \right) (N_1 +N_2) \\
& \quad + (8\i)^{-1}  (N_2-N_1) \left( S_1^{-\kappa_2} S_2 ^{\kappa_2} - S_1^{\kappa_2} S_2 ^{-\kappa_2} \right) + (8\i)^{-1} \left( S_1^{-\kappa_2} S_2 ^{\kappa_2} - S_1^{\kappa_2} S_2 ^{-\kappa_2} \right) (N_2-N_1). 
\end{aligned}
\end{equation}
The proof of the following 2 regularity results are left to the reader. 
\begin{proposition}  $D \in C^{\infty}(\pi^{-1} A_{\kappa}\pi)$ for all $\kappa \in (\N^*)^2$.
\end{proposition}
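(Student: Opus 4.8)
The plan is to avoid recomputing commutators from scratch and instead to transport the regularity of $\Delta$ through the isomorphism $\pi$. The key observation is that membership in $C^{\infty}(\cdot)$ is invariant under unitary conjugation: if $U$ is unitary and $\mathbb{A}$ self-adjoint, then $e^{\i t\, U^{-1}\mathbb{A}U} = U^{-1} e^{\i t \mathbb{A}} U$, so for any bounded $T$ the map $t \mapsto e^{\i t\, U^{-1}\mathbb{A}U}(U^{-1}TU)e^{-\i t\, U^{-1}\mathbb{A}U}$ equals $U^{-1}\big(e^{\i t\mathbb{A}}T e^{-\i t \mathbb{A}}\big)U$, and conjugation by the fixed isometry $U$ preserves strong $C^k(\R)$-regularity in $t$. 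Hence $T \in C^k(\mathbb{A})$ if and only if $U^{-1}TU \in C^k(U^{-1}\mathbb{A}U)$, for every $k$, and likewise for $k=\infty$.

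First I would record that $\pi^{-1}A_{\kappa}\pi$ is self-adjoint on $\ell^2(\mathcal{V}_B)$, which holds because $A_{\kappa}$ is self-adjoint on $\ell^2(\Z^2)$ and $\pi$ is unitary; this makes the class $C^{\infty}(\pi^{-1}A_{\kappa}\pi)$ meaningful. Next, Proposition \ref{reg_Prop_DD} gives $\Delta \in C^{\infty}(A_{\kappa})$, hence trivially $\Delta/2 \in C^{\infty}(A_{\kappa})$. Taking $U = \pi$ in the invariance principle and using the identity \eqref{key_iso_D}, namely $\pi^{-1}(\Delta/2)\pi = D_B$, I conclude at once that $D_B \in C^{\infty}(\pi^{-1}A_{\kappa}\pi)$ as operators on $\ell^2(\mathcal{V}_B)$.

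It remains to pass from $D_B$ on $\ell^2(\mathcal{V}_B)$ to the full operator $D = D_B \oplus D_R$ on $\ell^2(\Z^2) = \ell^2(\mathcal{V}_B)\oplus \ell^2(\mathcal{V}_R)$. Here the conjugate operator is understood as the direct sum of $\pi^{-1}A_{\kappa}\pi$ on $\ell^2(\mathcal{V}_B)$ with its image under the translation unitary $W: \ell^2(\mathcal{V}_R) \to \ell^2(\mathcal{V}_B)$ that intertwines $D_R$ and $D_B$. Since both $D$ and this conjugate operator are block-diagonal with respect to the decomposition, and the two blocks are unitarily equivalent, the $C^{\infty}$ regularity on the $B$-block transfers verbatim to the $R$-block; a direct sum of operators each lying in $C^{\infty}$ of the corresponding block lies in $C^{\infty}$ of the total conjugate operator. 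This yields $D \in C^{\infty}(\pi^{-1}A_{\kappa}\pi)$.

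The argument above is essentially formal; the only genuine point requiring care — and the step I would treat as the main obstacle — is the bookkeeping of the two distinct Hilbert spaces $\ell^2(\mathcal{V}_B)$ and $\ell^2(\Z^2)$ across which $\pi$ acts, together with the clean reduction of $D$ to its blocks $D_B, D_R$. As a sanity check and a fully self-contained alternative, one can instead proceed by the explicit commutator computation exactly as in the proof that $\Delta \in C^{\infty}(\pi A_{2\kappa}\pi^{-1})$: conjugating \eqref{MV:k333Ultra} gives $[D_B, \i\,\pi^{-1}A_{\kappa}\pi]_{\circ} = \tfrac12\,\pi^{-1}\big(\sum_{j}(1-\Delta_j^2)U_{\kappa_j-1}(\Delta_j)\big)\pi$, which is bounded, establishing the $C^1$ class on $\ell_0(\mathcal{V}_B)$ by density; iterating the bracket and invoking $\Delta \in C^{\infty}(A_{\kappa})$ upgrades this to $C^{\infty}$, after which the same block argument delivers the statement for $D$.
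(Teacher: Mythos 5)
Your main route is correct and genuinely different from the one the paper intends. The paper proves the twin statement $\Delta\in C^{\infty}(\pi A_{2\kappa}\pi^{-1})$ by computing the commutator explicitly on $\ell_0(\Z^d)$ via \eqref{dAk} and inducting, and the present proposition (left to the reader) is clearly meant to go the same way, i.e.\ via \eqref{implication22}: every iterated commutator of $D$ with \eqref{A_2d} is a polynomial in the commuting bounded operators $\Delta_1,\Delta_2$ and the diagonal shifts, hence bounded. You instead transport $\Delta/2\in C^{\infty}(A_{\kappa})$ (Proposition \ref{reg_Prop_DD}) through $\pi$ using \eqref{key_iso_D} and the invariance of the $C^{k}$ classes under simultaneous unitary conjugation of the operator and of the conjugate operator; that principle is correct and yields $D_B\in C^{\infty}(\pi^{-1}A_{\kappa}\pi)$ on $\ell^2(\mathcal{V}_B)$ with no computation. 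The direct route buys a uniform treatment of all of $\ell^2(\Z^2)$ at once; your route buys brevity on the $B$-block at the price of having to handle the $R$-block separately.

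That $R$-block step is the one place with a real, if small, hole. For Theorem \ref{iso2Dthm} the conjugate operator for $D$ on $\ell^2(\Z^2)$ must be the closure of the expression \eqref{A_2d} on $\ell_0(\Z^2)$; this is block-diagonal for $\ell^2(\mathcal{V}_B)\oplus\ell^2(\mathcal{V}_R)$, but its $R$-block is \emph{not} the transplant $W^{-1}(\pi^{-1}A_{\kappa}\pi)W$ you declare it to be. Conjugating by the translation $W$ sends $N_j$ to $N_j\pm 1$, so the two differ by a bounded linear combination of the diagonal shifts $S_1^{\pm\kappa_i}S_2^{\pm\kappa_i}$; unitary equivalence of $D_R$ with $D_B$ alone therefore does not transfer regularity ``verbatim''. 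The fix is one line: this bounded discrepancy commutes with $D$ and with every iterated commutator (all of which are polynomials in diagonal shifts), so it drops out of every bracket and the $C^{\infty}$ conclusion is unaffected. With that observation added your first argument is complete; your fallback computation, which coincides with the paper's intended proof, already treats both blocks uniformly and needs no such care.
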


\begin{Lemma} Let $\kappa \in (\N^*)^2$ be given. Suppose
\begin{equation}
\label{CriterionA5551}  
(n_1+n_2) (V - \tau_1^{\kappa_1} \tau_2^{\kappa_1} V)(n) \quad and \quad (n_2-n_1) (V - \tau_1^{\kappa_2} \tau_2^{-\kappa_2} V)(n) = O (1) \textrm{ as } |n|\to\infty.
\end{equation} 
Then $V(\cdot) \in C^1(\pi^{-1} A_{\kappa} \pi)$. Now let $V_s$ satisfy \eqref{Firstcriterion}. Then $V_s(\cdot) \in C^{1,1}(\pi^{-1} A_{\kappa} \pi)$. Let $V_l (n) = o(1)$ as $|n|\to\infty$. Then $V_l(\cdot) \in C^{1,1}(\pi^{-1} A_{\kappa} \pi)$ whenever 
\begin{equation*} 
\label{Secondcriterion223}
\int_{1} ^{\infty} \sup _{r < | n| < 2r} \big | (V_{\text{l}} - \tau_1^{\kappa_1} \tau_2 ^{\kappa_1} V_{\text{l}})(n) \big | dr < \infty \quad \text{and} \quad \int_{1} ^{\infty} \sup _{r < | n| < 2r} \big | (V_{\text{l}} - \tau_1^{\kappa_2} \tau_2 ^{-\kappa_2} V_{\text{l}})(n) \big | dr < \infty.
\end{equation*}
\end{Lemma}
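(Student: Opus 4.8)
The plan is to avoid recomputing the commutator and instead transport the hypotheses through the unitary $\pi:\ell^2(\mathcal{V}_B)\to\ell^2(\Z^2)$, thereby reducing to the already-proven Lemmas \ref{LEM0} and \ref{criteriaLS}. Since $A_\kappa=\pi(\pi^{-1}A_\kappa\pi)\pi^{-1}$ and conjugation by a unitary is a homeomorphism of the bounded operators in both the strong and the norm topology, the defining map \eqref{defCk} for $V$ relative to $\pi^{-1}A_\kappa\pi$ is carried by $\pi$ onto the corresponding map for $\tilde V:=\pi V\pi^{-1}$ relative to $A_\kappa$. Hence, for each regularity class, $V\in C^{1}(\pi^{-1}A_\kappa\pi)$ (resp.\ $C^{1,1}$) on $\ell^2(\mathcal{V}_B)$ if and only if $\tilde V\in C^{1}(A_\kappa)$ (resp.\ $C^{1,1}$) on $\ell^2(\Z^2)$, where $\tilde V$ is multiplication by $\tilde V(n)=V(\pi_0^{-1}n)=V(n_1-n_2,n_1+n_2)$.

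For the $C^{1}$ claim I would set $p:=\pi_0^{-1}n=(n_1-n_2,n_1+n_2)$ and substitute to obtain
\begin{equation*}
(\tilde V-\tau_1^{\kappa_1}\tilde V)(n)=(V-\tau_1^{\kappa_1}\tau_2^{\kappa_1}V)(p),\qquad (\tilde V-\tau_2^{\kappa_2}\tilde V)(n)=(V-\tau_1^{-\kappa_2}\tau_2^{\kappa_2}V)(p),
\end{equation*}
together with $n_1=(p_1+p_2)/2$ and $n_2=(p_2-p_1)/2$. Thus $n_1(\tilde V-\tau_1^{\kappa_1}\tilde V)(n)=\tfrac12(p_1+p_2)(V-\tau_1^{\kappa_1}\tau_2^{\kappa_1}V)(p)$ is $O(1)$ by the first half of \eqref{CriterionA5551}. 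For the second coordinate I would use $V-\tau_1^{-\kappa_2}\tau_2^{\kappa_2}V=-\tau_1^{-\kappa_2}\tau_2^{\kappa_2}(V-\tau_1^{\kappa_2}\tau_2^{-\kappa_2}V)$, so that after the harmless reindexing $q=(p_1+\kappa_2,p_2-\kappa_2)$ the product $n_2(\tilde V-\tau_2^{\kappa_2}\tilde V)(n)$ becomes $\tfrac12(q_2-q_1)(V-\tau_1^{\kappa_2}\tau_2^{-\kappa_2}V)(q)+\kappa_2(V-\tau_1^{\kappa_2}\tau_2^{-\kappa_2}V)(q)$; the first term is $O(1)$ by the second half of \eqref{CriterionA5551} and the second is $o(1)$ since $V(n)\to0$. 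This gives $n_j(\tilde V-\tau_j^{\kappa_j}\tilde V)(n)=O(1)$, $j=1,2$, so Lemma \ref{LEM0} yields $\tilde V\in C^1(A_\kappa)$ and hence $V\in C^1(\pi^{-1}A_\kappa\pi)$.

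For the $C^{1,1}$ statements I would transport the short/long-range hypotheses to $\tilde V_s,\tilde V_l$ and invoke Lemma \ref{criteriaLS}. Because $\pi_0^{-1}$ is a rotation composed with a dilation by $\sqrt2$ (so $|p|=\sqrt2\,|n|$), the dyadic annuli $r<|n|<2r$ correspond to $\sqrt2\,r<|p|<2\sqrt2\,r$, and a change of variable shows \eqref{Firstcriterion} for $V_s$ is equivalent to \eqref{Firstcriterion} for $\tilde V_s$; likewise $\tilde V_l=o(1)$, and the two finite-difference identities above, combined with the shift-invariance of the dyadic-supremum integral (and once more the relation between $\tau_1^{-\kappa_2}\tau_2^{\kappa_2}$ and $\tau_1^{\kappa_2}\tau_2^{-\kappa_2}$), show the two integrals in \eqref{Secondcriterion223} are equivalent to \eqref{Secondcriterion} for $\tilde V_l$. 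Lemma \ref{criteriaLS} then gives $\tilde V_s,\tilde V_l\in C^{1,1}(A_\kappa)$, which transfers back to $\pi^{-1}A_\kappa\pi$.

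The work here is bookkeeping rather than conceptual: the genuine point to get right is that the two conjugated shifts in \eqref{A_2d} are $S_1^{\kappa_1}S_2^{\kappa_1}$ and $S_1^{-\kappa_2}S_2^{\kappa_2}$, so the finite differences that appear naturally are $V-\tau_1^{\kappa_1}\tau_2^{\kappa_1}V$ and $V-\tau_1^{-\kappa_2}\tau_2^{\kappa_2}V$, whereas \eqref{CriterionA5551} is phrased with $\tau_1^{\kappa_2}\tau_2^{-\kappa_2}$; reconciling them needs the displayed shift identity plus the fact that $V(n)\to0$ absorbs the boundary term $\kappa_2(V-\tau_1^{\kappa_2}\tau_2^{-\kappa_2}V)(q)$. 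Alternatively one could argue directly, mirroring Lemma \ref{LEM0}, by expanding $[V,\i\,\pi^{-1}A_\kappa\pi]$ on $\ell_0(\mathcal{V}_B)$ via $[N_j,S_j^{\pm\kappa_j}]_\circ=\pm\kappa_j S_j^{\pm\kappa_j}$ and $[V,S_1^aS_2^b]_\circ=(V-\tau_1^a\tau_2^bV)S_1^aS_2^b$; this reproduces the same four ``(position)$\times$(finite difference)$\times$(shift)'' terms whose boundedness is exactly governed by \eqref{CriterionA5551}, but the unitary route is shorter and reuses the existing lemmas verbatim.
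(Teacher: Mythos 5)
Your proof is correct. The paper itself offers no argument here -- it states just before this lemma that ``the proof of the following 2 regularity results are left to the reader'' -- and the proof it implicitly has in mind is the direct one you sketch at the end: expand $[V,\i\,\pi^{-1}A_{\kappa}\pi]$ on $\ell_0(\mathcal{V}_B)$ using the explicit form \eqref{A_2d}, exactly as in Lemma \ref{LEM0}, and observe that the resulting terms $(N_1+N_2)(V-\tau_1^{\pm\kappa_1}\tau_2^{\pm\kappa_1}V)S_1^{\pm\kappa_1}S_2^{\pm\kappa_1}$ and $(N_2-N_1)(V-\tau_1^{\mp\kappa_2}\tau_2^{\pm\kappa_2}V)S_1^{\mp\kappa_2}S_2^{\pm\kappa_2}$ are bounded precisely under \eqref{CriterionA5551}. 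Your unitary-transport route is a genuinely different and arguably cleaner packaging: since $C^1$ and $C^{1,1}$ are manifestly invariant under conjugation by the isometry $\pi$, you reduce everything to Lemmas \ref{LEM0} and \ref{criteriaLS} applied to $\tilde V=\pi V\pi^{-1}$, and the only work is the change of variables $p=\pi_0^{-1}n$, for which your identities $(\tilde V-\tau_1^{\kappa_1}\tilde V)(n)=(V-\tau_1^{\kappa_1}\tau_2^{\kappa_1}V)(p)$, $n_1=(p_1+p_2)/2$, $n_2=(p_2-p_1)/2$, and the reflection $V-\tau_1^{-\kappa_2}\tau_2^{\kappa_2}V=-\tau_1^{-\kappa_2}\tau_2^{\kappa_2}(V-\tau_1^{\kappa_2}\tau_2^{-\kappa_2}V)$ are all correct; this buys you reuse of the $C^{1,1}$ criterion verbatim (the factor $\sqrt2$ in $|p|=\sqrt2|n|$ and the fixed shift of the dyadic annuli being harmless for the integrals \eqref{Firstcriterion}--\eqref{Secondcriterion}). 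Two cosmetic points: your displayed expression for $n_2(\tilde V-\tau_2^{\kappa_2}\tilde V)(n)$ is off by an overall sign (irrelevant for an $O(1)$ bound), and the absorption of the boundary term $\kappa_2(V-\tau_1^{\kappa_2}\tau_2^{-\kappa_2}V)(q)$ only needs $V$ bounded, which is automatic for a bounded multiplication operator, so invoking $V(n)\to0$ is more than you need.
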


\subsection{Transposing the results for $\Delta$ to $D_B$.} We briefly mention the key formulas. Let $\kappa = (\kappa_1, \kappa_2) \in (\N^*)^2$. One has
\begin{equation}
\label{implication22}
\left [  2D_B , \i \pi^{-1} A_{\kappa} \pi \right ]_{\circ} = \pi^{-1} \left [ \Delta , \i  A_{\kappa} \right ]_{\circ} \pi.
\end{equation}
Thus one has $\boldsymbol{\mu}_{\pi^{-1} A_{\kappa} \pi} ^{\pm} (2D_B) = \boldsymbol{\mu}_{ A_{\kappa} } ^{\pm}(\Delta)$. This formula says that whenever a spectral interval exhibits operator positivity for $\Delta$ with respect to  $A_{\kappa}$ it can be transferred into operator positivity for $2D_B$ with respect to \ $\pi^{-1} A_{\kappa} \pi$ and vice versa.

\subsection{Regularity conditions: a comparison.} As discussed above, the regularity requirements imposed on the potential $V$ by the conjugate operators $A_{\kappa}$, $\pi A_{2\kappa} \pi^{-1}$ and $\pi^{-1} A_{\kappa} \pi$ are of different nature. In this subsection we illustrate some further considerations in that regard. Let $\epsilon >0$. In what follows we further suppose $\kappa = (\kappa_1, \kappa_2)$ with $\kappa_1 = \kappa_2$. Consider the statements:
\begin{align}
\label{form11}
& (V - \tau_1^{\kappa} V)(n) \quad and \quad (V - \tau_2^{\kappa} V)(n) = O(|n|^{-1-\epsilon}), \\
\label{form22}
& (V - \tau_1^{\kappa} \tau_2^{-\kappa} V)(n) \quad and \quad (V - \tau_1^{\kappa} \tau_2 ^{\kappa}  V)(n) = O(|n|^{-1-\epsilon}), \\
\label{form33}
& (V - \tau_1^{2\kappa} V)(n) \quad and \quad (V - \tau_2^{2\kappa} V)(n) = O(|n|^{-1-\epsilon}),
\end{align} 
and
\begin{align}
\label{form111}
& n_1(V - \tau_1^{\kappa} V)(n) \quad and \quad n_2(V - \tau_2^{\kappa} V)(n) = O(|n|^{-\epsilon}), \\
\label{form222}
& (n_1-n_2)(V - \tau_1^{\kappa} \tau_2^{-\kappa} V)(n) \quad and \quad (n_1+n_2)(V - \tau_1^{\kappa} \tau_2 ^{\kappa}  V)(n) = O(|n|^{-\epsilon}), \\
\label{form333}
& n_1(V - \tau_1^{2\kappa} V)(n) \quad and \quad n_2(V - \tau_2^{2\kappa} V)(n) = O(|n|^{-\epsilon}),
\end{align} 
as $|n| \to\infty$. Table \ref{table:456798} gives relationships between these statements.

\begin{table}[H]
    \begin{tabular}{c|l|l|l|l} 
      \footnotesize Assumption & \eqref{form11} holds & \eqref{form22} holds & \eqref{form111} holds & \eqref{form222} holds \\ [0.4em]
      \hline
     \footnotesize  $Consequences$ &   $ \bullet$ \eqref{form22} holds &  $ \bullet$ \eqref{form33} holds & $\bullet V$ satisfies \eqref{A3} & $\bullet V$ satisfies \eqref{A3}    \\ [0.4em]
        &  $ \bullet$ \eqref{form111} holds &  $ \bullet$ \eqref{form222} holds & with $\mathbb{A} = A_{\kappa}$ &with $\mathbb{A} = \pi A_{2\kappa} \pi^{-1}$  \\ [0.4em]
        &  $ \bullet V(\cdot) \in C^{1,1}(A_{\kappa})$ &  $\bullet V(\cdot) \in C^{1,1}(\pi A_{2\kappa} \pi^{-1})$ & $\bullet$ \eqref{form333} holds &  \\ [0.4em]
     \end{tabular}
  \caption{Implications involving \eqref{form11}--\eqref{form333} in dimension 2.}
     \label{table:456798}
\end{table}

\section{An isomorphism for the Standard Laplacian in dimension $3$}
\label{Iso3D}

One may leverage the isomorphism of the previous section to dig out other results for $\Delta$ on $\ell^2(\Z^3)$. We have $\mathcal{V}_B \times \Z \cong \Z^3$, where $\mathcal{V}_B$ is given in Section \ref{Iso2D}. Define a Laplacian $\mathscr{D} $ that corresponds to the Molchanov-Vainberg Laplacian on the plane, and to the Standard Laplacian on the vertical coordinate:
$$\mathscr{D}:= 2 D_B + \Delta_3 \cong \Delta_1+\Delta_2 + \Delta_3, \quad \text{ \ on \ } \quad \ell^2(\mathcal{V}_B \times \Z)  \cong \ell^2(\Z^3).$$
The exact specification of the isomorphism is as follows. Denote 
\[\pi_0: \mathcal{V}_B \times \Z \mapsto \Z^3, \quad \pi_0: (n_1,n_2,n_3) \mapsto \left( (n_1+n_2)/2, (n_2-n_1)/2, n_3 \right)\] 
Note that  $\pi_0^{-1}: \Z^3 \mapsto \mathcal{V}_B \times \Z$, $\pi_0^{-1}: (n_1,n_2,n_3) \mapsto \left( n_1-n_2, n_1+n_2, n_3 \right)$. $\pi_0$ and $\pi_0^{-1}$ induce natural isometries  $\pi: \ell^2(\mathcal{V}_B \times \Z )\mapsto \ell^2(\Z^3)$,
and $\pi^{-1}: \ell^2(\Z^3) \mapsto \ell^2(\mathcal{V}_B \times \Z )$, given by  $\pi^{\pm 1} f(x):= f(\pi_0^{\mp 1} x)$. We now perform a brief spectral analysis of $\mathscr{D}$ with the help of an appropriate conjugate oprerator. Start by noting that $\sigma(\mathscr{D}) = [-3,3]$. Let $\kappa = (\kappa_1, \kappa_2, \kappa_3) \in (\N^*)^3$. Consider conjugate operators of the form
\begin{equation}
\label{A_kappa_rho}
A_{\kappa,\rho}:= A_{\kappa_1,\kappa_2} + \rho A_{\kappa_3}, \quad \mathrm{on} \quad \ell^2(\mathcal{V}_B \times \Z).
\end{equation}
As mentioned in the Introduction, $A_{\kappa,\rho}$ is self-adjoint and essentially self-adjoint on $\ell_0(\mathcal{V}_B \times \Z)$.
We have found that it is useful to include a parameter $\rho \in \R$, for some unknown reason. $A_{\kappa,\rho}$ is invariant on $\ell_0(\mathcal{V}_B \times \Z)$ if and only if both $\kappa_1$ and $\kappa_2$ are even. Thanks to \eqref{MV:k333Ultra} and \eqref{COMMUTATOR_D} one computes
$$[ \mathscr{D}, \i A_{\kappa,\rho} ]_{\circ} = 2 \Delta_2 (1-\Delta_1^2) U_{\kappa_1-1} (\Delta_1) + 2 \Delta_1 (1-\Delta_2^2) U_{\kappa_2-1} (\Delta_2) + \rho (1-\Delta_3^2) U_{\kappa_3-1} (\Delta_3).$$
In fact one can show by induction that $\mathscr{D} \in C^{\infty}(A_{\kappa,\rho})$. To this commutator one associates a polynomial $g_E: [-1,1]^3\mapsto \R$, 
$$g_E(E_1,E_2,E_3):= 2 E_2 (1-E_1^2) U_{\kappa_1-1} (E_1) + 2 E_1 (1-E_2^2) U_{\kappa_2-1} (E_2) + \rho (1-E_3^2) U_{\kappa_3-1} (E_3).$$
The constant energy surface for $\mathscr{D}$ is $S_E:= \{ (E_1, E_2, E_3) \in [-1,1]^3: E = 2E_1 E_2 + E_3 \}$, $E \in \sigma(\mathscr{D})$. One has that $E \in \boldsymbol{\mu}^{\pm}_{A_{\kappa,\rho}} (\mathscr{D})$ if and only if $\pm \left.g_E\right|_{S_E}$ is strictly positive.

Table \ref{table:4567} shows results obtained with the computer together along with conjectures on the exact values. Choosing $\rho =0.5$ gives a band of a.c.\ spectrum that is already covered by the Standard Laplacian in Section \ref{stdLaplacianMourre} (although conditions on $V$ are different). But if we take $\rho =-0.5$ we get another band, adjacent to the other one, which is not covered by the Standard Laplacian, see Table \ref{tab:table1013127conj}. The choice of $\rho$ is based on observation, trial and error.

\begin{table}[H]
    \begin{tabular}{c|c|c|c|c} 
      $\kappa$ & $\rho = 0.5$, num. & $\rho = 0.5$, conjecture & $\rho=-0.5$, num. & $\rho = -0.5$, conjecture \\ [0.4em]
      \hline
      $2$  & $(2,3)$ & $(2+\cos(\pi/2),3)$  & $(1,2)$  &  $(1+2\cos(\pi/2), 2+\cos(\pi/2))$ \\ [0.4em]
      $4$  & $(2.707,3)$ & $(2+\cos(\pi/4),3)$ & $(2.414,2.707)$ & $(1+2\cos(\pi/4), 2+\cos(\pi/4))$   \\ [0.4em]
      $6$  & $(2.866,3)$ & $(2+\cos(\pi/6),3)$ & $(2.733, 2.866)$ & $(1+2\cos(\pi/6), 2+\cos(\pi/6))$   \\ [0.4em]
      $8$  & $(2.924,3)$ & $(2+\cos(\pi/8),3)$ & $(2.848, 2.924)$ & $(1+2\cos(\pi/8), 2+\cos(\pi/8))$   \\ [0.4em]
     \end{tabular}
  \caption{$\boldsymbol{\mu} ^{\text{NUM}}_{A_{\kappa,\rho}}(\mathscr{D}) \cap [0,3]$ in dimension $3$. $\kappa_1 = \kappa_2 = \kappa_3$, denoted $\kappa$ in short}
     \label{table:4567}
\end{table}

\begin{table}[H]
    \begin{tabular}{c|c|c} 
      $\kappa$ & $\rho = 0.5$ & $\rho=-0.5$  \\ [0.4em]
      \hline
       $(2,2,1)$  & $(1,3)$ & $\emptyset$    \\ [0.4em]
      $(2,2,2)$  & $(2,3)$   & $(1,2)$    \\ [0.4em]
      $(2,2,4)$  & $(1.2929,2)\cup (2.707,3)$   & $(1,1.2929) \cup (2,2.707)$    \\ [0.4em]
       $(2,2,8)$  &  $(1.0762,1.2918) \cup (1.6174,2)$ & $(1,1.074)\cup (1.2929,1.6164)$  \\ [0.4em]
        ...  & $\quad \quad ...\cup (2.3826,2.707) \cup (2.924,3)$ & $\quad \quad ...\cup (2,2.3825) \cup (2.707,2.924)$    \\ [0.4em]      
      $(4,4,1)$  & $(2.414,3)$  & $\emptyset$     \\ [0.4em]
      $(4,4,2)$  & $(2.414,3)$  & $\emptyset$    \\ [0.4em]
      $(4,4,4)$  & $(2.707,3)$  & $(2.414,2.707)$    \\ [0.4em]
      $(4,4,6)$  & $(2.414,2.5) \cup (2.866,3)$  & $(2.5,2.866)$    \\ [0.4em]
      $(4,4,8)$  & $(2.414,2.707) \cup (2.924,3)$  & $(2.707,2.924)$    \\ [0.4em]
     \end{tabular}
  \caption{More results for $\boldsymbol{\mu} ^{\text{NUM}}_{A_{\kappa,\rho}}(\mathscr{D}) \cap [0,3]$ for the operator $\mathscr{D}$ in dimension $3$}
     \label{table:4567new}
\end{table}

To obtain the results in Tables \ref{table:4567} and \ref{table:4567new} we plotted the polynomial $g_E$ given above and then used the small algorithm:
\begin{itemize}
\item For all $E \in [0,3]$:
\item For all $E_3 \in [\max(E-2,-1),1]$:
\begin{itemize}
\item let $E_2 = (E-E_3)/(2E_1)$,
\item check if the function $E_1 \mapsto g_E(E_1, E_2 , E_3)$ has same sign on $E_1 \in \pm [|E-E_3|/2,1]$.
\end{itemize}
\end{itemize}

\begin{Lemma}
For $\kappa_1, \kappa_2,\kappa_3$ all even, any $\rho$, $\boldsymbol{\mu}_{A_{\kappa,\rho}} (\mathscr{D}) = - \boldsymbol{\mu}_{A_{\kappa,\rho}} (\mathscr{D})$.
\end{Lemma}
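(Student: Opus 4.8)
The plan is to mimic the proofs of Lemmas \ref{Lemma_symmetryDelta} and \ref{symmetry_propertyD}: exhibit an explicit involution of the cube $[-1,1]^3$ that carries the constant-energy surface $S_E$ onto $S_{-E}$ and simultaneously reverses the sign of the associated polynomial $g_E$. Write $h_j(x):=(1-x^2)U_{\kappa_j-1}(x)$ for $j=1,2,3$, so that
$$g_E(E_1,E_2,E_3)=2E_2\,h_1(E_1)+2E_1\,h_2(E_2)+\rho\,h_3(E_3).$$
Since each $\kappa_j$ is even, $\kappa_j-1$ is odd, hence $U_{\kappa_j-1}$ is an odd polynomial and therefore each $h_j$ is odd, i.e.\ $h_j(-x)=-h_j(x)$. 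Recall that, by functional calculus and continuity of $g_E$, one has $E\in\boldsymbol{\mu}^{\pm}_{A_{\kappa,\rho}}(\mathscr{D})$ if and only if $\pm\left.g_E\right|_{S_E}$ is strictly positive.

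First I would introduce the involution $\phi:[-1,1]^3\to[-1,1]^3$, $\phi(E_1,E_2,E_3):=(-E_1,E_2,-E_3)$. It is a bijection of the cube, and it maps $S_E$ onto $S_{-E}$: if $2E_1E_2+E_3=E$ then $2(-E_1)E_2+(-E_3)=-E$, and conversely. The key computation is that $\phi$ reverses $g_E$; indeed, using that $h_1,h_2,h_3$ are all odd,
$$g_E(\phi(E_1,E_2,E_3))=2E_2\,h_1(-E_1)+2(-E_1)\,h_2(E_2)+\rho\,h_3(-E_3)=-g_E(E_1,E_2,E_3),$$
with the factor $\rho$ passing through harmlessly, which is precisely why no restriction on $\rho$ is needed.

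Finally I would conclude as follows. Because $\phi$ is a bijection between $S_E$ and $S_{-E}$ with $g_E\circ\phi=-g_E$, the restriction $\left.g_E\right|_{S_{-E}}$ is strictly positive if and only if $\left.g_E\right|_{S_E}$ is strictly negative. In terms of the sets $\boldsymbol{\mu}^{\pm}_{A_{\kappa,\rho}}(\mathscr{D})$ this reads $-E\in\boldsymbol{\mu}^{+}_{A_{\kappa,\rho}}(\mathscr{D})\iff E\in\boldsymbol{\mu}^{-}_{A_{\kappa,\rho}}(\mathscr{D})$ and, symmetrically, $-E\in\boldsymbol{\mu}^{-}_{A_{\kappa,\rho}}(\mathscr{D})\iff E\in\boldsymbol{\mu}^{+}_{A_{\kappa,\rho}}(\mathscr{D})$. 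Taking the union of the $\pm$ sets gives $-E\in\boldsymbol{\mu}_{A_{\kappa,\rho}}(\mathscr{D})\iff E\in\boldsymbol{\mu}_{A_{\kappa,\rho}}(\mathscr{D})$, i.e.\ $\boldsymbol{\mu}_{A_{\kappa,\rho}}(\mathscr{D})=-\boldsymbol{\mu}_{A_{\kappa,\rho}}(\mathscr{D})$. There is no real obstacle here: the only substantive point is choosing the sign-flip that reverses exactly one factor of the product $2E_1E_2$ together with $E_3$, so that the surface constraint $E=2E_1E_2+E_3$ flips sign while the oddness of all three $h_j$ flips the sign of $g_E$; everything else is bookkeeping. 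One could equally use $(E_1,E_2,E_3)\mapsto(E_1,-E_2,-E_3)$.
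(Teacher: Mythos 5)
Your proof is correct and is essentially identical to the paper's: the paper uses the same sign-flips $(E_1,E_2,E_3)\mapsto(-E_1,E_2,-E_3)$ (and its companion $(E_1,-E_2,-E_3)$) together with the oddness of the $U_{\kappa_j-1}$ to get $g\circ\phi=-g$ on $S_E\to S_{-E}$. No gaps; your write-up just spells out the bookkeeping more explicitly.
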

\begin{proof}
The $U_{\kappa_j-1}(\cdot)$ are odd functions. Also, $S_{-E} = \{ (-E_1, E_2,-E_3) , (E_1, -E_2, -E_3): (E_1,E_2,E_3) \in S_E \}$. Thus $g_{-E}(-E_1,E_2,-E_3) = g_{-E}(E_1,-E_2,-E_3) = -g_{E}(E_1, E_2, E_3)$. This implies the statement.
\qed
\end{proof}

\begin{Lemma} 
\label{lem3dSUM} Let $\rho \in \R$. 
$\{ 2\cos( j_1 \pi / \kappa_1) \cos( j_2 \pi / \kappa_2) +\cos( j_3 \pi / \kappa_3): j_i = 0, ...,\kappa_i, i=1,2,3 \} \subset [-3,3] \setminus \boldsymbol{\mu}_{A_{\kappa,\rho}} (\mathscr{D})$ whenever $\kappa_1, \kappa_2,\kappa_3$ are all even. This supports the conjectures in Table \ref{table:4567}.
\end{Lemma}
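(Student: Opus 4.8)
The plan is to imitate the proofs of Lemmas \ref{lemSUMcos} and \ref{lemPRODsin}: for each choice of indices, exhibit a single point of the constant energy surface $S_E$ at which the polynomial $g_E$ associated to $[\mathscr{D}, \i A_{\kappa,\rho}]_{\circ}$ vanishes. By the criterion recalled just before the statement, $E \in \boldsymbol{\mu}^{\pm}_{A_{\kappa,\rho}}(\mathscr{D})$ if and only if $\pm \left.g_E\right|_{S_E}$ is strictly positive; producing a zero of $\left.g_E\right|_{S_E}$ therefore rules out both signs at once, giving $E \notin \boldsymbol{\mu}_{A_{\kappa,\rho}}(\mathscr{D})$.

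Concretely, I would fix a triple $(j_1,j_2,j_3)$ with $j_i \in \{0,\ldots,\kappa_i\}$ and set $E_q := \cos(j_q \pi / \kappa_q)$ for $q=1,2,3$. Then $E_q \in [-1,1]$, and the associated energy is $E = 2E_1E_2 + E_3 = 2\cos(j_1\pi/\kappa_1)\cos(j_2\pi/\kappa_2) + \cos(j_3\pi/\kappa_3)$; in particular $(E_1,E_2,E_3) \in S_E$ and $E \in [-3,3]$ automatically.

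The key input is that each single-variable factor $h_q(x) := (1-x^2)U_{\kappa_q-1}(x)$ vanishes precisely at the points $\cos(j\pi/\kappa_q)$, $j=0,\ldots,\kappa_q$ --- this is the observation recalled in the proof of Lemma \ref{lemSUMcos}, which follows from \eqref{e:zerotcheby} together with the vanishing of $1-x^2$ at $x=\pm1$. Hence $h_q(E_q) = 0$ for each $q$. Writing out the three terms of $g_E(E_1,E_2,E_3)$, namely $2E_2\, h_1(E_1) + 2E_1\, h_2(E_2) + \rho\, h_3(E_3)$, every summand carries a factor $h_q(E_q)=0$, so $g_E(E_1,E_2,E_3) = 0$, as desired.

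There is essentially no obstacle here: the computation is a direct substitution. The only points worth flagging are that the argument is completely independent of the value of $\rho$ (the vertical term vanishes on its own), and that the hypothesis that $\kappa_1,\kappa_2,\kappa_3$ be even is not needed for the vanishing itself --- it is imposed only so that $A_{\kappa,\rho}$ is well-defined (invariant on $\ell_0(\mathcal{V}_B\times\Z)$), as noted after \eqref{A_kappa_rho}, and for consistency with the preceding symmetry lemma. One should nonetheless state explicitly that finding a single zero forecloses strict positivity of both $+g_E$ and $-g_E$ on $S_E$, which is exactly the criterion used throughout the section.
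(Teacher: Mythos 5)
Your proposal is correct and follows exactly the paper's argument: substitute $E_q = \cos(j_q\pi/\kappa_q)$, observe that each factor $(1-E_q^2)U_{\kappa_q-1}(E_q)$ vanishes by \eqref{e:zerotcheby}, hence $g_E = 0$ on $S_E$, which forecloses strict positivity of $\pm g_E$. Your added remarks (independence of $\rho$, the evenness hypothesis being needed only for well-definedness of $A_{\kappa,\rho}$) are accurate but not part of the paper's one-line proof.
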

\begin{proof}
Let $E_i = \cos( j_i \pi / \kappa_i)$, $j_{i} =0,...,\kappa_i$. Then $g_E(E_1,E_2,E_3) = 0$ (see Lemma \ref{lemSUMcos}).
\qed
\end{proof}

\subsection{Transposing the results for $\mathscr{D}$ to $\Delta$} First we need to compute the action of the transformed conjugate operator $\pi A_{\kappa,\rho} \pi^{-1}$. However to ensure it is well defined on $\ell_0(\Z^3)$ we choose to adjust the notation (we face the same obstacle as in Subsection \ref{sub_def_a232}). Instead of using $\kappa = (\kappa_1, \kappa_2,\kappa_3) \in (\N^*)^3$, we use $\tilde{\kappa}:= (2\kappa_1, 2\kappa_2,\kappa_3)$, $\kappa \in (\N^*)^3$. In this way one finds 
\begin{equation}
\label{CONJ_3d}
\pi A_{\tilde{\kappa},\rho} \pi^{-1} = \eqref{PIinversePI} + \rho (4\i)^{-1} \left[ (S_3 ^{\kappa_3} - S_3^{-\kappa_3})N_3 +  N_3 (S_3^{\kappa_3}-S_3^{-\kappa_3}) \right]. 
\end{equation}
It is well defined on $\ell_0(\Z^3)$, $\forall \kappa \in (\N^*)^3$. When $\kappa_1 = \kappa_2$, setting $\kappa$ to $\kappa_1 = \kappa_2$ in \eqref{ConjFourier} gives:
$$\mathcal{F} \pi A_{\tilde{\kappa},\rho} \pi^{-1} \mathcal{F}^{-1} = \eqref{ConjFourier} + \rho(2\i)^{-1} \left[ \sin(\kappa_3 \xi_3) \frac{\partial}{\partial \xi_3} +  \frac{\partial}{\partial \xi_3} \sin(\kappa_3 \xi_3) \right].$$
We mention without proof the obvious result:

\begin{proposition}
\label{iso3D_prop} Fix $d=3$ and $\kappa = (\kappa_j) \in (\N^*)^3$. Then $\Delta \in C^{\infty}(\pi A_{\tilde{\kappa},\rho} \pi^{-1})$ and $\left [  \Delta , \i \pi A_{\tilde{\kappa},\rho} \pi^{-1} \right ]_{\circ} = \pi \left [ \mathscr{D} , \i  A_{\tilde{\kappa},\rho} \right ]_{\circ} \pi^{-1}$. In particular $\boldsymbol{\mu}_{\pi A_{\tilde{\kappa},\rho} \pi^{-1}} (\Delta)=\boldsymbol{\mu}_{A_{\tilde{\kappa},\rho}} (\mathscr{D})$. The latter set is numerically estimated (see Tables \ref{table:4567} and \ref{table:4567new}).
\end{proposition}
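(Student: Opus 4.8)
The plan is to reduce all three assertions to the single structural identity $\pi \mathscr{D} \pi^{-1} = \Delta$, after which everything follows formally from the unitarity of $\pi$. First I would record this identity. By construction $\mathscr{D} = 2D_B + \Delta_3$, and $\pi_0$ acts as the $45^\circ$ rotation-and-scaling of Section \ref{Iso2D} on the first two coordinates while fixing the third. Hence $\pi$ intertwines $2D_B$ with the planar Laplacian $\Delta_1+\Delta_2$ exactly as in \eqref{key_iso_D} and leaves $\Delta_3$ untouched, so that $\pi \mathscr{D} \pi^{-1} = \Delta_1 + \Delta_2 + \Delta_3 = \Delta$ on $\ell^2(\Z^3)$; since $\pi$ is a bijective isometry (hence unitary, with $\pi^{-1}=\pi^*$), this is the same as $\mathscr{D} = \pi^{-1}\Delta\pi$. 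I would also note that the notational switch from $\kappa$ to $\tilde{\kappa} = (2\kappa_1, 2\kappa_2, \kappa_3)$ is precisely what makes $A_{\tilde{\kappa},\rho}$ invariant on $\ell_0(\mathcal{V}_B \times \Z)$ (even first two components, as in Subsection \ref{sub_def_a232}), so that $\pi A_{\tilde{\kappa},\rho}\pi^{-1}$ is well-posed on $\ell_0(\Z^3)$; see \eqref{CONJ_3d}.

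For the $C^\infty$ regularity, I would use that conjugation by the unitary $\pi$ commutes with the Heisenberg evolution: for every $t\in\R$,
\[ e^{\i t \pi A_{\tilde{\kappa},\rho}\pi^{-1}} \Delta\, e^{-\i t \pi A_{\tilde{\kappa},\rho}\pi^{-1}} = \pi\, e^{\i t A_{\tilde{\kappa},\rho}} \mathscr{D}\, e^{-\i t A_{\tilde{\kappa},\rho}}\, \pi^{-1}. \]
Because $T\mapsto \pi T\pi^{-1}$ is an isometric $*$-isomorphism of $\mathscr{B}(\ell^2(\Z^3))$, continuous for both the strong and the norm topologies, the left-hand map inherits the $C^k(\R)$ smoothness of $t\mapsto e^{\i t A_{\tilde{\kappa},\rho}}\mathscr{D} e^{-\i t A_{\tilde{\kappa},\rho}}$. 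The latter is $C^\infty$ by the induction argument already given for $\mathscr{D}\in C^\infty(A_{\kappa,\rho})$, applied to the admissible index $\tilde{\kappa}$. Hence $\Delta\in C^\infty(\pi A_{\tilde{\kappa},\rho}\pi^{-1})$.

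The commutator identity is then a one-line substitution on the core $\ell_0(\Z^3)$: writing $\Delta = \pi\mathscr{D}\pi^{-1}$ and cancelling the inner $\pi^{-1}\pi$ gives
\[ [\Delta, \i\pi A_{\tilde{\kappa},\rho}\pi^{-1}] = \pi\,[\mathscr{D}, \i A_{\tilde{\kappa},\rho}]\,\pi^{-1}. \]
Since $[\mathscr{D}, \i A_{\tilde{\kappa},\rho}]$ extends to a bounded operator by the $C^1$ regularity just obtained and $\pi$ is bounded, passing to the bounded extensions by density yields $[\Delta, \i\pi A_{\tilde{\kappa},\rho}\pi^{-1}]_\circ = \pi[\mathscr{D}, \i A_{\tilde{\kappa},\rho}]_\circ \pi^{-1}$, exactly as in \eqref{implication1} and \eqref{implication22}.

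Finally, the equality of Mourre sets follows from the previous step, paralleling the subsection transposing results for $D_B$ to $\Delta$. Spectral projections transform covariantly, $E_I(\Delta) = \pi\,E_I(\mathscr{D})\,\pi^{-1}$ (equivalently $\theta(\Delta) = \pi\,\theta(\mathscr{D})\,\pi^{-1}$ by the Helffer--Sj\"ostrand formula), so that
\[ E_I(\Delta)\,[\Delta, \pm\i\pi A_{\tilde{\kappa},\rho}\pi^{-1}]_\circ\,E_I(\Delta) = \pi\,E_I(\mathscr{D})\,[\mathscr{D}, \pm\i A_{\tilde{\kappa},\rho}]_\circ\,E_I(\mathscr{D})\,\pi^{-1}. \]
As unitary conjugation preserves operator inequalities and maps $\gamma E_I(\mathscr{D})$ to $\gamma E_I(\Delta)$, a strict Mourre estimate on an interval $I$ holds for $\Delta$ with $\pm\pi A_{\tilde{\kappa},\rho}\pi^{-1}$ if and only if it holds for $\mathscr{D}$ with $\pm A_{\tilde{\kappa},\rho}$; taking the union over the two signs gives $\boldsymbol{\mu}_{\pi A_{\tilde{\kappa},\rho}\pi^{-1}}(\Delta) = \boldsymbol{\mu}_{A_{\tilde{\kappa},\rho}}(\mathscr{D})$. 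The only genuinely delicate point is the bookkeeping in the first paragraph --- verifying that the $3d$ map factors as the planar isomorphism times the identity on the vertical coordinate, and that $\tilde{\kappa}$ renders the conjugated operator well-defined on $\ell_0(\Z^3)$; once that is in place every remaining step is a formal consequence of unitarity, which is why the result may fairly be called obvious.
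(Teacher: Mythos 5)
Your argument is correct, and it fills in exactly the proof the paper omits (the paper states the proposition "without proof" as an obvious consequence of the intertwining $\pi\mathscr{D}\pi^{-1}=\Delta$, mirroring the two-dimensional transposition via \eqref{implication1} and \eqref{implication22}). All the steps check out: the factorization of $\pi$ as the planar isomorphism times the identity on the third coordinate, the well-posedness supplied by $\tilde{\kappa}=(2\kappa_1,2\kappa_2,\kappa_3)$, the identity $e^{\i t\pi A_{\tilde{\kappa},\rho}\pi^{-1}}=\pi e^{\i tA_{\tilde{\kappa},\rho}}\pi^{-1}$, and the covariance of spectral projections under unitary conjugation.
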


Finally we outline the change in regularity requirements for the potential.

\begin{Lemma} 
\label{criteriaEXotic3D}
Fix $\kappa = (\kappa_j) \in (\N^*)^3$. Suppose $V$ satisfies \eqref{CriterionA4441} and $n_3(V -\tau_3 ^{\kappa_3}V)(n) = O (1)$ as $|n|\to\infty$. Then $V(\cdot) \in C^1(\pi A_{\tilde{\kappa},\rho} \pi^{-1})$. If $V_s$ satisfies \eqref{Firstcriterion} then $V_s(\cdot)  \in C^{1,1}(\pi A_{\tilde{\kappa},\rho} \pi^{-1})$. Finally $V_l(\cdot) \in C^{1,1}(\pi A_{\tilde{\kappa},\rho} \pi^{-1})$ whenever $V_l (n) = o(1)$ as $|n|\to\infty$, $V_l(\cdot)$ satisfies \eqref{Secondcriterion22}, and 
\begin{equation*} 
\label{Secondcriterion2290}
\int_{1} ^{\infty} \sup _{r < | n| < 2r} \big | (V_{\text{l}} - \tau_3^{\kappa_3} V_{\text{l}})(n) \big | dr < \infty.
\end{equation*}

\end{Lemma}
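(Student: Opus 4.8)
The plan is to exploit the additive structure of the conjugate operator. Comparing \eqref{CONJ_3d} with \eqref{PIinversePI} and \eqref{generatorDilations_011} shows that
$$\pi A_{\tilde{\kappa},\rho} \pi^{-1} = \pi A_{2\kappa} \pi^{-1} + \rho A_3,$$
where $\pi A_{2\kappa}\pi^{-1}$ is the planar operator \eqref{PIinversePI} acting only through coordinates $1,2$ and $\rho A_3$ is $\rho$ times the standard generator \eqref{generatorDilations_011} in the single coordinate $3$. Thus every commutator with $\pi A_{\tilde{\kappa},\rho}\pi^{-1}$ splits by linearity into a planar contribution and a vertical contribution, governed by disjoint families of shifts and position weights.

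For the $C^1$ claim, first I would compute $[V, \i \pi A_{\tilde{\kappa},\rho}\pi^{-1}]$ on $\ell_0(\Z^3)$, writing it as $[V, \i\pi A_{2\kappa}\pi^{-1}] + \rho[V, \i A_3]$. The first term is bounded under \eqref{CriterionA4441} by the $C^1$ statement already established for $\pi A_{2\kappa}\pi^{-1}$ in Section \ref{Iso2D}. The second term is exactly the $j=3$ summand from the proof of Lemma \ref{LEM0}, namely
$$\rho \cdot \tfrac14\big[(2N_3-\kappa_3)(V-\tau_3^{\kappa_3}V)S_3^{\kappa_3} - (2N_3+\kappa_3)(V-\tau_3^{-\kappa_3}V)S_3^{-\kappa_3}\big],$$
which extends to a bounded operator precisely when $n_3(V-\tau_3^{\kappa_3}V)(n) = O(1)$. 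Extending by density to all of $\ell^2(\Z^3)$ gives $V \in C^1(\pi A_{\tilde{\kappa},\rho}\pi^{-1})$.

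For the two $C^{1,1}$ claims, I would follow the template of Lemma \ref{criteriaLS} and its planar analogue, i.e.\ the appendix argument of \cite{BSa} specialized via $\Lambda = \langle N\rangle$ and $A = \pi A_{\tilde{\kappa},\rho}\pi^{-1}$. The short-range assertion is the easiest: condition \eqref{Firstcriterion} controls the double commutator purely through the decay of $V_s$, and this estimate is insensitive to the particular conjugate operator as long as the operator is dominated by $\langle N\rangle$; hence the same argument yields $V_s \in C^{1,1}(\pi A_{\tilde{\kappa},\rho}\pi^{-1})$. For the long-range part, the point is that $\pi A_{\tilde{\kappa},\rho}\pi^{-1}$ carries exactly three shift directions in $\Z^3$, namely $(\kappa_1,-\kappa_1,0)$, $(\kappa_2,\kappa_2,0)$ and $(0,0,\kappa_3)$, with respective weights $N_1-N_2$, $N_1+N_2$ and $N_3$, so the long-range $C^{1,1}$ criterion produces one Dini-type integral condition per direction. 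These are precisely the two conditions in \eqref{Secondcriterion22} together with the vertical condition on $(V_l-\tau_3^{\kappa_3}V_l)$; combined with $V_l = o(1)$, the \cite{BSa} argument gives $V_l \in C^{1,1}(\pi A_{\tilde{\kappa},\rho}\pi^{-1})$.

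The step I expect to require the most care is transporting the abstract $C^{1,1}$ machinery of \cite{BSa}, originally written for $A_{\kappa=1}$, to the twisted three-direction operator. The essential technical input is a comparison bound of the form $|\pi A_{\tilde{\kappa},\rho}\pi^{-1}| \leq C\langle N\rangle$, the analogue of Lemma \ref{A<N}, which makes the weighted integrals control the double-commutator norm; once this is secured, the disjointness of the three shift directions reduces the verification to three independent one-direction estimates of the kind already handled in Section \ref{Iso2D}.
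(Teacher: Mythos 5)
Your proof is correct and follows exactly the route the paper intends (the paper omits the proof of this lemma, but the additive splitting $\pi A_{\tilde{\kappa},\rho}\pi^{-1} = \pi A_{2\kappa}\pi^{-1} + \rho A_3$ together with the planar $C^1/C^{1,1}$ lemma of Section \ref{Iso2D}, the $j=3$ term of Lemma \ref{LEM0}, and the \cite{BSa} criterion with one Dini integral per shift direction is precisely the intended argument). The only refinement: the comparison bound you flag as the delicate step is not an analogue of Lemma \ref{A<N} still to be proved --- Lemma \ref{A<N} as stated already covers $\pi A_{\tilde{\kappa},\rho}\pi^{-1}$, since it applies to every conjugate operator listed in Table \ref{table:2020987}.
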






\section{The LAP based on Mourre's original paper}
\label{proofMourre}

Notation is fixed. Let $\ell^2(L^{\infty}(\R))$ be the space of real-valued measurable functions $g(t)$ with $\|g\|_{\ell^2(L^{\infty})}:= \{ \sum_{n=0}^{\infty} s_n(g) ^2\}^{1/2} < \infty$, where $s_n(g):= \mathrm{ess  \ sup \ } \{ |g(x)|: n \leq |x| \leq n+1\}$. Let $T$ be a self-adjoint operator in $\mathscr{H}$, $E_\Sigma (T)$ its spectral projection onto a set $\Sigma$. Let 
\[\Sigma_j := \{x \in \R: 2^{j-1} \leqslant |x| \leqslant 2^j \},  \mbox{\, for\,} j \geqslant 1\] 
and $\Sigma_0:= \{ x\in \R: |x| \leqslant 1\}$. Define the Banach spaces with the obvious norms: 
\begin{equation*}
B (T):= \Big \{ \psi \in \mathscr{H} : \| \psi \|_{B(T)}:= \sum_{j=0} ^{\infty} \sqrt{2^j} \| E_{\Sigma_j} (T) \psi \|_{\mathscr{H}}  < \infty \Big \}.
\end{equation*}
The dual of $B(T)$ is the Banach space obtained by completing $\mathscr{H}$ in the norm 
$$ \| \psi \|_{B^*(T)} = \sup_{j \in \N} \sqrt{2^{-j}} \| E_{\Sigma_j} (T) \psi \|_{\mathscr{H}}.$$
We refer to \cite{JP} and the references therein for these definitions. The following result holds for $(\mathfrak{D}, \mathfrak{H}) = (\Delta, \Delta+V)$ or $(D,D+V)$, and any $\mathbb{A}$ as in Table \ref{table:2020987} (for example: $\mathbb{A} = A_{\kappa}$).

\begin{theorem} 
\label{THM_MOURRE}
Suppose that $V \in C^2(\mathbb{A})$ and $V(n)=o(1)$ as $|n| \to +\infty$. Then for any closed interval $I \subset \boldsymbol{\mu}_{\mathbb{A}}(\mathfrak{D}) \setminus \sigma_\mathrm{p}(\mathfrak{H})$, any $f_1, f_2 \in \ell^2(L^{\infty}(\R))$ there is $c>0$ such that 
$$\sup_{z \in I_{\pm}} \| f_1(\mathbb{A}) (\mathfrak{H} - z)^{-1} f_2(\mathbb{A}) \| \leq c \| f_1 \|_{\ell^2(L^{\infty})} \| f_2 \|_{\ell^2(L^{\infty})}.$$
In particular the map $I_{\pm} \ni z \mapsto (\mathfrak{H}-z)^{-1} \in \mathscr{B}(\mathcal{K} ,\mathcal{K}^*)$ extends to a weak-$^*$ continuous map on $I$, with $\mathcal{K} = B(\mathbb{A})$ and $\sigma_{\mathrm{sc}} (\mathfrak{H}) \cap I = \emptyset$. By Lemma \ref{A<N} the statement also holds for $\mathcal{K} = B(\mathbf{N})$.
\end{theorem}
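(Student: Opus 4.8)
The plan is to reduce the statement to an application of the abstract, optimal limiting absorption principle of Mourre type, the hard analytic content of which is already packaged in the cited abstract theory. First I would transfer the strict Mourre estimate from the free operator $\mathfrak{D}$ to the full Hamiltonian $\mathfrak{H}$. Since $V(n)=o(1)$, the operator $V=\mathfrak{H}-\mathfrak{D}$ is compact, so the invariance result recalled in Subsection \ref{full_Mourre} (based on \cite[Theorem 7.2.9]{ABG}, cf.\ \cite{GM}) gives $\tilde{\boldsymbol{\mu}}_{\mathbb{A}}(\mathfrak{D})=\tilde{\boldsymbol{\mu}}_{\mathbb{A}}(\mathfrak{H})$. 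Combined with the decomposition $\boldsymbol{\mu}_{\mathbb{A}}(\mathfrak{D})=\boldsymbol{\mu}_{\mathbb{A}}(\mathfrak{H})\cup\{\text{eigenvalues of }\mathfrak{H}\text{ of finite multiplicity}\}$ also recorded there, the hypothesis $I\subset \boldsymbol{\mu}_{\mathbb{A}}(\mathfrak{D})\setminus\sigma_{\mathrm{p}}(\mathfrak{H})$ forces every point of $I$ to lie in $\boldsymbol{\mu}_{\mathbb{A}}(\mathfrak{H})=\boldsymbol{\mu}^{+}_{\mathbb{A}}(\mathfrak{H})\cup\boldsymbol{\mu}^{-}_{\mathbb{A}}(\mathfrak{H})$. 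As these are open sets, by compactness $I$ is covered by finitely many open subintervals, on each of which a strict Mourre estimate with $K=0$ holds for $\mathfrak{H}$ with respect to either $+\mathbb{A}$ or $-\mathbb{A}$.

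Next I would check the regularity input. By Proposition \ref{reg_Prop_DD} (and its analogues for the other conjugate operators listed in Table \ref{table:2020987}) one has $\mathfrak{D}\in C^{\infty}(\mathbb{A})$, while $V\in C^2(\mathbb{A})$ by assumption; since each $C^k(\mathbb{A})$ is a vector space, $\mathfrak{H}=\mathfrak{D}+V\in C^2(\mathbb{A})$. With a strict Mourre estimate and $C^2(\mathbb{A})$ regularity in hand, I would invoke the refined Mourre/Besov-space limiting absorption principle of \cite{JP} (see also \cite{Mo1}, \cite{Mo2}), which on each subinterval yields boundary values of the resolvent from both half-planes together with the multiple-commutator bound $\sup_{z\in I_{\pm}}\|f_1(\mathbb{A})(\mathfrak{H}-z)^{-1}f_2(\mathbb{A})\|\leq c\,\|f_1\|_{\ell^2(L^{\infty})}\|f_2\|_{\ell^2(L^{\infty})}$; taking the maximum of the finitely many constants gives the estimate uniformly over $I$. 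Interpreting this bound through the spaces $B(\mathbb{A})$ and $B^{*}(\mathbb{A})$, which are tailored precisely to the $\ell^2(L^{\infty})$ functional calculus of $\mathbb{A}$, produces the weak-$^*$ continuous extension of $I_{\pm}\ni z\mapsto(\mathfrak{H}-z)^{-1}\in\mathscr{B}(B(\mathbb{A}),B^{*}(\mathbb{A}))$ to $I$; the existence of these boundary values excludes singular continuous spectrum, so $\sigma_{\mathrm{sc}}(\mathfrak{H})\cap I=\emptyset$.

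Finally, to pass from $\mathcal{K}=B(\mathbb{A})$ to $\mathcal{K}=B(\mathbf{N})$ I would invoke Lemma \ref{A<N}: the comparison it provides between $\mathbb{A}$ and $\mathbf{N}=\langle N\rangle$ yields a continuous embedding $B(\mathbf{N})\hookrightarrow B(\mathbb{A})$ and, dually, $B^{*}(\mathbb{A})\hookrightarrow B^{*}(\mathbf{N})$. Pre- and post-composing the resolvent with these embeddings turns $\mathscr{B}(B(\mathbb{A}),B^{*}(\mathbb{A}))$-boundedness into $\mathscr{B}(B(\mathbf{N}),B^{*}(\mathbf{N}))$-boundedness, giving the final assertion. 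The computation is conceptually routine: the only genuinely substantive inputs are the compactness transfer of the Mourre estimate to $\mathfrak{H}$ and the domination $\mathbb{A}\lesssim\mathbf{N}$ of Lemma \ref{A<N}, and the main obstacle is simply the bookkeeping needed to match the hypotheses of the abstract theorem of \cite{JP} — in particular ensuring that the $C^2(\mathbb{A})$ hypothesis and the $K=0$ Mourre estimate are exactly what that formulation requires.
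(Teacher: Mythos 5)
Your proposal is correct and follows essentially the same route as the paper: the paper's proof is precisely an appeal to Mourre's abstract theorem (\cite[Theorem I.2]{Mo2}) after noting that the Mourre estimate transfers to $\mathfrak{H}$ by compactness of $V$ and that the technical hypotheses are trivial since $\mathfrak{H}$ is bounded, followed by the argument of \cite[Proposition 2.1]{JP} for the $B(\mathbb{A})$--$B(\mathbb{A})^*$ formulation and Lemma \ref{A<N} for the $B(\mathbf{N})$ version. Your additional bookkeeping (covering $I$ by finitely many subintervals where the strict estimate holds with $+\mathbb{A}$ or $-\mathbb{A}$, and checking $\mathfrak{H}\in C^2(\mathbb{A})$) is exactly the content the paper delegates to Section \ref{full_Mourre}.
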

Theorem \ref{THM_MOURRE} is an application of \cite[Theorem I.2]{Mo2}. Since $\mathfrak{H}$ is a bounded operator, the technical assumptions of \cite[Theorem I.2]{Mo2} are trivially fulfilled. Under our hypothesis, the Mourre estimate holds, see Section \ref{full_Mourre}. The statement about the LAP holding in the $B(\mathbb{A}) - B(\mathbb{A})^*$ spaces is proved exactly as in \cite[Proposition 2.1]{JP}.

\section{The LAP in the Besov spaces}
\label{proofBesov}

Notation is fixed. For each real $s$, denote by $\mathscr{H}_s (\mathbb{A})$ the Sobolev space associated to $\mathbb{A}$. For $s\geq 0$ it is the domain of $\langle \mathbb{A} \rangle^s$ and for $s<0$, set $\mathscr{H}_s (\mathbb{A}):= (\mathscr{H}_{-s} (\mathbb{A}))^*$. By identifiying $\mathcal{H}$ with its space of anti-linear forms, we can choose on $\mathscr{H}_s (\mathbb{A})$, the norm $\|f \|_s:= \| \langle \mathbb{A} \rangle^s f \|$, $f \in \mathscr{H}_s (\mathbb{A})$,  for $s\in \R$. For real numbers $t \leq s$, one has the continuous dense embedding $\mathscr{H}_s (\mathbb{A}) \subset \mathscr{H}_t (\mathbb{A})$. By interpolation the Besov spaces $\mathscr{H}_{s,p} (\mathbb{A})$ associated to $\mathbb{A}$ are obtained, namely
\begin{align*}
& \mathscr{H}_{s,p} (\mathbb{A}) = (\mathscr{H}_{s_1} \left(\mathbb{A}), \mathscr{H}_{s_2} (\mathbb{A}) \right) _{\theta,p}, \\
& \mathrm{for \ } s_1 < s_2,\, 0 < \theta < 1, \,s= \theta s_1 + (1-\theta) s_2, \,1 \leq p \leq \infty.
\end{align*}
We refer to \cite{ABG} and \cite{BSa} for more facts on the Besov spaces. The following result holds for $(\mathfrak{D}, \mathfrak{H}) = (\Delta, \Delta+V)$ or $(D,D+V)$, and any $\mathbb{A}$ as in Table \ref{table:2020987} (for example: $\mathbb{A} = A_{\kappa}$). 
\begin{theorem}
\label{BesovTHM}
Suppose that $V \in C^{1,1}(\mathbb{A})$ and $V(n)=o(1)$. Then for any closed interval $I \subset \boldsymbol{\mu}_{\mathbb{A}}(\mathfrak{D}) \setminus \sigma_\mathrm{p}(\mathfrak{H})$, the map $I_{\pm} \ni z \mapsto (\mathfrak{H}-z)^{-1} \in \mathscr{B}\left(\mathcal{K},\mathcal{K}^* \right)$ extends to a weak-$^*$ continuous map on $I$, with $\mathcal{K} = \mathscr{H}_{\frac{1}{2},1} (\mathbb{A})$. In particular $\sigma_{\mathrm{sc}} (\mathfrak{H}) \cap I = \emptyset$. By Lemma \ref{A<N} the statement also holds for $\mathcal{K} = \mathscr{H}_{\frac{1}{2},1} (\mathbf{N})$.
\end{theorem}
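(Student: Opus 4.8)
The plan is to deduce Theorem \ref{BesovTHM} from the abstract limiting absorption principle under the optimal $C^{1,1}(\mathbb{A})$ regularity, as developed in \cite{ABG} (e.g.\ \cite[Theorem 7.4.1]{ABG}) and in \cite{BSa}. That abstract result has exactly two hypotheses for the bounded self-adjoint operator $\mathfrak{H}$: that $\mathfrak{H} \in C^{1,1}(\mathbb{A})$, and that a strict Mourre estimate holds for $\mathfrak{H}$ on $I$ (that is, \eqref{mourreEstimate12345} with $K=0$). Once both are verified, the statement follows by citation, and the rest of the claims are standard consequences.

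First I would establish the regularity of the full Hamiltonian. By Proposition \ref{reg_Prop_DD} (and the analogous regularity statements for the remaining conjugate operators collected in Table \ref{table:2020987}, e.g.\ Proposition \ref{iso3D_prop}) the free operator satisfies $\mathfrak{D} \in C^{\infty}(\mathbb{A}) \subset C^{1,1}(\mathbb{A})$. Since the bounded operators of class $C^{1,1}(\mathbb{A})$ form a $\C$-vector space (Section \ref{full_Regularity}) and $V \in C^{1,1}(\mathbb{A})$ by hypothesis, I conclude $\mathfrak{H} = \mathfrak{D} + V \in C^{1,1}(\mathbb{A})$. Second, I would produce the strict Mourre estimate for $\mathfrak{H}$. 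Because $V(n) = o(1)$, the perturbation $\mathfrak{H} - \mathfrak{D}$ is compact, so by the identity recalled in Subsection \ref{full_Mourre}, namely $\boldsymbol{\mu}_{\mathbb{A}}(\mathfrak{D}) = \tilde{\boldsymbol{\mu}}_{\mathbb{A}}(\mathfrak{D}) = \tilde{\boldsymbol{\mu}}_{\mathbb{A}}(\mathfrak{H}) = \boldsymbol{\mu}_{\mathbb{A}}(\mathfrak{H}) \cup \{\text{eigenvalues of } \mathfrak{H} \text{ of finite multiplicity}\}$, every energy in $\boldsymbol{\mu}_{\mathbb{A}}(\mathfrak{D}) \setminus \sigma_{\mathrm{p}}(\mathfrak{H})$ lies in $\boldsymbol{\mu}_{\mathbb{A}}(\mathfrak{H})$, i.e.\ carries the strict estimate with $K=0$ for $\mathfrak{H}$. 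As $I$ is compact and contained in this set, I cover it by finitely many of the open intervals furnished by \eqref{mu_set_def} and take the minimum of the corresponding constants; after splitting $I$ into finitely many subintervals one obtains a single $\gamma>0$ together with a fixed choice of the sign of $\mathbb{A}$ valid throughout.

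With both hypotheses in force, the abstract theorem yields that $I_{\pm} \ni z \mapsto (\mathfrak{H}-z)^{-1}$ extends to a weak-$^*$ continuous map on $I$ valued in $\mathscr{B}\left( \mathscr{H}_{\frac{1}{2},1}(\mathbb{A}), \mathscr{H}_{-\frac{1}{2},\infty}(\mathbb{A}) \right)$, where $\mathscr{H}_{-\frac{1}{2},\infty}(\mathbb{A}) = \left( \mathscr{H}_{\frac{1}{2},1}(\mathbb{A}) \right)^*$; this is precisely the assertion with $\mathcal{K} = \mathscr{H}_{\frac{1}{2},1}(\mathbb{A})$. The emptiness of $\sigma_{\mathrm{sc}}(\mathfrak{H}) \cap I$ is then standard: local boundedness of the boundary values of the resolvent forces the spectral measure of $\mathfrak{H}$ to be purely absolutely continuous on $I$. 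Finally, to pass to $\mathcal{K} = \mathscr{H}_{\frac{1}{2},1}(\mathbf{N})$, I would invoke Lemma \ref{A<N}, which controls $\mathbb{A}$ by $\mathbf{N} = \langle N \rangle$ and hence gives the continuous embedding $\mathscr{H}_{\frac{1}{2},1}(\mathbf{N}) \hookrightarrow \mathscr{H}_{\frac{1}{2},1}(\mathbb{A})$ (equivalently the dual embedding on the target spaces), so that the LAP transfers verbatim to the $\mathbf{N}$-Besov scale.

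The genuine difficulty is not located in this theorem but upstream: the real work consists in verifying $V \in C^{1,1}(\mathbb{A})$ (handled by Lemma \ref{criteriaLS} and its refinements) and, above all, in identifying the sets $\boldsymbol{\mu}_{\mathbb{A}}(\mathfrak{D})$ on which the Mourre estimate holds (Sections \ref{stdLaplacianMourre} and \ref{SME_MV}). Granting those inputs, the present proof is a clean reduction to the abstract optimal LAP; the only step requiring minor care is the compactness/covering argument that produces a uniform Mourre constant with a single, fixed sign of $\mathbb{A}$ across the closed interval $I$.
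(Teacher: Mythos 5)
Your proposal is correct and follows exactly the route the paper takes: the paper's entire proof is the one-line remark that the theorem is a straightforward application of the abstract $C^{1,1}$ limiting absorption principle in \cite{ABG} (it cites \cite[Theorem 7.3.1]{ABG} where you cite the companion statement), with the regularity of $\mathfrak{H}$ and the strict Mourre estimate on $I$ supplied by the earlier sections just as you describe. You have merely written out the verification of the two hypotheses that the paper leaves implicit, so there is nothing to add.
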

Theorem \ref{BesovTHM} is a straightforward application of \cite[Theorem 7.3.1]{ABG}.

\section{The LAP based on energy estimates}
\label{proofEnergy}

Notation is fixed. Let $\log_0 (x):= 1$, $\log_1(x):= \log(1+ x)$, and for integer $k \geqslant 2$, $\log_k(x):= \log \left(1+\log_{k-1}(x)\right)$. Thus $k$ is the number of times the function $\log(1+x)$ is composed with itself. Also denote $\log_k ^q (x):= (\log_k  (x) )^q$, $q \in \R$.  Let $\langle x \rangle:= \sqrt{1+x^2}$. Let
\begin{equation}
\label{weightsW205}
w_{M} ^{\alpha,\beta} (x):= \log_{M+1} ^{\alpha} \left( \langle x \rangle \right) \prod_{k=0}  ^M \log_{k} ^{\beta} \left( \langle x \rangle \right), \quad \alpha, \beta \in \R, M \in \N.
\end{equation}

Let $T$ be a self-adjoint operator. Let $s, p, p' \in \R$, $M \in \N$. Define a family of Banach spaces with the obvious norms:
\begin{equation}
\label{L2sppM}
L^2_{s, p, p', M}(T):= \left \{ \psi \in \mathscr{H}: \|\psi\|_{L^2_{s, p, p', M}(T)}:= \| \langle T \rangle ^{s} w_M ^{p,p'}(T)  \psi \|_{\mathscr{H}} < \infty \right \}.
\end{equation}
The dual with respect to the inner product on $\mathscr{H}$ is $( L^2_{s, p, p', M}(T) )^* = L^2_{-s, -p, -p', M}(T)$. 
Write $L^2_{s}(T):= L^2_{s, 0, 0, 0}(T)$. For any $s, p>1/2$ and $M \in \N$, the following inclusions hold: 
$$L^2_s(T) \subsetneq L^2_{1/2, p,1/2, M}(T) \subsetneq B(T)  \subsetneq L^2_{1/2}(T),$$
and 
$$L^2_{-1/2}(T) \subsetneq B^*(T)  \subsetneq L^2_{-1/2, -p,-1/2, M}(T) \subsetneq L^2_{-s}(T).$$
We need the obvious extension of \cite[Lemma 4.13]{GM2}, see also \cite[Lemma 5.1]{BSa}.
\begin{Lemma} 
\label{A<N}
Fix $d$ and $\kappa =(\kappa_j)_{j=1}^d \in (\N^*)^d$. Then for all conjugate operators $\mathbb{A}$ in Table \ref{table:2020987} there is $c>0$ such that for all $\alpha \in [0,1]$, $(\mathbb{A}^2+1)^{\alpha} \leq c (N_1^2 + ...+ N_d ^2 +1)^{\alpha}$.
\end{Lemma}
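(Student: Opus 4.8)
The plan is to reduce the claim to the endpoint $\alpha = 1$ and then interpolate by operator monotonicity. Recall the map $x \mapsto x^\alpha$ is operator monotone on $[0,\infty)$ for every $\alpha \in [0,1]$ (the Loewner--Heinz inequality), a statement that remains valid in the form sense for unbounded positive self-adjoint operators, e.g.\ via the representation $x^\alpha = c_\alpha \int_0^\infty \frac{x}{x+t}\, t^{\alpha-1}\, dt$ and the monotonicity of resolvents. Consequently, once I produce a constant $c \geq 1$ with $\mathbb{A}^2 + 1 \leq c\,\mathbf{N}^2$, where $\mathbf{N}^2 = N_1^2 + \cdots + N_d^2 + 1$, I immediately obtain $(\mathbb{A}^2+1)^\alpha \leq (c\,\mathbf{N}^2)^\alpha = c^\alpha (\mathbf{N}^2)^\alpha \leq c\,(\mathbf{N}^2)^\alpha$ for all $\alpha \in [0,1]$, using $c^\alpha \leq c$ since $c \geq 1$. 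This is exactly the asserted inequality with a single constant.

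The heart of the matter is thus the endpoint estimate $\mathbb{A}^2 + 1 \leq c\,\mathbf{N}^2$. The key structural observation is that every conjugate operator $\mathbb{A}$ listed in Table \ref{table:2020987} is \emph{first order} in the position operators: there exist bounded operators $B_0, B_1, \ldots, B_d$, each a finite linear combination of shift operators (hence bounded, the shifts being unitary), such that $\mathbb{A} = \sum_{j=1}^d B_j N_j + B_0$ on $\ell_0(\Z^d)$ (respectively on $\ell_0(\mathcal{V}_B \times \Z)$). For $\mathbb{A} = A_\kappa$ this follows from \eqref{generatorDilations_011} by commuting one shift past $N_j$ with $[N_j, S_j^{\pm\kappa_j}]_\circ = \pm\kappa_j S_j^{\pm\kappa_j}$, which yields $A_j = \tfrac{1}{2\i}(S_j^{\kappa_j}-S_j^{-\kappa_j})N_j + \tfrac{\kappa_j}{4\i}(S_j^{\kappa_j}+S_j^{-\kappa_j})$. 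For $\pi A_{2\kappa}\pi^{-1}$, $\pi^{-1}A_\kappa\pi$, and $\pi A_{\tilde\kappa,\rho}\pi^{-1}$ the same reasoning applies to \eqref{PIinversePI}, \eqref{A_2d}, \eqref{CONJ_3d}, using additionally that the conjugated positions $\pi N_j \pi^{-1}$ and $\pi^{-1}N_j\pi$ are themselves integer linear combinations of $N_1, \ldots, N_d$ (e.g.\ $\pi N_1 \pi^{-1} = N_1 - N_2$), so that the first-order-in-$N$ structure is preserved.

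Granting this representation, fix $\psi$ in the relevant space of compactly supported sequences. Then $\|\mathbb{A}\psi\| \leq \sum_{j=1}^d \|B_j\|\,\|N_j \psi\| + \|B_0\|\,\|\psi\|$, and the Cauchy--Schwarz inequality gives $\|\mathbb{A}\psi\|^2 \leq (d+1)\,\mu\,\big( \sum_{j=1}^d \|N_j\psi\|^2 + \|\psi\|^2 \big) = (d+1)\,\mu\,\langle \psi, \mathbf{N}^2 \psi\rangle$, with $\mu = \max\big( \|B_0\|^2, \max_j \|B_j\|^2 \big)$. Since $\|\mathbb{A}\psi\|^2 = \langle \psi, \mathbb{A}^2\psi\rangle$, this reads $\mathbb{A}^2 \leq (d+1)\mu\,\mathbf{N}^2$ as quadratic forms, and adding $1 \leq \mathbf{N}^2$ yields the endpoint estimate with $c = (d+1)\mu + 1$. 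As the compactly supported sequences form a core for $\mathbf{N}$ and for $\mathbb{A}$, the form inequality extends to the form domain of $\mathbf{N}$.

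The only genuine work is the bookkeeping of the second paragraph: verifying the representation $\mathbb{A} = \sum_j B_j N_j + B_0$ for each of the four conjugate operators. I expect this to be the main (but entirely routine) obstacle, as it is a finite computation from the explicit formulas with no analytic difficulty; the interpolation step is standard once the endpoint is in hand.
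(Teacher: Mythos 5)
Your proof is correct and is, in substance, the argument the paper relies on: the paper does not prove Lemma \ref{A<N} itself but defers to the ``obvious extension'' of \cite[Lemma 4.13]{GM2} and \cite[Lemma 5.1]{BSa}, whose proof is exactly your two steps --- writing each $\mathbb{A}$ as $\sum_j B_j N_j + B_0$ with bounded shift-polynomial coefficients to get the endpoint bound $\mathbb{A}^2+1\leq c\,(N_1^2+\cdots+N_d^2+1)$ on the core $\ell_0$, and then the Loewner--Heinz inequality to pass to all $\alpha\in[0,1]$. No gap; your bookkeeping of the commutators $[N_j,S_j^{\pm\kappa_j}]_{\circ}=\pm\kappa_j S_j^{\pm\kappa_j}$ and of the conjugated positions $\pi N_j\pi^{-1}$ is what the cited lemmas carry out for $\kappa=1$, and it extends verbatim here.
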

Thus, we can apply all the estimates of \cite[Section 4]{GM2} to all such $\mathbb{A}$. Now let $P^{\perp}:= 1-P$, where $P$ is the projection onto the pure point spectral subspace of $\mathfrak{H}$. Let 
\begin{align*}
\mathbb{K}_{\mathbb{A}}(\mathfrak{H}) & := \sigma(\mathfrak{H}) \setminus  \{ E \in \sigma_{\mathrm{p}} (\mathfrak{H}): \{\mathrm{eigenspace \ of \ } \mathfrak{H} \mathrm{ \ associated \ to \ } E\}  \not\subset \mathrm{Dom}[\mathbb{A}] \}.
\end{align*}
Because we include a projector $P^{\perp}$ in the LAP we need an additional local regularity verification, i.e.\ a version of \cite[Lemma 5.2]{GM2}:

\begin{Lemma}
\label{equivLem52}
Suppose $V$ satisfies \eqref{A1}, \eqref{A2}. Let $E \in \Omega$, $\Omega$ as in Theorem \ref{lapy3055}. Then there is a closed interval $I$ of $E$ such that $I \subset \Omega$ and for all $J \subset I$, for all $\theta\in C^{\infty}_c(\R)$ with supp$(\theta) = J$, $PE_{J}(\mathfrak{H}) $ and $P^{\perp}\theta(\mathfrak{H}) \in C^1(\mathbb{A})$.
\end{Lemma}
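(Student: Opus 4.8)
The plan is to follow \cite[Lemma 5.2]{GM2}, the only adjustment being that the present conjugate operators $\mathbb{A}$ of Table \ref{table:2020987} replace the usual dilation generator; this replacement is harmless thanks to Lemma \ref{A<N} and the regularity criteria of Section \ref{full_Regularity}. First I would record the two elementary inputs. On the one hand, the hypotheses \eqref{A1}--\eqref{A2} imply $V \in C^1(\mathbb{A})$, hence $\mathfrak{H} = \mathfrak{D} + V \in C^1(\mathbb{A})$; since $\mathfrak{H}$ is bounded, the Helffer--Sj\"ostrand functional calculus gives $\theta(\mathfrak{H}) \in C^1(\mathbb{A})$ for every $\theta \in C^\infty_c(\R)$. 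On the other hand, the bounded operators of class $C^1(\mathbb{A})$ form a $\C$-vector space (Section \ref{full_Regularity}). Writing $P^{\perp}\theta(\mathfrak{H}) = \theta(\mathfrak{H}) - P\theta(\mathfrak{H})$, the whole statement therefore reduces to showing that the two point-spectrum operators $PE_J(\mathfrak{H})$ and $P\theta(\mathfrak{H})$ lie in $C^1(\mathbb{A})$.

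Next I would localize. Recalling that $\Omega$ (defined in Theorem \ref{lapy3055}) is contained in $\boldsymbol{\mu}_{\mathbb{A}}(\mathfrak{D}) \cap \mathbb{K}_{\mathbb{A}}(\mathfrak{H})$, we have $E \in \boldsymbol{\mu}_{\mathbb{A}}(\mathfrak{D}) = \tilde{\boldsymbol{\mu}}_{\mathbb{A}}(\mathfrak{H})$ (Section \ref{full_Mourre}), so a Mourre estimate holds on an open neighborhood of $E$ and, by the virial theorem \cite[Theorem 7.2.13]{ABG}, the eigenvalues of $\mathfrak{H}$ are locally finite near $E$, each of finite multiplicity. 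One may therefore choose a closed interval $I$, with $E$ in its interior and $I \subset \Omega$, so short that $E$ is the only possible eigenvalue of $\mathfrak{H}$ in $I$. For any $J \subset I$ and any $\theta$ supported in $J$, both $PE_J(\mathfrak{H})$ and $P\theta(\mathfrak{H})$ are then finite-rank: they vanish if $E \notin J$, and otherwise $PE_J(\mathfrak{H}) = \Pi_E$ and $P\theta(\mathfrak{H}) = \theta(E)\,\Pi_E$, where $\Pi_E$ is the orthogonal projection onto the finite-dimensional eigenspace $\ker(\mathfrak{H}-E)$.

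The decisive step is that $\ker(\mathfrak{H}-E) \subset \mathrm{Dom}[\mathbb{A}]$, which is precisely what the inclusion $E \in \Omega \subset \mathbb{K}_{\mathbb{A}}(\mathfrak{H})$ encodes, by the definition of $\mathbb{K}_{\mathbb{A}}(\mathfrak{H})$ given above. Granting it, both $PE_J(\mathfrak{H})$ and $P\theta(\mathfrak{H})$ are finite-rank operators whose range, and the range of whose adjoint, lie in $\mathrm{Dom}[\mathbb{A}]$, being scalar multiples of the self-adjoint projection $\Pi_E$. For such an operator the commutator form with $\mathbb{A}$ extends to a bounded (again finite-rank) operator — one simply lets the self-adjoint $\mathbb{A}$ act on the finitely many defining vectors — so it belongs to $C^1(\mathbb{A})$ by \cite[Lemma 6.2.9]{ABG}. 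Together with $\theta(\mathfrak{H}) \in C^1(\mathbb{A})$ and the vector-space property, this gives the claim.

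The main obstacle is the localization input rather than any single computation: one must guarantee that $I$ can be shrunk so that \emph{every} eigenvalue of $\mathfrak{H}$ inside $I$ has its eigenspace in $\mathrm{Dom}[\mathbb{A}]$, whereas the $\mathbb{K}_{\mathbb{A}}(\mathfrak{H})$ condition controls only $E$ itself. This is exactly why one combines the local finiteness of the point spectrum in the Mourre region (to isolate $E$ in $I$) with the $\mathbb{K}_{\mathbb{A}}(\mathfrak{H})$ membership built into $\Omega$. A secondary, routine check is that the Helffer--Sj\"ostrand calculus preserves the class $C^1(\mathbb{A})$ for the non-standard generators $\mathbb{A}$ of Table \ref{table:2020987}; this follows once $\mathfrak{H} \in C^1(\mathbb{A})$ and $\mathfrak{H}$ is bounded, uniformly in the choice of $\mathbb{A}$ by Lemma \ref{A<N}.
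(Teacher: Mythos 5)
Your proof is correct and follows essentially the same route as the paper: use the Mourre estimate (via $E\in\boldsymbol{\mu}_{\mathbb{A}}(\mathfrak{D})$ and compactness of $V$) to get local finiteness of $\sigma_{\mathrm{p}}(\mathfrak{H})$, shrink $I$ so that $PE_J(\mathfrak{H})$ is finite rank with range in $\mathrm{Dom}[\mathbb{A}]$ thanks to $E\in\mathbb{K}_{\mathbb{A}}(\mathfrak{H})$, and then subtract from $\theta(\mathfrak{H})\in C^1(\mathbb{A})$. The only cosmetic difference is that you prove the finite-rank step directly (range and co-range in $\mathrm{Dom}[\mathbb{A}]$ imply $C^1(\mathbb{A})$) where the paper cites \cite[Proposition 5.1]{GM2}, which encapsulates the same fact.
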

\begin{proof}
$E \in \boldsymbol{\mu}_{\mathbb{A}}(\mathfrak{D})$ together with the compactness of $V$ implies that a Mourre estimate holds for $\mathfrak{H}$ and $\mathbb{A}$ in a neighborhood $I$ of $E$. In particular $\sigma_{\mathrm{p}}(\mathfrak{H})$ is finite on $I$, see \cite[Corollary 7.2.11]{ABG}. So $PE_{J}(\mathfrak{H})$ is a finite rank operator whenever $J \subset I$. By reducing the size of $I$ it is possible to have $I \subset$ supp$(\eta)$ and $I \subset \mathbb{K}_{\mathbb{A}}(\mathfrak{H})$.  We apply \cite[Proposition 5.1]{GM2} to get $PE_{J}(\mathfrak{H}) \in C^1(\mathbb{A})$. Write $P^{\perp}\theta (\mathfrak{H}) = \theta(\mathfrak{H}) - P E_{J}(\mathfrak{H})\theta(\mathfrak{H})$ to see it too belongs to $C^1(\mathbb{A})$.
\qed
\end{proof}
\begin{remark}
This remark concerns the Standard Laplacian and $\mathbb{A} = A_{\kappa}$ when $\kappa=1$, (which is the case in \cite{GM2} for example). The assumptions \eqref{A1} and \eqref{A2}, coupled with \cite[Theorem 1.5]{Ma2}, imply that the eigenfunctions of $\Delta +V$, if any, belong to $\mathrm{Dom}[A_{\kappa=1}]$. For more general $\kappa = (\kappa_j)$ we do not know of a similar result, but there is the abstract result \cite{FMS} which roughly says that if $V \in C^{1+n}(A_{\kappa})$ then the eigenfunctions of $\Delta+V$ belong to $\mathrm{Dom}[A_{\kappa}^{n}]$, $n \in \N^*$. 
\end{remark}

The following result holds for $(\mathfrak{D}, \mathfrak{H}) = (\Delta, \Delta+V)$ or $(D,D+V)$, and any $\mathbb{A}$ as in Table \ref{table:2020987} (for example: $\mathbb{A} = A_{\kappa}$). 

\begin{theorem}
\label{lapy3055} Let $\varphi_{p,m}$ be the function in \eqref{FunctionVARPHI}. Let $\mathscr{W}_{M} ^{p} (x):= \langle x \rangle ^{\frac{1}{2}} w_{M} ^{p,\frac{1}{2}} (x)$. Suppose 
\begin{equation}
\label{A1}
V \in C^{1}(\mathbb{A}),
\end{equation}
\begin{equation}
there \ are \  m\in \N \ \mathrm{ and } \  2 = r < q \ such \ that \ V(n) = O((w_m ^{q,r}(n))^{-1}), \ and
\label{A2}
\end{equation}
there are $\eta \in C^{\infty}_c(\R)$, $m \in \N$, $p > 1/2$, and bounded and compact operators on $\mathscr{H}$, $B$ and $K$ respectively, whose norms are uniformly bounded with respect to $t$, such that for $t \in \R^+$ large enough,
\begin{equation}
\mathscr{W}_m ^{p} (\mathbb{A}/t) \eta(\mathfrak{D}) [V, \varphi_{p,m}(\mathbb{A}/t)]_{\circ} \eta(\mathfrak{D}) \mathscr{W}_m ^{p} (\mathbb{A}/t) = t^{-2}B + t^{-1}K.
\label{A3}
\end{equation}
Denote $\Omega:= \mathrm{supp} (\eta) \cap \boldsymbol{\mu}_{\mathbb{A}}(\mathfrak{D}) \cap \mathbb{K}_{\mathbb{A}}(\mathfrak{H})$. Let $E \in \Omega$. Then there is a closed interval $I$ of $E$ such that for any integer $M \geqslant m$, and any $p>1/2$, the map $I_{\pm} \ni z \mapsto (\mathfrak{H}-z)^{-1} P^{\perp} \in \mathscr{B}\left( \mathcal{K}, \mathcal{K}^* \right)$ extends to a uniformly bounded map on $I$ with $\mathcal{K} = L^2_{\frac{1}{2},p,\frac{1}{2},M}(\mathbb{A})$. In particular $\sigma_{\mathrm{sc}} (\mathfrak{H}) \cap I = \emptyset$. By Lemma \ref{A<N} the statement also holds for $\mathcal{K} = L^2_{\frac{1}{2},p,\frac{1}{2},M}(\mathbf{N})$.
\end{theorem}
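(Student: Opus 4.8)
The plan is to derive the LAP from a differential (energy) inequality in the spirit of G\'erard \cite{G}, \cite{GJ2}, following step by step the blueprint of \cite[Section 4]{GM2}. The essential point is that \cite{GM2} treats only $\mathbb{A} = A_{\kappa=1}$ for the Standard Laplacian, yet its energy-estimate machinery never exploits anything particular to $A_{\kappa=1}$ beyond two facts: (i) a strict Mourre estimate holds at the energy in question, and (ii) the conjugate operator is dominated by the position operator in the sense of Lemma \ref{A<N}, i.e.\ $(\mathbb{A}^2+1)^{\alpha} \leq c\,\mathbf{N}^{2\alpha}$ for $\alpha \in [0,1]$. Fact (ii) is available for every $\mathbb{A}$ in Table \ref{table:2020987}; fact (i) is supplied by the hypothesis $E \in \boldsymbol{\mu}_{\mathbb{A}}(\mathfrak{D})$ together with the compactness of $V$ (which follows from $V(n)=o(1)$). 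So the proof reduces to localizing near $E$, installing the $P^{\perp}$ projector via the local regularity of Lemma \ref{equivLem52}, and then invoking the estimates of \cite[Section 4]{GM2} essentially verbatim.

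First I would fix the interval. Since $E \in \Omega \subset \boldsymbol{\mu}_{\mathbb{A}}(\mathfrak{D})$ and $V$ is compact, the perturbation results recalled in Section \ref{full_Mourre} (in particular $\tilde{\boldsymbol{\mu}}_{\mathbb{A}}(\mathfrak{D}) = \tilde{\boldsymbol{\mu}}_{\mathbb{A}}(\mathfrak{H})$) give a strict Mourre estimate for $\mathfrak{H}$ w.r.t.\ $\mathbb{A}$ on a neighborhood of $E$, with only finitely many eigenvalues of finite multiplicity there. Shrinking, I obtain a closed interval $I \ni E$ with $I \subset \mathrm{supp}(\eta) \cap \mathbb{K}_{\mathbb{A}}(\mathfrak{H})$ on which Lemma \ref{equivLem52} applies: for $\theta \in C^{\infty}_c(\R)$ supported in $I$, both $PE_I(\mathfrak{H})$ and $P^{\perp}\theta(\mathfrak{H})$ lie in $C^1(\mathbb{A})$. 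This is precisely what legitimizes inserting $P^{\perp}$ into the commutator manipulations and guarantees that $P^{\perp}\theta(\mathfrak{H})[\mathfrak{H},\i\mathbb{A}]_{\circ}P^{\perp}\theta(\mathfrak{H})$ is well defined and obeys a reduced strict Mourre bound $\geq \gamma\, P^{\perp}\theta(\mathfrak{H})^2$ on $I$.

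Next comes the core energy estimate. Here I would introduce the propagation observable $\varphi_{p,m}(\mathbb{A}/t)$ of \eqref{FunctionVARPHI} and, for $u_z := (\mathfrak{H}-z)^{-1}P^{\perp} f$ with $f$ in the weighted space, study the scalar quantity $t \mapsto \langle u_z, \varphi_{p,m}(\mathbb{A}/t) u_z\rangle$. Differentiating in $t$ produces three contributions: the free commutator with $[\mathfrak{D}, \varphi_{p,m}(\mathbb{A}/t)]$, which, once localized by $\eta(\mathfrak{D})$, supplies a positive lower bound through the reduced Mourre estimate; the potential commutator $[V,\varphi_{p,m}(\mathbb{A}/t)]_{\circ}$, whose sandwiched form is of order $t^{-2}B + t^{-1}K$ by assumption \eqref{A3}; and a term proportional to $\mathrm{Im}(z)$ with a favorable sign for $z \in I_{\pm}$. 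The weights $\mathscr{W}_m^p$ are designed exactly so that the iterated-logarithm losses are absorbed and the resulting differential inequality integrates to the uniform a priori bound $\sup_{z \in I_{\pm}}\|(\mathscr{W}_M^p(\mathbb{A}))^{-1} u_z\| \leq C\,\|\mathscr{W}_M^p(\mathbb{A}) f\|$ for every $M \geq m$, $p>1/2$, i.e.\ uniform boundedness of $(\mathfrak{H}-z)^{-1}P^{\perp}$ from $L^2_{1/2,p,1/2,M}(\mathbb{A})$ to its dual.

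Finally, two routine steps close the argument. The weight transfer from $\mathbb{A}$ to $\mathbf{N}$ is immediate from Lemma \ref{A<N}, yielding the same LAP with $\mathcal{K} = L^2_{1/2,p,1/2,M}(\mathbf{N})$; and the existence of weak-$^*$ boundary values of $(\mathfrak{H}-z)^{-1}P^{\perp}$ as $\mathrm{Im}(z) \to 0^{\pm}$ gives, by the standard criterion, absolute continuity of the spectral measure of $\mathfrak{H}$ on $P^{\perp}\mathscr{H}$ over $I$, hence $\sigma_{\mathrm{sc}}(\mathfrak{H})\cap I = \emptyset$. I expect the genuine obstacle to be confined entirely to the middle step: producing and integrating the differential inequality with the iterated-logarithm weights, uniformly up to the real axis. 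But since \eqref{A3} is stated precisely so that the potential term has the $t$-decay demanded in \cite[Section 4]{GM2}, and Lemma \ref{A<N} lets the operator-domain estimates there be reused unchanged, this step is a transcription of \cite{GM2} rather than new analysis.
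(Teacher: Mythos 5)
Your overall architecture coincides with the paper's: both proofs reduce Theorem \ref{lapy3055} to the machinery of \cite[Theorem 1.3]{GM2}, and your bookends are exactly right --- the interval is fixed via the compactness of $V$ and the perturbation stability of the Mourre estimate, the projector $P^{\perp}$ is legitimized by Lemma \ref{equivLem52}, the potential term is disposed of by hypothesis \eqref{A3}, and the final weight transfer and absence of singular continuous spectrum follow from Lemma \ref{A<N} and the standard criterion.

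However, your description of the core step is not what \cite{GM2} does and, as written, would fail. You propose to differentiate $t \mapsto \langle u_z, \varphi_{p,m}(\mathbb{A}/t) u_z\rangle$ with $u_z := (\mathfrak{H}-z)^{-1}P^{\perp}f$ fixed. Since $u_z$ does not depend on $t$, this derivative only hits the observable $\varphi_{p,m}(\mathbb{A}/t)$; it produces neither a commutator $[\mathfrak{D},\varphi_{p,m}(\mathbb{A}/t)]$, nor the term $[V,\varphi_{p,m}(\mathbb{A}/t)]_{\circ}$ to which \eqref{A3} applies, nor any contribution proportional to $\mathrm{Im}(z)$. The actual argument fixes $t$ large and proves an \emph{operator} inequality: setting $F:= P^{\perp}\theta(\mathfrak{H})[\mathfrak{H}, \i\varphi_{p,m}(\mathbb{A}/t)]_{\circ}\theta(\mathfrak{H})P^{\perp}$ and splitting $\mathfrak{H}=\mathfrak{D}+V$, the free part is bounded below by $\gamma t^{-1}P^{\perp}\theta(\mathfrak{H})\left(\mathscr{W}_{M}^{p}(\mathbb{A}/t)\right)^{-2}\theta(\mathfrak{H})P^{\perp}$ plus errors of the form $t^{-2}B+t^{-1}K$ sandwiched between inverse weights (here one uses that $\varphi_{p,m}'(x)=(\mathscr{W}_{m}^{p}(x))^{-2}$ together with the strict Mourre estimate for $\mathfrak{D}$), the potential part is of the same error form by \eqref{A3}, and taking $t$ large yields the weighted Mourre estimate $F \geq \gamma' t^{-1}P^{\perp}\theta(\mathfrak{H})\left(\mathscr{W}_{M}^{p}(\mathbb{A}/t)\right)^{-2}\theta(\mathfrak{H})P^{\perp}$. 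The passage from this fixed-$t$ inequality to the LAP is then a separate abstract step (\cite[Section 2]{GM2}), or alternatively Gérard's differential inequality --- but that one is in $\epsilon=\mathrm{Im}(z)$, using $(\mathfrak{H}-z)u_z = P^{\perp}f$, not in $t$. You have conflated the two routes; the step you call "a transcription of \cite{GM2}" needs to be replaced by one of these two correct mechanisms.
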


\begin{proof} The proof is that of \cite[Theorem 1.3]{GM2}, the only difference being the formulation of the assumptions. A sketch of proof is outlined for convenience. For $m \in \N$, $p > 1/2$, let 
\begin{equation}
\label{FunctionVARPHI}
\varphi_{p,m}: \R \mapsto \R,  \quad \varphi_{p,m}(t):= \int_{-\infty} ^t \langle x \rangle ^{-1} w_m ^{-2p,-1}(x) dx.
\end{equation} 
$\varphi_{p,m}$ is a bounded function and its derivative yields the weights that appear in the LAP, i.e.\ $\mathscr{W}_m ^{p} (x) = \left(\frac{d}{dx}\varphi_{p,m}(x)\right)^{-1/2}$. One considers the operator $F:= \Pp \theta(\mathfrak{H}) [\mathfrak{H}, \i \varphi_{p,m}(\mathbb{A}/t)]_{\circ} \theta(\mathfrak{H}) \Pp$, where $t \in \R^+$ is a parameter that will be chosen sufficiently large later. One writes 
\begin{equation}
\label{DecompF}
F = \Pp \tH [\mathfrak{D}, \i \varphi_{p,m}(\mathbb{A}/t)]_{\circ} \tH \Pp +  \Pp \tH [V, \i \varphi_{p,m}(\mathbb{A}/t)]_{\circ} \tH \Pp.
\end{equation}
Using assumptions \eqref{A1} and \eqref{A2} one may show that the first term on the rhs.\ of \eqref{DecompF} is $\geq$
\begin{align}
\label{1stLINE}
& \gamma t^{-1} \Pp \tH \left( \mathscr{W}_{M} ^{p} (\mathbb{A}/t) \right) ^{-2}   \tH \Pp  \\
\label{2ndLINE}
&\quad +  \Pp \tH \left( \mathscr{W}_{M} ^{p} (\mathbb{A}/t) \right) ^{-1} \left( t^{-2}B + t^{-1} K \right)  \left( \mathscr{W}_{M} ^{p} (\mathbb{A}/t) \right) ^{-1}  \tH \Pp.
\end{align}
$\gamma >0$ comes from applying the strict Mourre estimate to $\mathfrak{D}$ with respect to $\mathbb{A}$ ; $B$ and $K$ denote respectively bounded and compact operators whose norms do not grow with $t$, and the norm of $K$ goes to zero as the support of $\theta$ shrinks. As for the second term on the rhs.\ of \eqref{DecompF}, again using assumptions \eqref{A1} and \eqref{A2} one shows it is equal to 
\begin{equation}
\label{F1}
\Pp \tH \eta(\mathfrak{D}) [V, \i \varphi_{p,m}(\mathbb{A}/t)]_{\circ} \eta(\mathfrak{D}) \tH \Pp
\end{equation}
plus another term of the form \eqref{2ndLINE}. Then applying \eqref{A3} one shows that \eqref{F1} is also of the form \eqref{2ndLINE}. Taking $t$ large enough implies $F \geq$ \eqref{1stLINE} with $\gamma'$ instead of $\gamma$, $\gamma' \in (0, \gamma)$. It is explained in \cite[Section 2]{GM2} how the weighted estimate $F \geq$ \eqref{1stLINE} implies the LAP. Alternatively one can also argue as in \cite[proof of Theorem 1]{G} starting from the equation (3.30) of that article.
\qed
\end{proof}

\begin{remark} Theorems \ref{lapy305} and \ref{iso2Dthm} are special cases of Theorem \ref{lapy3055} because assumptions \eqref{A20}, \eqref{CriterionA444} and \eqref{CriterionA555} all imply \eqref{A3} when respectively $\mathbb{A} = A_{\kappa}$, $\mathbb{A} = \pi A_{2\kappa} \pi^{-1}$, and $\mathbb{A} = \pi^{-1} A_{\kappa} \pi$. To prove this, one applies the Helffer-Sj\"ostrand formula to express the commutator in \eqref{A3} as an integral, and then a simple analysis proves that this integral converges in norm to a compact operator whose norm does not depend on $t$. In this case the localization $\eta(\mathfrak{D})$ is both harmless and useless. 
\end{remark}
\begin{remark} 
Unlike in the preceding remark, there are instances where the localization $\eta(\mathfrak{D})$ is resourceful. It is the case for some oscillating potentials, such as Wigner von-Neumann potentials, which decay like $O(|n|^{-1})$, see examples \ref{exampleOSC} and \ref{exWvN}. 
\end{remark}

\section{Examples}
\label{Sec_example}

\subsection{Radial potential modulo $\kappa_j$ in dimension $d$.} 
\label{example11}
Fix $\kappa = (\kappa_j) \in (\N^*)^d$. Consider any functions
$$\sigma: \prod_{j=1}^d \{0,...,\kappa_j-1\} \mapsto \{\pm 1\}, \quad \alpha: \prod_{j=1}^d \{0,...,\kappa_j-1\} \mapsto \R^+.$$ Let $V(\kappa_1 n_1 + i_1, ..., \kappa_d n_d +i_d) = \sigma(i_1,...,i_d) \langle n \rangle ^{-\alpha(i_1,...,i_d)}$, for all $n=(n_1,...,n_d) \in \Z^d$ and $i_j \in \{0,...,\kappa_j-1\}$, $j=1,...,d$. This defines a potential on $\ell^2(\Z^d)$. We view $V$ as the product of a periodic component times a damping factor that decays radially. $V(n) = O(|n|^{-\min \alpha(i_1,...,i_d)})$ and $(V-\tau_j^{\kappa_j}V)(n) = O(|n|^{-1 - \min \alpha(i_1,...,i_d)})$, for all $1 \leq j \leq d$. In particular, $V \in C^{1,1}(A_{\kappa})$.

Now let $d=2$. As per the isometry $\pi$ of Section \ref{Iso2D}, $V$ induces a potential $\pi^{-1} V \pi$ defined on $\ell^2(\mathcal{G}_B)$. What does it look like? To illustrate, consider the case $\kappa_1 = \kappa_2 = \kappa = 2$. The graph on the right in Figure \ref{figure:lattice} illustrates the ``periodic pattern'' rotated by $45^{\circ}$. It satisfies $\pi^{-1} V \pi (n_1,n_2) = V (\pi_0 (n_1,n_2)) = V \left(2\floor*{\frac{n_1+n_2}{4}} + f(\frac{n_1+n_2}{2}), 2\floor*{\frac{n_2-n_1}{4}} + f(\frac{n_2-n_1}{2}) \right)$, where $f(x) = x$ modulo $2$. 

\subsection{A class of oscillating potentials in dimension $1$.} 
\label{ExHarmonic} Let $d=1$. Fix $\alpha \in (0,1)$. We pose the ansatz $V(n)-V(n-1) = (-1)^{n}/n^{\alpha}$, $n \geq 1$. Then $(V-\tau^2V)(n) = O(|n|^{-1-\alpha})$. By means of a telescoping sum one finds $V(2n) - V(0) = -H_{2n,\alpha} + 2^{1-\alpha} H_{n,\alpha} = (2^{1-\alpha}-1) \zeta(\alpha) + 2^{-1-\alpha} n^{-\alpha} + O(|n|^{-\alpha-1})$, where we have used the fact that the generalized harmonic numbers $H_{n,\alpha}$ satisfy $H_{n,\alpha} = \zeta(\alpha) + (1-\alpha)^{-1} n^{1-\alpha} + 2^{-1} n^{-\alpha} + O(|n|^{-\alpha-1})$. Set $V(0):=-(2^{1-\alpha}-1) \zeta(\alpha)$ so that $V(n) = o(1)$ at infinity. Thus $V$ is actually purely alternating (at least for $n$ sufficiently large), i.e.\ $V(n) = (-1)^n \tilde{V}(n)$, with $0 \leq \tilde{V}(n) = O(|n|^{-\alpha})$. One is inclined to choose $\mathbb{A} = A_{\kappa=2}$.

\subsection{An oscillating potential in dimension $d$}
\label{exampleOSC} Let  $\kappa := (\kappa_j) \in (\N^*)^d$. Set  $V(n) := \sigma (n) \tilde{V}(n)$, with $\sigma(n) := (-1)^{n_1+...+n_d}$, and suppose there are $m \in \N$ and $2=r<q$ such that $\tilde{V}$ satisfies 
\begin{itemize}
\item $(H_0)$ $\tilde{V}(n) = O((w_m ^{q,r}(n))^{-1})$,  
\item $(H_1)$ $n_i (\tilde{V} - \tau_j \tilde{V})(n) = O((w_m ^{q,r}(n))^{-1})$ for $1 \leq i,j \leq d$, 
\item $(H_2)$ the $\kappa_j$'s are all even, or
\item $(H_2')$ $n_j \tilde{V}(n) = O(1)$, for $j=1,...,d$.
\end{itemize}
On $\ell_0(\Z^d)$ we see that $[A_{\kappa}, V(\cdot) ]=$
\[\frac{1}{2\i}  \sum_{j=1}^d 2^{-1} \kappa_j [(S_j^{\kappa_j} - S_j^{-\kappa_j}), V(\cdot)] +  [(S_j^{\kappa_j} - S_j^{-\kappa_j}), \sigma ] \tilde{V}(\cdot) N_j + \sigma [(S_j^{\kappa_j} - S_j^{-\kappa_j}), \tilde{V}(\cdot) ]  N_j.\] 
Assuming $(H_0), (H_1)$ and either $(H_2)$ or $(H_2')$, since $\ell_0(\Z^d)$ is a core for $A_{\kappa}$ and since the commutator extends to a element of $\mathscr{B}(\ell^2(\Z^d))$, we infer that $V \in C^1(A_{\kappa})$. For specific $1$-dimensional examples, let us mention for $(H_2)$, $V(n) = (-1)^n\log^{-p}(2+|n|)$, $p>2$, with $A_{\kappa=2}$, and for $(H_2')$, $V(n)= n^{-1}(-1)^n(1+2/ \log(n))$ with $A_{\kappa=1}$. The latter is Remling's example from \cite{R}. 

The point about this class of oscillating potentials is that it is relevant to use localizations to verify the hypothesis \eqref{A3}. First we treat the Standard Laplacian. Thanks to the relation $\sigma \Delta = - \Delta \sigma$, in all dimensions $d$, one has $\eta(\Delta) \sigma \eta (\Delta) = \eta(\Delta) \eta(-\Delta) \sigma$ for any $\eta \in C^{\infty}_c(\R)$. Furthermore by functional calculus $\eta(\Delta) \eta(-\Delta) = 0$ whenever supp$(\eta) \Subset \pm (0,d]$. Thus for $\eta$ localized away from $E=0$,
\begin{align*}
\eta(\Delta) [ V , A_{\kappa} ]_{\circ} \eta(\Delta) &= \eta(\Delta) \sigma [\tilde{V}A_{\kappa},  \eta(\Delta) ]_{\circ} - [ \eta(\Delta) , A_{\kappa} \tilde{V} ]_{\circ} \sigma \eta(\Delta).
\end{align*}
One may check that the assumptions $(H_0)$ and $(H_1)$ mean that \eqref{A3} holds for any $p \in (1/2,q/2)$. Now we briefly discuss the Molchanov-Vainberg Laplacian. One may check that for even dimensions $d$, one has $D \sigma = \sigma D$, which in turn implies $\eta(D) \sigma \eta (D) = \eta ^2 (D) \sigma$, which is non-zero regardless of the localization. So we cannot expect to use a localization argument. For odd dimensions $d$ however, one has $\eta(D) \sigma \eta (D) = \eta (D) \eta(-D) \sigma$ which is equal to $0$ whevener supp$(\eta) \Subset \pm (0,1]$. Thus we may use a localization argument exactly as described above to obey \eqref{A3}.

\subsection{Oscillating potential in dimension $2$.} This example is relevant for Section \ref{Iso2D}. Let $V_B(n_1,n_2) = (\i)^{n_1+n_2} \langle n \rangle ^{-\epsilon}$, $\epsilon >0$, defined on $\ell^2(\mathcal{G}_B)$ (recall $n_1+n_2$ is even). It satisfies \eqref{form22} with $\kappa =2$, but not \eqref{form11} with $\kappa =2$.

\subsection{Oscillating potential in dimension $2$.} This example is relevant for Section \ref{Iso2D}. For $\epsilon >0$, let 
$$V_B(n_1,n_2) = (-1)^{n_1} \log(n_1) \langle n \rangle ^{-\epsilon} + (-1)^{n_2} \log(n_2) \langle n \rangle ^{-\epsilon},$$ 
defined on $\ell^2(\mathcal{G}_B)$. $V_B$ satisfies \eqref{form111} with $\kappa = 2$. By the unitary transformation, 
$$\pi V_B \pi^{-1} (n_1,n_2) = (-1)^{n_1-n_2} \log (n_1 - n_2) \langle n \rangle ^{-\epsilon} + (-1)^{n_1+n_2} \log (n_1 + n_2) \langle n \rangle ^{-\epsilon}$$
defined on $\ell^2(\Z^2)$ satisfies \eqref{form222} with $\kappa =1$.

\subsection{Wigner-von Neumann potential}
\label{exWvN}
We consider the Wigner-von Neumann potential 
\begin{equation}
\label{WvNdef}
W(n):= |n|^{-1} \cdot \sin(k(n_1+...+n_d)), \quad k \in (0,\pi).
\end{equation} 
We stick to $k \in (0,\pi)$ for simplicity. The LAP for $\Delta +W(\cdot)$ was treated in \cite{Ma1}, \cite{GM2}. $W$ satsifies \eqref{A1} for any $\mathbb{A}$ as in Table \ref{table:2020987}, and \eqref{A2}. The point of this potential is that localization in energy $\eta(\Delta)$ is necessary to satisfy criterion \eqref{A3}. 
It is proved in \cite{Ma1} that
\begin{equation}
\begin{aligned}
\boldsymbol{\tilde{\mu}}_{A_{\kappa}=1}(\Delta+W(\cdot)) &= (-1,1)\setminus \{E_{\pm}(k)\} \ \ \text{for} \ \ d=1, \\
\boldsymbol{\tilde{\mu}}_{A_{\kappa}=1}(\Delta+W(\cdot)) & \supset (-d,-d+E(k)) \cup (d-E(k),d) \ \ \text{for} \ \ d\geqslant 2. 
\label{MuH}
\end{aligned}
\end{equation}
with equality in the case $d=2$. Here $E_{\pm}(k):= \pm \cos\left(k/2\right)$ and $E(k) = 2-2|\cos(k/2)|$.
We refer to \cite[Lemma 3.4 and Proposition 4.4]{Ma1} for this result. Given a triplet $(V, \mathfrak{D}, \mathbb{A})$ and $\mathfrak{H} = \mathfrak{D}+V(\cdot)$, define 
$$\aleph(V, \mathfrak{D}, \mathbb{A}):= \{ E \in \sigma(\mathfrak{H}): \exists \eta \in C_c^{\infty}(\R) \ supported  \ on \  I, I \ni E,  \ such \  that \ \eqref{A3}  \ holds \}.$$
Based on the work \cite{Ma1} it follows quite forwardly that:
\begin{theorem} 
For any relevant $\mathbb{A}$ as in Table \ref{table:2020987}, $\aleph(W, \Delta, \mathbb{A}) \supset \boldsymbol{\tilde{\mu}}_{A_{\kappa}=1}(\Delta+W(\cdot))$, given by \eqref{MuH}. If $H_{\mathrm{std}} = \Delta + W(\cdot) + V(\cdot)$, and $V$ satisfies \eqref{A1}, \eqref{A2} and \eqref{A3} for some $\mathbb{A}$ as in Table~\ref{table:2020987}, then for any $E \in \aleph(W, \Delta, \mathbb{A}) \cap \aleph(V, \Delta, \mathbb{A}) \cap \boldsymbol{\mu}_{\mathbb{A}}(\Delta) \cap \mathbb{K}_{\mathbb{A}}(H_{\mathrm{std}})$, the conclusion of Theorem \ref{lapy3055} holds.
\end{theorem}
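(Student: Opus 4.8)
The plan is to prove the two assertions separately, with the bulk of the work going into the first. For the inclusion $\aleph(W, \Delta, \mathbb{A}) \supset \boldsymbol{\tilde{\mu}}_{A_{\kappa}=1}(\Delta+W(\cdot))$, I would fix $E \in \boldsymbol{\tilde{\mu}}_{A_{\kappa}=1}(\Delta+W(\cdot))$ as described by \eqref{MuH} and exhibit a localization $\eta \in C_c^\infty(\R)$ supported on a small interval $I \ni E$ for which \eqref{A3} holds with the chosen $\mathbb{A}$. The mechanism is the one illustrated in Example \ref{exampleOSC}: writing $\sigma_{\pm}(n):= e^{\pm \i k(n_1+\cdots+n_d)}$, one has $W(n) = (2\i)^{-1}(\sigma_+(n)-\sigma_-(n))|n|^{-1}$, and multiplication by $\sigma_{\pm}$ acts in Fourier space as translation by $\mp k$ in each variable, so that $\sigma_{\pm}\Delta\sigma_{\pm}^{-1}$ carries the shifted symbol $\sum_j \cos(\xi_j \mp k)$. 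This gives the intertwining $\eta(\Delta)\sigma_{\pm}\eta(\Delta) = \eta(\Delta)\eta(\sigma_{\pm}^{-1}\Delta\sigma_{\pm})\sigma_{\pm}$, and the resonance thresholds recorded in \eqref{MuH}, namely $E_{\pm}(k)=\pm\cos(k/2)$ for $d=1$ and $\pm(d-E(k))$ for $d\geq 2$, are exactly the energies at which the supports of $\eta(\Delta)$ and $\eta(\sigma_{\pm}^{-1}\Delta\sigma_{\pm})$ can no longer be separated. For $E$ outside this threshold set one may shrink $I$ so that this product either vanishes by functional calculus or gains enough extra decay to render the localized $W$ effectively short range.

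With such an $\eta$ at hand, I would verify \eqref{A3} by the Helffer--Sj\"ostrand representation of $[W,\varphi_{p,m}(\mathbb{A}/t)]_\circ$, exactly as in \cite{Ma1} for the case $\mathbb{A}=A_{\kappa=1}$. The commutator of $\mathbb{A}$ with the oscillating factor $\sigma_{\pm}$ produces terms carrying the position operators $N_j$, but these are absorbed by the extra $|n|^{-1}$ factor in $W$ together with the $(H_0)$/$(H_1)$-type bounds, yielding a decomposition $t^{-2}B + t^{-1}K$. The key point making the argument carry over ``forwardly'' to every $\mathbb{A}$ in Table \ref{table:2020987}, rather than only to $A_{\kappa=1}$, is Lemma \ref{A<N}: it bounds $\mathbb{A}$ by $\langle \mathbf{N}\rangle$ uniformly, so that all the weighted estimates of \cite[Section 4]{GM2} apply uniformly in $t$ and independently of the particular conjugate operator. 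Hence $E \in \aleph(W,\Delta,\mathbb{A})$, proving the inclusion.

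For the second assertion I regard $W+V$ as the perturbation of $\Delta$ in $H_{\mathrm{std}}$ and check the three hypotheses of Theorem \ref{lapy3055} at an $E$ in the stated intersection. Assumption \eqref{A1} holds because $W \in C^1(\mathbb{A})$ and, by hypothesis, $V \in C^1(\mathbb{A})$, and the $C^1(\mathbb{A})$ class is a vector space. Assumption \eqref{A2} holds because $W(n)=O(|n|^{-1})$ decays faster than the logarithmic weight $(w_m^{q,2}(n))^{-1}$, while $V$ satisfies \eqref{A2} by hypothesis, so after taking the larger $m$ the sum $W+V$ does too. For \eqref{A3}, the membership $E \in \aleph(W,\Delta,\mathbb{A}) \cap \aleph(V,\Delta,\mathbb{A})$ furnishes localizations $\eta_W,\eta_V$ supported near $E$, each realizing \eqref{A3} separately; since $[W+V,\varphi_{p,m}(\mathbb{A}/t)]_\circ = [W,\varphi_{p,m}(\mathbb{A}/t)]_\circ + [V,\varphi_{p,m}(\mathbb{A}/t)]_\circ$ is linear in the potential, the common localization $\eta:=\eta_W\eta_V$ (supported near $E$) makes the sandwiched commutator a sum of two terms of the form $t^{-2}B + t^{-1}K$, hence again of that form, so $E \in \aleph(W+V,\Delta,\mathbb{A})$. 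The remaining two conditions are immediate from the intersection: $E \in \boldsymbol{\mu}_{\mathbb{A}}(\Delta)$ supplies the strict Mourre estimate for $\mathfrak{D}=\Delta$, and $E \in \mathbb{K}_{\mathbb{A}}(H_{\mathrm{std}})$ places the relevant eigenspaces in $\mathrm{Dom}[\mathbb{A}]$. Thus $E$ lies in $\Omega = \mathrm{supp}(\eta) \cap \boldsymbol{\mu}_{\mathbb{A}}(\Delta) \cap \mathbb{K}_{\mathbb{A}}(H_{\mathrm{std}})$, and Theorem \ref{lapy3055} delivers the LAP on a closed interval containing $E$.

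The main obstacle is entirely in the first part: establishing \eqref{A3} for the genuinely long-range oscillating potential $W$, which decays only like $O(|n|^{-1})$ and therefore cannot satisfy \eqref{A3} without the energy localization. One must show that the localization $\eta(\Delta)$ genuinely neutralizes the resonance at the thresholds of \eqref{MuH} and that the additional $N_j$-factors generated by $[\mathbb{A},\sigma_{\pm}]$ are controlled uniformly in $t$ for \emph{all} conjugate operators in Table \ref{table:2020987}, not just $A_{\kappa=1}$; this is where Lemma \ref{A<N} and the analysis of \cite{Ma1} do the decisive work. By contrast, the additivity of \eqref{A3} and the bookkeeping of the four membership conditions in the second part are routine.
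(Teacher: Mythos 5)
Your proposal is correct and takes essentially the route the paper intends: the paper gives no written proof beyond the remark that the statement ``follows quite forwardly'' from \cite{Ma1}, and your reconstruction --- the intertwining $\eta(\Delta)\,T_{\pm k}\,\eta(\Delta)=\eta(\Delta)\,\eta(T_{\pm k}\Delta T_{\mp k})\,T_{\pm k}$, which annihilates the oscillating factor precisely on the bands of \eqref{MuH}, combined with Lemma \ref{A<N} to carry the weighted estimates of \cite{GM2} over to every $\mathbb{A}$ in Table \ref{table:2020987}, and the additivity check of \eqref{A1}--\eqref{A3} for $W+V$ --- is exactly the intended argument. The one point to tighten is the common localization: \eqref{A3} for $\eta_W$ does not formally transfer to $\eta_W\eta_V$ (the weights $\mathscr{W}_m^p(\mathbb{A}/t)$ do not commute with $\eta(\mathfrak{D})$), but this is harmless because both the separation argument for $W$ and the hypothesis on $V$ hold for \emph{every} $\eta$ of sufficiently small support around $E$, so a single $\eta$ can be fixed from the outset.
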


Now we develop the analogous result for $H_{\mathrm{MV}} = D + W(\cdot)+V(\cdot)$. To this end we parallel the calculation of \cite[Section 3]{Ma1}. The underlying idea is traced back to \cite[Lemma 2.5]{FH}. Let $T_k$ be the operator of multiplication on $\ell^2(\Z^d)$ given by $(T_k u)(n):= e^{\i k(n_1+...+n_d)}u(n)$. Then $(\mathcal{F}T_{k}\mathcal{F}^{-1}f)(\xi)=f(\xi+k)$. Denote by $\widecheck{\1}_{[0,\pi],i}$ the operator on $\ell^2(\Z^d)$ satisfying $(\mathcal{F}\widecheck{\1}_{[0,\pi],i}\mathcal{F}^{-1}f)(\xi) = \1_{[0,\pi]}(\xi_i) f(\xi)$. $\widecheck{\1}_{[0,\pi],i}$ is a bounded self-adjoint operator with spectrum $\sigma(\widecheck{\1}_{[0,\pi],i}) = \{0,1\}$. Let 
\begin{equation}
g_k: [-1,1] \times \{0,1\} \mapsto \R,  \quad g_k(x,y):= x \cos(k) - \sin(k)\sqrt{1-x^2}(2y-1).
\label{okid}
\end{equation}
Then one easily proves the following key relation: 
\begin{equation*}
T_k D = \left(\prod_{i=1}^d g_k(\Delta_i, \widecheck{\1}_{[0,\pi],i}) \right)T_k.
\end{equation*}
By the Helffer-Sj\"ostrand formula this implies:
\begin{equation}
\label{vd=bv54}
T_k \theta(D) = \theta \left(\prod_{i=1}^d g_k(\Delta_i, \widecheck{\1}_{[0,\pi],i}) \right)T_k.
\end{equation}
Since $\{\Delta_i,\widecheck{\1}_{[0,\pi],i}\}_{i=1}^d$ forms a family of self-adjoint commuting operators, we may apply the functional calculus for such operators. Let 
\begin{align*}
\widetilde{E}(k) &:= \max \{ \cos^2(k/2), \sin^2(k/2) \} \\
&= \cos^2(k/2) \ for \ k\in (0,\pi/2) \cup (3\pi/2,2\pi) \quad and \quad \sin^2(k/2) \ for  \ k\in (\pi/2, 3\pi/2).
\end{align*}

\begin{Lemma} 
Let $d=2$. For every $E \in [-1,1] \setminus [-\widetilde{E}(k), \widetilde{E}(k)]$ there is $\epsilon >0$ such that for any $\theta \in C^{\infty}_c(\R)$ supported on $I:= (E-\epsilon,E+\epsilon)$, 
$$\theta(D) \theta \left( \prod_{i=1}^{d=2} g_k(\Delta_i, \widecheck{\1}_{[0,\pi],i}) \right) = \theta(D) \theta \left( \prod_{i=1}^{d=2}  g_{2\pi -k}(\Delta_i, \widecheck{\1}_{[0,\pi],i}) \right) = 0.$$
In particular, writing $W = (2\i)^{-1}(T_k - T_{-k})|N|^{-1}$, it follows by \eqref{vd=bv54} that for any relevant $\mathbb{A}$ 
$$\aleph(W,D, \mathbb{A}) = \pm (\widetilde{E}(k),1), \quad for \ d=2.$$
\end{Lemma}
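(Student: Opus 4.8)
The plan is to reduce both operator identities to a disjointness statement for level sets of symbols in Fourier space, carry out an elementary trigonometric computation to locate the resonant energies, and then feed the result into the Froese--Herbst localization argument of \cite{Ma1} to identify $\aleph$.

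First I would observe that $\{\Delta_1,\Delta_2,\widecheck{\1}_{[0,\pi],1},\widecheck{\1}_{[0,\pi],2}\}$ is a commuting family of self-adjoint operators, so $D=\Delta_1\Delta_2$ and $\prod_i g_k(\Delta_i,\widecheck{\1}_{[0,\pi],i})$ commute, and the product $\theta(D)\theta(\prod_i g_k(\cdots))$ is, under $\mathcal{F}$, multiplication by $\theta(F_0(\xi))\,\theta(F_k(\xi))$, where $F_0(\xi):=\cos\xi_1\cos\xi_2$ and, using $\sqrt{1-\cos^2\xi_i}=|\sin\xi_i|$ together with $2\1_{[0,\pi]}(\xi_i)-1=\sign(\sin\xi_i)$ on $[-\pi,\pi]$, one gets $g_k(\cos\xi_i,\1_{[0,\pi]}(\xi_i))=\cos(\xi_i+k)$ and hence $F_k(\xi):=\cos(\xi_1+k)\cos(\xi_2+k)$; replacing $k$ by $2\pi-k$ yields $F_{-k}(\xi)=\cos(\xi_1-k)\cos(\xi_2-k)$. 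Since $\operatorname{supp}\theta\subset\bar I$, the product symbol vanishes identically precisely when the closed sets $\{F_0\in\bar I\}$ and $\{F_{\pm k}\in\bar I\}$ are disjoint; by continuity of $F_0,F_{\pm k}$ on the compact torus $[-\pi,\pi]^2$ a routine compactness argument shows this holds for all small $\epsilon$ as soon as the level sets $\{F_0=E\}$ and $\{F_{\pm k}=E\}$ are disjoint.

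Then I would perform the trigonometric computation. Writing $u=\xi_1+\xi_2$, $v=\xi_1-\xi_2$ and using $\cos\xi_1\cos\xi_2=\tfrac12(\cos u+\cos v)$ gives $F_0=\tfrac12(\cos u+\cos v)$ and $F_{\pm k}=\tfrac12(\cos v+\cos(u\pm 2k))$. Thus $F_0(\xi)=F_{\pm k}(\xi)=E$ forces $\cos u=\cos(u\pm 2k)$, i.e. $u\equiv\mp k\pmod\pi$, so $\cos u=\pm\cos k$, and then the common value $E=\tfrac12(\pm\cos k+\cos v)$ is attainable for some $v$ if and only if $|2E\mp\cos k|\leq 1$. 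Unwinding, a common solution exists iff $E\in[-\sin^2(k/2),\cos^2(k/2)]\cup[-\cos^2(k/2),\sin^2(k/2)]=[-\widetilde{E}(k),\widetilde{E}(k)]$. Therefore for $E\in[-1,1]\setminus[-\widetilde{E}(k),\widetilde{E}(k)]$ the level sets are disjoint, the product symbol vanishes for $\epsilon$ small, and both operators are zero, which proves the first assertion.

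For the identification of $\aleph(W,D,\mathbb{A})$ I would combine this with \eqref{vd=bv54}: when $\operatorname{supp}\eta\cap[-\widetilde{E}(k),\widetilde{E}(k)]=\emptyset$, \eqref{vd=bv54} (with $\theta=\eta$) gives $\eta(D)T_{\pm k}\eta(D)=\eta(D)\,\eta(\prod_i g_{\pm k}(\Delta_i,\widecheck{\1}_{[0,\pi],i}))\,T_{\pm k}=0$, so the resonant part of $W=(2\i)^{-1}(T_k-T_{-k})|N|^{-1}$ is annihilated by the localization $\eta(D)$; the surviving contributions come from commutators $[\,|N|^{-1},\eta(D)]$ and $[\eta(D),T_{\pm k}]$, which carry the extra decay needed to verify \eqref{A3}, exactly as in \cite[Section 3]{Ma1} and \cite{GM2} (the mechanism going back to \cite[Lemma 2.5]{FH}). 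This yields $\pm(\widetilde{E}(k),1)\subset\aleph(W,D,\mathbb{A})$. Conversely, for $|E|\leq\widetilde{E}(k)$ the level sets meet and the resonant term cannot be removed by any localization, so \eqref{A3} fails; and the endpoints $\pm 1$ are excluded because there $F_0=\pm1$ forces $\cos\xi_i=\pm1$, exactly where $g_k$ is non-smooth in $\Delta_i$ through $\sqrt{1-\Delta_i^2}$, breaking the Helffer--Sj\"ostrand estimate behind \eqref{A3}. I expect this converse-and-endpoint analysis to be the main obstacle, since it requires quantifying that the surviving resonant contribution is genuinely non-compact with the wrong $t$-scaling, rather than merely that the leading symbol fails to vanish.
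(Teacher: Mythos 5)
Your proposal is correct and follows essentially the same route as the paper: both reduce the operator identity to the disjointness of the level sets $\{\cos\xi_1\cos\xi_2=E\}$ and $\{\cos(\xi_1\pm k)\cos(\xi_2\pm k)=E\}$ via a continuity/compactness argument and then solve the resulting trigonometric equation with the product-to-sum formula, your $(u,v)=(\xi_1+\xi_2,\xi_1-\xi_2)$ parametrization merely merging the paper's four $(y_1,y_2)$ cases into a single computation. For the identification of $\aleph(W,D,\mathbb{A})$ the paper, like you, only sketches the passage from the vanishing statement to \eqref{A3} (deferring to \cite{Ma1}), so your cautionary remarks about the converse inclusion and the endpoints flag something the paper itself does not spell out rather than a defect of your argument relative to it.
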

\begin{remark}
The choice of $\mathbb{A}$ is not important for $W$ because it is the localization in energy that is doing the job.
\end{remark}

\begin{proof}
We assume $k\in (0,\pi)$. The case $k \in (\pi,2\pi)$ is similar. Let us explain the strategy in dimension $d$. We want to find $\epsilon(E) > 0$ such that for the interval $\mathcal{I}:= (E-\epsilon,E+\epsilon)$ we have
\begin{equation}
\mathcal{I} \cap \big \{ \prod_{1 \leqslant i \leqslant d} g_k(x_i,y_i): (x_1,...,x_d) \in S_{\mathcal{I}} \ \ \text{and} \ \  (y_1,...,y_d) \in \{0,1\}^d \big \} = \emptyset,
\label{oksy}
\end{equation} 
where $S_{\mathcal{I}}$ is the region defined by $S_{\mathcal{I}}:= \{(x_1,...,x_d) \in [-1,1]^d: \prod_{i=1}^d x_i \in \mathcal{I} \}$. In this way if supp$(\theta)=\mathcal{I}$, then we will have $\theta(\prod_i x_i)\theta(\prod_i g_k(x_i,y_i)) = 0$ as required. Set 
\begin{equation*}
\mathcal{E}_d (k):= \{ E \in [-1,1]: \exists \  (x_i)_{i=1}^d \in [-1,1]^d \ \text{and} \ (y_i)_{i=1}^d \in \{0,1\}^d \text{ s.t. }  E = \prod_{i=1}^d x_i = \prod_{i=1} ^d g_k(x_i,y_i) \}.   
\end{equation*}
If $E \in \mathcal{E}_d (k)$, then \eqref{oksy} does not hold at $E$. By a continuity argument, the converse is true as well, provided $\epsilon$ is sufficiently small. Note also that $\mathcal{E}_d (k) = \mathcal{E}_d (2\pi-k)$. We only identify the set $\mathcal{E}_2(k)$, as the problem becomes too complex for $d\geq 3$. We solve 
\begin{equation}
\label{espin1}
x_1 x_2 = g_k(x_1,y_1) g_k(x_2,y_2).
\end{equation} 
One makes the change of variable $\cos(\phi_i) = x_i$, $\phi_i \in [0,\pi]$. So it's the same as solving $\cos(\phi_1) \cos(\phi_2) = \cos(\phi_1 \pm k) \cos(\phi_2 \pm k)$. 
Case $y_1 = y_2 = 1$. Thanks to the product to sum cosine formula one shows that \eqref{espin1} is equivalent to $\sin(\phi_1+\phi_2 + k) = 0$, which has solutions $\phi_1+\phi_2+k = \pi$ or $2\pi$, as we assume $k \in (0,\pi)$. Let $f(\phi_1,\phi_2):= \cos(\phi_1) \cos(\phi_2)$. We have 
$$ \{ f(\phi_1, \pi - k - \phi_1 ) \}_{\phi_1 \in [0,\pi] } = \bigg [f \left(\pi-\frac{k}{2}, \frac{k}{2} \right), f \left(\frac{\pi-k}{2}, \frac{\pi-k}{2} \right) \bigg ] = \bigg [-\cos^2 \left(\frac{k}{2}\right),\sin^2 \left(\frac{k}{2}\right) \bigg].$$

$$ \{ f(\phi_1, 2\pi - k - \phi_1 ) \}_{\phi_1 \in [0,\pi] } = \{ - f(\phi_1, \pi - k - \phi_1 ) \}_{\phi_1 \in [0,\pi] }  = \bigg [-\sin^2 \left(\frac{k}{2}\right),\cos^2 \left(\frac{k}{2}\right) \bigg].$$

Case $y_1=y_2=0$. \eqref{espin1} is equivalent to $\sin(\phi_1+\phi_2 - k) = 0$. Thus, since we assume 
$k \in (0,\pi)$, we have $\phi_1+\phi_2-k=0$ or $\pi$.

$$ \{ f(\phi_1, k - \phi_1 ) \}_{\phi_1 \in [0,\pi] } = \bigg [f \left(\frac{\pi+k}{2}, \frac{\pi-k}{2} \right), f \left(\frac{k}{2}, \frac{k}{2} \right) \bigg ] = \bigg [-\sin^2 \left(\frac{k}{2}\right),\cos^2 \left(\frac{k}{2}\right) \bigg].$$

$$ \{ f(\phi_1, \pi + k - \phi_1 ) \}_{\phi_1 \in [0,\pi] } =  \{ -  f(\phi_1, k - \phi_1 ) \}_{\phi_1 \in [0,\pi] } = \bigg [-\cos^2 \left(\frac{k}{2}\right),\sin^2 \left(\frac{k}{2}\right) \bigg].$$

Case $y_1 = 1$, $y_2=0$.  \eqref{espin1} is equivalent to $\sin(\phi_1-\phi_2 + k) = 0$. Thus, since we assume $k \in (0,\pi)$, we have $\phi_1-\phi_2+k=0$ or $\pi$.

$$\{ f(\phi_1, k + \phi_1 ) \}_{\phi_1 \in [0,\pi] }  =  \{ f(\phi_1, 2\pi - k - \phi_1 ) \}_{\phi_1 \in [0,\pi] }.$$
$$\{ f(\phi_1, -\pi + k + \phi_1 ) \}_{\phi_1 \in [0,\pi] }  =  \{ f(\phi_1, \pi - k - \phi_1 ) \}_{\phi_1 \in [0,\pi] }.$$

Case $y_1 = 0$, $y_2=1$. This is the same as the previous case, with $\phi_1$ interchanged with $\phi_2$. So the solutions are the same as in the previous case by symmetry.

The statement of the Lemma follows by taking $\max_{k \in [0,\pi]} \{\sin^2(k/2), \cos^2(k/2) \}$.
\qed
\end{proof}

We now have our LAP for $H_{\mathrm{MV}} := D+ W +V$.
\begin{theorem} 
Let $d=2$. If $H_{\mathrm{MV}} = D + W(\cdot) + V(\cdot)$, and $V$ satisfies \eqref{A1}, \eqref{A2} and \eqref{A3} for some $\mathbb{A}$ as in Table \ref{table:2020987}, then for any $E \in \aleph(W, D, \mathbb{A})\cap \aleph(V, D, \mathbb{A}) \cap \boldsymbol{\mu}_{\mathbb{A}}(D) \cap \mathbb{K}_{\mathbb{A}}(H_{\mathrm{MV}})$, the conclusion of Theorem \ref{lapy3055} holds.
\end{theorem}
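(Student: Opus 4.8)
The plan is to apply Theorem~\ref{lapy3055} with $\mathfrak{D}=D$ and the total potential $W+V$, so that $\mathfrak{H}=H_{\mathrm{MV}}=D+W(\cdot)+V(\cdot)$; once its three hypotheses \eqref{A1}, \eqref{A2}, \eqref{A3} are checked for $W+V$ near $E$ and $E$ is shown to lie in the associated set $\Omega$, the asserted LAP is immediate. First I would dispose of \eqref{A1} and \eqref{A2}. From the Wigner--von Neumann discussion, $W$ satisfies \eqref{A1} for every $\mathbb{A}$ in Table~\ref{table:2020987} and also \eqref{A2}; by hypothesis $V$ satisfies both. Since the bounded operators of class $C^1(\mathbb{A})$ form a $\C$-vector space and both $W$ and $V$ are bounded multiplication operators, $W+V\in C^1(\mathbb{A})$, which is \eqref{A1}. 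For \eqref{A2}, both $W(n)$ and $V(n)$ decay like the reciprocal of some weight $w_m^{q,2}$ with $q>2$; the condition \eqref{A2} is stable under addition (the slower-decaying of the two bounds dominates the sum), so $W+V$ satisfies it as well.

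The crux is \eqref{A3} for the sum, and here the energy localization $\eta(\mathfrak{D})$ is indispensable because $W$ decays only like $O(|n|^{-1})$ and oscillates. By linearity of the commutator,
\[
\mathscr{W}_m^p(\mathbb{A}/t)\,\eta(D)\,[W+V,\varphi_{p,m}(\mathbb{A}/t)]_{\circ}\,\eta(D)\,\mathscr{W}_m^p(\mathbb{A}/t)
\]
splits into a $W$-term and a $V$-term. The membership $E\in\aleph(W,D,\mathbb{A})$ furnishes a localization for which the $W$-term has the form $t^{-2}B+t^{-1}K$; this is the substantive point, and it is exactly what the preceding Lemma establishes: through the intertwining relation \eqref{vd=bv54} for $T_k\theta(D)$, the factor $\theta(D)$ kills the shifted spectral projection once the localization sits in $\pm(\widetilde{E}(k),1)$, annihilating the offending commutator term. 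Likewise $E\in\aleph(V,D,\mathbb{A})$ furnishes a localization handling the $V$-term. Choosing a single $\eta$ supported on a small open interval $I\ni E$ contained in both good regions, each mechanism is stable under shrinking $I$, so \eqref{A3} holds simultaneously for $W$ and for $V$ with this common $\eta$; adding the two $(t^{-2}B+t^{-1}K)$ contributions, boundedness and compactness being preserved under addition, yields \eqref{A3} for $W+V$.

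Finally I would verify $E\in\Omega=\mathrm{supp}(\eta)\cap\boldsymbol{\mu}_{\mathbb{A}}(D)\cap\mathbb{K}_{\mathbb{A}}(H_{\mathrm{MV}})$: the interval $I$ above can be taken with $E$ in its interior, so $E\in\mathrm{supp}(\eta)$, while $E\in\boldsymbol{\mu}_{\mathbb{A}}(D)$ and $E\in\mathbb{K}_{\mathbb{A}}(H_{\mathrm{MV}})$ hold by hypothesis. Theorem~\ref{lapy3055} then produces the closed interval about $E$ on which the weighted resolvent bound holds and $\sigma_{\mathrm{sc}}(H_{\mathrm{MV}})\cap I=\emptyset$. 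I expect the only delicate point to be the use of one localization valid for both $W$ and $V$ at once---that is, obtaining \eqref{A3} for $W+V$ rather than for each summand with its own $\eta$---which the shrinking argument resolves; all the genuinely hard analysis, namely the control of the oscillating, slowly decaying $W$, is already contained in the preceding Lemma.
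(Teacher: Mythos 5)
Your proposal is correct and follows exactly the route the paper intends: the theorem is a direct application of Theorem \ref{lapy3055} to $\mathfrak{H}=D+(W+V)$, with \eqref{A1} and \eqref{A2} handled by linearity of the $C^1(\mathbb{A})$ class and of the weight bounds, and \eqref{A3} handled by splitting the commutator and choosing one localization $\eta$ supported in the intersection of the two good regions $\aleph(W,D,\mathbb{A})\cap\aleph(V,D,\mathbb{A})$, the $W$-term being controlled by the preceding Lemma via the intertwining relation \eqref{vd=bv54}. The paper leaves this verification implicit, so your write-up is if anything more explicit than the original.
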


Finally let us make an observation. We continue with $d=2$. On the one hand, one has $\pi W \pi ^{-1}(n_1,n_2) = (\sqrt{2} |n|)^{-1} \sin(2kn_1)$, where $W$ is given by \eqref{WvNdef}, and $\pi$ as in Section \ref{Iso2D}. Thanks to the isomorphism we infer $\aleph(\pi W\pi^{-1}, \Delta, \mathbb{A}) = \pm (2\widetilde{E}(k),2)$, for any relevant $\mathbb{A}$ as in Table \ref{table:2020987}. On the other hand, one can confirm this directly working only with $\Delta$ as follows. Let $\tilde{T}_k$ be the operator of multiplication by $e^{\i k n_1}$ on $\ell^2(\Z^2)$. One has $\tilde{T}_k \Delta = \left(g_k(\Delta_1, \widecheck{\1}_{[0,\pi],1})  + \Delta_2 \right)\tilde{T}_k$. Thus $\tilde{T}_k \theta(\Delta) = \theta \left( g_k(\Delta_1, \widecheck{\1}_{[0,\pi],1}) + \Delta_2 \right) \tilde{T}_k$. To solve the equation $\theta(\Delta) \theta \left( g_k(\Delta_1, \widecheck{\1}_{[0,\pi],1}) + \Delta_2 \right) =0$ one is led to solve $x_1 + x_2 = g_k(x_1, y) + x_2$, which has solutions $E = x_1 + x_2 = \pm \cos(k/2) +[-1,1]$. By this approach we conclude that $\aleph(\pi W\pi^{-1}, \Delta, \mathbb{A}) = \pm (e(k),2)$, where $e(k):= \max \{1+\cos(k), 1-\cos(k) \}$ (note we have to plug in $2k$ instead of $k$). The conclusions are in agreement, i.e.\ $e(k) =  2\widetilde{E}(k)$ because of the identities $1+\cos(k) = 2\cos^2(k/2)$ and $1-\cos(k) = 2\sin^2(k/2)$.

\section{Appendix: Convergence of the 2 Laplacians}
\label{convergence_appendix}

Consider the Hilbert space $\mathscr{H}_h := \ell^2(h \Z^d)$, where $h \Z^d$ is the square lattice, $d \geqslant 1$ is the dimension, and $h >0$ is a scaling parameter determining the mesh size. To establish convergence between the 2 Laplacians we shift and rescale them. Instead of \eqref{def:std} and \eqref{def:MV22} we use :

\begin{align}
\label{eqn2.1}
\Delta_h & := h^{-2} \sum _{j=1}^d (2 - S_j - S_j ^*) = 2 h^{-2} \sum_{j=1} ^d (1-\Delta_j), \quad \Delta_j := (S_j + S_j^*)/2, \\
D_h &:= 2h^{-2} - 2^{-d+1} h^{-2} \prod_{j=1}^d (S_j + S_j ^*) = 2h^{-2} \left(1- \prod_{j=1}^d \Delta_j \right).
\end{align}
The spectra of $\Delta_h$ and $D_h$ are $h^{-2}[0,4d]$ and $h^{-2}[0,4]$ respectively. The next result extends those proved in \cite{NT}.
 \begin{proposition} 
 \label{thm1prp} For any fixed $\mu \in \C \setminus \R$ we have
 \begin{equation*}
\| (\Delta_h - \mu ) ^{-1} - (D_h - \mu)^{-1} \| _{\mathscr{B}(\mathscr{H}_h)} = O(h^2), \quad \text{as} \ h \to 0.
\end{equation*}
If $V$ is a real-valued bounded potential on $h\Z^d$, then 
\begin{equation}
 \label{main_result}
\| (\Delta_h +V - \mu ) ^{-1} - (D_h+V - \mu)^{-1} \| _{\mathscr{B}(\mathscr{H}_h)} = O(h^2), \quad \text{as} \ h \to 0.
\end{equation}
\end{proposition}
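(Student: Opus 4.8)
The plan is to deduce the perturbed estimate \eqref{main_result} from the free estimate $\|(\Delta_h-\mu)^{-1}-(D_h-\mu)^{-1}\|=O(h^2)$, already established in the first part of the Proposition, by purely algebraic manipulations with resolvent identities. I would write $R_\Delta:=(\Delta_h-\mu)^{-1}$, $R_D:=(D_h-\mu)^{-1}$, $\tilde R_\Delta:=(\Delta_h+V-\mu)^{-1}$, $\tilde R_D:=(D_h+V-\mu)^{-1}$, and $W:=\Delta_h-D_h$. Since $\Delta_h$, $D_h$ and $V$ are bounded self-adjoint operators and $\mu\in\C\setminus\R$, all four resolvents obey $\|\cdot\|\le|\mathrm{Im}\,\mu|^{-1}$ uniformly in $h$, and $\|V\|$ is a fixed finite constant; these uniform bounds are the only analytic input besides the free estimate.

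First I would record the second resolvent identity for the two perturbed operators, which differ exactly by $W$:
\begin{equation}
\tilde R_\Delta-\tilde R_D=-\tilde R_\Delta\,W\,\tilde R_D.
\end{equation}
The obstacle is that this cannot be estimated factor by factor: the explicit formulas for $\Delta_h$ and $D_h$ give $\|W\|=O(h^{-2})$, which blows up as $h\to0$. The observation that resolves this is that the same identity applied to the free operators reads $R_\Delta-R_D=-R_\Delta W R_D$, so that
\begin{equation}
R_\Delta\,W\,R_D=-(R_\Delta-R_D),
\end{equation}
and the right-hand side is $O(h^2)$ by the free estimate. In other words, the dangerous factor $W$ becomes harmless precisely when it is sandwiched between $R_\Delta$ on its left and $R_D$ on its right.

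The remaining step is to force this good combination to appear. I would substitute $\tilde R_\Delta=R_\Delta-\tilde R_\Delta V R_\Delta$ and $\tilde R_D=R_D-R_D V\tilde R_D$ into the first identity and expand, obtaining four terms each of which contains the central block $R_\Delta W R_D$:
\begin{align}
\tilde R_\Delta\,W\,\tilde R_D
&=R_\Delta W R_D-(R_\Delta W R_D)\,V\,\tilde R_D \nonumber \\
&\quad-\tilde R_\Delta\,V\,(R_\Delta W R_D)+\tilde R_\Delta\,V\,(R_\Delta W R_D)\,V\,\tilde R_D.
\end{align}
Replacing $R_\Delta W R_D$ by $-(R_\Delta-R_D)=O(h^2)$ and bounding every surrounding factor $V,\tilde R_\Delta,\tilde R_D$ by the uniform constants above, each of the four terms is $O(h^2)$. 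Hence $\|\tilde R_\Delta\,W\,\tilde R_D\|=O(h^2)$, and the first identity yields $\|\tilde R_\Delta-\tilde R_D\|=O(h^2)$, which is exactly \eqref{main_result}.

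I expect the only genuinely non-routine point to be the recognition that the $O(h^{-2})$ growth of $W$ is defeated not by any bound on $W$ itself but by the factorization $R_\Delta W R_D=-(R_\Delta-R_D)$; once this is in place the rest is bookkeeping with uniformly bounded factors. A secondary point worth stating explicitly is the uniformity in $h$ of the resolvent bounds, which rests only on self-adjointness and on $\mu$ being non-real, so that $\|V\|$ and $|\mathrm{Im}\,\mu|^{-1}$ absorb all the factors flanking the central block.
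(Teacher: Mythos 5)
Your reduction of \eqref{main_result} to the free estimate is algebraically correct: the identity $R_\Delta W R_D = -(R_\Delta-R_D)$, the substitutions $\tilde R_\Delta = R_\Delta - \tilde R_\Delta V R_\Delta$ and $\tilde R_D = R_D - R_D V \tilde R_D$, and the uniform bounds $\|\tilde R_\Delta\|,\|\tilde R_D\|\leq |\mathrm{Im}\,\mu|^{-1}$ together do give $\|\tilde R_\Delta W \tilde R_D\|=O(h^2)$ and hence the perturbed estimate. This is essentially the same mechanism as the paper's, which packages the whole computation into the single factorized identity
\begin{equation*}
(\tilde{H}_h - \mu)^{-1} - (H_h - \mu)^{-1}  = \left[ 1 -(\tilde{H}_h - \mu)^{-1} V \right ] \left [ (D_h - \mu)^{-1} - (\Delta_h - \mu)^{-1} \right] (\Delta_h - \mu) (H_h - \mu)^{-1},
\end{equation*}
where the last two factors are grouped so that $(\Delta_h-\mu)(H_h-\mu)^{-1}=1-V(H_h-\mu)^{-1}$ is uniformly bounded. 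Your four-term expansion and the paper's one-line identity are two bookkeepings of the same idea (second resolvent identity plus the observation that $W$ only ever appears sandwiched between free resolvents); neither buys anything over the other.

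The genuine gap is that you prove only half of the Proposition. The first display, $\|(\Delta_h-\mu)^{-1}-(D_h-\mu)^{-1}\|_{\mathscr{B}(\mathscr{H}_h)}=O(h^2)$, is itself one of the two assertions to be established, not a hypothesis you may quote; your phrase ``already established in the first part of the Proposition'' is circular. All of the analytic content of the Proposition sits in that first claim: the paper proves it by conjugating with the Fourier transform, so that both resolvents become multiplication by explicit functions of $\theta=h\xi$ on the Brillouin zone, and then comparing the two symbols. Note that this comparison is not a purely local Taylor expansion at $\theta=0$: one must control the quotient $|A-B|/(|A-\mu||B-\mu|)$ (with $A$, $B$ the two symbols) uniformly over the whole torus, including the points away from the origin where the Molchanov--Vainberg symbol $2h^{-2}(1-\prod_j\cos\theta_j)$ degenerates (e.g.\ $\theta=(\pi,\pi)$ in dimension $2$), and this is exactly where the estimate is delicate. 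A complete proof must supply this step; your argument, as written, cannot produce it because it never looks at the operators themselves, only at formal resolvent algebra.
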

The first convergence formula is proved by expressing the Laplacians as operators of multiplication by functions in Fourier space and performing a Taylor expansion. Now let $H_h := \Delta_h + V$, $\tilde{H}_h := D_h +V$. The second formula follows directly from the first thanks to the identity :
$$(\tilde{H}_h - \mu)^{-1} - (H_h - \mu)^{-1}  = \left[ 1 -(\tilde{H}_h - \mu)^{-1} V \right ] \left [ (D_h - \mu)^{-1} - (\Delta_h - \mu)^{-1} \right] (\Delta_h - \mu) (H_h - \mu)^{-1}. $$

\section{Appendix: Algorithm details}
\label{appendix_algo}

When $\mathfrak{D} = \Delta$ in dimension $2$: For the results in Tables \ref{tab:table1011} and  \ref{tab:table10112162} we used the simple algorithm:
\begin{itemize}
\item For all $E \in [-2,2]$: 
\begin{itemize}
\item let $E_2 = E-E_1$
\item check if the function $E_1 \mapsto g_E(E_1, E_2)$ has same sign on the interval \\ \mbox{$E_1 \in [\max(E-1,-1),\min(E+1,1)]$}.
\end{itemize}
\end{itemize}

When $\mathfrak{D} = \Delta$ in dimension $3$: For the results in Tables \ref{tab:table1013127conj} and \ref{tab:table1013127NEW} we used the simple algorithm:
\begin{itemize}
\item For all $E \in [0,3]$: 
\item For all $E_3 \in [\max(E-2,-1),\min(E+2,1)]$:
\begin{itemize}
\item let $E_2 = E-E_1-E_3$
\item check if the function $E_1 \mapsto g_E(E_1, E_2 , E_3)$ has same sign on the interval \\ $E_1 \in [\max(E-E_3-1,-1),\min(E-E_3+1,1)]$
\end{itemize}
\end{itemize}

When $\mathfrak{D} = D$ in dimension $2$: For the results in Tables \ref{tab:table1012} and \ref{tab:table101244} we used the simple algorithm:
\begin{itemize}
\item For all $E \in [-1,1]$: 
\begin{itemize}
\item let $E_2 = E/E_1$
\item check if the function $E_1 \mapsto g_E(E_1, E_2)$ has same sign on the interval $E_1 \in \left[ -1, -|E|] \cup [|E|,1 \right]$.
\end{itemize}
\end{itemize}

When $\mathfrak{D} = D$ in dimension $3$: For the results in Tables \ref{tab:table10144} and \ref{tab:table10144NEW} we used the simple algorithm:
\begin{itemize}
\item For all $E \in [-1,1]$: 
\item For all $E_3 \in [-1,-|E|] \cup [|E|,1]$:
\begin{itemize}
\item let $E_2 = E/(E_1 E_3)$
\item check if the function $E_1 \mapsto g_E(E_1, E_2 , E_3)$ has same sign on the interval $E_1 \in \left[ -1, -|E/E_3|] \cup [ |E/E_3|,1 \right]$.
\end{itemize}
\end{itemize}

\newpage
\clearpage

\section{Appendix: Numerical evidence for Standard Laplacian} 
\label{appendix_std}
In the tables below, the decimal numbers were obtained using computer software. We denote those results by $\boldsymbol{\mu} ^{\text{NUM}}_{\mathbb{A}}(\mathfrak{D})$. The \textbf{horizontal single line} towards the top of the table separates the values of $\kappa$ for which we have proved the results rigorously from the values for which we don't have a rigorous proof. Below the \textbf{horizontal double line} towards the bottom of the table we have included conjectures on closed form formulas for the results, which we denote $\boldsymbol{\mu} ^{\text{CONJ}}_{\mathbb{A}}(\mathfrak{D})$.


\begin{table}[H]
  \begin{center}
    \begin{tabular}{c|c} 
      $\kappa$ & energies in $[0,2]$ for which a Mourre estimate holds for $\Delta$ wrt.\ $A_{\kappa}$, $d=2$.     \\ [0.5em]
      \hline
      $1$  & $(0,2)$ \\ [0.5em]
      $2$  & $(1,2)$  \\ [0.5em]
      $3$  &  $(0.542477,1.000000) \cup (1.500000,2)$   \\[0.5em]
      $4$ &  $( 1.026054 , 1.414214 ) \cup ( 1.707107 , 2 ) $  \\[0.5em]
      \hline
      $5$ &  $( 1.326098 , 1.618034 ) \cup ( 1.809017 , 2 )$  \\[0.5em]
      $6$ &  $( 1.511990 , 1.732051 ) \cup ( 1.866026 , 2 )$  \\[0.5em]
      $7$ &  $( 1.222521 , 1.246980 ) \cup ( 1.632351 , 1.801938 ) \cup ( 1.900969 , 2 )$  \\[0.5em]
      $8$ &  $( 1.382684 , 1.414214 ) \cup ( 1.713916 , 1.847760) \cup ( 1.923880 , 2 )$  \\[0.5em]
      $9$ &  $( 1.500001 , 1.532089 ) \cup ( 1.771437 , 1.879386 ) \cup ( 1.939693 , 2 )$  \\[0.5em]
      $10$ &  $( 1.587786 , 1.618034 ) \cup ( 1.813393 , 1.902114 ) \cup ( 1.951057 , 2 )$  \\[0.5em]
      $11$ &  $( 1.654861 , 1.682508 ) \cup ( 1.844874 , 1.918986 ) \cup ( 1.959493 , 2 )$  \\[0.5em]
      $12$ &  $( 1.707107 , 1.732051 ) \cup ( 1.869071 , 1.931852 ) \cup ( 1.965926 , 2 )$  \\[0.5em]
      $13$ &  $( 1.748511 , 1.770913 ) \cup ( 1.888053 , 1.941884 ) \cup ( 1.970942 , 2 )$  \\[0.5em]
      $14$ &  $( 1.781832 , 1.801938 ) \cup ( 1.903209 , 1.949856 ) \cup ( 1.974928 , 2 )$  \\[0.5em]
      $15$ &  $( 1.809017 , 1.827091 ) \cup ( 1.915498 , 1.956296 ) \cup ( 1.978148 , 2 )$  \\[0.5em]
      $16$ &  $( 1.831470 , 1.847760 ) \cup ( 1.925596 , 1.961571 ) \cup ( 1.980786 , 2 )$  \\[0.5em]
      $17$ &  $( 1.850218 , 1.864945 ) \cup ( 1.933994 , 1.965947 ) \cup ( 1.982974 , 2 )$  \\[0.5em]
      $18$ &  $( 1.866026 , 1.879386 ) \cup ( 1.941050 , 1.969616 ) \cup ( 1.984808 , 2 )$  \\[0.5em]
      $19$ & $( 1.879474 , 1.891635 ) \cup ( 1.947036 , 1.972723 ) \cup ( 1.986362 , 2 )$ \\[0.5em]
      $20$ &  $( 1.891007 , 1.902113 ) \cup ( 1.952156 , 1.975377 ) \cup ( 1.987688 , 2 )$  \\[0.5em]
      \hline \hline
      $3-6$ & $\left( \ \_ \_ \ ,2\cos(\pi/ \kappa) \right) \cup \left(1+\cos(\pi/ \kappa) ,2 \right)$   \\[0.5em]
    $7-20$ &  $\left( 1+\cos(3\pi/ \kappa) , 2\cos(2\pi/ \kappa) \right) \cup \left( \ \_ \_ \ , 2\cos(\pi/ \kappa) \right) \cup \left( 1+\cos(\pi/ \kappa) , 2 \right)$  
    \end{tabular}
  \end{center}
     \caption{$\boldsymbol{\mu} ^{\text{NUM}}_{A_{\kappa}}(\Delta) \cap [0,2]$ above the double horizontal line and $\boldsymbol{\mu} ^{\text{CONJ}}_{A_{\kappa}}(\Delta) \cap [0,2]$ below the horizontal double line. Dimension 2, $\kappa_1 = \kappa_2$}
           \label{tab:table1011}
\end{table}

\begin{table}[H]
  \begin{center}
    \begin{tabular}{c|c} 
      $\kappa = (\kappa_1,\kappa_2)$ & energies in $[-2,2]$ for which a Mourre estimate holds for $\Delta$ wrt.\ $A_{\kappa}$, $d=2$.    \\ [0.5em]
      \hline
      $(1,2)$  & $(-0.970992,-0.166382) \cup (1.000000,2)$ \\ [0.5em]
      $(1,3)$  & $\pm (1.500000,2)$  \\[0.5em]
      $(1,4)$  &  $(-1.610194,-1.000000) \cup (0.292894,0.612721) \cup (1.707107,2)$   \\[0.5em]
      $(1,5)$ &  $\pm (0.690983, 0.979613) \cup \pm( 1.809017 , 2 ) $  \\[0.5em]
      $(1,6)$ &  $( -1.776066,-1.500000 ) \cup ( 1.000000,1.210566 ) \cup (1.866026,2)$  \\[0.5em]
      $(2,3)$ &  $(-1.500000,-1.136027) \cup (1.500000,2)$  \\[0.5em]
            $(2,4)$ &  $\pm (1.707108,2)$  \\[0.5em]
            $(2,5)$ &  $(-1.796088, -1.309018) \cup (1.809017,2)$  \\[0.5em]
            $(2,6)$ &  $\pm (1.144385, 1.376569) \cup \pm (1.866025,2)$  \\[0.5em]
 $(3,4)$ &  $(-1.707107,-1.566774) \cup (1.015055,1.192464) \cup (1.707107,2)$  \\[0.5em]
  $(3,5)$ &  $\pm (1.809017,2)$  \\[0.5em]
   $(3,6)$ &  $(-1.865620,-1.558422) \cup (1.866026,2)$  \\[0.5em]
        $(4,5)$ &  $(-1.809017,-1.746259) \cup (1.320973,1.510642) \cup (1.809017,2)$  \\[0.5em]
    $(4,6)$ &  $\pm(1.504482,1.561559) \cup \pm (1.866025,2)$  \\[0.5em]
      $(5,6)$ &  $(-1.866025,-1.834695) \cup (1.509219, 1.672531) \cup (1.866025,2)$  \\[0.5em]
    \end{tabular}
  \end{center}
   \caption{$\boldsymbol{\mu} ^{\text{NUM}}_{A_{\kappa}}(\Delta) \cap [-2,2]$ in dimension 2, $\kappa_1 \neq \kappa_2$. }
       \label{tab:table10112162}
\end{table}


\begin{table}[H]
  \begin{center}
    \begin{tabular}{c|c} 
      $\kappa$ & energies in $[0,3]$ for which a Mourre estimate holds for $\Delta$ wrt.\ $A_{\kappa}$, $d=3$.   \\ [0.5em]
      \hline
      $1$  & $[0,1) \cup (1,3)$ \\ [0.5em]
      $2$  & $(2,3)$  \\ [0.5em]
      $3$  &  $(2.5,3)$   \\[0.5em]
      \hline
      $4$ &  $(2.072,2.121) \cup (2.707,3)$  \\[0.5em]
      $5$ &  $(2.353,2.427) \cup (2.809,3)$  \\[0.5em]
      $6$ &  $(2.529,2.598) \cup (2.866,3)$  \\[0.5em]
      $7$ &  $(2.645,2.703) \cup (2.901,3)$  \\[0.5em]
      $8$ &  $(2.724,2.771) \cup (2.924,3)$  \\[0.5em]
      $9$ &  $(2.780,2.819) \cup (2.940,3)$  \\[0.5em]
      $10$ &  $(2.820,2.853) \cup (2.951,3)$  \\[0.5em]
      $12$ &  $(2.874,2.897) \cup (2.966,3)$  \\[0.5em]
      $16$ &  $(2.928,2.942) \cup (2.981,3)$  \\[0.5em]
      \hline \hline
      $4-10,12,16$ &  $ \left(  \ \_ \_  \ , 3\cos(\pi/ \kappa) \right) \cup \left(2+\cos(\pi/ \kappa),3 \right)$ 
    \end{tabular}
  \end{center}
     \caption{$\boldsymbol{\mu} ^{\text{NUM}}_{A_{\kappa}}(\Delta) \cap [0,3]$ and $\boldsymbol{\mu} ^{\text{CONJ}}_{A_{\kappa}}(\Delta) \cap [0,3]$ in dimension 3, $\kappa_1 = \kappa_2 = \kappa_3$.}
         \label{tab:table1013127conj}
\end{table}

\begin{table}[H]
  \begin{tabular}{c|c|c|c|c|c}
    \multirow{2}{*}{} $\kappa$ &       
      \multicolumn{5}{c}{energies in $[0,3]$ for which a Mourre estimate holds for $\Delta$ wrt.\ $A_{\kappa}$, $d=3$} \\ [0.5em]
      \hline
    
     \footnotesize $(1,1,1)$  & \footnotesize $[0,1) \cup (1,3)$ & \footnotesize $(2,2,5)$  & \footnotesize $(2.8091,3)$ &  \footnotesize $(4,4,3)$  & \footnotesize $(2.7071,3)$ \\ [0.5em]
    \footnotesize  $(2,1,1)$  & \footnotesize $(0,0.8336) \cup (2,3)$ & \footnotesize $(2,2,6)$  & \tiny $(2.1444,2.3765) \cup (2.8660,3)$ & \footnotesize $(4,4,4)$  & \tiny $(2.0719,2.1213) \cup (2.7071,3)$ \\ [0.5em]
     \footnotesize $(3,1,1)$  & \footnotesize $(2.5,3)$ & \footnotesize $(3,3,1)$  & \footnotesize $(2.5,3)$ & \footnotesize $(4,4,5)$  & \footnotesize $(2.8091,3)$ \\ [0.5em]
     \footnotesize $(4,1,1)$  & \tiny $(1.2929,1.6127) \cup (2.7071,3)$ &  \footnotesize $(3,3,2)$  & \footnotesize $(2.5,3)$ & \footnotesize $(4,4,6)$  & \footnotesize $(2.8660,3)$ \\ [0.5em]
     \footnotesize $(5,1,1)$  & \tiny $(1.6910,1.9796) \cup (2.8091,3)$ & \footnotesize $(3,3,3)$  & \footnotesize $(2.5,3)$  & \footnotesize $(5,5,1)$  & \footnotesize $(2.8091,3)$ \\ [0.5em]
   \footnotesize   $(6,1,1)$  & \footnotesize $(2,2.2105) \cup (2.8660,3)$ & \footnotesize $(3,3,4)$  & \footnotesize $(2.7071,3)$  & \footnotesize $(5,5,2)$  & \footnotesize$(2.8091,3)$ \\ [0.5em]
    \footnotesize  $(2,2,1)$  & \footnotesize $(2,3)$ & \footnotesize $(3,3,5)$  & \footnotesize $(2.8091,3)$  & \footnotesize $(5,5,3)$  & \footnotesize $(2.8091,3)$ \\ [0.5em]
    \footnotesize  $(2,2,2)$  & \footnotesize $(2,3)$ & \footnotesize $(3,3,6)$  & \footnotesize $(2.8660,3)$  & \footnotesize $(5,5,4)$  & \footnotesize $(2.8091,3)$ \\ [0.5em]
   \footnotesize   $(2,2,3)$  & \footnotesize $(2.5,3)$ & \footnotesize $(4,4,1)$  & \footnotesize $(2.7071,3)$ &  \footnotesize $(5,5,5)$ &  \tiny $(2.3526,2.4270) \cup (2.8091,3)$  \\[0.5em]
   \footnotesize   $(2,2,4)$  & \footnotesize $(2.7071,3)$ & \footnotesize $(4,4,2)$  & \footnotesize $(2.7071,3)$ & \footnotesize  $(5,5,6)$  & \footnotesize $(2.8660,3)$ \\ [0.5em]

     \end{tabular}
    \caption{$\boldsymbol{\mu} ^{\text{NUM}}_{A_{\kappa}}(\Delta) \cap [0,3]$ in dimension $3$. }
    \label{tab:table1013127NEW}
\end{table}

\section{Appendix: Numerical evidence for Molchanov-Vainberg Laplacian}
\label{appendix_MV}

As in the previous section, the \textbf{horizontal single line} towards the top of the table separates the values of $\kappa$ for which we have proved the results rigorously from the values for which we don't have a rigorous proof. Below the \textbf{horizontal double line} towards the bottom of the table we have included conjectures on closed form formulas for the results, which we denote $\boldsymbol{\mu} ^{\text{CONJ}}_{\mathbb{A}}(\mathfrak{D})$.

\begin{table}[H]
  \begin{center}
    \begin{tabular}{c|c} 
      $\kappa$ & energies in $[0,1]$ for which a Mourre estimate holds for $D$ wrt.\ $A_{\kappa}$, $d=2$.   \\ [0.5em]
      \hline
      $2$  & $(0,1)$  \\ [0.5em]
      $4$ &  $(0,0.5) \cup ( 0.707107 , 1 )$  \\[0.5em]
      \hline
      $6$ &  $(0,0.25) \cup (0.50644,0.75) \cup (0.866026,1)$  \\[0.5em]
      $8$ &  $(0,0.146447) \cup (0.382684,0.5) \cup (0.712105 ,0.853554) \cup (0.92388,1)$  \\[0.5em]
   $10$ &   \footnotesize $(0,0.095492) \cup (0.310325 , 0.345492) \cup ( 0.587786 , 0.654509 ) \cup ( 0.81264 , 0.904509 ) \cup ( 0.951057 , 1 )$  \\[0.5em]
      $12$ &  $( 0 , 0.066988 ) \cup ( 0.707107 , 0.75) \cup ( 0.868705 , 0.933013 ) \cup ( 0.965926 , 1 )$  \\[0.5em]
      $14$ &  $(0,0.049516) \cup (0.781832,0.811745) \cup (0.903011,0.950485) \cup (0.974928,1)$  \\[0.5em]
      $16$ &  $(0,0.038061) \cup ( 0.83147 , 0.853554 ) \cup ( 0.92548 , 0.96194 ) \cup ( 0.980786 , 1 )$  \\[0.5em]
      $18$ &  $( 0 , 0.030154 ) \cup ( 0.866026 , 0.883023 ) \cup ( 0.940977 , 0.969847 ) \cup ( 0.984808 , 1 )$  \\[0.5em]
       \hline \hline
      $4$ &  $\left(0,\sin^2(\frac{\pi}{4}) \right) \cup \left( \sin(\frac{\pi}{4}) , 1 \right)$  \\[0.5em]
      $6$ &  $\left(0,\sin^2(\frac{\pi}{6}) \right) \cup \left( \ \_ (^*) \_ \ , \sin^2(\frac{2\pi}{6}) \right) \cup \left( \sin(\frac{2\pi}{6}),1 \right)$  \\[0.5em]
      $8$ &  $\left(0,\sin^2(\frac{\pi}{8}) \right) \cup \left( \sin(\frac{\pi}{8}),\sin^2(\frac{2\pi}{8}) \right) \cup \left( \ \_\_ \  ,\sin^2(\frac{3\pi}{8}) \right) \cup \left(\sin(\frac{3\pi}{8}),1 \right)$  \\[0.5em]
      $10$ & \small  $\left(0,\sin^2(\frac{\pi}{10}) \right) \cup \left( \ \_\_ \ , \sin^2(\frac{2\pi}{10}) \right) \cup \left(\sin(\frac{2\pi}{10}),\sin^2(\frac{3\pi}{10}) \right) \cup \left( \ \_\_ \ , \sin^2(\frac{4\pi}{10}) \right) \cup \left(\sin(\frac{4\pi}{10}),1 \right)$  \\[0.5em]
      $12-18$ & \small $\left( 0 , \sin^2(\frac{\pi}{\kappa}) \right) \cup \left(\sin (\frac{(\frac{\kappa}{2}-3)\pi}{\kappa}) , \sin^2(\frac{(\frac{\kappa}{2}-2)\pi}{\kappa}) \right) \cup \left( \ \_\_ \ , \sin^2(\frac{(\frac{\kappa}{2}-1)\pi}{\kappa} )\right) \cup \left( \sin(\frac{(\frac{\kappa}{2}-1)\pi}{\kappa}) , 1 \right)$  \\[0.5em]
    \end{tabular}
  \end{center}
    \caption{$\boldsymbol{\mu} ^{\text{NUM}}_{A_{\kappa}}(D) \cap [0,1]$ and $\boldsymbol{\mu} ^{\text{CONJ}}_{A_{\kappa}}(D) \cap [0,1]$ in dimension 2, $\kappa_1 = \kappa_2$.}
        \label{tab:table1012}
\end{table}

In Table \ref{tab:table1012}, $\kappa = 6$ we do not know a closed form solution for the missing value $(^*)$. But it appears that for $E \in [0.4,0.6]$ the function $g_E(E_1, E/E_1)$ has a global maximum attained at 
$$E_1 = h(E):= \frac{1}{2} \sqrt{\frac{1}{6}\left( 8+ \sqrt{144 E^2 +7} +\sqrt{16 \sqrt{144 E^2+7} + 71- 432E^2} \right)}.$$
The value $(^*)$ is therefore the root of $g_E(h(E),E/h(E))$, which can be estimated numerically to high accuracy.

\begin{table}[H]
  \begin{tabular}{c|c|c|c}
    \multirow{2}{*}{} $\kappa$ &       
      \multicolumn{3}{c}{energies in $[0,1]$ for which a Mourre estimate holds for $D$ wrt.\ $A_{\kappa}$, $d=2$} \\ [0.5em]
      \hline
      $(2,4)$  & $(0.707107,1)$ &  $(4,8)$ &  $( 0 , 0.246095) \cup ( 0.923880 , 1 )$  \\[0.5em]
      $(2,6)$ &  $(0,0.445133) \cup ( 0.866025 , 1 )$  & $(4,10)$ &  $(0.309017,0.379323) \cup (0.951057,1)$  \\[0.5em]
      $(2,8)$ & \small $ (0.382683, 0.647959) \cup (0.923880,1)$  & $(6,8)$ & \small  $( 0.710171, 0.797581) \cup ( 0.923880 , 1 )$  \\[0.5em]
      $(2,10)$ & \small  $ (0.587786,0.750000) \cup (0.951056,1)$  & $(6,10)$ & \footnotesize $( 0, 0.145934) \cup ( 0.809728, 0.818608 ) \cup ( 0.951057 , 1 )$  \\[0.5em]
      $(4,6)$ &  \small  $( 0.502625 , 0.605938 ) \cup ( 0.866025 , 1 )$  & $(8,10)$ & \small   $( 0.811615, 0.877563) \cup ( 0.951057 , 1 )$  \\[0.5em]

     \end{tabular}
    \caption{$\boldsymbol{\mu} ^{\text{NUM}}_{A_{\kappa}}(D) \cap [0,1]$ in dimension 2, $\kappa = (\kappa_1, \kappa_2)$, $\kappa_1 \neq \kappa_2$.}
    \label{tab:table101244}
\end{table}

 There appears to be no strict positivity for $D$ in dimension 2 wrt.\ $A_{\kappa}$ for $(\kappa_1, \kappa_2) = (1,2)$, $(1,3)$, $(1,4)$, $(1,5)$, $(2,3)$, $(2,4)$, $(2,5)$, $(3,4)$, $(3,5)$, $(4,5)$, $(5,5)$.
 
\begin{table}[H]
  \begin{center}
    \begin{tabular}{c|c} 
      $\kappa$ & energies in $[0,1]$ for which a Mourre estimate holds for $D$ wrt.\ $A_{\kappa}$, $d=3$. \\ [0.5em]
      \hline
      $2$  & $(0,1)$  \\ [0.5em]
      \hline
      $4$ &  $(0,0.3535) \cup (0.7071,1)$  \\[0.5em]
      $6$ &  $(0,0.125) \cup (0.5148,0.6495) \cup (0.8660,1)$  \\[0.5em]
      $8$ &  $(0, 0.0560) \cup (0.7187,0.78858) \cup (0.9238,1)$  \\[0.5em]
      $10$ &  $(0,0.029508) \cup (0.81751,0.8602) \cup (0.951056,1)$  \\[0.5em]
      $12$ &  $(0,0.01734) \cup (0.87235,0.9012) \cup (0.965925,1)$ \\[0.5em]
      $14$ &  $ (0,0.0110)\cup (0.9058,0.9266)  \cup (0.9749,1)$ \\[0.5em]
      $16$ &  $(0, 0.007425) \cup (0.927666 , 0.943456) \cup (0.980785, 1) $ \\[0.5em]
      $18$ &  $(0,0.0052) \cup (0.9428,0.9551) \cup (0.9848,1)$ \\[0.5em]
      $20$ &  $(0,0.0038) \cup (0.9536,0.9635) \cup (0.9877,1)$ \\[0.5em]
       $24$ &  $(0,0.0022) \cup (0.9677,0.9746) \cup (0.9914,1)$ \\[0.5em]
       \hline \hline
        $4$ &  $\left(0,\sin^3 \left( \frac{\pi}{4} \right) \right) \cup \left(\sin \left(\frac{\pi}{4}\right),1 \right)$  \\[0.5em]
      $6-24$ &  $\left(0, \sin^3 \left(\frac{\pi}{\kappa} \right) \right) \cup \left( \ \_\_ \  , \sin^3 \left(\frac{(\frac{\kappa}{2}-1)\pi}{\kappa}\right)  \right) \cup \left(\sin \left(\frac{(\frac{\kappa}{2}-1)\pi}{\kappa}\right) ,1\right)$  \\[0.5em]
    \end{tabular}
  \end{center}
\caption{$\boldsymbol{\mu} ^{\text{NUM}}_{A_{\kappa}}(D) \cap [0,1]$ and $\boldsymbol{\mu} ^{\text{CONJ}}_{A_{\kappa}}(D) \cap [0,1]$ in dimension 3, $\kappa_1 = \kappa_2 = \kappa_3$.}
    \label{tab:table10144}
\end{table}

\begin{table}[H]
  \begin{tabular}{c|c|c|c}
    \multirow{2}{*}{} $\kappa$ &       
      \multicolumn{3}{c}{energies in $[0,1]$ for which a Mourre estimate holds for $D$ wrt.\ $A_{\kappa}$, $d=3$} \\ [0.5em]
      \hline
      $(2,2,1)$  & $\emptyset$  & $(4,4,6)$ &  $(0.8660,1)$  \\[0.5em]
      $(2,2,2)$  & $(0,1)$  & $(4,4,8)$ &  $(0,0.1737) \cup (0.9238,1)$  \\[0.5em]
      $(2,2,3)$  & $\emptyset$  &  $(6,6,2)$ &  $(0.8660,1)$  \\[0.5em]
      $(2,2,4)$  & $(0.7071,1)$ & $(6,6,4)$ &  $(0.5115,0.5205) \cup (0.8660,1)$  \\[0.5em]
      $(2,2,5)$  & $\emptyset$  & $(6,6,6)$ &  $(0,0.1250) \cup (0.5148,0.6495) \cup (0.8660,1)$  \\[0.5em]
      $(2,2,6)$  & $(0,0.4451) \cup (0.8660,1)$  & $(6,6,8)$ &  $(0.9238,1)$  \\[0.5em]
      $(2,2,7)$  & $\emptyset$ & $(8,8,2)$ &  $(0.9238,1)$  \\[0.5em]
      $(2,2,8)$  & $(0.3827,0.6479) \cup (0.9238,1)$ & $(8,8,4)$ &  $(0,0.0743) \cup (0.9238,1)$  \\[0.5em]
       $(4,4,2)$  & $(0.7071,1)$ & $(8,8,6)$ &  $(0.7170,0.7349) \cup (0.9238,1)$  \\[0.5em]
      $(4,4,4)$ &  $(0,0.3535) \cup (0.7071,1)$ & $(8,8,8)$ &  $(0, 0.0560) \cup (0.7187,0.78858) \cup (0.9238,1)$  \\[0.5em]

     \end{tabular}
    \caption{$\boldsymbol{\mu} ^{\text{NUM}}_{A_{\kappa}}(D) \cap [0,1]$ in dimension 3}
    \label{tab:table10144NEW}
\end{table}

\end{document}